\theoremstyle{plain}
\newtheorem{global-theorem}{Theorem}
\newtheorem{theorem}{Theorem}[section]
\newtheorem{lemma}[theorem]{Lemma}
\newtheorem{scholium}[theorem]{Scholium}
\newtheorem{corollary}[theorem]{Corollary}
\newtheorem{definition}[theorem]{Definition}
\newtheorem{proposition}[theorem]{Proposition}
\theoremstyle{definition}
\newtheorem{hypothesis}[theorem]{Hypothesis}
\newtheorem{remark}[theorem]{Remark}
\newcommand{\cc}{{\mathbb C}}
\newcommand{\pp}{{\mathbb P}}
\newcommand{\rr}{{\mathbb R}}
\newcommand{\Gm}{{\mathbb G}_{\rm m}}
\newcommand{\aaaa}{{\mathbb A}}
\newcommand{\Dd}{{\mathcal D}}
\newcommand{\Ff}{{\mathcal F}}
\newcommand{\Oo}{{\mathcal O}}
\newcommand{\Vv}{{\mathcal V}}
\newcommand{\Cc}{{\mathcal C}}
\newcommand{\Ll}{{\mathcal L}}
\newcommand{\Hh}{{\mathcal H}}
\newcommand{\srA}{{\mathscr A}}
\newcommand{\srE}{{\mathscr E}}
\newcommand{\srF}{{\mathscr F}}
\newcommand{\srD}{{\mathscr D}}
\newcommand{\srQ}{{\mathscr Q}}
\newcommand{\srX}{{\mathscr X}}
\newcommand{\srY}{{\mathscr Y}}
\newcommand{\srZ}{{\mathscr Z}}
\newcommand{\srT}{{\mathscr T}}
\newcommand{\srR}{{\mathscr R}}
\newcommand{\srK}{{\mathscr K}}
\newcommand{\srC}{{\mathscr C}}
\newcommand{\srP}{{\mathscr P}}
\newcommand{\srG}{{\mathscr G}}
\newcommand{\scD}{\textsf{\textbf{D}}}
\newcommand{\Dol}{{\text{DOL}^{\text{par}}_{L^{2}}}}
\newcommand{\End}{{\text{End}}}
\newcommand{\delbar}{\overline{\partial}}
\newcommand{\ddual}{{}^{\vee}\!}
\title{Direct Images in Non Abelian Hodge Theory}
\author{R. Donagi, T. Pantev and C. Simpson}
\date{} 
\begin{document}
\maketitle

\begin{abstract}
In this paper we explain how non-abelian Hodge theory 
allows one to
compute the $L^{2}$ cohomology or middle perversity higher direct images of harmonic bundles and twistor $D$-modules
in a purely algebraic manner. Our main result is a new algebraic
description for the fiberwise $L^{2}$ cohomology 
of a tame harmonic bundle or the corresponding flat bundle or
tame polystable parabolic Higgs bundle. 
Specifically we give a formula
for the Dolbeault version of the $L^{2}$ pushforward 
in terms of a modification of the Dolbeault complex of a 
Higgs bundle which takes into account the monodromy 
weight filtration in the normal directions
of the horizontal parabolic divisor. The parabolic structure
of the higher direct image is obtained by analyzing 
the $V$-filtration at a normal crossings point.
We prove this algebraic formula
for semistable families of curves. 
\end{abstract}

\tableofcontents

\newpage

\section{Introduction}

The Non Abelian Hodge Correspondence (NAHC) on a variety $X$ 
is the equivalence

\

\[
\boxed{
  \left(
  \text{
  \begin{minipage}{1.2in}
      semisimple local systems on $X$
  \end{minipage}
  }
  \right)
\Longleftrightarrow
\left(
  \text{
    \begin{minipage}{2in}
      polystable Higgs bundles on $X$ with vanishing Chern classes $c_1=c_2=0$
\end{minipage}
  }
  \right)
 }
\]

\

\noindent given by Hitchin's equations.
Here a local system $L$ is a representation of the fundamental group of $X$,
while a Higgs bundle on $X$ is a pair $(E,\varphi)$ where 
$E$ is a vector bundle on $X$ 
and the Higgs field 
$\varphi \in \Gamma(X, \End(E)\otimes \Omega^1_X)$
self-commutes, in the sense that  
\[
\varphi \wedge \varphi =0  \in \Gamma(X, \End(E)\otimes \Omega^2_X),
\]
so there is a {\em Dolbeault complex}:
\begin{equation} \label{Koszul}
  \text{DOL}(X,(E,\varphi)):= \left[
    E \stackrel{\varphi}{\to} E\otimes \Omega^1_X
  \stackrel{\varphi}{\to} E\otimes \Omega^2_X
  \stackrel{\varphi}{\to} \ldots
  \right].
\end{equation}
There is a richer object called a harmonic bundle, reviewed in section
\ref{sec-hb}, which naturally determines both a Higgs bundle
$(E,\varphi)$ and a local system $L$.  In fact, it determines a family
of $\lambda$-connections for all $\lambda \in \cc$, with the Higgs
bundles arising at $\lambda=0$ and the local system, or equivalently a
flat connection, at $\lambda=1$.  The NAHC (``Kobayashi-Hitchin
correspondence'' in Mochizuki's terminology) says that this sets up an
equivalence between Higgs bundles, local systems, and harmonic
bundles.  The equivalence, in the compact case, is due in one
direction to Hitchin \cite{Hitchin} for one dimensional $X$ and to
Simpson \cite{SimpsonHBLS} in higher dimension.  The other direction
is provided by Donaldson's appendix to Hitchin's paper for curves, and
Corlette \cite{Corlette} in all dimensions.

A version of the NAHC on non compact spaces $X \setminus D$, where $X$
is compact and $D \subset X$ is a normal crossing divisor, was
established in \cite{SimpsonHBNC} for one dimensional $X$, in
\cite{Biquard} for $X$ of arbitrary dimension with smooth $D$, and in
\cite{MochizukiKobayashiHitchin} in general.  The setup involves a
local system $L$ and a Higgs bundle $(E,\varphi)$ that are defined on
$X \setminus D$ and carry order of growth filtrations along the
components of $D$.  More precisely, $L$ is a polystable filtered local
system with vanishing parabolic Chern classes, and $(E,\varphi)$ is a
parabolic Higgs bundle, consisting of a locally abelian parabolic
vector bundle $E$ with vanishing parabolic Chern classes, together
with a Higgs field $\varphi$ that is logarithmic with respect to the
parabolic structure along $D$.  The theorem states that both types of
data are equivalent to a {\em tame} harmonic bundle on $X \setminus
D$.  We refer to section \ref{sec-hb} for more details.

Natural operations such as pullback and direct image
 with respect to a morphism \linebreak $f:X \to Y$
are well defined for harmonic bundles and local systems, 
and commute with the NAHC. 
Pullback for Higgs bundles is also well defined and commutes with the NAHC.
The main goal of the present work is 
to give an algebraic definition of direct image for Higgs bundles
and to show that it too commutes with the NAHC.

Consider the case that $X$ is compact and $Y$ is a point.  The direct
image of a Higgs bundle $(E,\varphi)$ on $X$ should be an object of
the bounded derived category of coherent sheaves on the point, so it
should be described by a complex of vector spaces.  For this we take
the Dolbeault complex \eqref{Koszul}.  This is the correct choice in
the sense that if $(E,\varphi)$ corresponds to $L$ by the NAHC then
there is a natural isomorphism:
\[
Rf_*L \stackrel{\cong}{\longrightarrow}
\mathbb{R}\Gamma(X,\text{DOL}(X,(E,\varphi))). 
\]
For example, the trivial local system $L=\cc$ corresponds to the
trivial vector bundle $E=\Oo$ along with the the null Higgs field
$\varphi=0$. In this case the above isomorphism is the ordinary Hodge
theorem: $Rf_*\cc$ gives the de Rham cohomology
$H^{\bullet}_{DR}(X,\cc)$, 
while
$\text{DOL}(X,(E,0)) = \oplus_{i} \Omega^{i}_{X}[-i]$ are the
holomorphic forms on $X$, which give the Dolbeault cohomology
$H^{\bullet}_{Dol}(X,\cc) = \oplus_{p,q} H^{p}(X,\Omega^{q}_{X})$.

The analogous result in the open case, when $X$ is one dimensional,
was established by Zucker \cite{Zucker} for variations of Hodge
structure.
In this work we extend these results to families of spaces and to
general local systems.

Our main result is Theorem \ref{mainth}. We consider a map $f:X \to Y$
from a smooth projective surface $X$ to a smooth projective curve $Y$,
with a reduced divisor $Q\subset Y$, and a (reduced) simple normal
crossings divisor $D=D_V + D_H\subset X$, consisting of horizontal and
vertical parts: $D_H$ is etale over $Y$, while $D_V = f^{-1}(Q)$.
Starting from a Higgs bundle $(E,\varphi)$ on $X \setminus D$, we
construct in \eqref{l2pardol} the $L^2$ Dolbeault complex ${\rm
  DOL}^{\rm par}_{L^2}(X/Y,E_{\alpha (a)})$. This construction is
purely algebraic.  The theorem asserts that the direct images $F^i_a:=
{\mathbb R}^if_{\ast} \left( {\rm DOL}^{\rm par}_{L^2}(X/Y,E_{\alpha
  (a)}) \right) $ are the correct direct images for the Higgs bundle
$(E,\varphi)$, in the sense that they fit together\footnote{Our
  notational convention will be that a parabolic structure is denoted
  by an underlined letter; it consists of a collection of sheaves
  indexed by the parabolic levels, but the letters for these component
  sheaves are not underlined.}  into the parabolic Higgs bundles
$\underline{F}^i$ that correspond under NAHC to the direct images of
the various objects (harmonic bundle and filtered local system)
corresponding under NAHC to the original Higgs bundle $(E,\varphi)$

As discussed in Section \ref{ancon} at the end,
one can envision analytic arguments going in the direction
of the proof. But the essential ingredient would be to know 
that the higher direct image sheaves $F^i_a$ are locally free. 
That should be viewed as some kind of strictness property,
not easy to obtain ``by hand''. 

It is therefore natural to use Sabbah's theory of
twistor $\Dd$-modules \cite{Sabbah}, generalizing the technique of
Saito \cite{SaitoMHP} for
Hodge modules. This allows furthermore to
leave the analytic considerations in the background
as they are already treated within the context of 
the general theory. 

Our strategy of proof may be described, in general terms, as follows:
Sabbah has already proven a compatibility between
$V$-filtrations and higher direct images.
The $V$-filtration uses in an intrinsic way the
structure of module over the ring $\srR$ obtained by using the Rees
construction of the standard filtration of $\Dd _X$.

The required strictness property is reflected in the
Decomposition Theorem of Sabbah and Mochizuki 
\cite{Sabbah, MochizukiPure}. 

In order to pass between the $\srR$-module picture and the parabolic
Higgs picture, we need to understand the $V$-filtration and nearby
cycles functors in a relative way over the $\lambda$-line $\srA :=
\aaaa^1$.

Let us discuss this first on $Y$. The $\srR$-module
$\widetilde{\srF}^i$, defined as the direct image of the relative de
Rham complex for $\srR$-modules, has a $V$-filtration along the
divisor $Q$. On the base curve we could restrict to an open disk so we
may think of $Q$ as consisting of a single point, in particular we use
only a single real weight $a$. We have the filtration by subsheaves
$$
V_{a-1}\widetilde{\srF}^i\subset \widetilde{\srF}^i .
$$
These are $V_0\srR$-modules, where the latter ring is the one
associated to the sheaf of rings of differential operators that is
Koszul dual to the logarithmic cotangent complex $\Omega
^{\bullet}_Y(\log Q)$.

In the range $a<1$ these sheaves fit together to form a structure of
parabolic bundle. That is to say they are locally free $\Oo
_{Y\times \srA}$-modules organized in a filtration satisfying the axioms of a
parabolic structure. One should extend by periodicity to the case
$a\geq 1$.

The restriction of this parabolic structure to any $\lambda$ is the
parabolic bundle associated to the harmonic bundle associated to the
middle higher direct image. This comes from Saito-Mochizuki-Sabbah's
theory. The reader may refer to \cite[Chapter 5]{Sabbah} for a review
of these relationships.

In order to prove our main theorem, we would like to show that
the natural isomorphism of bundles on $Y-Q$ extends to give
an isomorphism
$$
V_{a-1}\widetilde{\srF}^i|_{Y\times \{0\}} \cong 
F^i_a.
$$
The strategy to show this is to use the compatibility of
$V$-filtrations with higher direct images: we can define a
$V$-filtration on $\srE $ over $\srX = X\times \srA$, and Sabbah shows
\cite[Theorem 3.1.8]{Sabbah} following Saito \cite[Proposition
  3.3.17]{SaitoMHP} that the higher direct image of the de Rham
complex made from a level of this $V$-filtration, is the corresponding
level of the $V$-filtration on the higher direct image.

A first question for obtaining information on the Dolbeault fiber
$\lambda =0$ is that the axiomatic characterization of the
$V$-filtration uses the full $\srR$-module structure over the
$\lambda$-line $\srA$, so it is not {\em a priori} well-defined just
in terms of Dolbeault data. This aspect was pointed out to us rather early on by Sabbah and Mochizuki. 
However, it turns out that Saito and Mochizuki 
have proven useful expressions for
the $V$-filtration viewed as being generated by certain subsheaves
closely related to the parabolic structure. This is stated
in Proposition \ref{vdescrip}. Furthermore, we furnish our own proof of
how to get from the weaker to the stronger version of
the generation statement, 
in Corollary \ref{vgen2} and Lemma \ref{vgen3}. 
These expressions are
well-defined in the restriction to $\lambda =0$.

There are now two difficulties. Near a horizontal divisor (but that is
automatically away from the singularities of $f$), we would like to
further reduce from the de Rham complex for an $\srR_{\srX}$-module
which is not $\Oo _{\srX}$-coherent, to an $L^2$ de Rham complex. For
this, we can just quote the relevant statement in Sabbah, using
the weight filtration given by Corollary \ref{weightcor}.

The main difficulty is the following: 
at a singular point of $f$,
corresponding to the intersection of two vertical divisor components,
the map $f$ is not smooth so Saito and Sabbah do not directly
define the $V$-filtration along $f=0$ (here we are assuming that $Y$
is a disk and the singular point is at the origin $y=0$).

Rather, in order to define the $V$-filtration for the
function $f$, they tell us to take the graph embedding
and use the $V$-filtration for the new coordinate function. We
therefore have to prove a statement about compatibility between the
complex obtained from the graph construction, and the $L^2$ parabolic
Dolbeault complex. It is this compatibility statement that will be our
main result.

\vspace{1cm}

Here is an outline of the paper.

Section \ref{prelim} is preparatory, and includes a review of some of
the relevant issues from non-abelian Hodge theory.  We start in
subsection \ref{geom} by describing our basic geometric situation.  In
subsection \ref{sec-hb} we review the non-abelian Hodge correspondence
in both the compact and non-compact cases. We also explain the
Nilpotence Hypothesis \eqref{mainhyp} that will be assumed
throughout. There and in the next subsection we give the main
definitions: harmonic bundle (wild, tame, tame with trivial
filtrations), $\lambda$-connection.  A central theme of this work will
be the comparison of harmonic bundles with $\srR$-modules. In
subsection \ref{sec-r} we define the relevant $\srR$-modules,
especially the minimal extension.

In section \ref{setup} we state and discuss the main theorem. We start
in section \ref{sec-locstudy} by constructing the monodromy weight
filtration $W({\rm Gr}_{k,b} (\underline{E})):= W(N_{k,b})$ associated
to a parabolic Higgs bundle $(\underline{E},\varphi)$. In section
\ref{L2dol}, these weight filtrations are used to construct the $L^2$
parabolic Dolbeault complex, in equation \eqref{l2pardol}. This allows
us, in section \ref{Main}, to state the Main Theorem \ref{mainth}.

In section \ref{sec-dr}, as a step towards the proof of the Main
Theorem \ref{mainth}, we formulate its analogue, Theorem \ref{rthm},
on the level of $\srR$-modules and their $V$-filtrations.  This
result, in the $\srR$-module context, is already known, by
Sabbah's work.  So this leads us to the comparison problem of relating
these two points of view, in order to deduce our main theorem from
Theorem \ref{rthm}.  In a little more detail: we start in subsection
\ref{j} with a brief review of the Kashiwara-Malgrange $V$-filtration
of a $\Dd$-module with respect to a map or a subscheme. While the
smooth case is straightforward, the singular case requires factoring
through a graph construction, as we recall in section \ref{j}. In
section \ref{dR} we define the de Rham complex on $\srX$ in terms of
this graph factorization, take its higher direct image to $\srY = Y\times\srA$, and
then state and prove Theorem \ref{rthm} about compatibility with the
$V$-filtration.  Finally, in subsection \ref{constructmap}, we
approach the comparison problem by constructing a map $u(a)$
(cf. Lemma \ref{mapcomplexes}) from the parabolic de Rham complex to
the new de Rham complex. The proof of the Main Theorem \ref{mainth} is
then reduced, in Proposition \ref{proofmainth}, to Theorem
\ref{cpxqis} which asserts that the comparison maps $u(a), u_0(a)$ are
indeed quasi-isomorphisms. This is proved over the next two sections.

In section \ref{sec-qi}, after some preliminaries and reductions, the
desired quasi-isomorphism is established over smooth points, thus
focusing our attention, in Theorem \ref{maincalc}, on the double
points.  Some further reductions lead us to consider, in Proposition
\ref{reduction2}, a graded version ${\rm gr}_a(u_0)$ described in
equation \ref{gru}.

The needed local calculations are then completed in section
\ref{sec-nc}. These include a Tensor Product Formula and explicit
calculations with Koszul complexes.

In the final section we consider several improvements and extensions.
The first two subsections provide some details on the proofs of parts
3 and 5 of Theorem \ref{mainth}.  We start by looking at the absolute
Dolbeault complex on $X$ and use it to define the Gauss-Manin Higgs
field on the higher direct images. Next we look at the analytical
aspects of the direct image harmonic bundle. This includes a sketch of
our original strategy for proving the theorem, involving the study of
the family of $L^2$ cohomology spaces. The remaining sections consider
extensions of our basic geometric setup to allow maps between various
higher dimensional spaces.

\bigskip

\noindent
{\bf Acknowledgements}---We would like to thank Takuro Mochizuki and
Claude Sabbah for helpful discussions all along this project.  We also
thank Mochizuki for the note \cite{MochizukiNote} that he sent us,
containing his interpretation of our result. His note gives a proof
that is uniform with respect to $\lambda$.

\smallskip

\noindent
During the preparation of this work Ron Donagi was supported in part
by NSF grants DMS 1304962 and DMS 1603526 and by Simons Foundation
grant \# 390287. Tony Pantev was supported in part by NSF grant DMS
1302242 and by Simons Foundation grant \# 347070.  Carlos Simpson was
supported in part by ANR grant ANR 933R03/13ANR002SRAR
(Tofigrou). This work enters into the context of the project supported
by ANR grant ``Hodgefun'' and by Simons grant \# 390287. Carlos
Simpson would like to thank the University of Miami for hospitality
during the completion of this work. This work was presented at the
conference ``Algebraic geometry and integrable systems---Kobe 2016''
thanks to the support of JSPS Grant-in-aid (S)24224001.

\vspace{2cm}

\section{Preliminaries} \label{prelim}

This section is preparatory, and includes a review of some of the
relevant issues from non-abelian Hodge theory.  We start by describing
our geometric situation, and then give the main definitions: harmonic
bundle (wild, tame, tame with trivial filtrations),
$\lambda$-connection, minimal $\srR _{\srX}$-module.

\subsection{The underlying geometry} \label{geom}

Suppose we are given a smooth projective surface $X$ with a map
$f:X\rightarrow Y$ to a smooth projective curve.  Suppose we are given
a (reduced) simple normal crossings divisor $D\subset X$. Suppose
given a reduced divisor $Q\subset Y$ so $Q$ is just a finite
collection of points $q_i$.  We assume that $D$ decomposes as
$$
D=D_V + D_H
$$
into two simple normal crossings divisors meeting transversally,
called the {\em vertical} and {\em horizontal} divisors
respectively. We assume that
$$
D_V = f^{-1}(Q)
$$
as a divisor (that is to say, the fibers of $f$ over the points
$q_k$ are reduced with simple normal crossings), and that $D_H$ is
etale over $Y$ (so it is a disjoint union of smooth components, not
intersecting each other but they can intersect $D_V$).

When we would like to consider an irreducible component of a vertical
divisor we write $D_{v(i)}$, and when we would like to consider an
irreducible component of a horizontal divisor we write $D_{h(j)}$.  We
use $D_k$ to refer to any one of these divisors.  Each $D_k$ is smooth
and irreducible:
$$
D_V=\sum _{i=1}^{n_v} D_{v(i)},
$$
$$
D_H=\sum _{j=1}^{n_h} D_{h(j)},
$$
so that
$$
D=\sum _i D_{v(i)} + \sum _j D_{h(j)} = \sum _k D_k.
$$
We assume that $f$ is smooth away from $D_V$, so the fibers
$f^{-1}(q_l)$ are the only singular fibers.  Each one of these fibers
will, in general, be composed of several irreducible components
$D_{v(i)}$. However, since we will rather quickly reduce to the
consideration of a neighborhood of only one $q_i$, we don't need to
introduce additional notation for that.

It follows from our etaleness assumption that $D_H$ is entirely
contained in the smooth locus of $f$.

We next consider the ``$\lambda$-line''
$$
\srA := \aaaa ^1
$$
with coordinate denoted $\lambda$. This is the base of the
Deligne moduli space of $\lambda$-connections which is the
first coordinate chart of the twistor $\pp^1$.

Denote by script letters the spaces
$$
\srX := X\times \srA , \;\;\; \srY := Y\times \srA ,
$$
with their divisors denoted
$$
\srD = \srD _V + \srD _H = D_V \times \srA + D_H\times \srA
$$
and $\srQ = Q\times \srA$.  The corresponding map is
denoted
$$
f_{\srX} : \srX \rightarrow \srY
$$
or just by $f$ if there is no confusion.

Similarly bundles over $\srX$ or $\srY$ will be denoted by script
letters when possible. Their fibers over $\lambda=0$ will be denoted
by regular letters and the fiber over an arbitrary $\lambda$ will be
denoted functionally. Thus if $\srE$ is a bundle on $\srX$ we obtain
bundles $E (\lambda )$ on $X(\lambda )=X$, and $E:= E(0)$.

We have the sheaf of rings $\srR _{\srX}$ over $\srX$ defined by doing
the Rees construction to the filtration of $\Dd _X$ by order of
differential operators (see Subsection \ref{sec-r} below). Denote its
restriction to fibers by $R_X(\lambda )$, so we have
$$
R_X:= R_X(0) = {\rm Sym}^{\ast}(T_X)
$$
whereas for any $\lambda \neq 0$,
$$
R_X(\lambda ) \cong \Dd _X.
$$

\subsection{Harmonic bundles}
\label{sec-hb}

In this subsection we review the non-abelian Hodge correspondence in
both the compact and non-compact cases. We also explain the Nilpotence
Hypothesis \eqref{mainhyp} on the residues of the Higgs field that
will be assumed throughout.

Recall that a {\em harmonic bundle} over $X-D$ 
consists of the data 
$(\Ll , \scD ',\scD '',h)$ where $\Ll$ is a $\Cc ^{\infty}$ bundle
over $X-D$, with a hermitian metric $h$,
having operators
$$
\scD '=\partial + \overline{\varphi} , \; \scD '' =\delbar + \varphi: 
\; \Ll \rightarrow  {\mathcal A}^1(\Ll )
$$
such that
$\partial$ and $\varphi$ are of type $(1,0)$ and 
$\delbar , \overline{\varphi}$ are of type $(0,1)$. 
Put $\scD := \scD ' + \scD''$. These
are subject to the following conditions: 

\begin{enumerate}
\item[1.] $\partial + \delbar$ is an $h$-unitary connection;
\item[2.] $\varphi + \overline{\varphi}$ is $h$-self-adjoint; 
\item[3.] $(\scD '')^2=0$ so that $E^o=(\Ll , \delbar )$ is a holomorphic 
bundle\footnote{In order to avoid confusion we sometimes use a 
superscript $(\, )^o$ to
denote objects such as $E^o$ on the complement $X-D$.}
  and $\varphi : E^o\rightarrow E^o\otimes \Omega ^1_{X-D}$ is a holomorphic Higgs
  field; and 
\item[4.] $\scD ^2=0$,
  so that $L:= \Ll ^{\scD}$ is a local system (i.e. a locally constant sheaf of
  finite-dimensional $\cc$-vector spaces).  
\end{enumerate}

In the compact case with empty divisor ($D=0$), recall that the local
system $L$ is semisimple; the Higgs bundle $(E^o,\varphi )$ is
polystable with vanishing rational Chern classes; and for either one
of these kinds of objects there exists a unique harmonic bundle
associated to it. The metric is unique up to rescaling on each direct
factor. This sets up the {\em nonabelian Hodge correspondence} between
Higgs bundles and local systems, pioneered by Hitchin \cite{Hitchin}.

In the noncompact case, that is to say in the presence of a nontrivial
divisor at infinity $D$ that we are assuming has normal crossings,
more needs to be said about the asymptotics of the harmonic bundle
near the divisor.  For dimension $1$ that was the subject of
\cite{SimpsonHBNC}. In the higher dimensional case this discussion was
initially extended over the smooth points of $D$ by Biquard
\cite{Biquard}. The existence of a pluriharmonic metric on a local
system with monodromy eigenvalues of norm $1$, was done by Jost and
Zuo \cite{JostZuoExistence}.  The full correspondence in general was
done by Takuro Mochizuki in a series of works including
\cite{MochizukiPure,MochizukiKobayashiHitchin}, and was later extended
to cover even wild ramifications that we don't consider here.

This is the starting point for our investigation of higher direct
images. We shall make some simplifying assumptions and now review the
basic outlines. Consider a smooth point $p\in D_i$ of one of the
divisor components, and let $z_i$ be the coordinate function defining
$D_i$ near $p$.  Let $\{ r(t)\}_{t\in (0,1)}$ be a ray going towards
$p$, with $|z_i(r(t))|=t$. If $\{ u(t)\in \Ll _{r(t)}\}$ is a flat
section of the local system $L$ over the ray, we can look at the
growth rate of $\| u(t)\| _{h(r(t))}$ with respect to the harmonic
metric. We say that $u$ has {\em polynomial growth} (respectively,
{\em sub-polynomial growth}) along the ray, if for some (respectively
all) $b>0$ we have:
$$
\| u(t)\| _{h(r(t))} \leq C t^{-b}.
$$
The harmonic bundle is said to be {\em tame} if all its flat
sections have polynomial growth along rays. It is said to be {\em tame
  with trivial filtrations} if flat sections have sub-polynomial
growth along rays.

Tameness is equivalent to the condition that the eigen-one-forms of
the Higgs field $\varphi$ are multivalued sections of the logarithmic
cotangent bundle. Triviality of the filtrations on the local system is
then equivalent to the condition that the limiting values of the
eigen-one-forms (these will be the residues of the logarithmic Higgs
field) are purely imaginary.

Suppose $(\Ll , \scD ',\scD '',\varphi )$ is a tame harmonic bundle.
Using the order of growth defines a collection of filtrations on the
restrictions of the local system $L$ to punctured neighborhoods of
each of the divisor components. Let $j:X-D\hookrightarrow X$. If $\eta
= \{ \eta _i\}$ is a parabolic level, that is to say $\eta _i\in \rr$
for each $i$ corresponding to a divisor component $D_i$, then
$L_{\eta}$ is the subsheaf of $j_{\ast}(L)$ consisting of sections
that have growth rate $\leq C t^{-\eta _i-\epsilon}$ for any $\epsilon
>0$, along rays going towards smooth points of $D_i$. The collection
$\underline{L}=\{ L_{\eta}\}$ is the filtered local system associated
to $(\Ll , \scD ',\scD '',\varphi )$. The condition of trivial
filtrations is equivalent to $L_0=j_{\ast}(L)$ and $L_{\eta}=j_!(L)$
for $\eta _i<0$.

Similarly, the Higgs bundle $(E^o,\varphi )$ extends to a parabolic
sheaf $(\underline{E},\varphi )$ defined as follows. For a parabolic
level $\beta = \{ \beta _i\}$ with $\beta _i\in \rr$, let
$E_{\beta}\subset j_{\ast}(E^o)$ be the subsheaf of holomorphic
sections that locally near smooth points of $D_i$ have growth rate
$\leq C |z_i|^{-\beta _i-\epsilon}$ for any $\epsilon >0$.

Our convention is that the parabolic levels (and weights) are indexed
by $\beta = (\ldots , \beta _k , \ldots )$ with one $\beta_{k} \in
\mathbb{R}$ for each divisor component $D_{k}$.  The parabolic
filtrations are increasing: $E _{\alpha} \subseteq E _{\beta}$ if
$\alpha _i \leq \beta _i$, and if we let $\delta ^k$ denote the index
with all values $0$ except for $1$ at position $k$, then
$$
E  _{\beta + \delta ^k} =
E  _{\beta} (\srD _k).
$$
In our notation for the parabolic bundle, we don't underline when
there is a subscript: a notation of the form $E _{\beta}$ with a
subscript denotes one of the bundles in the structure of the parabolic
bundle.

The following collects some of Mochizuki's main results:

\begin{theorem}[Non-compact non-abelian Hodge correspondence]
\label{hbthm}
If $(\Ll , \scD ',\scD '',\varphi )$ is a tame harmonic bundle, then
the filtered local system is locally abelian: it is locally an
extension of standard rank $1$ filtered local systems. The parabolic
sheaf $\underline{E}$ is a locally abelian parabolic bundle, and the
Higgs field $\varphi$ is logarithmic in the sense that for any
parabolic weight $\beta$ we have
$$
\varphi : E_{\beta}\rightarrow 
E_{\beta}\otimes \Omega ^1_X(\log D).
$$
The filtered local system, and the parabolic logarithmic Higgs
bundle, are both polystable objects with vanishing Chern
classes. Furthermore, any polystable filtered local system or
parabolic logarithmic Higgs bundle with vanishing Chern classes comes
from an essentially unique harmonic bundle (the metric is unique up to
scaling on each irreducible direct factor).  This sets up one to one
correspondences between the three kinds of objects.
\end{theorem}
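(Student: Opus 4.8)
The plan is to reduce the whole statement to the asymptotic analysis of a tame harmonic bundle in a neighborhood of $D$, and then to globalize by Chern--Weil theory applied to the harmonic metric itself. First I would set up the two local models: near a smooth point $p\in D_i$ with defining coordinate $z_i$, and near a crossing point with coordinates $(z_1,z_2)$. The basic technical result is the \emph{norm estimate}: there is a holomorphic frame of $j_{\ast}(E^o)$ in which $h$ is mutually bounded above and below by a diagonal model metric whose entries are of the form $|z_i|^{2\beta_i}(\log|z_i|)^{m}$, the weights $\beta_i$ being the real parts of the residual eigenvalues of $\varphi$ along $D_i$ and the logarithmic powers recording the monodromy weight filtration $W(N_i)$ of the nilpotent part $N_i$ of that residue. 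This is proved by an induction on the ``level,'' controlling the decay of the off-diagonal blocks of $h$ against $W(N_i)$; tameness --- boundedness of the eigenforms of $\varphi$ in the logarithmic cotangent bundle --- is exactly what forbids any worse, e.g.\ essentially singular, behaviour. This step is due to Simpson in dimension one, Biquard over smooth $D$, and Mochizuki in general.

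Granting the norm estimates, the sheaves $E_{\beta}\subset j_{\ast}(E^o)$ of holomorphic sections with growth $\le C|z_i|^{-\beta_i-\epsilon}$ are coherent, and the adapted frame exhibits $\underline{E}$ locally as a direct sum of rank-one parabolic bundles $\Oo_X(\sum_i c_i D_i)$ with prescribed weights; hence $\underline{E}$ is locally abelian. Since the eigenforms of $\varphi$ lie in $\Omega^1_X(\log D)$ and the adapted frame diagonalizes $h$ up to bounded error, one reads off $\varphi\colon E_{\beta}\to E_{\beta}\otimes\Omega^1_X(\log D)$ for every weight $\beta$, the residue preserving the parabolic filtration by construction. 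The identical analysis on the flat side produces the filtered local system $\underline{L}=\{L_{\eta}\}$, shows it is locally abelian, and --- under the hypothesis of trivial filtrations, i.e.\ sub-polynomial growth --- yields $L_0=j_{\ast}(L)$ and $L_{\eta}=j_!(L)$ for $\eta_i<0$ directly from the definitions.

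For polystability and vanishing of the parabolic Chern classes I would compute $c_1^{\mathrm{par}}(\underline{E})$ and $\mathrm{ch}_2^{\mathrm{par}}(\underline{E})$ by Chern--Weil with the harmonic metric: the conditions that $\partial+\delbar$ be $h$-unitary, that $\varphi+\overline{\varphi}$ be $h$-self-adjoint, and that $\scD^2=0$ force the curvature of $h$ to satisfy the Hermitian--Einstein-type identity, and integrating over $X$ --- with the boundary contributions near $D$ controlled by the Step~1 model metric --- gives vanishing of the parabolic Chern classes (for a locally abelian bundle it suffices that $c_1^{\mathrm{par}}$ and $\mathrm{ch}_2^{\mathrm{par}}$ vanish). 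If $\underline{E}'\subset\underline{E}$ is a saturated $\varphi$-invariant parabolic subsheaf, then away from $D$ and the singular loci orthogonal projection equips it with a sub-harmonic-bundle structure, and the Gauss--Codazzi computation of its parabolic degree shows $\deg^{\mathrm{par}}\underline{E}'\le 0$, with equality only when the second fundamental form vanishes, i.e.\ when $\underline{E}'$ is $\scD$- and $\varphi$-parallel; this produces a harmonic-bundle splitting and hence polystability, and the same argument gives polystability of $\underline{L}$.

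Conversely, starting from a polystable parabolic logarithmic Higgs bundle (or filtered local system) with vanishing parabolic Chern classes, one must construct the metric: fix a $C^{\infty}$ model metric near $D$ adapted to the weights and residue data, solve the Hermitian--Einstein equation on an exhaustion of $X-D$ by compact domains with boundary, and pass to the limit; the a priori estimates that make the limit exist with the prescribed asymptotics come from parabolic stability via a noncompact Uhlenbeck--Yau argument, while vanishing of the parabolic Chern classes forces the limiting object to be a genuine harmonic bundle. The three-way correspondence then follows by composing the bijections harmonic bundle $\leftrightarrow$ local system (at $\lambda=1$) and harmonic bundle $\leftrightarrow$ Higgs bundle (at $\lambda=0$). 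The main obstacle is entirely analytic and sits in two places --- the inductive norm estimates of Step~1 (whose structure along the weight filtrations $W(N_i)$ is precisely what the later sections of this paper exploit) and the noncompact existence theorem in the converse direction, which needs weighted analysis and Uhlenbeck--Yau on $X-D$. Both are Mochizuki's theorems (with Simpson and Biquard covering the one-dimensional and smooth-divisor cases), so in the present context they are quoted rather than reproved.
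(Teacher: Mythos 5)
Your proposal is correct and ultimately takes the same route as the paper: the paper's entire proof is the citation ``See \cite{MochizukiKobayashiHitchin}'', and you likewise conclude that the two analytic pillars (the norm estimates along $W(N_i)$ and the noncompact Uhlenbeck--Yau existence theorem) are to be quoted from Simpson, Biquard and Mochizuki rather than reproved. Your intervening sketch of how those cited results are established is a faithful summary of the literature, but it does not change the fact that both you and the authors treat this theorem as an imported black box.
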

\begin{proof}
See \cite{MochizukiKobayashiHitchin}. 
\end{proof}

\

We recall here the correspondence between residual data for filtered
local systems and parabolic logarithmic Higgs bundles.  This was given
in the table of \cite[p. 720]{SimpsonHBNC} for the case of curves, and
it works the same way at smooth points of $D$ in higher
dimensions. Notice however that we are using the convention that
filtrations are increasing, so there is a sign change: our $\eta _i$
and $\beta_i$ have opposite signs with respect to \cite{SimpsonHBNC}.

Denote by $m_i\in \cc^{\ast}$ an eigenvalue of the monodromy
transformation of $L$ around $D_i$, in the $\eta _i$-graded piece for
the filtration. This generalized eigenspace corresponds to a
generalized eigenspace of the same dimension, with eigenvalue $r_i\in
\cc$, for the residue of the Higgs field $\varphi$ acting on the
$\beta _i$-graded piece of the parabolic structure along $D_i$. The
correspondence is:

\vspace{.5cm}

\begin{center}
\begin{tabular}{|| l || c |  c || }
\hline \hline
$ { } $ & Local system $(\lambda=1)$ & Higgs $(\lambda=0)$\\
\hline \hline
Grade & $\eta _i= 2{\rm Re}(r_i)$  
\rule{0pt}{12pt}& $\beta _i= {\rm Arg}(m_i)/2\pi $ \\
\hline
Eigenvalue &  $m_i = e^{4\pi {\rm Im}(r_i)+2\pi \sqrt{-1}\beta_i} $  & $r_i = \eta _i /2 + 
\frac{\sqrt{-1}}{4\pi}\log |m_i| $\rule{0pt}{13pt} \\

\hline \hline
\end{tabular}
\end{center}

\vspace{.5cm}

The following hypothesis, in effect throughout the paper,
will considerably simplify many parts of the discussion. 
We note that local systems coming from geometry certainly
satisfy this hypothesis, and it leaves a wide lattitude for
the choice of local system.

\begin{hypothesis}[Nilpotence]
\label{mainhyp}
Throughout the present paper, we shall assume that the filtered local
system has trivial filtrations ($\eta_i=0$) and the eigenvalues of the
monodromy are complex numbers of norm $1$ ($m_i\in S^1\subset
\cc^{\ast}$).  This corresponds for the parabolic logarithmic Higgs
bundle to the hypothesis that the residue of the Higgs field has
trivial eigenvalues ($r_i=0$). The correspondence becomes:

\vspace{.5cm}

\begin{center}
\begin{tabular}{|| l || c |  c || }
\hline \hline
$ { } $ & Local system $(\lambda=1)$ & Higgs $(\lambda=0)$\\
\hline \hline
Grade & $\eta _i=0$  & 
\rule{0pt}{12pt}$\beta _i= {\rm Arg}(m_i)/2\pi $ \\
\hline
Eigenvalue &  $m_i = e^{2\pi \sqrt{-1}\beta_i} $ 
\rule{0pt}{12pt} & $r_i = 0  $ \\

\hline \hline
\end{tabular}
\end{center}

\vspace{.5cm}

\end{hypothesis}

\subsection{The $\lambda$-connections}
\label{sec-lam}

One can extend the construction of Theorem \ref{hbthm}
to obtain a twistor family of parabolic logarithmic $\lambda$-connections, 
interpolating between the connection and the Higgs field. 

Consider a harmonic bundle $(\Ll ,\scD ',\scD '',h)$. 

Our Nilpotence Hypothesis \ref{mainhyp} is in effect; this makes it so that the whole family of parabolic logarithmic $\lambda$-connections associated to the harmonic bundle, has a collection of residual data that varies in a nice way. Complications such as were illustrated in \cite[Figure 3.A]{Sabbah}, leading for example to the need for choosing small open neighborhoods in the $\lambda$-line, are avoided.  

For any $\lambda \in \srA$, we have a vector 
bundle
with 
$\lambda$-connection $(E^o(\lambda ), \nabla (\lambda ))$
on $X-D$ where 
$$
E^o(\lambda )=(\Ll , \delbar + \lambda \overline{\varphi}),
\;\; \nabla (\lambda ) = \lambda \partial + \varphi .
$$
At $\lambda = 0$ this specializes to the Higgs bundle $(E^o(0),\nabla (0))=(E^o,\varphi )$. 
These bundles fit together to form a
holomorphic vector bundle $\srE ^o(-)$ over $\srX -\srD = (X-D)\times \aaaa ^1$, with a relative $\lambda$-connection $\nabla (-)$. Denote again by $j:\srX -\srD \hookrightarrow \srX$ the inclusion.

\begin{theorem}
\label{hblambda}
Under the Nilpotence Hypothesis \ref{mainhyp}, if we set $\srE _{\beta}\subset j_{\ast}(E^o(-))$ to be the subsheaf of sections that have order of growth near $\srD _i$ 
bounded by $C|z_i|^{-\beta _i-\epsilon}$ for any $\epsilon >0$,
then these fit together into a locally abelian parabolic bundle $\underline{\srE}=\{ \srE_{\beta}\}$ and $\nabla (-)$ extends to a relative parabolic logarithmic $\lambda$-connection 
$$
\nabla : \srE _{\beta} \rightarrow \srE_{\beta}
\otimes _{\Oo _{\srX}}
\Omega ^1_{\srX /\srA} (\log \srD ) .
$$
The fiber over $\lambda = 1$ corresponds to the 
local system $L$.
\end{theorem}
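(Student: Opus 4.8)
The plan is to construct $\underline{\srE}$ one $\lambda$ at a time from the Simpson--Mochizuki asymptotic analysis of the harmonic metric, and then to upgrade the fibrewise statement to a statement over the whole $\lambda$-line $\srA$. For each fixed $\lambda$, the pair $(E^o(\lambda),\nabla(\lambda))$ is the $\lambda$-connection underlying the tame harmonic bundle $(\Ll,\scD',\scD'',h)$, and the asymptotic analysis underlying Theorem~\ref{hbthm} (\cite{MochizukiKobayashiHitchin,Sabbah}) applies verbatim to it, producing a canonical parabolic logarithmic extension $\underline{E}(\lambda)$ whose parabolic weights and residue data are read off from $h$. The first point to record is that, precisely \emph{because} the Nilpotence Hypothesis~\ref{mainhyp} is in force, this residual data is rigid in $\lambda$: near a smooth point $p\in D_i$ with coordinate $z_i$ the metric $h$ (which does not itself depend on $\lambda$) is asymptotic to a standard model of parabolic weight $\beta_i={\rm Arg}(m_i)/2\pi$ with nilpotent residue, and neither the weight nor the model type jumps as $\lambda$ moves --- this is exactly the $\lambda$-dependent jumping phenomenon, illustrated in \cite[Figure 3.A]{Sabbah}, that the hypothesis is designed to remove. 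So the combinatorial skeleton of the parabolic structure is constant along $\srA$, and only the holomorphic structure has to be glued.

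Next I would show that the subsheaf $\srE_\beta\subset j_*(E^o(-))$ cut out by the uniform growth bound $C|z_i|^{-\beta_i-\epsilon}$ is $\Oo_{\srX}$-coherent and locally free, with $\srE_\beta|_{X\times\{\lambda\}}=E_\beta(\lambda)$. Starting from the fact (established just above) that $\srE^o(-)$ is a holomorphic bundle on $\srX-\srD$, one picks near a smooth point of $\srD_i$ a local holomorphic frame adapted to the model metric; in such a frame the growth condition singles out an explicit Deligne-type lattice that depends trivially on $\lambda$, giving at once local freeness and the identification of fibres. At a normal crossing point of two components of $\srD$ one exploits that $\srX=X\times\srA$ and $\srD=D\times\srA$ are products, so the crossing locus is again a product and the model metric is, by the tame-harmonic-bundle analysis at a normal crossing, a tensor product of the two rank-one models in the normal directions; this simultaneously proves local freeness there and exhibits $\underline{\srE}$ locally as a direct sum of parabolic line bundles --- the \emph{locally abelian} property.

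With $\underline{\srE}=\{\srE_\beta\}$ in hand the remaining assertions are formal consequences of the local model. The inclusions $\srE_\alpha\subseteq\srE_\beta$ for $\alpha\le\beta$ and the twisting rule $\srE_{\beta+\delta^k}=\srE_\beta(\srD_k)$ are immediate from the growth condition (multiplying a section by $z_k$ shifts its growth exponent by one), while left-continuity of the filtration and discreteness of its jumps are read off the model. Tameness, uniformly in $\lambda$, says that in the adapted frame the relative $\lambda$-connection $\nabla(-)$ acquires at worst a simple pole along $\srD$ with residue preserving the filtration, which is precisely the statement that it extends to $\nabla:\srE_\beta\to\srE_\beta\otimes_{\Oo_{\srX}}\Omega^1_{\srX/\srA}(\log\srD)$. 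Finally, at $\lambda=1$ the pair $(E^o(1),\nabla(1))$ is the flat bundle with $\Ll^{\scD}=L$, and via a multivalued flat frame the restriction of the growth filtration to $X\times\{1\}$ is identified with the parabolic (Deligne-type) extension attached to the order-of-growth filtration $\underline{L}$ of $L$; under Hypothesis~\ref{mainhyp} that filtration is trivial, which is the assertion that the $\lambda=1$ fibre corresponds to $L$.

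The only genuinely nontrivial step --- everything else being either a fibrewise appeal to Mochizuki's work or bookkeeping --- is the uniform-in-$\lambda$ control needed to make $\srE_\beta$ locally free in the $\lambda$-direction as well; this is a strictness statement, and the honest options are to invoke Mochizuki's twistor (Kobayashi--Hitchin) correspondence for the whole family $\underline{\srE}$ at once, or else to re-prove the required norm estimates directly from the Simpson--Mochizuki asymptotic analysis, in either case leaning essentially on the Nilpotence Hypothesis to rule out the $\lambda$-dependent jumping of residual data.
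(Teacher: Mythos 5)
Your proposal is correct and follows essentially the same route as the paper: the paper's proof of Theorem \ref{hblambda} consists precisely of deferring to Mochizuki's asymptotic analysis of tame harmonic bundles (\cite{MochizukiPure}, Proposition 1.6 and related results) for the regularity and local freeness of the growth filtration, and to the Kobayashi--Hitchin correspondence of \cite{MochizukiKH1,MochizukiKobayashiHitchin} for the identification of the $\lambda=1$ fibre, which is exactly where your sketch lands, including your correct identification of the uniform-in-$\lambda$ strictness as the step that cannot be done by bookkeeping alone.
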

\begin{proof}
See \cite{MochizukiPure} (Proposition 1.6 and many others) 
for the
basic regularity properties of the filtered holomorphic bundles.  See \cite{MochizukiKH1} 
and \cite{MochizukiKobayashiHitchin} for the
Kobayashi-Hitchin correspondences between Higgs and flat bundles. 
\end{proof}

The Nilpotence Hypothesis \ref{mainhyp} tells us, in this context,
that the eigenvalues of the residue of the connection $\nabla (\lambda
)$ on the $\beta _i$-graded piece of the parabolic bundle are equal to
$\lambda \beta _i$. This formula is one of the things that is much
simplified by our Nilpotence Hypothesis.

(That could be seen by calculating directly for rank one
systems, or one may refer to \cite[Lemma 3.3.4(7)]{Sabbah}
as discussed in the remarks on page \pageref{genlam} below.)

As was the case for $\underline{E}$, the parabolic levels are indexed
by $\beta = (\ldots , \beta _k , \ldots )$ with one level for each
divisor component.  The parabolic filtrations are increasing: $\srE
_{\alpha} \subseteq \srE _{\beta}$ if $\alpha _i \leq \beta _i$, and
$$
\srE  _{\beta + \delta ^k} =
\srE  _{\beta} (\srD _k).
$$

\subsection{The $\srR$-module $\srE$}
\label{sec-r}

A central theme of this work will be the comparison of $\srR
_{\srX}$-modules and harmonic bundles. In this subsection we define
the relevant $\srR _{\srX}$-modules, especially the minimal extension.

Mochizuki shows in \cite{MochizukiPure} that, starting from a harmonic
bundle, we obtain a pure twistor $\Dd$-module in the sense of Sabbah
\cite{Sabbah}.  This generalizes Saito's construction of a pure Hodge
module out of a variation of Hodge structure \cite[Th\'eor\`eme
  5.4.3]{SaitoMHP}.

Recalling various parts of Sabbah's theory will take up much of our
exposition, and the reader will have to refer to \cite{Sabbah} for
many details. Here is the starting point.

The sheaf of rings $\Dd_X$ of differential operators on $X$ has a
filtration $F^{\bullet}$ by order of the operator: $F^k\Dd _X$
consists of differential operators of order $\leq k$. Thus, $F^0\Dd
_X=\Oo _X$. The Rees construction for the filtration $F^{\bullet}$
produces the sheaf of rings $\srR _{\srX}$ over $\srX =X\times
\srA$. Concretely, consider the sheaf of rings
$$
{\rm pr}_X^{\ast}(\Dd _X)= 
\Dd _{\srX /\srA}
$$
on $\srX$ and let 
$$
\srR _{\srX} := \sum_{i=0}^{\infty} {\lambda}^i {\rm
  pr}^{\ast}_XF^i\Dd_{X}\subset {\rm pr}_X^{\ast}(\Dd _X)
$$
be the subsheaf of $\Oo _{\srX}$-algebras which is 
generated in local coordinates by
$$
\partial _i := \lambda \frac{\partial}{\partial x_i}.
$$

Notice that $F^1\srR_{\srX}$ is still a direct sum $\Oo_{\srX}\oplus
T_{\srX / \srA }$, with the operators $\partial _i$ being a
basis for the relative tangent bundle $T_{\srX / \srA }$.
However, this copy of the tangent bundle acts through derivations on
$\Oo _{\srX}$ only after multiplying by $\lambda$.

According to our convention, we denote the restriction to
fibers of this sheaf of rings by $R_X(\lambda )$, so we have
$$
R_X:= R_X(0) = {\rm Sym}^{\ast}(T_X)
$$
whereas for any $\lambda \neq 0$,
$$
R_X(\lambda ) \cong \Dd _X.
$$

A $\lambda$-connection on a quasicoherent sheaf over $\srX$ is the
same as a structure of $\srR_{\srX}$-module. Our parabolic collection
of sheaves with logarithmic $\lambda$-connections yields a structure
of $\srR _{\srX}$-module on the union
$$
\widetilde{\srE}:= \bigcup _{\beta}\srE _{\beta}.
$$
This is almost, but not quite, the $\srR_{\srX}$-module associated
to our local system in the theory of Saito-Sabbah-Mochizuki.

It is too big, not being finitely generated over $\srR$ because there
is no derivation at $\lambda = 0$ that could increase the pole
order. For this reason, Saito and Sabbah introduce the following
definition, see \cite[Definition 3.4.7]{Sabbah}.

\begin{definition}
\label{minext}
The {\em minimal extension} $\srE$ is the smallest
$\srR_{\srX}$-submodule $\srE \subset \widetilde{\srE}$ containing the
$\srE_{\beta}$ for $\beta _i<1$.
\end{definition}

We recall that a notation of the form $\srE _{\beta}$ with a subscript
denotes one of the bundles in the structure of the parabolic bundle,
whereas $\srE$ without subscript is the $\srR_{\srX}$-module.

The union of parabolic components $\widetilde{\srE}$ that we defined
first, is the localization of $\srE$ along $\srD$, obtained by
tensoring with functions with poles along $\srD$.

\bigskip

\noindent
{\em Caution:} If $\beta _i<1$ then $\srE _{\beta} \subset \srE$.  But
if some $\beta _i \geq 1$ then $\srE _{\beta}$ is not necessarily
contained in $\srE$ but only in $\widetilde{\srE}$.  The $\srE
_{\beta}\cap \srE$ are characterized as the submodules obtained by
applying the appropriate number of derivatives to $\srE _{\beta '}$
with $\beta '_i<1$.

\bigskip

Note that for a given $\widetilde{\srE}$ there can be several possible
holonomic sub-$\srR _{\srX}$-modules $\srE \subset \widetilde{\srE}$
giving the same localization.  One first restricts to submodules that
are {\em strictly specializable} \cite[Definitions 3.3.8, Proposition
  3.3.11]{Sabbah}. However, these can have subobjects or quotients
supported on the divisor.

\begin{remark}
\label{s-term}
Recall that Sabbah defines the notion of
{\em strictly S-decomposable $\srR$-module}
\cite[Definition 3.5.1]{Sabbah}. 
This is a strictly specializable one satisfying the
conditions of \cite[3.11(e)]{Sabbah} (that we don't recopy here) about the morphisms
${\rm can}$ and ${\rm var}$.  Then $\srE$ has {\em strict
supports} if it is strictly S-decomposable and has no submodules supported in smaller dimension \cite[Definition 3.5.3]{Sabbah}. Now, the minimal extension may be characterized as the
unique extension with strict supports. 

In the other direction, the characterization of \cite[Corollary 3.5.5]{Sabbah} says that an $\srR$-module is strictly S-decomposable if and only if it is a direct sum of minimal extensions of smooth ones on locally closed subvarieties. 
\end{remark}

We will not need to calculate with the formulas for minimal extension;
these would come into the discussion of our quasiisomorphism along
horizontal divisors, but Sabbah has already done what we need and we
will just be quoting Sabbah's calculation in Proposition
\ref{horizontaldiv} below. Along the vertical divisors the localized
module $\widetilde{\srE}$ is just as well adapted (if not sometimes
better) for consideration of the $V$-filtrations.

Recall that the fibers over $\lambda =0$ are to be denoted by regular
letters: thus we get an $R_X$-module $E$ and a parabolic Higgs bundle
$\underline{E}$ on $X$.

One of our main concerns will be the relationship between the
parabolic Higgs bundle (resp. parabolic bundle with
$\lambda$-connection) and the $R_X$-module
(resp. $\srR_{\srX}$-module):

\vspace{.5cm}

\begin{center}
\begin{tabular}{|| l || c |  c || }
\hline \hline
 $ { } $ &  $\lambda=0$ & all $\lambda$\\
\hline \hline
Parabolic bundle (section \ref{sec-hb}) &
$\underline{E} =\{ E_{\beta}\}$  & $\underline{\srE}=\{ \srE_{\beta}\}$  \\
\hline
$R_X / \srR_{\srX}$-module  (section \ref{sec-r})&  $E$   & $\srE$ \\

\hline \hline
\end{tabular}
\end{center}

\vspace{.75cm}

\noindent
{\em Remarks:}
\label{genlam}
With the Nilpotence Hypothesis \ref{mainhyp} in effect, the residual
data vary in a nice way in the family of $\lambda$-connections
associated to our $\srR_{\srX}$-module $\srE$.  In the fiber over an
arbitrary $\lambda$, the bundle $\srE _{\beta}(\lambda )$ has a
parabolic $\lambda$-connection. Along a smooth point of some divisor
$D_k$, we obtain the associated-graded ${\rm Gr}_{k,\beta _k}(\srE
_{\beta} (\lambda ))$ with the residue endomorphism of the
$\lambda$-connection.  Hypothesis \ref{mainhyp} implies that the
eigenvalues of the residue endomorphism on the $\beta _k$-graded piece
are of the form $\lambda \beta _k$.

To put this fact into perspective, the reader may want to look at the
notations in \cite[p. 17]{Sabbah}. Here is a short guide to the
correspondence of notations: what we are denoting $\lambda$ is denoted
there by $z$. The complex numbers $\alpha$ indexing the pieces of the
nearby cycles functors are real in our case, indeed $\alpha =\beta _k$
for the $\beta_k$-th graded piece we are looking at. If $\alpha$ is
real then $\alpha \star z=z\alpha$ in the notation of
\cite[p. 17]{Sabbah}, corresponding to $\lambda \beta _k$ for
us. Similarly, $\ell _{z_0}(\alpha )= \alpha$.

In \cite[Lemma 3.3.4(6)]{Sabbah} with our hypothesis that $\alpha$ is
real, $\ell _{z_0}(\alpha )=a$ is equivalent to $\alpha =a$ and the
${\rm gr}^{V}_a$ is the same as just $\psi _{t,\alpha}$. This gives a
simple identification between the nearby cycles functor and the
associated-graded of the $V$-filtration that we'll consider below.

In view of the simplification $\alpha \star z = \lambda \beta _k$, the
formula of \cite[Lemma 3.3.4(7)]{Sabbah} says that the eigenvalue of
the residue of our $\lambda$-connection on the $\beta_k$ graded piece
of the parabolic filtration, is $\lambda \beta _k$.

\section{Main setup and results} \label{setup}

In this section we state the main theorem and make some preliminary
constructions and reductions. We start in section \ref{sec-locstudy}
by constructing the monodromy weight filtration \linebreak $W({\rm Gr}_{k,b}
(\underline{E})):= W(N_{k,b})$ associated to a parabolic Higgs bundle
$(\underline{E},\varphi)$. In section \ref{L2dol}, these weight
filtrations are used to construct the $L^2$ parabolic Dolbeault
complex, in equation \eqref{l2pardol}. This allows us, in section
\ref{Main}, to state the Main Theorem \ref{mainth}.

\subsection{Local study of the parabolic Higgs bundle} \label{sec-locstudy}

Let $(\underline{E},\varphi)$ be a parabolic Higgs bundle satisfying
the Nilpotence Hypothesis \ref{mainhyp}. Along a component $D_k$ of
the parabolic divisor $D$, our $\underline{E}$ determines a family
${\rm Gr}_{k,b}(\underline{E})$, indexed by real $b$, of parabolic
vector bundles on $D_k$. Their parabolic structure is along the
divisor $D_{\cap k}:= \overline{(D-D_k)}\cap D_k$ where $D_k$ meets
the other components. These come with endomorphisms $N_{k,b}$ induced
by the residues of the Higgs field $\varphi$. In this subsection we
discuss the monodromy weight filtration $W({\rm Gr}_{k,b}
(\underline{E})):= W(N_{k,b})$. Subsequently, these will be used to
construct the $L^2$ parabolic Dolbeault complex.

Consider one of the components $D_k$ of the divisor.  Let $D_{\cap
  k}:= \overline{(D-D_k)}\cap D_k$ be the divisor on $D_k$ induced by
the other components of $D$, and put $D_k^{\ast}:= D_k-D_{\cap k}$.

For a real number $b$, let ${\rm Gr}_{k,b}(\underline{E})$ be the
parabolic bundle on $D_k$, with respect to the divisor $D_{\cap k}$,
defined as follows. The indexing of its parabolic structure is by the
irreducible components, which in our case of $1$-dimensional $D_k$ are
just points in the zero-dimensional divisor $p\in D_{\cap k}$. Note
that for each such point there is an index $j\neq k$ such that $p\in
D_{jk}=D_j\cap D_k$.  Given a parabolic weight vector $\alpha$ for
these indices, then we obtain a bundle $E^{(D_k)}_{\alpha , b}$ in a
neighborhood of $D_k$ (say, a tubular neighborhood in the usual
topology). Near a point $p\in D_{jk}$, the divisor $D_j$ contains a
piece\footnote{Notice here that there could in principle be several
  different points $p\in D_{jk}$; but the local pieces of $D_j$
  intersected with the tubular neighborhood are disjoint so they can
  be assigned different weights. Alternately one could assume by
  further blow-up that two divisor components intersect in at most a
  single point.}  transverse to $D_k$ at $p$ and we can use the
parabolic structure of $\underline{E}$ with weight $\alpha _p$ for
such a piece of $D_j$, and weight $b$ along $D_k$.

Define
$$
{\rm Gr}_{k,b}(\underline{E})_{\alpha}:=
E^{(D_k)}_{\alpha , b}/
E^{(D_k)}_{\alpha , b-\epsilon}.
$$
Assuming that the original parabolic bundle was locally abelian
(i.e. locally a direct sum of parabolic line bundles), then ${\rm
  Gr}_{k,b}(\underline{E})$ will be a locally abelian parabolic bundle
on $D_k$ with respect to the divisor $D_{\cap k}$.

The Higgs field
$$
\varphi :
E_{\beta}\rightarrow
E_{\beta}\otimes \Omega ^1_X(\log D)
$$
induces a map
$$
N_{k,b}:={\rm res}_{b}(\varphi ) : {\rm Gr}_{k,b}(
\underline{E})\rightarrow
{\rm Gr}_{k,b}(\underline{E}).
$$
It is a map of parabolic bundles on $D_k$ since $\varphi$ respects
the parabolic structure of $\underline{E}$.  Our assumption
\ref{mainhyp} is that this residue is nilpotent.

Along smooth points of $D_k$ (which we assume irreducible) we have the
following basic fact, a special case of the more general result of
Mochizuki that we'll refer to below, but in this case it goes back to
Biquard \cite{Biquard}.

\begin{lemma}
\label{const1}
If $p,p'\in D^{\ast}_k$
then $N_{k,b}(p)$ and $N_{k,b}(p')$ are
conjugate as nilpotent endomorphisms of a vector space.
\end{lemma}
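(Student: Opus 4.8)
The plan is to show that the conjugacy class of $N_{k,b}(p)$ is locally constant on the connected smooth locus $D_k^\ast$, and then invoke connectedness (we have assumed $D_k$ irreducible, hence $D_k^\ast$ is connected). Since conjugacy class of a nilpotent endomorphism is a discrete invariant — it is determined by the partition recording the Jordan block sizes, equivalently by the sequence of ranks $\mathrm{rk}(N^j)$ for $j\geq 1$ — it suffices to prove that each function $p\mapsto \mathrm{rk}\bigl(N_{k,b}(p)^j\bigr)$ is locally constant on $D_k^\ast$. Upper semicontinuity of the corank is automatic; the real content is a matching lower semicontinuity, or equivalently local constancy.

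First I would recall that over a smooth point of $D_k$, Biquard's local analysis (a special case of Mochizuki's general asymptotic description of tame harmonic bundles at smooth divisor points) gives a local model: in a polydisk around $p\in D_k^\ast$ with $D_k=\{z_k=0\}$, the parabolic Higgs bundle $\underline{E}$ together with its logarithmic Higgs field is, after the usual deformation to the associated graded, isomorphic to a direct sum of standard model pieces built from the residue datum. In particular the graded object ${\rm Gr}_{k,b}(\underline{E})$ and the residue endomorphism $N_{k,b}$ near $p$ are pulled back from their restriction to $D_k$, and more importantly the whole structure extends in a $C^\infty$ (indeed real-analytic in the transverse variable) family over $D_k^\ast$ with the residue varying continuously as an endomorphism of a fixed-rank bundle. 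This is exactly the kind of ``constancy of the model along the divisor'' statement recorded in Mochizuki's work; here with the Nilpotence Hypothesis \ref{mainhyp} the residual eigenvalues are all $0$, so there is no monodromy of eigenvalue branches to worry about, and the only datum that can vary is the nilpotent conjugacy type.

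Given this, I would argue as follows. Fix $p_0\in D_k^\ast$ and a small connected neighborhood $U\subset D_k^\ast$ over which ${\rm Gr}_{k,b}(\underline{E})$ is a trivializable bundle and $N_{k,b}$ is represented by a matrix-valued function depending continuously on $p\in U$. Let $r_j=\mathrm{rk}(N_{k,b}(p_0)^j)$. By upper semicontinuity, $\mathrm{rk}(N_{k,b}(p)^j)\leq r_j$ for $p$ near $p_0$. For the reverse inequality, the cleanest route is the harmonic-bundle one: the flat $C^\infty$ structure underlying the harmonic bundle identifies the nearby fibers, and the monodromy weight filtration $W(N_{k,b})$ together with the Hodge-theoretic ($sl_2$) structure on the associated graded is real-analytic and of locally constant rank in $p$ — this is precisely the statement that the limiting mixed twistor/Hodge structure varies as a variation over $D_k^\ast$, which is part of Mochizuki's package. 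Since the ranks $\mathrm{rk}(N^j)$ are recovered from the (locally constant) dimensions of the weight-graded pieces $\mathrm{Gr}^W_\ell$ via the primitive decomposition, they are locally constant, and hence $\mathrm{rk}(N_{k,b}(p)^j)=r_j$ for all $p\in D_k^\ast$.

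The main obstacle, and the step where one must be careful rather than merely formal, is justifying that the residue endomorphism $N_{k,b}$ — a priori only an algebraically defined map of parabolic bundles on $D_k$ — genuinely fits into a family with the rigidity needed for lower semicontinuity. A purely algebraic gluing argument would only give upper semicontinuity; the extra input that forces local constancy is analytic, coming from the existence and asymptotic behaviour of the harmonic metric near $D_k^\ast$ (Biquard for smooth $D$, Mochizuki in general). So the honest proof quotes that analytic input: the restriction of the tame harmonic bundle to a punctured transverse disk at each $p\in D_k^\ast$ has a limiting mixed Hodge/twistor structure whose weight filtration and nilpotent operator depend real-analytically and isotypically on $p$, and conjugacy of $N_{k,b}(p)$ and $N_{k,b}(p')$ is then immediate from connectedness of $D_k^\ast$.
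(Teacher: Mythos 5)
Your proposal is correct and in essence coincides with what the paper does: the paper offers no argument for Lemma \ref{const1} beyond citing Biquard and the more general results of Mochizuki, and your reduction to local constancy of the ranks $\mathrm{rk}(N_{k,b}(p)^j)$ plus connectedness of $D_k^\ast$ ultimately rests on exactly that same analytic input, as you yourself acknowledge in your final paragraph. The semicontinuity scaffolding is a sensible way to organize the appeal to that input, but it does not constitute an independent route.
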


\begin{corollary}
\label{weightcor}
Over $D_k^{\ast}$ for any real number $b$, there is a weight
filtration $W(N_{k,b})$ of the vector bundle ${\rm
  Gr}_{k,b}(\underline{E})$ with respect to the endomorphism
$N_{k,b}$, a filtration by strict subbundles such that the nilpotent
endomorphism gives isomorphisms of bundles in the usual way, in
particular the restriction of this filtration to any point $p$ is the
weight filtration of $N_{k,b}(p)$.
\end{corollary}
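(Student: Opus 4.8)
The plan is to deduce Corollary~\ref{weightcor} from Lemma~\ref{const1} by the standard fact that the monodromy weight filtration of a nilpotent endomorphism is a combinatorial invariant of its conjugacy class, and then to promote the pointwise filtrations to a filtration by subbundles using local constancy of ranks.

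First I would recall the elementary linear-algebra statement: for a nilpotent endomorphism $N$ of a finite-dimensional vector space $V$, there is a unique increasing filtration $W_\bullet(N)$ of $V$ such that $N(W_\ell)\subseteq W_{\ell-2}$ and $N^\ell\colon \mathrm{Gr}^W_\ell \xrightarrow{\ \sim\ } \mathrm{Gr}^W_{-\ell}$ for all $\ell\ge 0$; and the dimensions $\dim W_\ell(N)$ depend only on the conjugacy class of $N$ (indeed only on the sizes of the Jordan blocks of $N$). Thus by Lemma~\ref{const1}, the function $p\mapsto \dim W_\ell(N_{k,b}(p))$ is constant on the connected variety $D_k^\ast$ for each $\ell$.

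Next I would define $W_\ell(N_{k,b})\subseteq \mathrm{Gr}_{k,b}(\underline{E})|_{D_k^\ast}$ to be the subsheaf whose fiber at each $p$ is $W_\ell(N_{k,b}(p))$, i.e.\ (working with the locally free $\Oo_{D_k^\ast}$-module $\Gg:=\mathrm{Gr}_{k,b}(\underline{E})|_{D_k^\ast}$ and the $\Oo$-linear endomorphism $N:=N_{k,b}$) the subsheaf built from the canonical kernel/image description of the weight filtration, namely the saturation of the coherent subsheaf generated by sums of images $N^{a}\Gg$ and kernels $\ker N^{b}$ in the appropriate pattern. Because $N$ has locally constant Jordan type, each of $\ker N^{a}$ and $\mathrm{im}\,N^{a}$ is a subbundle (its fiber rank is constant, so the cokernel is locally free), and hence the finitely many $\Oo$-linear operations (sums, intersections of subbundles with constant-rank intersections) producing $W_\ell$ also yield subbundles; here the constancy of all the relevant ranks, which is exactly what Lemma~\ref{const1} guarantees, is what makes intersections and sums stay locally free. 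The defining relations $N(W_\ell)\subseteq W_{\ell-2}$ and the isomorphisms $N^\ell\colon\mathrm{Gr}^W_\ell\to\mathrm{Gr}^W_{-\ell}$ hold fiberwise by construction and are $\Oo$-linear, hence hold as maps of bundles; strictness of the subbundles is the constant-rank condition just verified. Restriction to a point $p$ recovers $W(N_{k,b}(p))$ by uniqueness of the pointwise weight filtration.

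The main obstacle is the bundle (as opposed to merely subsheaf) statement: one must be sure that the various kernels, images, sums and intersections used to construct $W_\ell(N)$ out of $N$ remain \emph{locally free with locally free quotients}. This is precisely where Lemma~\ref{const1} does the work --- without local constancy of the conjugacy class the ranks could jump and the filtration would only be by coherent subsheaves. So the only real content beyond pure linear algebra is the observation that ``all ranks constant'' $\Rightarrow$ ``all the natural subsheaves are subbundles,'' which is a routine consequence of semicontinuity; I would simply remark that each step in the canonical recursive construction of the monodromy filtration (e.g.\ $W_{-\ell-1}=N^{\ell+1}W_{\ell+1}+W_{-\ell-2}$ et cetera, starting from the outer layers $W_{\ge r}=\Gg$, $W_{<-r}=0$ for $r$ the nilpotency index) preserves the subbundle property under the hypothesis of constant Jordan type, and thereby conclude.
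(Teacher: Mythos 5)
Your proof is correct, but it takes a genuinely different route from the paper's. The paper trivializes $V$ locally, regards $N$ as a morphism from $D_k^{\ast}$ into the fixed nilpotent orbit $O\cong G/S\subset \End(V_0)$ (which is where Lemma~\ref{const1} enters), and observes that over $O$ the tautological weight filtration is a homogeneous subbundle because $G$ acts transitively preserving the tautological endomorphism; the filtration on $D_k^{\ast}$ is then the pullback. You instead work directly with the canonical kernel/image construction of the monodromy filtration and check at each recursive step that constancy of the Jordan type forces all the relevant kernels, images, sums and intersections to have constant fiber rank, hence to be strict subbundles. The orbit argument buys brevity: homogeneity handles all the $W_\ell$ at once with no need to track the intermediate subsheaves, at the cost of invoking the group action and the local trivialization. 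Your argument is more elementary and self-contained, makes explicit exactly which rank-constancy statements are being used, and applies verbatim to any subspace built functorially from $N$; the one point you should state carefully (and do gesture at) is that a sum or intersection of subbundles is again a subbundle only once its fiberwise dimension is known to be constant, which here follows because every subspace in the recursion is canonically attached to $N$ and so has dimension depending only on the conjugacy class. Both proofs are valid.
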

\begin{proof}
We have a vector bundle $V:={\rm Gr}_{k,b}(\underline{E})$ over a
variety $D_k^{\ast}$, and a nilpotent endomorphism $N \in {\rm}End(V)$
whose values at all points of $D_k$ are conjugate to each other by
Lemma \ref{const1}. The claim is that the vector 
spaces $W_{\ell}$ arising
from the monodromy weight filtration of $N$ form vector subbundles of
$V$.

Without loss of generality, we may assume (by restricting to an open
subset) that $V$ is trivial, $V = V_0 \otimes {\mathcal
  O}_{D_k^{\ast}}$.  Then $N$ gives a map $X \to G:={\rm GL}(V)$.  Our
assumption is that the image of this map is contained in a single
(nilpotent) orbit $O \cong G/S \subset G$, where $S$ is the
stabilizer.  This means that we may as well replace: $D_k^{\ast}$ by
$O$, $V$ by $V_0 \otimes {\mathcal O}_O$, and $N$ by (the restriction
from $G$ to $O$ of) the tautological endomorphism.  The group $G$ now
acts transitively on $O=G/S$ preserving $V, N$, so each of the $W_{\ell}$
is now a homogeneous vector bundle on $O$.  The original 
$W_{\ell}$ on
$D_k^{\ast}$ are the pullbacks, so they still form vector bundles.
\end{proof}

This property extends to the normal crossings points too.  Suppose
$p\in D_{\cap k}$ is one of the intersection points with some other
$D_j$ so $p\in D_{jk}$.  Fix $b, \alpha$ and consider the vector
bundle
$$
V:= {\rm Gr}_{k,b}(\underline{E})_{\alpha}
$$
over $D_k$.

This bundle has a parabolic filtration at the point $p$, that is to
say we have a filtration $0=F_{-m}\subset \cdots \subset F_0=V(p)$ of
the fiber at $p$, and there are parabolic weights attached to the
pieces.  Call them $a _{-m}< \cdots <a _0$, and we attach the weight
$a _{-i}$ to the filtration element $F_{-i}/F_{-i-1}$.  Notice that $a
_{-m}$ doesn't actually occur here but it satisfies $a _{-m}=a
_0-1$. These weights come from the parabolic weights of
$\underline{E}$ along $D_j$. Precisely, the top weight is $a _0=\alpha
_j$, the weight attached to $D_j$ in our global weight $\alpha$. Then
the other ones are some of the parabolic weights attached to $D_j$
(they don't include ones for which the graded piece for $\beta _k=b$
along $D_k$ vanishes).

The bundle $V$ also has the nilpotent endomorphism $N:V \to V$, and
$N$ preserves the filtration, because of the condition that $\varphi$
acts on the full parabolic bundle $E$.

Mochizuki shows the following version of Lemma \ref{const1}
at the crossing points:

\begin{proposition}
\label{crossingpointsconstancy}
Let $Gr^F(V(p)):= \bigoplus _{i=0}^{m-1} F_{-i}/F_{-i-1}$.  Then the
pair consisting of $Gr^F(V(p))$ together with the induced endomorphism
$Gr^F(N(p))$ on this graded vector space, has the same isomorphism
type as any $(V(p'),N(p'))$ for $p'\in D_k^{\ast}$ a smooth point of
the divisor $D$.
\end{proposition}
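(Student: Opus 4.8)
The plan is to reduce the statement at a crossing point $p\in D_{jk}$ to the already-established smooth-point constancy, Lemma \ref{const1} and Corollary \ref{weightcor}, by interpolating between $p$ and a nearby smooth point of $D_k$ along a transverse ray into $D_j$. The essential input is that the bundle $V={\rm Gr}_{k,b}(\underline{E})_{\alpha}$ carries, near $p$, not just the endomorphism $N$ but the whole parabolic filtration $F_{\bullet}$ coming from the weights of $\underline{E}$ along $D_j$, and that $N$ preserves $F_{\bullet}$ (already noted, because $\varphi$ acts on the full parabolic bundle). So the first step is to record the local picture: choose coordinates $(z_j,z_k)$ so that $D_j=\{z_j=0\}$, $D_k=\{z_k=0\}$, $p=(0,0)$; then $V$ lives on $D_k=\{z_k=0\}$ with coordinate $z_j$, and $F_{-i}$ is the subsheaf of $V$ built from the piece of $\underline{E}$ of $D_j$-weight $\le a_{-i}$. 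For $z_j\neq 0$ small, the point $(z_j,0)$ is a smooth point of $D$ lying on $D_k^{\ast}$.

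The second step is to identify, for $z_j\neq 0$, the fiber $(V(z_j),N(z_j))$ with the graded object $\big({\rm Gr}^{F}(V(p)),{\rm Gr}^F(N(p))\big)$. Here I would use Mochizuki's asymptotic analysis of the tame harmonic bundle at the normal crossing point: the parabolic filtration along $D_j$ on ${\rm Gr}_{k,b}(\underline{E})$ is, up to the standard rescaling, the limit of the parabolic structure of $\underline{E}$ along the nearby copy of $D_j$, and taking ${\rm Gr}_{k,b}$ first and then restricting to $\{z_j=z_j^0\}$ for $z_j^0\neq 0$ versus restricting first and passing to the fiber at $p$ afterwards, differ exactly by passage to the associated graded for $F_{\bullet}$ — this is the content of the ``${\rm Gr}$'s commute'' phenomenon for mutually transverse parabolic divisors, and it is precisely the statement Mochizuki proves (the multi-variable version of the norm estimates in \cite{MochizukiPure}, used to establish that $\underline{E}$ is locally abelian). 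Since $N=\mathrm{res}_b(\varphi)$ is compatible with all these restrictions and gradings, the endomorphism is carried along, giving $(V(z_j^0),N(z_j^0))\cong ({\rm Gr}^F(V(p)),{\rm Gr}^F(N(p)))$.

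The third step is to invoke Lemma \ref{const1}: $(V(z_j^0),N(z_j^0))$ is conjugate to $(V(p'),N(p'))$ for any smooth point $p'\in D_k^{\ast}$, since $(z_j^0,0)$ is itself such a point. Combining the two identifications yields $({\rm Gr}^F(V(p)),{\rm Gr}^F(N(p)))\cong(V(p'),N(p'))$, which is the claim. I expect the main obstacle to be Step 2: making precise the claim that restricting ${\rm Gr}_{k,b}(\underline{E})$ to a generic transverse slice agrees with ${\rm Gr}^F$ of its fiber at $p$. One must be careful that the correct weights are used (the $a_{-i}$ are those $D_j$-weights for which the double-graded piece along $(D_j,D_k)$ at weights $(a_{-i},b)$ is nonzero, which is exactly how the filtration $F_{\bullet}$ was set up), and that the nilpotent residue $N_{k,b}$ genuinely restricts to ${\rm Gr}^F(N(p))$ rather than to something with extra off-diagonal terms — this is guaranteed because $N$ strictly preserves $F_{\bullet}$, so its action on the fiber at a generic slice is block-diagonal with respect to the associated graded. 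A clean way to package Steps 1–2 is to first reduce, as in the proof of Corollary \ref{weightcor}, to the universal situation over a nilpotent orbit (now an orbit of pairs (filtered vector space, filtration-preserving nilpotent endomorphism)), where the required identification becomes a statement about a single homogeneous family and follows once it is checked at one point; I would carry out that reduction to keep the argument structurally parallel to the smooth case.
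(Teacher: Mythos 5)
Your Step 2 is where the proof actually lives, and the justification you give for it does not hold up. The claim that $(V(z_j^0),N(z_j^0))$ is conjugate to $({\rm Gr}^F(V(p)),{\rm Gr}^F(N(p)))$ is not a ``${\rm Gr}$'s commute'' formality: a family of nilpotent endomorphisms of a vector bundle can degenerate at a special point, so soft arguments only give you that ${\rm Gr}^F(N(p))$ lies in the \emph{closure} of the orbit of $N(p)$, which in turn lies in the closure of the orbit of the generic $N(p')$; nothing prevents either containment from being strict. (Indeed the paper's own Lemma \ref{grconstancy} runs exactly this orbit-closure argument, but it needs Proposition \ref{crossingpointsconstancy} as an input to force the two orbits to coincide --- so deriving the Proposition from an orbit/degeneration argument is circular.) The references you invoke do not supply the missing equality either: local abelianness of $\underline{E}$ and the multi-variable norm estimates concern the parabolic sheaf structure, not the conjugacy class of the residue endomorphism at the crossing point; moreover the norm estimates at a crossing are themselves stated in terms of the weight filtration of the residue, so appealing to them here is again circular.

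The paper's proof is a citation of Mochizuki's Lemma 12.34 in \cite{MochizukiPure}, and the mechanism is genuinely different from your outline: one first proves point-independence for $\lambda\neq 0$, where the module with connection splits into generalized eigenspaces of the monodromy, and then transfers the statement to $\lambda=0$ using strictness of the nilpotent map with respect to the limiting mixed twistor structure. That strictness is the nontrivial input replacing your Step 2; without it (or an equivalent), the argument does not close.
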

\begin{proof}
This is \cite[Lemma 12.34]{MochizukiPure}. The basic idea is that for
$\lambda \neq 0$ the independence of the point is due to the fact that
the module with connection splits into a direct sum of pieces
according to the eigenvalues of the monodromy. Then, the main result
\cite[Lemma 12.33]{MochizukiPure}, due to strictness of the nilpotent
map with respect to a limiting mixed twistor structure, says that the
conjugacy classes are independent of $\lambda$. The independence of
the conjugacy class as a function of $p$ at $\lambda =0$ then follows.
\end{proof}

We need to strengthen this somewhat:

\begin{lemma}
\label{grconstancy}
In the above situation, 
$( V(p), N(p) )$ is also isomorphic to $( V(p'), N(p') )$.
\end{lemma}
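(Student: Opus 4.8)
The statement to prove is Lemma \ref{grconstancy}: in the normal crossings situation, not only is the associated graded $(Gr^F(V(p)), Gr^F(N(p)))$ (as provided by Proposition \ref{crossingpointsconstancy}) of constant isomorphism type, but the filtered pair $(V(p), N(p))$ itself is isomorphic to the smooth-point model $(V(p'), N(p'))$ for $p' \in D_k^{\ast}$. So we must upgrade a statement about a graded object to a statement about the object itself; equivalently, we must show the nilpotent endomorphism $N(p)$ on $V(p)$ is conjugate to the one on the associated graded, i.e.\ that there is no ``extension jump'' in the conjugacy class of $N$ at the crossing point. My plan is to establish this by a specialization argument: propagate the known constancy over $D_k^{\ast}$ to the crossing point $p$ using semicontinuity of nilpotent orbit invariants, and show the only possible jump direction is ruled out by the comparison with the associated graded.

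First I would record the relevant numerical invariants. The conjugacy class of a nilpotent endomorphism $N$ on a vector space of fixed dimension is determined by the partition given by the Jordan block sizes, or equivalently by the sequence of ranks $r_j := \dim \operatorname{im}(N^j)$ (equivalently by $\dim \ker(N^j)$). Over the smooth locus $D_k^{\ast}$ we have by Corollary \ref{weightcor} (or directly from Lemma \ref{const1}) that $\dim \ker(N(p')^j)$ is constant, say equal to $d_j$, for all $p' \in D_k^{\ast}$. Now $N$ is a morphism of vector bundles $N : V \to V$ on the whole of $D_k$ near $p$ (it is the residue of the globally-acting Higgs field $\varphi$), so $N^j : V \to V$ is a morphism of bundles and hence $p'' \mapsto \dim \ker(N(p'')^j)$ is \emph{upper} semicontinuous on $D_k$. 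Therefore $\dim \ker(N(p)^j) \geq d_j$ for all $j$, with equality for all $j$ if and only if $(V(p), N(p)) \cong (V(p'), N(p'))$. So the whole lemma reduces to proving the reverse inequalities $\dim \ker(N(p)^j) \leq d_j$.

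Here is where Proposition \ref{crossingpointsconstancy} enters. Since $N(p)$ preserves the filtration $F_\bullet$ on $V(p)$, the associated graded $Gr^F(N(p))$ is obtained from $N(p)$ by degenerating along the filtration, so $\dim \ker(Gr^F(N(p))^j) \geq \dim \ker(N(p)^j)$ for every $j$ (a nilpotent operator respecting a filtration can only acquire kernel, never lose it, upon passing to the graded). But Proposition \ref{crossingpointsconstancy} tells us $(Gr^F(V(p)), Gr^F(N(p)))$ has the same isomorphism type as $(V(p'), N(p'))$, hence $\dim \ker(Gr^F(N(p))^j) = d_j$. Combining, $\dim \ker(N(p)^j) \leq d_j$, which together with the semicontinuity inequality from the previous paragraph gives equality for all $j$, and hence $(V(p), N(p)) \cong (V(p'), N(p'))$ as desired. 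One should also note dimension counts are consistent: $\dim V(p) = \dim Gr^F(V(p)) = \dim V(p')$, so no degenerate case arises.

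The main obstacle, and the point deserving care, is the semicontinuity step: one must be sure that $N$ genuinely extends to a \emph{bundle} endomorphism over the crossing point $p$, with $N(p)$ being its honest fiberwise value, so that $N^j$ is a map of bundles and rank is lower semicontinuous (equivalently $\dim\ker$ upper semicontinuous). This is exactly the content of the remark, emphasized in Section \ref{sec-locstudy}, that $\varphi$ acts on the full parabolic bundle $\underline E$ and hence $N_{k,b}$ is a map of parabolic bundles on all of $D_k$, not just on $D_k^\ast$; so the fiberwise endomorphism $N(p)$ is well-defined and the semicontinuity is the standard one for morphisms of coherent sheaves. Once this is in place the argument is a short sandwich between two semicontinuity-type inequalities, with Proposition \ref{crossingpointsconstancy} supplying the crucial upper bound via the associated graded.
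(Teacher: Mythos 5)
Your proof is correct and follows essentially the same strategy as the paper's: sandwich $(V(p),N(p))$ between the generic fiber (via specialization/semicontinuity) and the associated graded (which is a further degeneration), then use Proposition \ref{crossingpointsconstancy} to close the sandwich. The only difference is cosmetic — you track the degenerations via the numerical invariants $\dim\ker(N^j)$ while the paper phrases the same two inequalities as mutual containment of nilpotent orbit closures (citing the Rees construction for the graded step).
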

\begin{proof}
Inclusion of orbit closures induces a partial ordering on the set of
nilpotent conjugacy classes in a given finite dimensional vector
space.  On the one hand, $( V(p), N(p) )$ is a limit of the family of
$( V(p'), N(p') )$ as $p' \to p$, and by Lemma \ref{const1}, the
conjugacy class of $( V(p'), N(p') )$ is independent of $p'$, so $(
V(p), N(p) )$ is in the orbit closure of $( V(p'), N(p') )$ for a
fixed $p'$.  On the other hand, $( Gr^F(V(p)) , Gr^F (N(p)) )$ is in
the orbit closure of $( V(p), N(p) )$, as is seen by the Rees
construction.  But by Proposition \ref{crossingpointsconstancy}, $(
Gr^F(V(p)) , Gr^F (N(p)) )$ is isomorphic, in the category of vector
spaces with nilpotent endomorphism, to $( V(p'), N(p') )$ for any $p'
\in D_k^{\ast}$.  So $( V(p'), N(p') )$, $( V(p), N(p) )$ are in each
other's orbit closure, and therefore they are conjugate.
\end{proof}

\begin{corollary}
The weight filtration extends as a strict filtration on the bundle $V$
over all of $D_k$. Furthermore, this weight filtration induces on
$Gr^F(V(p))$ the weight filtration of $Gr^F(N(p))$.
\end{corollary}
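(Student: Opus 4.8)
The plan is to deduce the global statement from the previous two results (Corollary \ref{weightcor} and Lemma \ref{grconstancy}) by the same homogeneous-space argument used in the proof of Corollary \ref{weightcor}, now carried out near the crossing point $p$. First I would work locally near $p\in D_{jk}$, shrinking $D_k$ to a small disk $\Delta$ centered at $p$ so that $V={\rm Gr}_{k,b}(\underline E)_\alpha$ becomes trivial, $V=V_0\otimes\Oo_\Delta$, and so that the parabolic filtration at $p$ is given by a fixed flag $F_\bullet\subset V_0$. The nilpotent endomorphism $N$ then defines a morphism $\Delta\to G:={\rm GL}(V_0)$ whose value at $p$ preserves $F_\bullet$ and whose values on $\Delta^\ast=\Delta\setminus\{p\}$ all lie (by Lemma \ref{const1}) in a single nilpotent orbit, which by Lemma \ref{grconstancy} is the {\em same} orbit $O$ that contains $N(p)$ itself. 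So the image of the whole map $\Delta\to G$ lies in $\overline O\cap\{N: N(F_\bullet)\subset F_\bullet\}$, and in fact, since all values are conjugate, the image lies in $O$.

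The key point is that on a single nilpotent orbit the monodromy weight filtration is $G$-equivariant: for $N\in O$ the spaces $W_\ell(N)\subset V_0$ vary as the fibers of a homogeneous vector bundle on $O\cong G/S$, where $S={\rm Stab}(N(p))$. Pulling this homogeneous bundle back along $\Delta\to O$ produces subbundles $W_\ell\subset V$ over all of $\Delta$, extending the subbundles already known on $\Delta^\ast$ by Corollary \ref{weightcor}; strictness (i.e.\ that the quotients are again bundles, equivalently that $N$ induces bundle isomorphisms ${\rm Gr}^W_\ell\xrightarrow{\sim}{\rm Gr}^W_{\ell-2}$ in the appropriate range) is inherited from the homogeneity on $O$, exactly as in the proof of Corollary \ref{weightcor}. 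Globalizing over the finitely many points of $D_{\cap k}$ and using the already-established extension over $D_k^\ast$, the $W_\ell$ glue to strict subbundles of $V$ over all of $D_k$.

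For the last sentence, I would argue fiberwise at $p$. The fiber $W_\ell(p)$ is, by the construction above, the weight space $W_\ell(N(p))\subset V_0$ of the nilpotent endomorphism $N(p)$. Since $N(p)$ preserves the flag $F_\bullet$, its weight filtration is compatible with $F_\bullet$, and the induced filtration on ${\rm Gr}^F(V(p))$ is the weight filtration of the induced nilpotent endomorphism ${\rm Gr}^F(N(p))$ — this is the elementary fact that, once one knows (via Lemma \ref{grconstancy}) that $N(p)$ and ${\rm Gr}^F(N(p))$ have the same Jordan type, the monodromy weight filtration of a nilpotent operator preserving a filtration $F_\bullet$ induces on ${\rm Gr}^F$ the monodromy weight filtration of the graded operator. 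I expect the only real subtlety to be bookkeeping: checking that the orbit identification of Lemma \ref{grconstancy} is exactly what is needed to guarantee that the image of $\Delta\to G$ stays inside the {\em single} orbit $O$ (rather than drifting into the boundary $\overline O\setminus O$ over $p$), since it is precisely this that makes the homogeneous-bundle argument apply at $p$ and not merely on $\Delta^\ast$.
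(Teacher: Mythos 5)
Your first paragraph and a half reproduce the paper's argument for the extension statement: constancy of the conjugacy class along $D_k$ (Lemma \ref{const1} away from the crossings, Lemma \ref{grconstancy} at them) plus the homogeneous-bundle argument of Corollary \ref{weightcor} applied to the map into the single orbit $O$. That part is fine and is exactly what the paper does.

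The gap is in the last sentence of the corollary, which you dispose of by invoking ``the elementary fact that \ldots the monodromy weight filtration of a nilpotent operator preserving a filtration $F_\bullet$ induces on ${\rm Gr}^F$ the monodromy weight filtration of the graded operator'' once $N(p)$ and ${\rm Gr}^F(N(p))$ have the same Jordan type. You have correctly isolated the statement that needs proving and the hypothesis under which it holds, but this statement is not elementary and you give no argument for it; your closing remark that ``the only real subtlety is bookkeeping'' locates the difficulty in the wrong place. The claim $Gr^F(W_\ell(N))=W_\ell(Gr^F(N))$ does not follow from unwinding the characterization of the weight filtration: for instance, one would need ${\rm Gr}^F$ of the isomorphisms $N^\ell:{\rm Gr}^W_\ell\to{\rm Gr}^W_{-\ell}$ to remain isomorphisms, i.e.\ a strictness statement, which is precisely what the equal-Jordan-type hypothesis must be converted into. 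The paper supplies this conversion by a separate argument: apply the Rees construction to the filtration $F$, obtaining a family over $\aaaa^1$ with general fiber $(V(p),N(p))$ and special fiber $(Gr^F(V(p)),Gr^F(N(p)))$; since these have the same isomorphism type, Corollary \ref{weightcor} applies to this one-parameter family and extends the weight filtration canonically over the Rees line, and $\Gm$-invariance of that canonical extension forces its special fiber to be simultaneously the $F$-associated-graded of $W(N(p))$ and the weight filtration of $Gr^F(N(p))$. Some such degeneration (or strictness) argument is needed here; as written, your proposal asserts the conclusion of that step rather than proving it.
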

\begin{proof}
By Lemma \ref{const1} away from the intersection points, and Lemma
\ref{grconstancy} at the intersection points, the conjugacy classes of
$N(p)$ acting on $V(p)$ are the same for all points $p\in D_k$.  The
argument of Corollary \ref{weightcor} gives a weight filtration by
strict vector subbundles of $V$, defined all along $D_k$. It is
clearly the weight filtration of $N(p')$ for smooth points $p'\in
D_k^{\ast}$.

At an intersection point $p$, the filtration is also by construction
the weight filtration of $N(p)$.  In order to show that it induces the
weight filtration of $Gr^F(N(p))$, consider the Rees construction for
the filtration $F$.  This gives an analogous situation of a family
indexed by a curve (the affine line), in which we know that the
special and general points have the same isomorphism type. Applying
Corollary \ref{weightcor} we get a canonical extension of the weight
filtration along the Rees line. Since it is canonical it is
$\Gm$-invariant so the filtration on the special fiber is the
associated-graded for $F$ of the filtration on the general fiber, that
is to say the filtration on the special fiber is the one induced on
$Gr^F(V(p))$. We get that this induced filtration is the weight
filtration of $Gr^F(N(p))$.
\end{proof}

As a result, we obtain weight filtrations denoted  
$W({\rm Gr}_{k,b} (\underline{E})):= W(N_{k,b})$ 
of the parabolic vector bundles
${\rm Gr}_{k,b} (\underline{E})$ over $D_k$, with parabolic structure
along $D_{\cap k}$. These are filtrations by strict parabolic subbundles.

\subsection{The $L^2$ parabolic Dolbeault complex}
\label{L2dol}

In this subsection we use our weight filtrations to define an $L^2$
parabolic Dolbeault complex \eqref{l2pardol} coming from our parabolic
logarithmic Higgs bundle.  It plays a central role in our main result,
theorem \ref{mainth}.  As we point out below, one can similarly define
a de Rham complex for the parabolic logarithmic $\lambda$-connection.

The definition will be algebraic, involving the weight filtrations we
constructed in subsection \ref{sec-locstudy} along the horizontal
divisor components. But let us start by considering the analytic
motivation.

Consider an open fiber $X^o_y=(X-D)_y$ for $y\in Y-Q$. Give it a
metric that is asymptotically the Poincar\'e metric near puncture
points. We are interested in the $L^2$ cohomology of the harmonic
bundle $(\Ll , \scD ',\scD '', h)$ restricted to this fiber, as shall
be discussed in more detail in Subsection \ref{ancon}
later. Basically, this is the cohomology of the complex of forms with
coefficients in $\Ll$ that are $L^2$, and whose derivative is
$L^2$. There are both de Rham cohomologies with differential $\scD$,
and Dolbeault cohomologies with differential $\scD''$.

In the case of coefficients in a variation of Hodge structure, these
cohomology spaces were considered by Zucker \cite{Zucker} who proved
that they are finite-dimensional and that the de Rham and Dolbeault
cohomologies are isomorphic, both being isomorphic to the space of
harmonic forms (recall from the K\"ahler identities that $\Delta
_{\scD} = 2\Delta _{\scD''}$ and this holds for harmonic bundles
too). But in fact, Zucker's theory also applies to tame harmonic
bundles satisfying the Nilpotence Hypothesis \ref{mainhyp}, indeed
locally near a puncture point they are asymptotically the same as the
standard local models for variations of Hodge structure
\cite{SimpsonHBNC}, so the estimates needed by Zucker hold. The theory
is treated in detail by Sabbah \cite[Section 6.2]{Sabbah}.

For our current purposes, we would like to have a calculation of the
cohomology space in algebraic terms using the parabolic Higgs
bundle. To get there, the main fact is that the $L^2$ Dolbeault
cohomology is isomorphic to the hypercohomology of the complex on
$X_y$ consisting of holomorphic forms whose restriction to $X^o_y$ is
in $L^2$ and whose differential is in $L^2$. So, we would like to
write down the resulting complex in algebraic terms.

First a general estimate tells us that a holomorphic section of
$E|_{X_y^o}$ (resp. $E|_{X_y^o}\otimes \Omega ^1_{X^o_y}$) can be in
$L^2$ only if it extends to a section of the $0$-th component of the
parabolic structure $E_0$ (resp. $E_0\otimes \Omega ^1_{X_y}(\log
D_y)$). Furthermore, if it is in $E_a$ (resp.  $E_a\otimes \Omega
^1_{X_y}(\log D_y)$) for $a<0$ then it is automatically $L^2$.

Suppose $e$ is a section of $(E_0)_y$ near a point $p$ on the
horizontal divisor component $D_{k,y}$; it projects to ${\rm
  gr}_{k,0}(e)\in {\rm Gr}_{k,0}(\underline{E})_y$. Suppose this
projection is in $W_{\ell}$ but not $W_{\ell -1}$. 
Then, denoting by $z$ a
coordinate on $X_y$ vanishing at $p$, the norm of $e$ is
asymptotically
$$
|e|\sim \left| \log |z|\right| ^{\ell /2}.
$$ Calculations with the Poincar\'e metric for the norms of sections
or holomorphic $1$-forms, done in \cite[Proposition 4.4]{Zucker}, tell
us the following. First, $e$ is in $L^2$ if and only if $m\leq 0$.
Then similarly, a section $e \frac{dz}{z}$ of $E_0\otimes \Omega
^1_{X_y}(\log D_y)$ is in $L^2$ if and only if
$$
{\rm gr}_{k,0}(e)\in W_{-2}{\rm Gr}_{k,0}(\underline{E}).
$$ These observations give us an algebraic description of the complex
of holomorphic $L^2$ forms on the fibers $X_y$.

\medskip

Proceed to define a complex on $X/Y$ as follows. 
For any divisor component $D_k$, and any multiindex $\beta$,
put 
$$
{\rm Gr}_{k,\beta _k} (E_{\beta}):= 
{\rm Gr}_{k,\beta _k} (\underline{E})_{\beta (\cap k)}
$$
where $\beta (\cap k)$ consists of the parabolic weights of $\beta$
for the components of $D_{\cap \beta}$.  We have explicitly
$$
{\rm Gr}_{k,\beta _k} (E_{\beta})
= E_{\beta} / E_{\beta -\epsilon \delta _k}
$$
where $\delta _k$ is the multiindex with $1$ in position $k$ and $0$ elsewhere. 

In the previous subsection we defined the weight filtration $W({\rm
  Gr}_{k,\beta_k} (\underline{E}))$ of the parabolic bundle ${\rm
  Gr}_{k,\beta _k} (\underline{E})$ on $D_k$. By assigning parabolic
weights $\beta (\cap k)$ on $D_{\cap k}$ this gives a weight
filtration of the bundle ${\rm Gr}_{k,\beta _k} (E_{\beta})$, and we
call that $W({\rm Gr}_{k,\beta _k} (E_{\beta}))$.

Let us now denote by 
$$
W(k, E_{\beta})\subset
E_{\beta}
$$
the pullback of the weight filtration 
$W({\rm Gr}_{k,\beta _k} (E_{\beta}))$
over $D_k$, to a filtration of $E_{\beta}$ by locally free subsheaves, via the map
$$
E_{\beta}\rightarrow
{\rm Gr}_{k,\beta _k} (E_{\beta}).
$$

Let $W(H,E_{\beta})$ denote the weight filtration obtained by using
$W(h(j),E_{\beta})$ along each horizontal component $D_{h(j)}$. More
precisely, we use the weight filtration as we have defined in the
previous subsection on the parabolic bundle ${\rm Gr}_{k,\beta _k}
(\underline{E})$, and take the resulting weight filtration on the
piece ${\rm Gr}_{k,\beta _k} (E_{\beta}) = {\rm Gr}_{k,\beta _k}
(\underline{E})_{\beta (\cap k)}$ of this parabolic bundle.

For any real number $a$, let $\alpha (a)$ denote the parabolic weight
for the divisor $D$ determined by using weight $a$ along the vertical
components and weight $0$ along the horizontal components. We then
obtain the levels of the horizontal weight filtrations
$$
W_{\ell}(H,E_{\alpha (a)})\subset
E_{\alpha (a)}.
$$
Notice that along the horizontal divisor components $D_{h(j)}$ we
have $\alpha (a)_{h(j)}=0$ so the horizontal weight filtrations come
from filtrations on the parabolic weight zero graded pieces ${\rm
  Gr}_{h(j),0} (E_{\alpha (a)})$.

We may now define our relative  $L^2$ parabolic Dolbeault complex:
\begin{equation}
\label{l2pardol}
{\rm DOL}^{\rm par}_{L^2}(X/Y,E_{\alpha (a)}):=
\xymatrix@R-2pc{
\Big[ W_0(H,E_{\alpha (a)}) 
\ar[r]^-{\varphi} & 
W_{-2}(H,E_{\alpha (a)})
\otimes_{\Oo _X} \Omega ^1_{X/Y}(\log D)\Big], \\
0 & 1 }
\end{equation}
where $\Omega ^1_{X/Y}(\log D) = \Omega^{1}_{X}(\log D)/ f^{*}
\Omega^{1}_{Y}(\log Q)$ is the sheaf of relative logarithmic one forms
along the fibers of $f$. To make the notation less cumbersome we
still write $\varphi$ for the projection of the Higgs field to the
relative logarithmic forms $\Omega ^1_{X/Y}(\log D)$.

\subsection{The Main Theorem}\label{Main}

We can now state the main theorem of this paper.

\begin{theorem}
\label{mainth}
Let
$$
F^i_a:= {\mathbb R}^if_{\ast}
\left(
{\rm DOL}^{\rm par}_{L^2}(X/Y,E_{\alpha (a)}) \right) .
$$
\begin{enumerate}
\item[1.] The 
$F^i_a$ are locally free, and fit together as
$a$ varies into a parabolic bundle $\underline{F}^i$.
\item[2.] Formation of the higher direct images is compatible with
base-change, in other words 
$F^i_a(y)$ is the cohomology
of the fiber over $y\in Y$.
\item[3.] The parabolic bundle $\underline{F}^i$ has a Higgs
  field $\theta$ given by the usual Gauss-Manin construction (section
  \ref{globalcomplex}), making it into a parabolic Higgs bundle.
\item[4.] This parabolic Higgs bundle on $(Y,Q)$ is the one associated to the
middle perversity higher direct image (of degree $i=0,1,2$) of the
local system underlying our original harmonic bundle.
\item[5.] More specifically, over $Y-Q$ the bundle 
$\underline{F}^i$ has a
harmonic metric given by the $L^2$ metric on cohomology classes in the
fibers, and the parabolic Higgs structure is the one associated to
this harmonic metric.
\end{enumerate}
\end{theorem}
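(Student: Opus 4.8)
The plan is to reduce Theorem \ref{mainth} to the $\srR$-module statement Theorem \ref{rthm} via the comparison maps. The overall strategy has three layers. First, one works on the $\lambda$-line $\srX = X\times\srA$ with the minimal extension $\srE$ and its $V$-filtration along the singular locus of $f$; because $f$ is not smooth at the vertical double points, one must pass through the graph embedding and use the $V$-filtration for the additional coordinate, then take higher direct images of the associated de Rham complex to $\srY = Y\times\srA$. Sabbah's compatibility of $V$-filtrations with direct images (following Saito) gives that the level $V_{a-1}\widetilde{\srF}^i$ of the $V$-filtration on the direct image is the direct image of the de Rham complex built from the corresponding level of the $V$-filtration on $\srX$. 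Combined with the Saito--Mochizuki--Sabbah identification of the restriction to any $\lambda$ of the parabolic bundle $V_{a-1}\widetilde{\srF}^i$ with the parabolic bundle attached to the harmonic bundle of the middle direct image, this handles the $\srR$-module side; that is the content of Theorem \ref{rthm} and yields parts 1, 2, 4 (and, restricted to $Y-Q$, part 5) on the $\srR$-module level.

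Second, one establishes the comparison. Using Proposition \ref{vdescrip} (and our own Corollary \ref{vgen2}, Lemma \ref{vgen3}) one expresses the relevant levels of the $V$-filtration in terms of subsheaves closely tied to the parabolic structure, so that the construction makes sense after restriction to $\lambda=0$, i.e.\ in purely Dolbeault terms. One then constructs (Lemma \ref{mapcomplexes}) the map $u(a)$ from the $L^2$ parabolic de Rham/Dolbeault complex ${\rm DOL}^{\rm par}_{L^2}(X/Y,E_{\alpha(a)})$ to the new de Rham complex coming from the graph construction, together with its $\lambda=0$ specialization $u_0(a)$. Near the horizontal divisor components, away from the singularities of $f$, the reduction from the (non-$\Oo_{\srX}$-coherent) $\srR_{\srX}$-module de Rham complex to the $L^2$ de Rham complex is exactly the statement of Zucker's theory as treated by Sabbah \cite[Section 6.2]{Sabbah}, using the weight filtrations from Corollary \ref{weightcor} — here one simply quotes Sabbah (cf.\ Proposition \ref{horizontaldiv}). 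The remaining, decisive point is that $u(a)$ and $u_0(a)$ are quasi-isomorphisms (Theorem \ref{cpxqis}); granting this, Proposition \ref{proofmainth} deduces the Main Theorem.

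The hard part is Theorem \ref{cpxqis}, the quasi-isomorphism at the vertical double points, where $f$ fails to be smooth and the graph construction genuinely intervenes. Over smooth points of the fibers the quasi-isomorphism is comparatively formal (Section \ref{sec-qi}), so the work concentrates in Theorem \ref{maincalc} at an intersection point of two vertical divisor components $D_{v(i)}\cap D_{v(i')}$ lying over $y=0$. There one reduces — via the $V$-filtration's behavior under the graph embedding — to a graded statement ${\rm gr}_a(u_0)$ (Proposition \ref{reduction2}, equation \ref{gru}), and the key is a local model computation: one must match, term by term and compatibly with differentials, the Koszul-type complex arising from $f=z_1 z_2$ acting on the graded pieces of $\widetilde{\srE}$ for the monodromy weight filtrations in the two normal directions, against the $L^2$ parabolic Dolbeault complex built from $W_0(H,E_{\alpha(a)})$ and $W_{-2}(H,E_{\alpha(a)})\otimes\Omega^1_{X/Y}(\log D)$. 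This is carried out in Section \ref{sec-nc}, and it rests on a Tensor Product Formula describing the weight filtration of a tensor product of nilpotent operators (the Clemens--Schmid/Deligne ``weight filtration of a sum of commuting nilpotents'' phenomenon) together with an explicit identification of the relevant Koszul complexes. Everything else in the proof is either a quotation from Sabbah/Mochizuki or a routine patching argument; the monodromy-weight bookkeeping at the normal crossings point is where the real content lies.
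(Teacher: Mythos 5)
Your proposal follows the paper's own route essentially verbatim: reduction to Theorem \ref{rthm} via the comparison maps $u(a)$, $u_0(a)$ of Lemma \ref{mapcomplexes}, quoting Sabbah near the horizontal divisor, and concentrating the work in the local quasi-isomorphism at the vertical double points via Proposition \ref{reduction2} and the computations of Section \ref{sec-nc}. One correction: the Tensor Product Formula (Theorem \ref{tensorproductformula}) is not a statement about monodromy weight filtrations of commuting nilpotents --- the weight filtrations only enter along $D_H$, which is disjoint from the vertical crossings --- but rather the algebraic identification $\Psi_{b-1}\cong \psi_{b,b}\otimes_{\Oo_D[w]}\Oo_D[u,v]$ of the nearby-cycles module of the graph embedding, after which the Koszul quasi-isomorphism theorem \ref{qi} compares the rank-one Koszul complex for $\varphi_{\log}$ on $\psi_{b,b}$ with the rank-two Koszul complex for $(u,v)$ on that tensor product.
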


Clearly we are interested mostly in the case $i=1$. It is useful to
include $i=0,2$ because these facilitate using an Euler characteristic
argument to control the dimension jumps at $i=1$.

We note here that one can also define a relative version over the
$\lambda$-line, denoted as an $L^2$ de Rham complex:
$$
DR^{\rm par}_{L^2}(\srX / \srY, \srE  _{\alpha (a)}):=
\xymatrix@R-2pc{
\Big[ W_0(H,\srE  _{\alpha (a)}) \ar[r]^-{\nabla} & 
W_{-2}(H,\srE  _{\alpha (a)}) \otimes_{\Oo _{\srX}}
\Omega ^1_{\srX / \srY} (\log \srD ). \Big] \\
0 & 1
}
$$

The restriction to $\lambda =0$ is well-behaved since the complex is
flat over $\srA$ and it gives back the Dolbeault complex
\eqref{l2pardol}:
\begin{equation} \label{l2pardR}
DR^{\rm par}_{L^2}(\srX / \srY,\srE  _{\alpha (a)}) |_{X(0)} =
{\rm DOL}^{\rm par}_{L^2}(X/Y,E_{\alpha (a)}).
\end{equation} 
See \cite[Theorem 6.2.4]{Sabbah}. Part (2) of that statement gives
only a quasiisomorphism between the restriction of the $L^2$ de Rham
complex to $X(0)$, and the $L^2$ Dolbeault complex. The restriction to
$X(0)$ corresponds to the case $z_0=0$ in Sabbah's notation. In our
situation, the restriction map between complexes is an
isomorphism. Indeed, the additional terms that occur in the proof of
\cite[Theorem 6.2.4]{Sabbah} concern the case of non-real values of
the parameter $\alpha$, whereas we are assuming that the eigenvalues
of the monodromy transformations are roots of unity so that the
parameters $\alpha$ that occur are only real.  In the terms for
$\alpha $ real (taking into account the shift of $1$ from Sabbah's
$\alpha =-1$ to our $\alpha =0$), the description of the $L^2$ complex
in terms of the weight filtration coincides with what we have said
above, both before and after restricting to $X(0)$ i.e. $z_0=0$.

The relative de Rham version of our main theorem is
the following statement: 

\begin{theorem}
\label{mainthdr}
Let
$$
\srF ^i_a:= {\mathbb R}^if_{\ast}
\left(
{\rm DR}^{\rm par}_{L^2}(\srX / \srY , \srE _{\alpha (a)}) \right) .
$$
The 
$\srF ^i_a$ are locally free and the higher direct
images are compatible with base-change. These fit together into a
parabolic bundle $\underline{\srF}^i$ with logarithmic
$\lambda$-connection on $\srY$ relative to $\srA$.  It is the
parabolic $\lambda$-connection associated to the harmonic bundle of
Theorem \ref{mainth}.
\end{theorem}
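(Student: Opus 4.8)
The plan is to prove Theorem \ref{mainthdr} first; Theorem \ref{mainth} then follows by restriction to $\lambda=0$, that restriction being well-behaved because the $L^2$ de Rham complex is flat over $\srA$ and specializes to the $L^2$ Dolbeault complex, cf.\ \eqref{l2pardR}. The strategy for Theorem \ref{mainthdr} is to transport everything into the category of $\srR_{\srX}$-modules, where the analogous statement (Theorem \ref{rthm}) is a consequence of Sabbah's work — in particular the local freeness of the higher direct images, which is the essential ``strictness'' input and is supplied there by the Decomposition Theorem of Sabbah and Mochizuki \cite{Sabbah, MochizukiPure} — and then to bridge back by an explicit comparison of complexes. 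Thus the whole theorem reduces to two inputs: Theorem \ref{rthm} on the $\srR$-module side, and Theorem \ref{cpxqis}, asserting that the comparison maps $u(a)$, $u_0(a)$ built in Lemma \ref{mapcomplexes} are quasi-isomorphisms.

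First I would recall the $\srR$-module picture. From the harmonic bundle we have the minimal extension $\srR_{\srX}$-module $\srE\subset\widetilde{\srE}$ (Definition \ref{minext}), and since $f$ fails to be smooth along the double points of $D_V$ the relevant relative de Rham complex on $\srX$ must be formed through the graph embedding of $f$ (subsection \ref{j}). Its higher direct images to $\srY$ are $\srR_{\srY}$-modules $\widetilde{\srF}^i$ carrying $V$-filtrations along $\srQ=Q\times\srA$. By Sabbah's compatibility of $V$-filtrations with higher direct images (\cite[Theorem 3.1.8]{Sabbah}, following \cite[Proposition 3.3.17]{SaitoMHP}) together with the strictness just mentioned, the $V_0\srR_{\srY}$-modules $V_{a-1}\widetilde{\srF}^i$ are locally free, fit together into a parabolic bundle with logarithmic relative $\lambda$-connection on $\srY$, and restrict at any $\lambda$ to the parabolic $\lambda$-connection of the middle perversity direct image of the harmonic bundle (\cite[Chapter 5]{Sabbah}); this is Theorem \ref{rthm}. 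So it remains to produce an isomorphism $\srF^i_a\cong V_{a-1}\widetilde{\srF}^i$ of parabolic bundles with $\lambda$-connection.

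That isomorphism I would obtain at the level of complexes. By compatibility of the $V$-filtration with $\mathbb{R}f_\ast$, the right-hand side is the $i$-th higher direct image of the de Rham complex built from the level $V_{a-1}$ of the $V$-filtration of $\srE$ along $f=0$ in its graph presentation. I would rewrite that level using the Saito--Mochizuki generation statements for the $V$-filtration, Proposition \ref{vdescrip}, strengthened by Corollary \ref{vgen2} and Lemma \ref{vgen3} into the form which is manifestly well-defined on restriction to $\lambda=0$ and is expressed through the parabolic pieces $\srE_\beta$ and their monodromy weight filtrations. The map $u(a)$ of Lemma \ref{mapcomplexes} then goes from ${\rm DR}^{\rm par}_{L^2}(\srX/\srY,\srE_{\alpha(a)})$ to this graph--de Rham complex, and Theorem \ref{cpxqis} transports local freeness, base-change compatibility, the parabolic structure and the logarithmic $\lambda$-connection from $\widetilde{\srF}^i$ to $\srF^i_a$, and identifies the result with the parabolic $\lambda$-connection associated to the harmonic bundle of Theorem \ref{mainth}. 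Along a horizontal component of $\srD$, which lies in the smooth locus of $f$, one further has to descend from the non-$\Oo_{\srX}$-coherent de Rham complex to an honest $L^2$ de Rham complex; this is Sabbah's statement \cite[Theorem 6.2.4]{Sabbah} using the weight filtration from Corollary \ref{weightcor}, and the minimal-extension calculation actually needed is Proposition \ref{horizontaldiv}.

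The main obstacle is Theorem \ref{cpxqis} itself, and within it the double points of $D_V$, where $f$ is singular and the $V$-filtration exists only through the graph construction. Over smooth points of $f$ the quasi-isomorphism is immediate from Sabbah. At a double point the plan is to filter by the parabolic weight $a$ and prove the statement for the associated graded ${\rm gr}_a(u_0)$ (Proposition \ref{reduction2}, equation \eqref{gru}), which reduces the problem to an explicit matching of Koszul-type complexes — the Koszul complex of the graph coordinate against the two normal monodromy weight filtrations $W(N_{k,b})$ — carried out in section \ref{sec-nc} through a Tensor Product Formula that splits the two-variable computation into one-variable models. The delicate point is making the bookkeeping of the two weight filtrations and of the parabolic shifts line up exactly with the $L^2$ truncation $W_0\to W_{-2}$ appearing in \eqref{l2pardol}; once that local computation is done, everything else is formal transport of Theorem \ref{rthm} across the quasi-isomorphism.
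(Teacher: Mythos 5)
Your proposal is correct and follows essentially the same route as the paper: reduce everything to Theorem \ref{rthm} on the $\srR$-module side plus the quasi-isomorphism of Theorem \ref{cpxqis}, with the comparison map $u(a)$ of Lemma \ref{mapcomplexes}, Sabbah's result along the horizontal divisor, and the graded Koszul/tensor-product computation at the double points of $D_V$. The only minor difference is one of emphasis — the paper's actual verification of Theorem \ref{cpxqis} is carried out at $\lambda=0$ and propagated to all $\lambda$ by perfectness and semicontinuity (Corollary \ref{downstairs}), rather than being established uniformly in $\lambda$ first — but this does not change the substance of the argument.
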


\section{The de Rham complex} \label{sec-dr}

As a step towards the proof of the Main Theorem \ref{mainth}, we
formulate its analogue, Theorem \ref{rthm}, on the level of
$\srR$-modules and their $V$-filtrations.  This result, in the
$\srR_{\srX}$-module context, is already known, by Sabbah's work.  So
this leads us to the comparison problem of relating these two points
of view, in order to deduce our main theorem from Theorem \ref{rthm}.

In a little more detail: we start in subsection \ref{j} with a brief
review of the Kashiwara-Malgrange $V$-filtration of a $\Dd$-module
with respect to a map or a subscheme. While the smooth case is
straightforward, the singular case requires factoring through a graph
construction, as we recall in section \ref{j}. In subsection \ref{dR}
we define the de Rham complex on $\srX$ in terms of this graph
factorization, take its higher direct image to $\srY$, and then state
and prove Theorem \ref{rthm} about compatibility with the
$V$-filtration.

In subsection \ref{multi}, we look briefly at the
notion of multi-$V$-filtration with respect to several
smooth divisors meeting transversally. This structure
gives back our parabolic structures. It will be useful
to have a formulation of the filtered objects involved
in the parabolic structures, in the language of $V$-filtrations. 

Finally, in subsection \ref{constructmap}, we approach the comparison
problem by constructing a map (cf. Lemma \ref{mapcomplexes}) from the
parabolic de Rham complex to the new de Rham complex. The proof of the
Main Theorem \ref{mainth} is then reduced, in Proposition
\ref{proofmainth}, to Theorem \ref{cpxqis} which asserts that the
comparison map is indeed a quasi-isomorphism.

\subsection{The $V$-filtration} \label{j}

Consider first a smooth map $p:P \rightarrow Y$, with $y$ a local
coordinate on $Y$ vanishing at a point $q \in Y$, and $x$ a coordinate
along the fibers, so $x,y$ are local coordinates on $P$. (Or
$x=(x_1,\ldots, x_n)$ and $y=(y_1,\ldots,y_k)$ could be coordinate
systems).  In these local coordinates, the sheaf of rings $\Dd_P$ of
differential operators on $P$ looks like
$$
\Dd_P = \cc[x,y,\partial_x,\partial_y]
$$
The $V$-filtration on $\Dd_P$ with respect to the smooth map $p$, or
with respect to the smooth subvariety 
$Z:=p^{-1}(q) = \{y=0\}$) defined by the ideal $I:=(y)$, is given by:
$$
V_j\Dd_P := \left\{ \xi \in \Dd_{P} \ \left| \ \xi I^{i} \subset I^{i -j}  \text{ for all } i
\in \mathbb{Z}
\right. \right\},
$$
where by convention $I^{k} = \mathcal{O}_{P}$ for all $k \leq 0$. 
In local coordinates, $V_j\Dd_P$   is spanned over $\Oo_P$ by the expressions
$\partial_x^a y^b\partial_y^c$ with $c-b \leq j$. In particular,
$V_0\Dd_P$ is the sheaf of rings of differential operators Koszul dual to the
logarithmic cotangent complex $\Omega ^{\bullet}_X(\log Z)$,
i.e. generated by the log tangent vectors $\partial_x, y\partial_y$.

The $V$-filtration (still with respect to the smooth map $p$ or
subvariety $Z$) on a left-$\Dd_P$-module $M$ is an increasing
filtration of $M$ by coherent $V_0\Dd_P$-submodules $V_a$. This is
subject to several axioms \cite{Bjork,Budur}, the main one being that
on
$$
{\rm Gr}_{a}(M):= V_{a}(M)/V_{a-\epsilon}(M),
$$
the first order operator $\sum_{k} \partial _{y_{k}} y_{k}$ acts with
generalized eigenvalue $-a$, that is to say $\sum_{k} \partial
_{y_{k}} y_{k} + a$ is nilpotent. Equivalently\footnote{This notation
  is shifted by $1$ from what one finds in $\Dd$-module literature
  \cite{Bjork,Budur}, but is commonly used in \cite{Sabbah} and other
  relevant references.  The relationship with the parabolic structure
  is shifted as shall be discussed further in the next section. For
  these reasons we usually include the shift in the notation and
  consider $V_{a-1}$.}, the vector field $\sum_{k} y_{k}\partial
_{y_{k}}$ acts there with generalized eigenvalue $-(a+1)$.

It is fairly straightforward to verify that the $V$-filtration is
uniquely characterized by the axioms. Kashiwara and Malgrange show
that if $M$ is regular holonomic and quasi-unipotent, its $V$-filtration
exists.

We can include $\lambda$-connections, i.e. replace $\Dd$-modules by
$\srR$-modules, by working instead with $p:\srP \rightarrow \srY$.
Only minor changes are needed, e.g. the generalized eigenvalue becomes
$-\lambda a$ instead of $-a$.

The interesting case for the present paper is when $X$ is a smooth
complex variety but the map $f:X \to Y$ is arbitrary. Equivalently,
the subscheme $Z \subset X$ defined by the ideal $I=(f_1,\ldots,f_k)$
is allowed to be arbitrarily singular, where the $f_i$ are coordinates
of the map $f$. In that case, let $P:= X\times Y$ be the product and
set $\srP := \srX \times _{\srA}\srY$.  We have the map $g:
\srX\rightarrow \srP$ given by the graph of $f$, that is to say
$$
g(x,\lambda ):= ((x,\lambda ),(f(x),\lambda )).
$$
Let $p:\srP \rightarrow \srY$ be the projection,
and let $\srX _q\cong \srX$ denote the fiber over $q\in Q$ of the
projection.

We may assume that there is only one singular point $q$, and indeed
that $Y$ is just an open disc. In coordinate notation we shall denote
by $t$ the coordinate on $\srP$ pulled back from the coordinate
function of the disk by the projection $p$. We can use interchangeably
the notation $\srX_0=\srX_q$ for the fiber at $t=0$.

From our $\srR_{\srX}$-module $\srE$ we get an $\srR_{\srP}$-module
$g_+(\srE )$ supported along the image of $g$. We then get the
$V$-filtration $V_{a-1}(g_+(\srE ))$.

\subsection{The de Rham complex} \label{dR}

We can form the de Rham complex
of $g_+(\srE )$ on $\srP$ 
relative to $\srY$, in the world of $\srR$-modules, 
and take the higher direct image to $\srY$.
The relative de Rham complex is
$$
DR(\srP/\srY,g_+\srE ) :=
\xymatrix@R-2pc{
\Big[ 
g_+(\srE )\ar[r] & 
g_+(\srE )\otimes_{\Oo _{\srP}} \Omega ^1_{\srP /\srY}
\ar[r] & 
g_+(\srE )\otimes_{\Oo _{\srP}}
\Omega ^2_{\srP /\srY}\Big]. \\
0 & 1 & 2
}
$$ Note that this is a somewhat different kind of object than the
$L^2$ parabolic Dolbeault or the $L^2$ parabolic de Rham complex
considered above: it doesn't {\em a priori} bring the parabolic
structure into play, rather relying on the $\srR_{\srX}$-module
structure of $\srE $. Our comparison problem is to relate these two
points of view, in order to deduce our main theorem from the following
result already known by Sabbah's work in the $\srR_{\srX}$-module
context.

Let
$$
\srF^i:=
{\mathbb R}^if_{\ast}
\left(
{\rm DR}(g_+\srE ) \right) .
$$
This $\srR _{\srY}$-module was also denoted by 
$\Hh ^i f_{\dagger}\srE$ in \cite[Remark 1.4.8]{Sabbah}.

Let $\overline{\srF}^i$ denote the minimal extension to $Y$ of
$\srF^i|_{\srY -\srQ}$. The localization $\widetilde{\srF}^i$
(inverting the equations of the points of $Q$) is the same as the
localization of $\overline{\srF}^i$.

On the other hand, consider the following local systems $G^i$ on
$Y-Q$: the fiber of $G^i$ at a point $y\in Y-Q$ is the cohomology of
the middle perversity extension $j_{X_y, \ast}(L|_{X_y-D_{H,y}})$ on
the fiber $X_y$. Equivalently, it is the $L^2$ cohomology of the
harmonic bundle restricted to $X_y-D_{H,y}$.

\begin{theorem}
\label{rthm}
With the above notations, and using the $V$-filtration:
\begin{enumerate}
\item[1.] $\overline{\srF}^i$ is an $\srR _{\srY}$-module on $\srY$,
with strict supports (Remark \ref{s-term}), 
in other words it is the minimal extension
  (Definition \ref{minext}) of its restriction to the open subset.  In
  particular, the $V$-filtrations at any point $q\in Q$ satisfy
$$
V_{b-1}(\overline{\srF}^i)= V_{b-1}(\srF ^i)= V_{b-1}(\widetilde{\srF}^i)
$$
for $b<1$, and $V_{b-1}(\overline{\srF}^i)$ are locally free.
\item[2.] $\overline{\srF}^i$ is the $\srR _{\srY}$-module
  corresponding to the harmonic bundle associated to the local system
  $G^i$.
\item[3.] In particular, the parabolic Higgs bundle whose components 
near a point $q\in Q$ are:
$$
F^i_b := V_{b-1}(\widetilde{\srF}^i)(0),
$$
is the parabolic Higgs bundle associated to $G^i$.
\end{enumerate}
\end{theorem}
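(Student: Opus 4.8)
The plan is to deduce all three assertions from the structure theory of polarizable pure twistor $\Dd$-modules of Sabbah \cite{Sabbah} and Mochizuki \cite{MochizukiPure}, applied to the projective pushforward by $f$; the whole point of passing to the $\srR_{\srX}$-module picture is that the strictness which is inaccessible ``by hand'' is built into that theory. Recall from Subsection \ref{sec-r} that the minimal extension $\srE$ is the $\srR_{\srX}$-module underlying a polarizable pure twistor $\Dd$-module, and that the graph pushforward $g_+\srE$ on $\srP$ carries the same structure, supported on the graph. I would then invoke the Decomposition Theorem of Sabbah and Mochizuki \cite{Sabbah, MochizukiPure}: the complex $\mathrm{DR}(g_+\srE)$ pushes forward to $\bigoplus_i \srF^i[-i]$, and each $\srF^i$ is a polarizable pure twistor $\Dd$-module, hence in particular strict (flat over $\srA$) and strictly S-decomposable. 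Over $\srY-\srQ$ everything is smooth, so the only strict supports available are $Y$ and the points of $Q$, and by Remark \ref{s-term} (that is, Corollary 3.5.5 of \cite{Sabbah}) we obtain a splitting $\srF^i = \overline{\srF}^i \oplus \srF^i_Q$, in which $\overline{\srF}^i$ has strict support $Y$ — so it is the minimal extension of $\srF^i|_{\srY-\srQ}$ in the sense of Definition \ref{minext} — while $\srF^i_Q$ is supported on $Q$. This is the first assertion of Part 1.

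For the $V$-filtration identities, fix $q\in Q$ and $b<1$, so that the index $b-1$ is negative. An $\srR$-module supported at the point $q$ has, in the shifted convention of \cite{Sabbah} used throughout, vanishing $V$-filtration in negative degrees (its jumps occur at integers $\geq 0$), so $V_{b-1}(\srF^i_Q)=0$ and hence $V_{b-1}(\srF^i)=V_{b-1}(\overline{\srF}^i)$. On the other hand $\widetilde{\srF}^i$ is the localization of $\overline{\srF}^i$ along $Q$, and for a minimal extension the levels of the $V$-filtration in negative degrees already lie inside the minimal extension — this is precisely the content of Definition \ref{minext}, now applied on $\srY$ — so $V_{b-1}(\overline{\srF}^i)=V_{b-1}(\widetilde{\srF}^i)$; combining gives the stated chain of equalities. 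Local freeness of $V_{b-1}(\overline{\srF}^i)$ then follows: being a polarizable pure twistor $\Dd$-module with strict support the smooth curve $Y$, $\overline{\srF}^i$ is strictly specializable, and its $V$-filtration levels are exactly the pieces of the associated locally abelian parabolic bundle with logarithmic $\lambda$-connection furnished by Theorems \ref{hbthm} and \ref{hblambda}, which are locally free by construction. This completes Part 1.

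Part 2 is the one-to-one correspondence between polarizable pure twistor $\Dd$-modules with strict support $Y$ and tame harmonic bundles on $Y-Q$ (again \cite{Sabbah, MochizukiPure}, cf.\ \cite[Chapter 5]{Sabbah}): $\overline{\srF}^i$ corresponds to a unique tame harmonic bundle, and to name it one restricts to the de Rham fiber $\lambda=1$, where, over $Y-Q$, it is the perverse higher direct image of the de Rham complex of the minimal extension, i.e.\ the local system $G^i$ by definition. Part 3 is then formal: for the harmonic bundle associated to $G^i$, the components near $q$ of its parabolic Higgs bundle are exactly $V_{b-1}$ of the corresponding $\srR_{\srY}$-module restricted to $\lambda=0$, extended by periodicity in $b$, with Higgs field the residue at $\lambda=0$ of the $V_0\srR_{\srY}$-action — this is the dictionary recalled in Subsection \ref{sec-r} and in the remarks on page \pageref{genlam} — and by Part 1 these are $V_{b-1}(\widetilde{\srF}^i)(0)=F^i_b$.

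I expect the main difficulty to be bookkeeping rather than mathematical substance, since every ingredient is already available in \cite{Sabbah, MochizukiPure}. One must make sure the weight and degree shifts between Sabbah's $\alpha=-1$ normalization and our $\alpha=0$ parabolic normalization are lined up, that the Decomposition Theorem is invoked in the form relative to the $\lambda$-line $\srA$ — which is exactly what twistor $\Dd$-modules are built for — and that $G^i$ as we have defined it, the cohomology of the middle-perversity extension $j_{X_y,\ast}(L|_{X_y-D_{H,y}})$ on the fiber, is indeed what the twistor pushforward computes on the $\lambda=1$ fiber over $Y-Q$. Everything else is a direct citation.
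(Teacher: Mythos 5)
Your proposal is correct and follows essentially the same route as the paper's own proof: both rest on the Sabbah--Mochizuki Decomposition Theorem for pushforwards of pure twistor $\Dd$-modules to split off the strict-support-$Y$ summand $\overline{\srF}^i$, deduce the $V$-filtration properties from that splitting, and then invoke the correspondence of \cite[Chapter 5]{Sabbah} applied fiberwise to identify the associated local system with $G^i$. Your write-up merely fills in more of the bookkeeping (e.g.\ the vanishing of $V_{b-1}$ for the point-supported summand when $b<1$) that the paper leaves implicit.
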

\begin{proof}
  Sabbah proves the decomposition theorem for the higher direct images
  of a pure twistor $\Dd$-module. Mochizuki shows that our original
  harmonic bundle on $X$ gave rise to a pure twistor $\Dd$-module, of
  which $\srE$ is the part over the chart $\srA$ for the twistor
  line. We obtain the statement that $\srF ^i$ decomposes as a direct
  sum of $\srR_{\srY}$ modules with strict support (\ref{s-term}). Therefore,
  $\overline{\srF}^i$ is a direct summand of $\srF^i$.  This gives the
  properties of the $V$-filtrations at points $q\in Q$. By the
  discussion of \cite[Chapter 5]{Sabbah}, $\overline{\srF}^i$
  corresponds to a local system on $Y-Q$, and again applying this
  correspondence in the fibers, that local system is $G^i$. That
  correspondence gives the statement (3).
\end{proof}

\

\noindent
The $V$-filtration of
the relative de Rham complex, with respect to the  
function 
$$
p:\srP \rightarrow \srY ,
$$ 
is given by:
$$
DR(V_{a-1}(g_+\srE )):= 
\Big[ V_{a-1}(g_+\srE )\rightarrow
V_{a-1}(g_+\srE )\otimes \Omega ^1_{\srP /\srY}
\rightarrow
V_{a-1}(g_+\srE )\otimes \Omega ^2_{\srP /\srY} \Big].
$$
The compatibility between $V$-filtrations and higher direct images says:
\begin{proposition}
We have
$$
V_{a-1}(\widetilde{\srF}^i)
={\mathbb R}^ip_{\ast} DR(V_{a-1}(g_+\srE )).
$$
Furthermore, these bundles are strict with respect to specialization
on the
$\lambda$-line so
$$
V_{a-1}(\widetilde{\srF}^i)|_{Y(0)} =
{\mathbb R}^ip_{\ast}
\left( DR(V_{a-1}(g_+\srE )) |_{P(0)}
\right) .
$$
\end{proposition}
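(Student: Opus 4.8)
The statement has two assertions: first, the identity
$V_{a-1}(\widetilde{\srF}^i) = {\mathbb R}^i p_{\ast}\, DR(V_{a-1}(g_+\srE))$ of
$V_0\srR_{\srY}$-modules, and second, the compatibility of this identity with
restriction to $\lambda = 0$, i.e. strictness over $\srA$. The plan is to
deduce the first assertion directly from Sabbah's compatibility theorem for
$V$-filtrations under proper higher direct images, quoted in the introduction
as \cite[Theorem 3.1.8]{Sabbah} (following Saito \cite[Proposition
3.3.17]{SaitoMHP}), and then to obtain the second assertion from strictness of
the pure twistor $\Dd$-module structure — the form in which the Decomposition
Theorem of Sabbah--Mochizuki \cite{Sabbah, MochizukiPure} enters here.

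\textbf{Step 1: identify the hypotheses of Sabbah's theorem.} First I would
check that the $\srR_{\srP}$-module $g_+(\srE)$, being the graph pushforward of
the minimal extension $\srE$, is strictly specializable along $t = 0$ (this is
part of the construction recalled in section \ref{j}, cf. \cite[Definitions
3.3.8, Proposition 3.3.11]{Sabbah}), so that its $V$-filtration
$V_{a-1}(g_+\srE)$ along $p$ exists and consists of coherent $V_0\srR_{\srP}$-submodules.
The map $p : \srP \rightarrow \srY$ is projective over each point of $Y$ since
$X$ is projective, so the higher direct image ${\mathbb R}^i p_{\ast}$ of the
relative de Rham complex is well-defined and the twistor $\Dd$-module formalism
applies. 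Then Sabbah's theorem asserts precisely that taking the de Rham
complex of a level $V_{a-1}(g_+\srE)$ of the $V$-filtration and pushing forward
computes the corresponding level $V_{a-1}$ of the $V$-filtration on the higher
direct image ${\mathbb R}^i p_{\ast} DR(g_+\srE)$. By definition this higher
direct image is $\srF^i$, and its localization (inverting the equation of $q$)
is $\widetilde{\srF}^i$ by construction; since $V$-filtration and localization
commute in the evident way, this yields
$V_{a-1}(\widetilde{\srF}^i) = {\mathbb R}^i p_{\ast} DR(V_{a-1}(g_+\srE))$.

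\textbf{Step 2: strictness over the $\lambda$-line.} For the second assertion I
would argue that the whole package — the $\srR_{\srP}$-module $g_+(\srE)$, its
$V$-filtered pieces, and their de Rham complexes — is \emph{strict} as a
complex of $\Oo_{\srA}$-modules, so that restriction to $\lambda = 0$ (the
fiber $P(0)$, resp. $Y(0)$) commutes with taking ${\mathbb R}^i p_{\ast}$.
The source of strictness is that $\srE$ underlies a pure twistor $\Dd$-module
(Mochizuki \cite{MochizukiPure}), its $V_{a-1}(g_+\srE)$ are then strictly
specializable pieces, and Sabbah's Decomposition Theorem guarantees that the
higher direct images of a pure twistor $\Dd$-module are again pure — in
particular their cohomology $\srR_{\srY}$-modules and the relevant
$V$-filtered subobjects are strict over $\srA$, so there are no $\lambda$-torsion
phenomena obstructing base change. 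Concretely: $DR(V_{a-1}(g_+\srE))$ is a
complex flat over $\srA$ in each term (the terms are $V_{a-1}(g_+\srE)$ tensored
with locally free $\Oo_{\srP}$-modules $\Omega^{\bullet}_{\srP/\srY}$), its
hypercohomology sheaves on $\srY$ are strict by the purity statement, hence the
base-change spectral sequence degenerates enough to give
${\mathbb R}^i p_{\ast}\!\big(DR(V_{a-1}(g_+\srE))|_{P(0)}\big)
= \big({\mathbb R}^i p_{\ast} DR(V_{a-1}(g_+\srE))\big)|_{Y(0)}
= V_{a-1}(\widetilde{\srF}^i)|_{Y(0)}$.

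\textbf{Main obstacle.} The genuinely delicate point is the strictness/base-change
assertion in Step 2: a priori the $V$-filtration is defined using the full
$\srR$-module structure over all of $\srA$, and it is not automatic that its
levels restrict compatibly to $\lambda = 0$ or that the higher direct image
commutes with this restriction. This is exactly the ``strictness property not
easy to obtain by hand'' flagged in the introduction, and the way out is not an
elementary argument but an appeal to the Sabbah--Mochizuki Decomposition
Theorem for pure twistor $\Dd$-modules, which packages the needed strictness.
So the proof is essentially a matter of correctly invoking \cite[Theorem
3.1.8]{Sabbah} together with the purity of higher direct images from
\cite{Sabbah, MochizukiPure}; the work is in verifying that our $g_+(\srE)$ and
its $V$-filtration satisfy the hypotheses (strict specializability, projectivity
of $p$, the Nilpotence Hypothesis \ref{mainhyp} ensuring the eigenvalues are
real so no small-neighborhood subtleties in the $\lambda$-line arise), rather
than in any new analysis.
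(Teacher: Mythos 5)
Your proposal is correct and follows essentially the same route as the paper, whose proof is simply a citation of Sabbah's compatibility theorem \cite[Theorem 3.1.8]{Sabbah} (after Saito \cite[Proposition 3.3.17]{SaitoMHP}, going back to Kashiwara--Malgrange), with the strictness over the $\lambda$-line being part of what that theorem, in the pure twistor $\Dd$-module setting, delivers. Your Step 2 merely spells out the purity/strictness input that the paper leaves implicit in the citation.
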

\begin{proof}
  See Sabbah \cite[Theorem 3.1.8]{Sabbah} generalizing Saito
  \cite[Proposition 3.3.17]{SaitoMHP}, but in turn this main property
  goes back to the original work of Kashiwara-Malgrange on the
  $V$-filtrations for $\Dd$-modules.
\end{proof}

In view of this proposition, we would like to compare
$$
DR(V_{a-1}(g_+\srE )) |_{P(0)}
\mbox{  and  }
{\rm DOL}^{\rm par}_{L^2}(E_{\alpha (a)}).
$$

\subsection{The multi-$V$-filtration and the parabolic 
structure}
\label{multi}

Let us consider, on the other hand, the multi-$V$-filtration obtained
by combining the individual $V$-filtrations along the divisor
components.  This is easier because the divisor components are smooth
so we don't need to use the graph embedding.

For each component $D_i$, we obtain a filtration
$$
V^i_{a-1}(\srE )\subset \srE  .
$$
It may be defined locally on open subsets where $D_i$ is the
zero-locus of a function.  Now let us intersect them: if $\beta$ is a
parabolic weight vector then we can use each $\beta _i$ along the
component $D_i$.  Denote by ${\bf 1}$ the multi-index whose components
are $1$.  Then we put
$$
V^{\rm multi}_{\beta -{\bf 1}}(\srE ):= \bigcap _i V^i_{\beta _i-1}(\srE ) \subset  \srE  .
$$
These submodules are closely related to the parabolic
structure. However, that relationship only holds for negative values
of the weight, because of the constraint imposed by the notion of
``minimal extension''. This restriction can be removed by passing to
the localized sheaf $\widetilde{\srE}$ of sections with poles along
$\srD$. Note that that sheaf is no longer of finite type over $\srR$,
however as Sabbah notes \cite[Section 3.4]{Sabbah}, the $V$-filtration
still works and we obtain as above
$$
V^{\rm multi}_{\beta-{\bf 1}}(\widetilde{\srE})
\subset \widetilde{\srE}.
$$

\begin{proposition}
\label{multivcomp}
Suppose $\beta _i<1$, then there is a natural identification between
the multi-$V$-filtration and the parabolic structure of $\srE$:
$$
V^{\rm multi}_{\beta -{\bf 1}}(\srE )\cong \srE _{\beta} .
$$
Notice here that the shift by $1$ is not present in the
parabolic structure. 

After localizing this extends to all values of $\beta$:
$$
V^{\rm multi}_{\beta -{\bf 1}}(\widetilde{\srE})\cong \srE _{\beta} .
$$
In particular, for any $\beta$ we have $\srE _{\beta}\subset \widetilde{\srE}$,
and for $\beta _i<1$ we have $\srE _{\beta}\subset \srE $.
\end{proposition}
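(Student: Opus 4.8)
The plan is to establish the identification $V^{\rm multi}_{\beta-{\bf 1}}(\srE)\cong\srE_\beta$ by reducing to the one-divisor case and then invoking the known compatibility between the single $V$-filtration and the parabolic structure. First I would note that the statement is local on $\srX$, so we may work near a point of $\srD$ and assume $\srD$ is a union of coordinate hyperplanes $\{z_i=0\}$ meeting transversally. In a neighborhood of a point lying on only one component $D_i$ the statement reduces to the classical fact (proven by Saito in the Hodge case, by Sabbah \cite[Section 3.4]{Sabbah} in the twistor case) that along a smooth divisor the $V$-filtration of the minimal extension agrees with the parabolic filtration coming from orders of growth of the harmonic metric; here the Nilpotence Hypothesis \ref{mainhyp} is what makes the indexing match up cleanly, since the relevant $\alpha$ are real and $\ell_{z_0}(\alpha)=\alpha$, so that the generalized eigenvalue condition on $\mathrm{Gr}$ for the $V$-filtration translates directly into the order-of-growth condition defining $\srE_\beta$ (with the conventional shift by $1$). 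The subtlety that $V$ is defined only for the minimal extension — hence the hypothesis $\beta_i<1$ — is exactly the ``Caution'' remark before Definition \ref{minext}.

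Next, at a normal crossings point where several components meet, I would argue that the $V_0\srR$-coherent submodule $\bigcap_i V^i_{\beta_i-1}(\srE)$ can be computed componentwise. The key point is that the operators $z_i\partial_{z_i}$ for distinct $i$ commute and act on distinct coordinate directions, so the multi-filtration splits as a ``product'' filtration: each $V^i_{\beta_i-1}$ is characterized by a nilpotency condition on $\mathrm{Gr}$ with respect to the single operator $z_i\partial_{z_i}$, and these conditions are independent. Concretely, using local generators of $\srE$ adapted simultaneously to all the residue filtrations (available because $\underline{\srE}$ is locally abelian by Theorem \ref{hblambda}, i.e. locally a direct sum of parabolic line sheaves), one checks that a section lies in $V^{\rm multi}_{\beta-{\bf 1}}$ iff its order of growth along each $D_i$ is $\le C|z_i|^{-\beta_i-\epsilon}$, which is precisely the defining condition for $\srE_\beta$. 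The local-abelian reduction turns this into the rank-one case, where everything is an explicit monomial computation.

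Then I would handle the localized statement: for $\widetilde{\srE}=\bigcup_\beta\srE_\beta$ one repeats the same argument, now with no constraint on $\beta$, using Sabbah's observation that the $V$-filtration makes sense on the (non-coherent) localized module. Periodicity $\srE_{\beta+\delta^k}=\srE_\beta(\srD_k)$ on the parabolic side matches the shift $V^k_{\beta_k}=z_k^{-1}V^k_{\beta_k-1}$ on the $V$-filtration side, so the identification for $\beta_i<1$ propagates to all $\beta$ after localizing. Finally, the last two sentences are immediate bookkeeping: $\srE_\beta=V^{\rm multi}_{\beta-{\bf 1}}(\widetilde{\srE})\subset\widetilde{\srE}$ always, and when all $\beta_i<1$ the module $V^{\rm multi}_{\beta-{\bf 1}}(\srE)$ lives inside $\srE$ by construction, giving $\srE_\beta\subset\srE$.

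The main obstacle I anticipate is justifying the componentwise splitting of the multi-$V$-filtration rigorously — i.e., that $\bigcap_i V^i_{\beta_i-1}$ really is controlled by the simultaneous residue filtrations and contains no extra sections forced in by the interaction of the $\srR_{\srX}$-module structure across different directions. The locally abelian hypothesis is what saves us, but one must be careful that the local splitting of $\underline{\srE}$ into parabolic line sheaves is compatible with the $\srR_{\srX}$-action well enough to reduce the $V$-filtration computation to rank one; the Nilpotence Hypothesis, which forces the residues to be nilpotent (no semisimple part mixing the lines), is precisely what makes this compatibility hold.
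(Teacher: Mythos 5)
The paper does not actually prove this proposition: its ``proof'' is a citation to Sabbah (\cite[Lemma 3.4.1]{Sabbah} for the existence of the $V$-filtration on the localized module, \cite[Corollary 5.3.1]{Sabbah} for the identification with the parabolic filtration in dimension one, with the shift explained in \cite[Section 5]{Sabbah}) and to Chapter 15 of \cite{MochizukiPure} for the higher-dimensional, normal-crossings extension. Your proposal instead sketches a direct argument, so the two routes are genuinely different: the paper buys brevity and offloads all the work to the references, while you attempt to make the mechanism visible. Your overall strategy --- exhibit the parabolic filtration as a candidate for each single $V^i$-filtration, check the axioms (coherence over $V_0\srR$, behavior under $z_i$ and $\partial_{z_i}$, and the generalized-eigenvalue condition on the graded pieces, which under the Nilpotence Hypothesis reads off directly from the residue eigenvalue $\lambda\beta_i$), and conclude by uniqueness of the $V$-filtration --- is sound and is essentially what the cited references do.

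One caution about your second paragraph: you lean on the locally abelian structure to ``reduce to the rank-one case,'' and you assert that the Nilpotence Hypothesis is what makes the local splitting into parabolic line sheaves compatible with the $\srR_{\srX}$-action. That is backwards. Nilpotent residues are exactly the situation in which the $\lambda$-connection does \emph{not} split as a direct sum of rank-one objects --- the nilpotent part mixes the lines; only the associated graded splits. The locally abelian property is a statement about the parabolic $\Oo$-module structure, not about $\varphi$ or $\nabla$. Fortunately your argument does not actually need the rank-one reduction: the uniqueness characterization of the $V$-filtration only requires that $z_i\partial_{z_i}+\lambda\beta_i$ be \emph{nilpotent} on ${\rm Gr}_{i,\beta_i}$, not that it vanish or diagonalize, and that is precisely what Hypothesis \ref{mainhyp} supplies. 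So you should drop the rank-one reduction and the claim about compatibility of the splitting with the module structure, and instead verify the axioms for the filtration $\{\srE_\gamma\}$ directly; the componentwise nature of the intersection defining $V^{\rm multi}$ then comes for free from the fact that each $V^i$ is separately identified with the parabolic filtration in the $i$-th index.
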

\begin{proof} 
  See \cite[Lemma 3.4.1]{Sabbah}.  Then for the identification with
  the parabolic filtration of the parabolic Higgs bundle, see
  \cite[Corollary 5.3.1]{Sabbah} in the one-dimensional case.  The
  shift between the parabolic index and the index for the
  $V$-filtration was noted in \cite[Section 5]{Sabbah}, passing
  through his introduction of an upper-indexed decreasing filtration
  that already incorporates the shift. There is a change of direction
  in the parabolic filtrations: Sabbah considers a decreasing
  parabolic filtration.  This comparison has been extended to the
  higher dimensional case in Chapter 15 of \cite{MochizukiPure} (cf
  15.1.2 for the shift).
\end{proof}

Unfortunately, the multi-$V$-filtration doesn't enter into the theory
of Saito-Sabbah about compatibility with direct images. We need
instead to look at the $V$-filtration with respect to the full
vertical divisor $\srD _V=f^{-1}(Q)$ defined above.  Along smooth
points of $\srD _V$ this is the same as the multi-$V$-filtration
because there is only one index.  The difficulty occurs at a normal
crossing point, and will be the subject of our main work to follow.

\subsection{Comparison of the two de Rham complexes}
\label{constructmap}

In order to compare the two complexes we would like to have a map
between them. However, they are not complexes on the same space.

Consider $\srE $ as a quasicoherent sheaf of $\Oo _{\srX}$-modules on
$\srX$ and as such take the direct image $g_{\ast}(\srE )$ to $\srP$.
There is an inclusion of sheaves of $\Oo
_{\srP}$-modules
$$
g_{\ast}(\srE )\hookrightarrow
g_{+}(\srE ).
$$
Locally if $t$ is a coordinate on a neighborhood in $Y$ and we let
$\partial _t$ be the corresponding tangent vector field, then we can
write
\begin{equation}
\label{expressiongplus}
g_{+}(\srE ) =
g_{\ast}(\srE )[\partial _t]
\end{equation}
(see \cite[Remark 1.4.7]{Sabbah}). The above inclusion is
just the term with the $0$-th power of $\partial _t$.

If we let $\srE_{\alpha (a)}$ denote the $\alpha (a)$ piece in the
parabolic bundle with $\lambda$-connection $\srE_{\ast}$ then for
$a<1$ we have a morphism
$$
\srE_{\alpha (a)}\rightarrow \srE .
$$

We get the composition
$$
g_{\ast}(\srE_{\alpha (a)})\rightarrow
g_{\ast}(\srE )\rightarrow
g_{+}(\srE ).
$$

Now recall that
$g_{+}(\srE )$ is an $\srR_{\srP}$-module,
in particular it has an action of $V_0\srR_{\srP}$ which is
the $V$-filtration along the divisor $\srX _q$ (here we fix a
point $q\in Q$ which we think of as the origin $t=0$ of a
disc).

Near the horizontal divisors we will be importing later the discussion
of the $L^2$ complex from Sabbah's Section 6.2, so for the moment we
work locally away from the horizontal divisors.

Saito-Mochizuki's description of the $V$-filtration may be
summed up in the following proposition:

\begin{proposition}
\label{vdescrip}
Away from the horizontal divisors, for $a<1$
the $V$-filtration $V_{a-1}$ of the module
$g_{+}(\srE )$ along $\srX _q$ is the
sub-$V_0\srR_{\srP}$-module generated by
the image of
$$
g_{\ast}(\srE_{\alpha (a)})\rightarrow
g_{+}(\srE ).
$$
In fact, more strongly 
it is the sub-$q^{-1}\srR _{\srX}$-module generated
by this image where $q:\srP \rightarrow \srX$ is the
first projection. 
\end{proposition}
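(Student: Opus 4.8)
The plan is to reduce Proposition \ref{vdescrip} to statements already available from Saito and Mochizuki, and then to bootstrap from the weaker ``$V_0\srR_{\srP}$-generation'' assertion to the stronger ``$q^{-1}\srR_{\srX}$-generation'' assertion by a local analysis near the normal crossings point. First I would set up notation: work in a neighborhood of a double point of $\srD_V$, with local coordinates $x_1, x_2$ on $\srX$ so that $f = x_1 x_2$ (or, in the smooth-point case, $f = x_1$), let $t$ be the coordinate on $\srY$, and recall from \eqref{expressiongplus} that $g_+(\srE) = g_\ast(\srE)[\partial_t]$ as a $q^{-1}\srR_{\srX}$-module, with the $V_0\srR_{\srP}$-action determined by the rule that $t$ acts as multiplication by $f$ and $\lambda\partial_t$ increases pole order. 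The first main step is to quote the existing description: by the combination of Mochizuki's higher-dimensional results \cite{MochizukiPure} (Chapter 15, as used in Proposition \ref{multivcomp}) and Saito's original graph-construction analysis \cite{SaitoMHP}, the $V_{a-1}$-filtration on $g_+(\srE)$ is generated over $V_0\srR_{\srP}$ by the image of the $0$-th parabolic level $g_\ast(\srE_{\alpha(a)})$ for $a<1$; this is the content of the first sentence and is precisely the ``weaker version'' the introduction refers to, to be recorded cleanly (the author says the proof of passing to the stronger version is furnished in Corollary \ref{vgen2} and Lemma \ref{vgen3}).

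The second, and genuinely new, step is the upgrade to $q^{-1}\srR_{\srX}$-generation. The point is that $V_0\srR_{\srP}$ is generated over $\Oo_{\srP}$ by $q^{-1}\srR_{\srX}$ together with the Euler operator $t\partial_t$ (equivalently $f \cdot \partial_t$ in the graph picture), so a priori the $V_0\srR_{\srP}$-span could be strictly larger than the $q^{-1}\srR_{\srX}$-span unless one shows that applying $\lambda\partial_t$ and then $f$ does not produce anything new. Here I would use the explicit form of the graph construction together with the Nilpotence Hypothesis \ref{mainhyp}: the residue of the $\lambda$-connection on the $\beta_k$-graded piece is $\lambda\beta_k$, so on $\srE_{\alpha(a)}$ with $a<1$ the relevant ``$V$-indices'' are all strictly below the threshold, and the operator $t\partial_t + a$ acts nilpotently on the associated graded. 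Concretely, for a local section $e$ of $g_\ast(\srE_{\alpha(a)})$ one computes $f\cdot\partial_t(g(e))$ inside $g_+(\srE)$ and rewrites it, using the Leibniz rule for the graph embedding, as a combination of $q^{-1}\srR_{\srX}$ applied to $g(e)$ plus a term lying again in the image of $g_\ast(\srE_{\alpha(a)})$ — this is where the logarithmic nature of $\nabla$ along $\srD_V$ (Theorem \ref{hblambda}) and the hypothesis $r_i = 0$ are used to control pole orders. Iterating, one gets that the full $V_0\srR_{\srP}$-submodule generated by the image is already $q^{-1}\srR_{\srX}$-generated, giving the stronger statement.

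The third step is to handle coherence and the degenerate versus normal-crossings cases uniformly. At a smooth point of $\srD_V$ the multi-$V$-filtration and the $V$-filtration agree (as noted after Proposition \ref{multivcomp}), so the statement there follows directly from Proposition \ref{multivcomp} applied to the single relevant index, together with the observation that $g$ is then a closed embedding transverse to $\srX_q$ and $g_+$ is just $g_\ast$ followed by the Kashiwara equivalence in one codirection. At the double point one must keep track of both parabolic indices $\alpha(a)_{v(1)} = \alpha(a)_{v(2)} = a$; the claim is that the $q^{-1}\srR_{\srX}$-submodule generated by $g_\ast(\srE_{\alpha(a)})$ is stable under $t\partial_t$, which I would verify by the pole-order bookkeeping of the previous paragraph applied separately in the two normal directions $x_1, x_2$ and then combined using $t = x_1 x_2$. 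Finally one checks that the resulting submodule satisfies the defining axioms of $V_{a-1}$ recalled in subsection \ref{j} — coherence over $V_0\srR_{\srP}$, the generalized-eigenvalue condition on the associated graded — invoking uniqueness of the $V$-filtration to conclude equality.

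The hard part will be the pole-order estimate in the second step at the double point: one must show that the naive $V_0\srR_{\srP}$-generators obtained by repeatedly applying $t\partial_t$ do not escape the $q^{-1}\srR_{\srX}$-span, and this requires knowing precisely how the graph-construction differential interacts with the two-variable monodromy weight filtration and the logarithmic connection, i.e. it is exactly the compatibility between the graph construction and the parabolic picture that the introduction flags as ``our main result.'' I expect this to hinge on the Nilpotence Hypothesis \ref{mainhyp} (so that no fractional or non-real shifts appear and the operator $t\partial_t + a$ is genuinely nilpotent on the graded pieces for $a < 1$) and on the locally abelian structure of $\underline{\srE}$ from Theorem \ref{hblambda}, which lets one reduce the estimate to rank-one model computations.
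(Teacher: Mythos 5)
Your proposal follows essentially the same route as the paper: quote Saito--Mochizuki for one form of the generation statement, then show that the $V_0\srR_{\srP}$-span and the $q^{-1}\srR_{\srX}$-span of the image of $g_{\ast}(\srE_{\alpha(a)})$ coincide by an explicit local rewriting of $t\partial_t$ near the double point, which is exactly what the paper does in Corollary \ref{vgen2} and Lemma \ref{vgen3}. Two corrections of emphasis are worth recording. First, the direction of quotation is inverted: the references actually state and prove the \emph{stronger} form ($q^{-1}\srR_{\srX}$-generation), so the paper's own contribution is the equality of the two spans (needed to see that this submodule is indeed a $V_0\srR_{\srP}$-module as the axioms require), not an upgrade from weak to strong. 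Second, the computation you flag as the hard part is in fact a short one: since sections of $g_{\ast}(\srE)$ are killed by $t-f$, any $V_0\srR_{\srP}$-operator acting on them collapses to an $\srR_{\srX}[s]$-operator with $s=t\partial_t$, and then $s = xA_x - x\varphi_x$ together with the logarithmicity of $\varphi$ (so that $x\varphi_x$ preserves $\srE_{\alpha(a)}$) shows the $s$-action produces nothing outside the $\srR_{\srX}$-span; neither the Nilpotence Hypothesis nor the monodromy weight filtrations enter this step, and your proposed re-verification of the $V$-filtration axioms via uniqueness is not needed once the quoted identification is in place.
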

\begin{proof}
  Mochizuki gives a review in \cite[12.2.2]{MochizukiWild}, refering
  to \cite[Section 16.1]{MochizukiPure} and going back to
Saito's  \cite[Theorem 3.4]{SaitoMHM}.  Note that they phrase the statement
  in terms of the multi-$V$-filtration considered before, then we use
  the compatibility of Proposition \ref{multivcomp}.
  
  The stronger statement that $V_{a-1}g_{+}(\srE )$
  is obtained from $g_{\ast}(\srE_{\alpha (a)})$ just
  by the action of $\srR _{\srX}$ is what is actually stated and shown in the references. In order to clarify the property that
this generates a $V_0\srR_{\srP}$-module, we'll show ourselves
that the two generation expressions are the same in Section 
\ref{sec-nc} below. 
\end{proof}

\

\noindent
In particular we obtain a map
\begin{equation}
\label{gstargplus}
g_{\ast}(\srE_{\alpha (a)})\rightarrow
V_{a-1}(g_{+}(\srE )).
\end{equation}
We would like to use this to obtain a map of
de Rham complexes. 
Recall that
$$
DR^{\rm par}_{L^2}(\srX/\srY,\srE _{\alpha (a)}):=
\Big[ W_0(H,\srE _{\alpha (a)}) \longrightarrow
W_{-2}(H,\srE _{\alpha (a)}) \otimes_{\Oo _{\srX}}
\Omega ^1_{\srX / \srY} (\log \srD )
\Big] .
$$
Locally away from the horizontal divisor this may be written
more simply as
$$
\Big[ \srE _{\alpha (a)} \longrightarrow
\srE _{\alpha (a)} \otimes_{\Oo _{\srX}}
\Omega ^1_{\srX / \srY} (\log \srD )
\Big].
$$
Apply $g_{\ast}$ to this complex, to get a complex
whose terms are coherent $\Oo _{\srP}$-modules.
The differential is still well-defined, since it comes from 
an action of vector fields tangent to the image $\srG =g(\srX )$
of the graph embedding. We get a complex
that we would like to map to the
de Rham complex $DR(\srP/\srY,V_{a-1}(g_+(\srE )))$
on $\srP$ relative to $\srY$. 

If $Y$ is a disk with coordinate $t$, $\Omega ^1_{\srY}$ is
trivialized with generator denoted $dt$.
Wedge with $dt$ gives maps of sheaves of differentials
$$
\Omega ^i_{\srX /\srY} (\log \srD )
\stackrel{\wedge dt}{\longrightarrow}
\Omega ^{i+1}_{\srX} . 
$$
In local coordinates, $t=xy$ and the above map for $i=1$ is
given by 
$$
\frac{dx}{x} \mapsto \frac{dx}{x} \wedge dt = 
dx \wedge dy . 
$$
Recall here that $\Omega ^1_{\srX / \srY} (\log \srD )$
is locally free of rank $1$ on $\srX$.

Take the product with a copy of $\srY$. 
Introduce the notation $T:= Y\times Y$ and
$\srT := \srY \times _{\srA}\srY$. We have a map
$$
(f,1):\srP \rightarrow \srT .
$$
Let $t_1$ and $t_2$ denote the two coordinates on $\srT$. 
Over $\srP$ the previous multiplication map,
interpreted in the first variable of $\srP = \srX \times _{\srA}\srY$, gives the multiplication map 
$$
\Omega ^i _{\srP / \srT} (\log \srD\times _{\srA}\srY )
\stackrel{\wedge dt_1}{\longrightarrow}
\Omega ^{i+1}_{\srP /\srY},
$$
where $\srD\times _{\srA}\srY \subset \srP$ is the divisor induced by $\srD \subset \srX$. 

Using this together with the previous
map \eqref{gstargplus}
gives a map between complexes that may be described
as follows.

Rewriting the terms,
we claim that
$$
g_{\ast}(\Omega ^i _{\srX / \srY} (\log \srD ))
=
g_{\ast}(\Oo _{\srX}) \otimes _{\Oo _{\srP}}
\Omega ^i_{\srP /\srT}(\log \srD\times _{\srA}\srY ),
$$
where $\srD\times _{\srA}\srY \subset \srP$ is the induced divisor.
To see this use the first projection $q:\srP \rightarrow \srX$
and note 
$$
\Omega ^i_{\srP /\srT}(\log \srD\times _{\srA}\srY )=
q^{\ast}(\Omega ^i _{\srX / \srY} (\log \srD )).
$$
Clearly
$$
g_{\ast}(\Oo _{\srX}) \otimes _{\Oo _{\srP}}
q^{\ast}(\Omega ^i _{\srX / \srY} (\log \srD ))
= g_{\ast}(\Omega ^i _{\srX / \srY} (\log \srD ))
$$
since $g$ is a section of the projection. 

Using the above formula gives
$$
g_{\ast}\left(
\srE _{\alpha (a)}\otimes _{\Oo _{\srX}}
\Omega ^i_{\srX /\srY} (\log \srD )
\right)
= (g_{\ast}\srE  _{\alpha (a)}) \otimes
_{\Oo _{\srP}}
\Omega ^i_{\srP /\srT}(\log \srD\times _{\srA}\srY ).
$$
Then use 
$$
\Omega ^i_{\srP /\srT}(\log \srD\times _{\srA}\srY )
\stackrel{\wedge dt_1}{\longrightarrow}
\Omega ^{i+1}_{\srP /\srY}
$$
to get a map
$$
g_{\ast}\left(
\srE _{\alpha (a)}\otimes _{\Oo _{\srX}}
\Omega ^i_{\srX /\srY} (\log \srD )
\right)
\longrightarrow
g_{\ast}(\srE  _{\alpha (a)})\otimes
_{\Oo _{\srP} }
\Omega ^{i+1}_{\srP /\srY} .
$$
Compose furthermore with 
$$
(g_{\ast}\srE  _{\alpha (a)})
\rightarrow
V_{a-1} (g_+(\srE ))
$$
to obtain a map
\begin{equation}
\label{map3}
g_{\ast}\left(
\srE _{\alpha (a)}\otimes _{\Oo _{\srX}}
\Omega ^i_{\srX /\srY} (\log \srD )
\right)
\longrightarrow
V_{a-1} (g_+(\srE ))\otimes
_{\Oo _{\srP} }
\Omega ^{i+1}_{\srP /\srY } .
\end{equation}

Another way of looking at things is as follows: 
let $T(\srG /\srY )$ 
be the sheaf of tangent vector fields on the graph 
$\srG := g(\srX )\subset \srP$, relative to $\srY$.  
Thinking that $\srG\cong \srX$, 
these tangent vector fields are just the tangent 
vector fields on 
$\srX$ relative to $\srY$, and that bundle is dual to 
$\Omega ^1_{\srX /\srY} (\log \srD )$.  
Now, this sheaf of tangent vector 
fields acts on $g_{\ast}(\srE _{\alpha (a)})$ and 
(away from the horizontal divisors) the parabolic de Rham complex 
$g_{\ast}DR^{\rm par}_{L^2}(\srX/\srY,\srE _{\alpha (a)})$ 
is just the de Rham complex (or perhaps more accurately 
the ``Spencer complex'' \cite[\S 0.6]{Sabbah})
associated to this action. On the other hand, the relative 
tangent vectors to the graph map to the 
tangent vectors of $\srP /\srY$:
$$
T(\srG /\srY )\rightarrow T(\srP /\srY )|_{\srG}.
$$
This map induces the desired map on de Rham complexes.

The expression \eqref{expressiongplus} 
for $g_{+}(\srE )$, if made in
the world of left modules, requires a 
choice of coordinate on $Y$. To get an invariant expression
in terms of left modules (we thank T. Mochizuki
for asking for that), let 
$\srG = g(\srX )\subset \srP $ coming from 
$G=g(X)\subset P$ denote the graph divisor, and
recall that the notation $(\ast \srG )$ means functions
with arbitrary finite order of pole along $\srG$. 
Let 
$$
\Oo _{\srP}({}^{\lambda}\!\ast \srG )\subset
\Oo _{\srP}(\ast \srG )
$$
denote the sub-$\srR_{\srP}$-module generated by 
the functions with a pole of order $1$. It is the minimal
extension (Definition \ref{minext}) 
of $\Oo _{\srP -\srG}$, and may be written down
as the sheaf of sections that are sums of $\lambda ^{m-1}$
times functions with poles of order $m$. 
Use the notation $({}^{\lambda}\!\ast\srG )$ for 
tensoring with $\Oo _{\srP}({}^{\lambda}\!\ast \srG )$. 
 
Recall
also that $q:\srP \rightarrow \srX$ denotes the 
first projection. The
canonical expression is 
\begin{equation}
\label{canexp}
g_+(\srE ) = \frac{q^{\ast}(\srE) ({}^{\lambda}\!\ast \srG )}{ 
q^{\ast}(\srE)}. 
\end{equation}
The map \eqref{gstargplus} depends on a choice of
section of $\Oo _{\srP} ({}^{\lambda}\!\ast \srG )/\Oo _{\srP}$  having
a pole of order one, that depends on our choice
of coordinate on $Y$. 

Writing down a canonical map will 
in fact give the beginning of our
map of complexes. Let 
$$
\zeta _{\srG}\in \frac{\Omega ^1_{\srP /\srY} (\log \srG )}
{\Omega ^1_{\srP /\srY}} 
\subset 
\frac{\Omega ^1_{\srP /\srY} ({}^{\lambda}\!\ast \srG )}
{\Omega ^1_{\srP /\srY}} 
$$
be the unique section given by a logarithmic differential 
with residue $1$ along $\srG$. 
Multiplication by $\zeta _{\srG}$ gives a map 
\begin{equation}
\label{cpx1}
q^{\ast}(\srE ) |_{\srG} = g_{\ast}(\srE ) 
\stackrel{\cdot \zeta _{\srG}}{\longrightarrow} 
\frac{q^{\ast}(\srE )\otimes \Omega ^1_{\srP /\srY}
({}^{\lambda}\!\ast \srG )}{q^{\ast}(\srE )\otimes 
\Omega ^1_{\srP /\srY}}
= g_+(\srE )\otimes \Omega ^1_{\srP / \srY} .
\end{equation}
For the second step of the complex, consider the
map of  multiplication by $\zeta _{\srG}$,
$$
q^{\ast}(\Omega ^1_{\srX}) = \Omega ^1_{\srP / \srY}
\stackrel{\wedge \zeta _{\srG}}{\longrightarrow} 
\frac{\Omega ^2_{\srP /\srY} 
({}^{\lambda}\!\ast \srG )}{\Omega ^2_{\srP /\srY}}.
$$
We claim that it factors through and extends
to a map
$$
q^{\ast}(\Omega ^1_{\srX / \srY} (\log \srD ))
\stackrel{\wedge \zeta _{\srG}}{\longrightarrow} 
\frac{\Omega ^2_{\srP /\srY} 
({}^{\lambda}\!\ast \srG )}{\Omega ^2_{\srP /\srY}}.
$$

Let us calcuate this locally, with $Y$ a disk having coordinate
$t_2$
and $X$ a product of two disks with coordinates $x,y$.
The map is $t_1=f(x,y)=xy$ so the graph is given by
the equation $xy-t_2$. Note that $dt_1=ydx + xdy$ so
$$
\Omega ^1_{\srX / \srY}  = \frac{\Omega ^1_{\srX}}{\langle
ydx + xdy\rangle }.
$$
On the other hand, $P$ has coordinates $x,y,t_2$ and 
$$
\zeta _{\srG} = \frac{d(xy-t_2)}{xy-t_2}
=d\log (t_1-t_2). 
$$
The factorization part of the
claim is equivalent to the equation 
\begin{equation}
\label{claimeq}
(ydx + xdy) \wedge \frac{d(xy-t_2)}{xy-t_2} = 0 \mbox{  mod }
dt_2 .
\end{equation}
This equation is clear since, modulo $dt_2$ it just
becomes $d(xy) \wedge d(xy) / (xy-t_2) =0$. 
The extension part of the claim asks that
$$
\frac{dx}{x}\wedge \frac{d(xy-t_2)}{xy-t_2}
\in \Omega ^2_{\srP /\srY} 
({}^{\lambda}\!\ast \srG ) .
$$
Again modulo $dt_2$ the expression becomes 
$$
\frac{dx}{x}\wedge \frac{ydx + xdy}{xy-t_2}
= \frac{dx\wedge dy}{xy-t_2}
$$
which is indeed in $\Omega ^2_{\srP /\srY} 
({}^{\lambda}\!\ast \srG )$. The same
holds for multiplication by $dy/y$. 
This completes the proof of the claim. 

We now get the map of the second term in our complex:
$$
q^{\ast}(\srE \otimes 
\Omega ^1_{\srX /\srY}(\log \srD )) |_{\srG} 
= g_{\ast}(\srE \otimes 
\Omega ^1_{\srX /\srY}(\log \srD )) 
$$
\begin{equation}
\label{cpx2}
\stackrel{\wedge \zeta _{\srG}}{\longrightarrow}
\frac{q^{\ast}(\srE )\otimes \Omega ^2_{\srP /\srY}
({}^{\lambda}\!\ast \srG )}{q^{\ast}(\srE )\otimes 
\Omega ^2_{\srP /\srY}}
= g_+(\srE )\otimes \Omega ^2_{\srP / \srY} .
\end{equation}

\

\begin{lemma}
\label{mapcomplexes}
The above maps \eqref{map3} 
for $i=0,1$ are the same as the
maps \eqref{cpx1} and \eqref{cpx2} respectively. 
These maps are compatible with the differentials
in the complexes, so they define a map of complexes
$$
u(a): g_{\ast}(DR^{\rm par}_{L^2}(\srX/\srY,
\srE  _{\alpha (a)}))
\rightarrow
DR(\srP/\srY,V_{a-1}(g_+(\srE ))).
$$
It is shifted, i.e. it
sends the degrees $0,1$ in the first one, to the
degrees $1,2$ in the second one.
\end{lemma}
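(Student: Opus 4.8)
The statement has three parts: that the maps \eqref{map3} coincide with \eqref{cpx1}, \eqref{cpx2}; that these maps intertwine the differentials; and that the resulting map of complexes carries degrees $0,1$ to degrees $1,2$. The third is immediate, since multiplication by the relative $1$-form $\zeta_\srG$ raises form degree by one; the first is an unwinding of definitions, and the second is where the real work lies.

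To identify \eqref{map3} with \eqref{cpx1}, \eqref{cpx2}, I would work in the local model of subsection \ref{constructmap}, writing $h:=t_1-t_2$ for the defining equation of the graph divisor $\srG$, so that $\zeta_\srG$ is the class of $d\log h$ in $\Omega^1_{\srP/\srY}({}^{\lambda}\!\ast\srG)/\Omega^1_{\srP/\srY}$. Two observations then suffice. First, $t_1$ is the pullback of $f$, so $dt_1$ (the relative differential) is a logarithmic section of $q^{\ast}\Omega^1_{\srX/\srY}(\log\srD)$, and $\zeta_\srG=h^{-1}\,dt_1$ in the relative quotient; the corresponding identity $\omega\wedge\zeta_\srG=\omega\wedge(h^{-1}dt_1)$ modulo $\Omega^2_{\srP/\srY}$ is exactly what is verified by equation \eqref{claimeq} and the ``extension'' part of the claim proved just above. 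Hence wedging with $dt_1$ followed by the projection into $g_+(\srE)$ agrees with wedging by $\zeta_\srG$. Second, under the canonical presentation \eqref{canexp}, the inclusion $g_{\ast}(\srE)\hookrightarrow g_+(\srE)$ as the zero-th power of $\partial_t$ in \eqref{expressiongplus} is the map $e\mapsto[\,q^{\ast}(e)\,h^{-1}\,]$, the class of a section with a pole of order one --- this is the point at which the choice of coordinate $t_2$ on $Y$ enters, as noted after \eqref{canexp}. Combining the two, the composite \eqref{map3} sends $e\otimes\omega$ to $[\,q^{\ast}(e)\,q^{\ast}(\omega)\wedge\zeta_\srG\,]$ for $i=0,1$; precomposed with the inclusion $\srE_{\alpha(a)}\hookrightarrow\srE$ this is \eqref{cpx1}, \eqref{cpx2}, the image lying inside $V_{a-1}(g_+(\srE))$ by Proposition \ref{vdescrip}. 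Since all the maps involved are $\Oo$-linear and the identification is local, this settles the first part.

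For the chain-map property, since the source complex has only the two terms in degrees $0$ and $1$ while $u(a)$ lands in degrees $1$ and $2$, there is a single commuting square to verify, and it can be checked locally away from the horizontal divisors --- where $DR^{\rm par}_{L^2}(\srX/\srY,\srE_{\alpha(a)})$ is simply $[\,\srE_{\alpha(a)}\xrightarrow{\nabla}\srE_{\alpha(a)}\otimes\Omega^1_{\srX/\srY}(\log\srD)\,]$. Over a smooth point of $f$ everything is in the smooth-map situation of subsection \ref{j}, and the square amounts to the classical compatibility (Kashiwara) of the relative de Rham complex of $\srE$ with that of its direct image $g_+(\srE)$. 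At a vertical double point I would use coordinates with $t_1=f=xy$ and base coordinate $t_2$, so $\srG=\{xy-t_2=0\}$, $\zeta_\srG=d\log(xy-t_2)$, the sheaf $\Omega^1_{\srX/\srY}(\log\srD)$ is free on $dx/x$ with $dy/y=-dx/x$, and its dual $T_{\srX/\srY}(-\log\srD)$ is free on the log vector field $\xi=x\partial_x-y\partial_y$. By Theorem \ref{hblambda}, and using $\srE_{\alpha(a)}\subset\srE$ for $a<1$, the relative logarithmic $\lambda$-connection is $\nabla_{\rm rel}e=(\xi\cdot e)\otimes(dx/x)$, with $\xi\cdot e$ the action of the $\srR_\srX$-module structure of $\srE$. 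Running both ways around the square: applying the relative de Rham differential of the $\srR_\srP$-module $g_+(\srE)$ to $u(a)(e)=[\,q^{\ast}(e)\,\zeta_\srG\,]$ brings in only the operators $\partial_x=\lambda\,\partial/\partial x$ and $\partial_y$, which lie in $q^{-1}\srR_\srX\subset V_0\srR_\srP$ and hence preserve $V_{a-1}(g_+(\srE))$ by Proposition \ref{vdescrip}; a short Leibniz computation, in which the two pole-derivative terms coming from $\partial_x(h^{-1}x)$ and $\partial_y(h^{-1}y)$ cancel, yields $[\,h^{-1}q^{\ast}((x\partial_x-y\partial_y)\cdot e)\,]\,dx\wedge dy$. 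On the other hand $u(a)(\nabla_{\rm rel}e)=[\,q^{\ast}(\xi\cdot e)\cdot(dx/x)\wedge\zeta_\srG\,]=[\,h^{-1}q^{\ast}(\xi\cdot e)\,]\,dx\wedge dy$ is the same element, so the square commutes.

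The main obstacle will be this double-point square. The friction is that the source differential is the \emph{logarithmic} relative $\lambda$-connection on a single parabolic level $\srE_{\alpha(a)}$, whereas the target differential uses the whole $\srR_\srP$-module structure of $g_+(\srE)$, including the \emph{non}-logarithmic fields $\partial_x,\partial_y$ which move a section off $g_{\ast}(\srE)$ into higher $\partial_t$-terms. Reconciling them requires both the $q^{-1}\srR_\srX$-generation statement of Proposition \ref{vdescrip} --- so that these non-logarithmic fields still preserve the level $V_{a-1}(g_+(\srE))$ --- and the cancellation, after Leibniz expansion, of the pole-increasing contributions in the action on the order-one classes $[\,q^{\ast}(e)\,h^{-1}(\,\cdot\,)\,]$; this cancellation is special to the local geometry $t=xy$ and is precisely what makes the graph construction the right device. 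Finally, in a neighbourhood of a horizontal divisor one must in addition carry along the weight-filtration truncations $W_0(H,\,\cdot\,)$, $W_{-2}(H,\,\cdot\,)$ and Sabbah's $L^2$ description from \cite[\S 6.2]{Sabbah}, which we postpone to the point in the text where that material is imported.
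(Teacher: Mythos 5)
Your proposal is correct and follows essentially the same route as the paper: the identification of \eqref{map3} with \eqref{cpx1}, \eqref{cpx2} via the order-one pole $e\mapsto e/(t_1-t_2)$ relating \eqref{canexp} to \eqref{expressiongplus}, and the commutation of the square because wedging with $\zeta_\srG=d\log(t_1-t_2)$ commutes with the de Rham differential. Your explicit Leibniz cancellation of the $\partial_x(h^{-1}x)$ and $\partial_y(h^{-1}y)$ terms at the double point is precisely the closedness of $\zeta_\srG$, which is the one-line argument the paper gives before deferring the fully explicit verification to the later local calculations.
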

\begin{proof}
For the compatibility statement note that 
$W_{\ell}(H,\srE _{\alpha (a)})\subset \srE$. 
The maps \eqref{cpx1} and \eqref{cpx2}  
are defined on $\srE$ but they restrict to maps
defined on the appropriate $W_{\ell}(H,\srE _{\alpha (a)})$.
Conserving the notations $t_1$ and $t_2$ for the
two maps from $\srP$ to $\srY$, with $t_1=xy$ and
$xy-t_2=t_1-t_2$ being the equation of the graph, 
the identification between \eqref{canexp} and 
\eqref{expressiongplus} is
\begin{equation}
\label{identif}
\frac{e}{(t_1-t_2)}\mapsto e.
\end{equation}
The maps \eqref{map3} for $i=0,1$ are 
$e\mapsto e\wedge dt_1$ whereas \eqref{cpx1} and
\eqref{cpx2} are
$e\mapsto e\wedge \frac{dt_1}{t_1-t_2}$. These are
the same after the identification \eqref{identif}.

The differential $\zeta _{\srG}=d\log (t_1-t_2)$ is closed,
so wedging with it commutes with the exterior derivative. 
Commutativity of the square comprising this map of complexes
can be seen more explicitly in the calculations of the
next sections. 
\end{proof}

\noindent {\em Remark:} \, Recall that the above
general discussion was happening away
from the horizontal divisors, in order to lighten the notation and not
include the weight filtrations and so forth. At the horizontal
divisors, the map from the $L^2$ complex to the $\srR$-module de Rham
complex exists, by Sabbah's discussion of \cite[Section 6.2.a, Lemma
6.2.2]{Sabbah}.  This needs to be interpreted in the following way.
In the general theory, if we try to define the de Rham complex using
the graph embedding for a smooth function, we get a different complex
but one that is quasiisomorphic. The map expressing this
quasiisomorphism will go from the de Rham complex on the smaller
variety, pushed forward to the graph, towards the de Rham complex of
the graph embedding (we will be doing a version of this at the normal
crossing points, below).  We should compose this map, with the map
from the $L^2$ complex to the full de Rham complex given in
\cite[Section 6.2.a]{Sabbah}, in order to obtain the map from our
$L^2$ Dolbeault complex to $DR(\srP/\srY,V_{a-1}g_+(\srE ))$. This will coincide
with the map we are considering here away from the horizontal
divisors.

Our main result is:

\begin{theorem}
\label{cpxqis}
The map of complexes $u(a)$ in Lemma \ref{mapcomplexes}
is a quasiisomorphism.
\end{theorem}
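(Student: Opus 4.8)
The plan is to prove the statement by a local analysis on $\srX\cong \srG\subset\srP$, since being a quasi-isomorphism is a local question, and then to reduce the whole assertion to the fiber $\lambda=0$. Indeed, by Theorem \ref{rthm}(1) together with the compatibility of the $V$-filtration with higher direct images, the sheaves $V_{a-1}(g_+\srE)$ are strict along the $\lambda$-line, hence $\Oo_{\srA}$-flat, and by \eqref{l2pardR} the $L^2$ de Rham complex restricts compatibly to the $L^2$ Dolbeault complex; thus the mapping cone of $u(a)$ is a bounded complex with $\Oo_{\srA}$-flat terms. By base change and semicontinuity this cone is acyclic in a neighborhood of $\lambda=0$ as soon as $u_0(a)=u(a)|_{\lambda=0}$ is a quasi-isomorphism, while for $\lambda\neq 0$ the map $u(a)$ is the classical comparison between the $L^2$ de Rham complex of the flat bundle $(E(\lambda),\nabla(\lambda))$ and the graph-construction de Rham complex of the associated $\Dd_X$-module, which is part of Sabbah's theory (\cite[Chapter 6]{Sabbah}). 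So everything comes down to proving that $u_0(a)$ is a quasi-isomorphism, point by point on $\srX$.

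Next I would dispose of the smooth points, which is the content of the reduction in Section \ref{sec-qi} leading to Theorem \ref{maincalc}. Away from $D$ there is nothing to check. At any point of the smooth locus of $f$ — in particular at a smooth point of a vertical component $D_{v(i)}$, or at a point of $D_V\cap D_H$ — the map $f$ is a submersion given by a coordinate, so the graph embedding is the standard one: the $V$-filtration along the smooth divisor $\srX_q$ is the ordinary Kashiwara--Malgrange filtration, which by Proposition \ref{multivcomp} reproduces the parabolic bundle $\srE_{\alpha(a)}$, and wedging with $\zeta_{\srG}=d\log(t_1-t_2)$ realizes the usual quasi-isomorphism between the relative Spencer/de Rham complex on $\srX$ and its graph-shifted counterpart on $\srP$. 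Near the horizontal components, which lie in the smooth locus of $f$, one composes this with Sabbah's comparison of the $L^2$ complex with the full $\srR$-module de Rham complex (\cite[\S 6.2.a, Lemma 6.2.2]{Sabbah}), as in the remark following Lemma \ref{mapcomplexes}; the weight filtrations $W_0(H,-)$ and $W_{-2}(H,-)$ are exactly what make the two sides agree. This leaves only the double points of $D_V$.

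At a double point we take local coordinates $x,y$ on $X$ and $t$ on $Y$ with $f=xy$, so the graph divisor is $\srG=\{xy-t=0\}$ and $g_+(\srE)$ is given by \eqref{canexp}. The key input is Proposition \ref{vdescrip}, sharpened in Corollary \ref{vgen2} and Lemma \ref{vgen3}: $V_{a-1}(g_+\srE)$ is generated over $q^{-1}\srR_{\srX}$ by the image of $g_{\ast}(\srE_{\alpha(a)})$, so the target of $u_0(a)$ admits a presentation built from $\srE_{\alpha(a)}$ and the action of the logarithmic vector fields $\partial_x,\partial_y$ — which is precisely the shape of the source complex. Restricting to $\lambda=0$ and filtering both complexes by the monodromy weight filtration, Proposition \ref{reduction2} reduces the problem to the graded map ${\rm gr}_a(u_0)$ of \eqref{gru}. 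At this point the locally abelian hypothesis of Theorem \ref{hbthm} together with a Tensor Product Formula lets one split the local node model as a tensor product of rank-one models in $x$ and in $y$, each an explicit complex of Koszul type, and the quasi-isomorphism is then checked directly on these building blocks, tracking how the single $V$-index $a$ along $t=xy$ distributes over the two parabolic weights along $x=0$ and $y=0$ and how the $W_0/W_{-2}$ truncation intervenes.

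The main obstacle is this last step at the double points. Concretely identifying $DR(V_{a-1}(g_+\srE))|_{P(0)}$ at a node — describing how the successive levels of the $V$-filtration of $g_+\srE$ along $t=xy$ are produced from $\srE_{\alpha(a)}$ by repeatedly applying $\partial_x$ and $\partial_y$, and how this interacts with the weight grading $W(N_{k,b})$ — is the heart of the argument; the Tensor Product Formula and the explicit Koszul computations of Section \ref{sec-nc} are the tools tailored to carrying out this bookkeeping and matching the result with the $L^2$ parabolic Dolbeault complex \eqref{l2pardol}.
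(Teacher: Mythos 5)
Your overall architecture matches the paper's: reduce to $\lambda=0$ by a flatness/perfect-complex argument, dispose of the smooth locus by the independence of the $V$-filtration from the graph construction, quote Sabbah for the points of $D_H\cap D_V$, and isolate the double points of $D_V$ as the remaining case. (One remark on the first step: for $\lambda\neq 0$ the paper does not simply invoke a ``classical comparison''; Corollary \ref{downstairs} gets an open neighborhood of $\lambda=0$ from perfectness and then extends to all $\lambda$ by degenerating the $\lambda$-connection $(\underline{E}(\lambda),t\nabla)$ to a Higgs bundle as $t\to 0$ and using semicontinuity again. Your appeal to Sabbah at $\lambda\neq0$ is not obviously available in the form you need.)

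The genuine gap is that the double-point case — the entire content of Theorem \ref{maincalc}, which is what this theorem is really about — is not proved; you only name the tools. Moreover your description of those tools is inaccurate in ways that suggest the argument has not actually been thought through. The Tensor Product Formula is \emph{not} a splitting of the local node model into rank-one models in $x$ and in $y$: it is the identification $\Psi_{b-1}\cong\psi_{b,b}\otimes_{\Oo_D[w]}\Oo_D[u,v]$, where $w=xu-yv$ acts as $\varphi_{\log}=x\varphi_x-y\varphi_y$, i.e.\ a base change of polynomial rings presenting the nearby cycles of $g_+\srE$ in terms of the parabolic graded piece; proving it requires the nontrivial injectivity statement that $E_{b,b}[u,v]\cap(xu-yv-\varphi_{\log})\widetilde E[u,v]=(xu-yv-\varphi_{\log})E_{b,b}[u,v]$. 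After that one must still prove the Koszul quasiisomorphism theorem comparing the rank-one Koszul complex of $(\psi_{b,b},\varphi_{\log})$ with the rank-two Koszul complex of $(\Psi_{b-1},u,v)$, which is done by a filtration reducing to $\varphi=0$ and then an explicit monomial computation for $M=\Oo_D$ and $M=\Oo_D/x$. Two further misstatements: Proposition \ref{reduction2} filters by the \emph{parabolic} weights (the graded pieces $Q(a')/Q(a'-\varepsilon)$), not by the monodromy weight filtration; and the $W_0/W_{-2}$ truncation does not ``intervene'' at the double points at all, since the crossings of $D_V$ are disjoint from $D_H$ by hypothesis. You also omit the reduction to the fiber $t=0$ (Proposition \ref{reduction1}), which is needed before the graded reduction makes sense. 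As it stands the proposal is a correct table of contents for the proof, but the theorem is not proved.
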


\begin{corollary}
The higher direct images from $\srX$ to $\srY$
of the complex
$DR^{\rm par}_{L^2}(\srX/\srY,\srE  _{\alpha (a)})$, or of any
of its restrictions to $X(\lambda )$, notably
including the Dolbeault complex at $\lambda = 0$,
are locally free
and compatible with base-change.
\end{corollary}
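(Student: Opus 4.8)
The plan is to read the corollary off from Theorem~\ref{cpxqis} together with Theorem~\ref{rthm} and the compatibility of $V$-filtrations with higher direct images. Assume first $a<1$, so that the map $u(a)$ of Lemma~\ref{mapcomplexes} is defined; by Theorem~\ref{cpxqis} it is a quasiisomorphism. Both complexes involved are bounded complexes of locally free $\Oo_{\srX}$- (resp.\ $\Oo_{\srP}$-) modules, hence of $\Oo_{\srA}$-flat sheaves --- the flatness over $\srA$ was already used for \eqref{l2pardR}, and $V_{a-1}(g_{+}\srE)$ is $\Oo_{\srA}$-flat because $\srE$ is strict and strictly specializable --- so the quasiisomorphism survives the naive restriction to any $X(\lambda)$ (and $P(\lambda)$), whence $u_{0}(a)$ and all the $u_{\lambda}(a)$ are quasiisomorphisms too. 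Since $g$ is a closed immersion and $p\circ g=f$, applying $\mathbb{R}^{i}p_{\ast}$ and invoking the compatibility proposition (Sabbah \cite[Theorem 3.1.8]{Sabbah}, after Saito \cite[Proposition 3.3.17]{SaitoMHP}) yields
$$
\mathbb{R}^{i}f_{\ast}\bigl( {\rm DR}^{\rm par}_{L^2}(\srX/\srY,\srE_{\alpha(a)}) \bigr) \;\cong\; V_{a-1}(\widetilde{\srF}^{i}),
$$
the degree shift recorded in Lemma~\ref{mapcomplexes} being absorbed into the indexing convention for $\srF^{i}=\Hh^{i}f_{\dagger}\srE$; restricting to $\lambda=0$ this is exactly the identity $F^{i}_{a}=V_{a-1}(\widetilde{\srF}^{i})(0)$ of Theorem~\ref{rthm}(3).

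I would then quote Theorem~\ref{rthm}(1): for $a<1$ the right-hand side equals $V_{a-1}(\overline{\srF}^{i})$, which is a \emph{locally free} $\Oo_{\srY}$-module, since $\overline{\srF}^{i}$ has strict supports and is a direct summand of $\srF^{i}$ by the Decomposition Theorem of Sabbah and Mochizuki. To remove the restriction $a<1$ I would argue by periodicity: shifting $a$ by an integer twists the whole complex ${\rm DR}^{\rm par}_{L^2}(\srX/\srY,\srE_{\alpha(a)})$ by a power of $\Oo_{\srX}(\srD_{V})=f_{\srX}^{\ast}\Oo_{\srY}(\srQ)$ --- using $\srD_{V}=f_{\srX}^{-1}(\srQ)$ and the parabolic axiom $\srE_{\beta+\delta^{k}}=\srE_{\beta}(\srD_{k})$, which propagates through the horizontal weight filtrations because $\srD_{V}$ is pulled back from $\srY$ --- so by the projection formula each $\mathbb{R}^{i}f_{\ast}$ is twisted by the corresponding power of $\Oo_{\srY}(\srQ)$, still locally free, and this matches the periodicity of the lattices $V_{a-1}(\widetilde{\srF}^{i})$. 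Hence $\mathbb{R}^{i}f_{\ast}({\rm DR}^{\rm par}_{L^2}(\srX/\srY,\srE_{\alpha(a)}))$ is locally free over $\srY$ for all $a\in\rr$ and all $i=0,1,2$.

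Finally, since this is a bounded complex of locally free $\Oo_{\srX}$-modules whose higher direct images are locally free in \emph{every} degree, the theorem on cohomology and base change on $\srY=Y\times\srA$ shows that the $\mathbb{R}^{i}f_{\ast}$ commute with arbitrary base change on $\srY$. In particular they commute with restriction to any $Y(\lambda)$, so --- combining with \eqref{l2pardR} and the fact that restriction to $X(0)$ turns ${\rm DR}^{\rm par}_{L^2}$ into ${\rm DOL}^{\rm par}_{L^2}$ --- the higher direct images of the restriction of the complex to any $X(\lambda)$, and of the Dolbeault complex ${\rm DOL}^{\rm par}_{L^2}(X/Y,E_{\alpha(a)})$ at $\lambda=0$, are again locally free and base-change compatible; for instance $\mathbb{R}^{i}f_{\ast}{\rm DOL}^{\rm par}_{L^2}(X/Y,E_{\alpha(a)})$ is the fibre at $\lambda=0$ of the locally free sheaf $\mathbb{R}^{i}f_{\ast}{\rm DR}^{\rm par}_{L^2}(\srX/\srY,\srE_{\alpha(a)})$, and likewise for restriction to a fibre $X_{y}$.

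The genuinely hard input here is the one isolated above --- the local freeness of $V_{a-1}(\widetilde{\srF}^{i})$ over $\srY$, equivalently the strict-support statement of Theorem~\ref{rthm}(1), which rests on the Decomposition Theorem and on Theorem~\ref{cpxqis} itself; everything else is formal bookkeeping, the $V$-filtration compatibility being quoted and the passage to restrictions being pure cohomology-and-base-change. The one point deserving a careful look is the boundary value $a=1$, where the minimal extension $\overline{\srF}^{i}$ and its localization $\widetilde{\srF}^{i}$ may disagree; working systematically with $\widetilde{\srF}^{i}$ and its lattices $V_{a-1}(\widetilde{\srF}^{i})$, which stay locally free for every $a$, avoids the issue.
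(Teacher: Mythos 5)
Your proposal is correct and follows essentially the same route as the paper, whose proof is simply: the statement holds for $DR(\srP/\srY,V_{a-1}(g_+\srE))$ by Saito--Sabbah--Mochizuki (Theorem \ref{rthm} and the $V$-filtration compatibility), and then one transports it through the quasiisomorphism of Theorem \ref{cpxqis}. The extra care you take with the degree shift, the boundary case $a\geq 1$ via periodicity, and flatness over $\srA$ is all consistent with the paper's conventions, just spelled out more explicitly than the paper bothers to.
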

\begin{proof}
We know from Saito-Sabbah-Mochizuki that this is
true for $DR(\srP/\srY,V_{a-1}(g_+(\srE )))$,
then apply the quasiisomorphism of the theorem.
\end{proof}

\

\begin{proposition}
\label{proofmainth}
Theorem \ref{mainth} follows from 
Theorem \ref{cpxqis}. 
\end{proposition}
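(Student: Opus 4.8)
The plan is to deduce all five parts of Theorem~\ref{mainth} by combining three things: Theorem~\ref{cpxqis} (the quasi-isomorphism $u(a)$ of Lemma~\ref{mapcomplexes}), the compatibility of the $V$-filtration with higher direct images (the Proposition of Subsection~\ref{dR}, from Saito--Sabbah), and Theorem~\ref{rthm}, which already packages the analytic content through Sabbah's and Mochizuki's decomposition theorem. First I would transport the direct image into the graph picture: since $g\colon\srX\to\srP$ is a closed immersion, $g_{\ast}$ is exact and $\mathbb{R}f_{\ast}=\mathbb{R}p_{\ast}\circ g_{\ast}$, so applying $\mathbb{R}^{\bullet}p_{\ast}$ to $u(a)$ identifies $\mathbb{R}^{i}f_{\ast}\bigl(DR^{\rm par}_{L^2}(\srX/\srY,\srE_{\alpha(a)})\bigr)$ with $\mathbb{R}^{\bullet}p_{\ast}\,DR\bigl(V_{a-1}(g_{+}\srE)\bigr)$, the degrees being shifted by the amount built into the graph construction (source in degrees $0,1$, image in degrees $1,2$) --- the same shift carried by the definition of $f_{\dagger}$, so that the indices below agree with those of Theorem~\ref{rthm}. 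By the compatibility Proposition of Subsection~\ref{dR} this right-hand side is the $V$-filtration level $V_{a-1}(\widetilde{\srF}^{\,i})$. I would then restrict to $\lambda=0$: the Corollary following Theorem~\ref{cpxqis} gives strictness of the left-hand side over $\srA$, and the Proposition of Subsection~\ref{dR} gives the same for the right-hand side, so the identification commutes with restriction to $X(0)$; feeding in \eqref{l2pardR} (restriction of the $L^2$ de Rham complex to $\lambda=0$ is the $L^2$ Dolbeault complex \eqref{l2pardol}) yields
$$
F^i_a=\mathbb{R}^{i}f_{\ast}\bigl({\rm DOL}^{\rm par}_{L^2}(X/Y,E_{\alpha(a)})\bigr)\;\cong\;V_{a-1}(\widetilde{\srF}^{\,i})(0).
$$
That is, the two objects denoted $F^i_a$ --- the one in Theorem~\ref{mainth} and the one in Theorem~\ref{rthm}(3) --- coincide, which is the crux of the reduction.

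With this in place the five parts follow by reading off known statements. Part~1: local freeness of $F^i_a$ and base-change compatibility are exactly the Corollary after Theorem~\ref{cpxqis}, applied to the Dolbeault restriction; that the $F^i_a$ form a parabolic bundle --- increasing in $a$, periodic up to twisting by $\Oo_Y(Q)$, with finitely many jumps --- comes from the corresponding properties of the levels $V_{a-1}(\widetilde{\srF}^{\,i})$ at $\lambda=0$, together with the local freeness for $a<1$ in Theorem~\ref{rthm}(1) and periodicity to extend. Part~2 is the base-change clause again, combined with Part~1. Part~4 is immediate from the identification above and Theorem~\ref{rthm}(2)--(3): $\overline{\srF}^{\,i}$ is the $\srR_{\srY}$-module of the harmonic bundle of the middle-perversity direct image local system $G^i$, and the $\lambda=0$ level of its $V$-filtration is the parabolic Higgs bundle of $G^i$, which is $\underline{F}^i$. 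Part~3 requires constructing the Gauss--Manin Higgs field $\theta$ on $\underline{F}^i$ from the absolute Dolbeault complex on $X$ (Section~\ref{globalcomplex}) and checking it matches the residual action of $\lambda\partial_{y}$ on $V_{a-1}(\widetilde{\srF}^{\,i})(0)$, the Higgs field making the latter a parabolic Higgs bundle. Part~5 requires identifying, over $Y-Q$, the canonical harmonic metric of $G^i$ (the one producing the Higgs structure in Theorem~\ref{rthm}(2)) with the fibrewise $L^2$ metric, the fibres of $\underline{F}^i$ being the $L^2$ cohomologies of the restricted harmonic bundle by Part~2 (Zucker, Sabbah Ch.~6); this and the previous identification are what Section~\ref{globalcomplex} and the analytic discussion that follows it supply.

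The hard part is not in this reduction but upstream, in Theorem~\ref{cpxqis} and in the decomposition theorem underlying Theorem~\ref{rthm}. Within the reduction itself, the delicate point is the first step: one must run strictness in two independent directions --- specialization along the $\lambda$-line and base change along $Y$ --- and use their compatibility, so that a single quasi-isomorphism of $\lambda$-families descends at once to the Dolbeault fibre and to the individual fibres $X_y$; one must also match the shift of the graph construction so that ``$F^i_a$'' is unambiguous, and supply the two auxiliary identifications for Parts~3 and~5. Everything else is formal.
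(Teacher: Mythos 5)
Your proposal is correct and follows essentially the same route as the paper's proof: identify the two higher direct images via the quasi-isomorphism $u(a)$ combined with the Saito--Sabbah compatibility of $V$-filtrations with pushforward, read off parts 1, 2 and 4 from Theorem \ref{rthm}, and defer part 3 to the Gauss--Manin construction of Section \ref{globalcomplex} and part 5 to the analytic discussion of Section \ref{ancon}. The only difference is that you make explicit the degree shift of the graph construction and the two strictness directions (over $\srA$ and over $Y$), which the paper leaves implicit.
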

\begin{proof}
  Parts 1 and 2 of Theorem \ref{mainth} follow from Theorem \ref{rthm}
  once we identify the two de Rham complexes using the
  quasiisomorphism of Theorem \ref{cpxqis}.  Part 3 follows from the
  discussion of \cite[Chapter 5]{Sabbah} relating $\srR$-modules on a
  curve with parabolic Higgs bundles. The Higgs field may also be
  constructed in the usual way, starting from the absolute Dolbeault
  complex on $X$ (rather than the relative one on $X/Y$ that we are
  looking at)---see Section \ref{globalcomplex} for more details.

Part 4 comes from part 2 of Theorem \ref{rthm}.

For part 5, the calculation in \cite{SimpsonFam} applies to the $L^2$
metric on a family of open complete curves (we are using the
Poincar\'e metrics at the punctures of the fiber curves). Here some
analytic considerations are needed that are left to the reader: the
spaces of harmonic forms, all having the same dimension, fit together
into a bundle and we can differentiate harmonic forms just as was done
for the compact case in \cite{SimpsonFam}. That argument shows that
the $L^2$ metric is harmonic. Norm estimates show one direction of
compatibility with the parabolic structure, and the other direction
then follows because the degree of the parabolic bundle is zero. This
discussion will be expanded upon in Section \ref{ancon} below.
\end{proof}

\section{Quasiisomorphism}\label{sec-qi}

Put
$$
\srQ (a):= DR^{\rm par}_{L^2}(\srX/\srY,\srE  _{\alpha (a)})
$$
and
$$
Q(a):= \srQ (a) |_{\lambda =0} =
{\rm DOL}^{\rm par}_{L^2}(X/Y,E_{\alpha (a)}).
$$
Recall that $\srQ (a)$ is a complex of vector bundles on $\srX$ and
$Q(a)$ is a complex of vector bundles on $X$. The differentials of
$\srQ (a)$ are differential operators, but on $Q(a)$ they are
$\Oo_X$-linear. We have
$$
\srQ (a-1)= t\srQ(a) = \srQ (a) (-D_V) \subset \srQ (a)
$$
where $t$ is the coordinate defining the fiber over $t=0$ in $Y$. We
have $\srQ (a)\subset \srQ(a')$ when $a\leq a'$.  The same hold for
the $Q(a)$.

The graph embedding $g:X\rightarrow P$ induces complexes
$g_{\ast}(\srQ (a))$ on $\srP$ and $g_{\ast}(Q(a))$ on
$P$.

On the other hand, put
$$
\srK (a):= DR(\srP/\srY,V_{a-1}(g_+(\srE ))
$$
and
$$
K(a) := \srK(a) |_{\lambda = 0}.
$$
These are complexes of quasicoherent sheaves on $\srP$ and $P$
respectively. For $a<1$ these collections have the same properties as
the $\srQ (a)$ and $Q(a)$.

Our map of complexes given in Lemma \ref{mapcomplexes}
$$
u(a):g_{\ast}(\srQ (a))\rightarrow \srK (a)
$$
induces on $\lambda =0$ the map
\begin{equation}
\label{u0a}
u_0(a): g_{\ast}(Q(a))\rightarrow K(a).
\end{equation}

Consider the projection maps $p:\srP\rightarrow \srY$ and
similarly (denoted by the same letter) $p:P\rightarrow Y$.
Note that $p\circ g=f$ is our original map.

\subsection{Higher direct image of $K(a)$}

For $\srK(a)$ and $K(a)$ which are the de Rham complexes of
$\srR$-modules the general theory of Saito-Sabbah gives the
properties we state in the following proposition.

\begin{proposition}
For $a<1$, we have a decomposition 
$$
\mathbb{R}p_{\ast}(\srK (a)) \cong \bigoplus _{i=0}^2
\srF ^i(a)[-i]
$$
where
$$
\srF^i(a):=
\mathbb{R}^ip_{\ast}(\srK (a)),
$$
and these terms are vector bundles. Furthermore again assuming
$a,a'<1$ if $a\leq a'$ then $\srF^i(a)\hookrightarrow \srF^i(a')$. In
particular $\srF^i(a-1)=t\srF^i(a)$.

Let
$$
F^i(a):= \mathbb{R}^ip_{\ast}(K (a)).
$$
Then $F^i(a)$ is also equal to the restriction of $\srF^i(a)$ to
$\lambda = 0$, in particular these are vector bundles. We again have
the decomposition
$$
\mathbb{R}p_{\ast}(K (a)) \cong \bigoplus _{i=0}^2
F ^i(a)[-i]
$$
and $F^i(a-1)=tF^i(a)$ for $a<0$,
with $F^i(a)
\subset F^i(a')$ when 
$a\leq a'<0$.
\end{proposition}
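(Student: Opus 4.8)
The plan is to read off every assertion from three inputs already at hand: Theorem~\ref{rthm}, the compatibility of $V$-filtrations with higher direct images established in Section~\ref{dR}, and Sabbah's decomposition theorem for direct images of pure twistor $\Dd$-modules. The starting observation is that $\srK(a)=DR(\srP/\srY,V_{a-1}(g_{+}(\srE)))$ is literally the complex denoted $DR(V_{a-1}(g_{+}(\srE)))$ in Section~\ref{dR}. Hence the compatibility proposition there gives $\srF^i(a)=\mathbb{R}^ip_{\ast}(\srK(a))=V_{a-1}(\widetilde{\srF}^i)$, and Theorem~\ref{rthm}(1) then gives, for $a<1$, the identifications $\srF^i(a)=V_{a-1}(\widetilde{\srF}^i)=V_{a-1}(\overline{\srF}^i)$ together with the fact that these sheaves are locally free; in particular $\srF^i(a)$ is a vector bundle.

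For the splitting $\mathbb{R}p_{\ast}(\srK(a))\cong\bigoplus_{i}\srF^i(a)[-i]$ I would invoke the decomposition theorem. Since $g_{+}(\srE)$ underlies a pure twistor $\Dd$-module, Sabbah's theorem gives $\mathbb{R}f_{\ast}(DR(g_{+}(\srE)))\cong\bigoplus_{i}\srF^i[-i]$, and moreover each $\srF^i$ is a direct sum of $\srR_{\srY}$-modules with strict support (Remark~\ref{s-term}). Because the Kashiwara--Malgrange $V$-filtration along $\srQ$ is intrinsic to the $\srR_{\srY}$-module structure, this decomposition is compatible with passage to $V$-filtered levels; combined with the compatibility proposition of Section~\ref{dR} this yields $\mathbb{R}p_{\ast}(\srK(a))=\mathbb{R}p_{\ast}(DR(V_{a-1}(g_{+}(\srE))))\cong\bigoplus_{i}V_{a-1}(\srF^i)[-i]=\bigoplus_{i}\srF^i(a)[-i]$. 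The monotonicity $\srF^i(a)\hookrightarrow\srF^i(a')$ for $a\le a'<1$ is just that $V$-filtrations are increasing, and $\srF^i(a-1)=t\srF^i(a)$ is the shift relation $t\cdot V_{b}(\widetilde{\srF}^i)=V_{b-1}(\widetilde{\srF}^i)$ on the localized module $\widetilde{\srF}^i$, on which multiplication by $t$ is invertible; this mirrors the relation $\srQ(a-1)=t\srQ(a)$ recorded above.

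The statements at $\lambda=0$ follow by base change over the $\lambda$-line. The terms of $\srK(a)$ are $\Oo_{\srA}$-flat, since $g_{+}(\srE)$ is $\lambda$-torsion free (it is strict), hence so are its $V_0\srR_{\srP}$-submodules and their tensor products with the locally free sheaves $\Omega^j_{\srP/\srY}$; therefore the naive restriction $K(a)=\srK(a)|_{\lambda=0}$ computes the derived restriction, and $\mathbb{R}p_{\ast}$ commutes with it by proper base change. Restricting the decomposition of the previous paragraph to $\lambda=0$ and using that each $\srF^i(a)$ is locally free, hence $\Oo_{\srA}$-flat, one gets $\mathbb{R}p_{\ast}(K(a))\cong\bigoplus_{i}\bigl(\srF^i(a)|_{\lambda=0}\bigr)[-i]$; reading off cohomology gives $F^i(a)=\mathbb{R}^ip_{\ast}(K(a))=\srF^i(a)|_{\lambda=0}$, which is thus a vector bundle, together with the asserted decomposition. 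This identification of $F^i(a)$ is also precisely the specialization-strictness statement of the compatibility proposition in Section~\ref{dR}. Finally, $F^i(a-1)=tF^i(a)$ and $F^i(a)\subset F^i(a')$ are obtained by restricting to $\lambda=0$ the corresponding relations over $\srY$; in the stated ranges ($a<0$, respectively $a\le a'<0$) the relevant subquotients $\srF^i(a')/\srF^i(a)$ are still locally free, so the restriction stays exact and injectivity is preserved. It is exactly here that the hypothesis of negative weights enters.

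The step I expect to be the genuine content, as opposed to bookkeeping, is the splitting of the \emph{complex} $\mathbb{R}p_{\ast}(\srK(a))$, not merely of its cohomology sheaves: the cohomology is handed to us by the compatibility of $V$-filtrations with direct images, but to produce an actual isomorphism $\mathbb{R}p_{\ast}(\srK(a))\cong\bigoplus_{i}\srF^i(a)[-i]$ in the derived category one needs Sabbah's decomposition isomorphism in a form compatible with the $V$-filtration, which rests on the strictness and semisimplicity built into the theory of pure twistor $\Dd$-modules and not on any purely formal argument. Everything else reduces to manipulations with filtrations and with flatness over the $\lambda$-line.
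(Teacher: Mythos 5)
Your proposal is correct and follows essentially the same route as the paper: the paper's own proof is a two-sentence citation of the Sabbah--Mochizuki decomposition theorem for the splitting, of Theorem~\ref{rthm} for local freeness and the identification of the $V$-filtration levels, and of standard $V$-filtration properties (monotonicity, the shift by $t$, strictness under specialization to $\lambda=0$) for the rest. You have simply spelled out these same ingredients, in particular making explicit the use of the compatibility proposition $\srF^i(a)=V_{a-1}(\widetilde{\srF}^i)$ and the flatness/base-change bookkeeping at $\lambda=0$.
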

\begin{proof}
  The first part comes from the decomposition theorem of Sabbah and
  Mochizuki. The other parts restate Theorem \ref{rthm} and gather
  some of the usual properties of the $V$-filtration.
\end{proof}

We now recall the main property calculating the parabolic Higgs bundle
associated to the harmonic bundle associated to the higher direct
image.

\begin{scholium}
  In the global situation over $Y$, the collection of $F^i(a)$ for
  $a<1$ coincides with the parabolic Higgs bundle over $Y$ associated
  to the harmonic bundle associated to the local system $R^if_{\ast
    !}(L)$.
\end{scholium}
\begin{proof}
This restates Theorem \ref{rthm}. 
\end{proof}

\subsection{Higher direct image of $Q(a)$}

Our complexes $\srQ (a)$ and $Q(a)$ are complexes of vector
bundles. Recall that $\mathbb{R}f_{\ast}=\mathbb{R}p_{\ast}\circ g_{\ast}$
on these complexes since $g_{\ast}$ is a closed embedding so it is
acyclic. We have the following standard property.

\begin{proposition}
The complexes
$\mathbb{R}f_{\ast}(\srQ (a))$ and $\mathbb{R}f_{\ast}(Q(a))$ are
perfect complexes on $\srY$ and $Y$ respectively.
\end{proposition}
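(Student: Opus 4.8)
The plan is to verify the two standard hypotheses that make a complex of sheaves push forward to a perfect complex: first, that $\srQ(a)$ and $Q(a)$ are bounded complexes of vector bundles (hence objects of the perfect derived category on $\srX$, resp.\ $X$); second, that the map $f:X\to Y$ is proper (indeed projective), since $X$ is a projective surface and $Y$ is a projective curve. Granting these, the general machinery of Grothendieck (see e.g.\ the properness-and-finiteness statements in SGA6, or equivalently the base-change/perfection theorems for proper morphisms) gives that $\mathbb{R}f_{\ast}$ of a perfect complex is a perfect complex on the base.

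First I would recall, from the construction in \eqref{l2pardol} and its $\lambda$-family analogue in Section \ref{L2dol}, that $\srQ(a)=DR^{\rm par}_{L^2}(\srX/\srY,\srE_{\alpha(a)})$ is by definition a two-term complex $[\,W_0(H,\srE_{\alpha(a)})\to W_{-2}(H,\srE_{\alpha(a)})\otimes\Omega^1_{\srX/\srY}(\log\srD)\,]$ placed in degrees $0,1$; each term is locally free over $\Oo_{\srX}$ because the $W_\ell(H,\cdot)$ are locally free subsheaves of the locally abelian parabolic bundle $\underline{\srE}$ (this is exactly the content of the weight-filtration construction of Section \ref{sec-locstudy}, using that the weight filtrations are by strict parabolic subbundles), and $\Omega^1_{\srX/\srY}(\log\srD)$ is a line bundle on the smooth surface-family $\srX$. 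So $\srQ(a)$ is a bounded complex of vector bundles on $\srX$, hence perfect; restricting to $\lambda=0$, which is flat since each term is $\srA$-flat, gives that $Q(a)$ is a bounded complex of vector bundles on $X$, hence perfect.

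Next I would note that since $g:\srX\to\srP$ is a closed immersion, $g_{\ast}$ is exact on quasicoherent sheaves, so $g_{\ast}(\srQ(a))$ (resp.\ $g_{\ast}(Q(a))$) is still a bounded complex whose terms are coherent $\Oo_{\srP}$-modules; but more to the point, as already observed in the text, $\mathbb{R}f_{\ast}=\mathbb{R}p_{\ast}\circ g_{\ast}$ on these complexes, so it suffices to know $\mathbb{R}f_{\ast}$ of a perfect complex is perfect, directly for $f$. The map $f:X\to Y$ is projective (a morphism of projective varieties over a field), hence proper and of finite Tor-dimension (even flat away from $D_V$, and in any case $X$ and $Y$ are regular so $f$ has finite Tor-dimension automatically), and the source and target are Noetherian. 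The theorem of Grothendieck (properness + perfect input + finite Tor-dimension $\Rightarrow$ perfect output, cf.\ SGA6 III.2 or the projective case via a Koszul/\v{C}ech resolution combined with finiteness of coherent cohomology) then yields that $\mathbb{R}f_{\ast}(\srQ(a))$ is a perfect complex on $\srY$ and $\mathbb{R}f_{\ast}(Q(a))$ is a perfect complex on $Y$.

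The only point requiring the slightest care—and the ``main obstacle,'' though it is a mild one—is the finite Tor-dimension hypothesis for $f$ over the $\lambda$-family, i.e.\ that $f_{\srX}:\srX\to\srY$ also has finite Tor-dimension: this is immediate because $\srX=X\times\srA$ and $\srY=Y\times\srA$ are again regular (products of regular varieties), so any morphism between them automatically has finite Tor-dimension, and $f_{\srX}$ is projective since it is $f$ times the identity on $\srA$ followed by no change. Hence the same Grothendieck perfection theorem applies verbatim in the $\srA$-relative setting, and the proposition follows. (One does not even need the sharper base-change statements here; those are the content of the next subsections, where the higher direct images are shown to be honest vector bundles.)
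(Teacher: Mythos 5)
Your treatment of $Q(a)$ is fine: at $\lambda=0$ the differential of the complex is the Higgs field $\varphi$, which is $\Oo_X$-linear, so $Q(a)$ genuinely is a bounded complex of locally free coherent sheaves, i.e.\ a perfect complex on $X$, and $\mathbb{R}f_{\ast}$ of a perfect complex under the projective map $f$ between regular Noetherian schemes is perfect. But for $\srQ(a)$ there is a genuine gap. The differential of $DR^{\rm par}_{L^2}(\srX/\srY,\srE_{\alpha(a)})$ is the relative $\lambda$-connection $\nabla$, which is a first-order differential operator and \emph{not} $\Oo_{\srX}$-linear; the paper states this explicitly at the start of Section \ref{sec-qi} (``The differentials of $\srQ(a)$ are differential operators, but on $Q(a)$ they are $\Oo_X$-linear''). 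Consequently $\srQ(a)$ is only a complex of $f^{-1}\Oo_{\srY}$-modules whose terms happen to be vector bundles; it is not an object of the perfect derived category of $\Oo_{\srX}$-modules, and the theorem ``$\mathbb{R}f_{\ast}$ of perfect is perfect'' cannot be invoked for it as you state. Your worry about finite Tor-dimension is a red herring (everything in sight is regular); the real point requiring care is exactly the one you skipped.

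The repair is standard and is what the paper's one-line proof is alluding to. Since $\nabla$ is $f^{-1}\Oo_{\srY}$-linear, $\mathbb{R}f_{\ast}(\srQ(a))$ is a well-defined object of $D(\Oo_{\srY})$, and the stupid filtration of the two-term complex exhibits it, up to shift, as the cone of the $\Oo_{\srY}$-linear map
$$
\mathbb{R}f_{\ast}\bigl(W_0(H,\srE_{\alpha(a)})\bigr)\;\longrightarrow\;
\mathbb{R}f_{\ast}\bigl(W_{-2}(H,\srE_{\alpha(a)})\otimes\Omega^1_{\srX/\srY}(\log\srD)\bigr).
$$
Each of these two objects is perfect by your argument applied to a single vector bundle (here one uses that $f$ is flat, so each term is $\srY$-flat and the pushforward has finite Tor-amplitude as well as bounded coherent cohomology), and perfect complexes are closed under cones and shifts. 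With that substitution your proof matches the paper's intended ``standard argument using the fact that $f$ is flat and that the complexes are complexes of vector bundles with differential operators as maps.''
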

\begin{proof}
  This is by a standard argument using the fact that $f$ is flat and
  that the complexes are complexes of vector bundles with differential
  operators as maps.
\end{proof}

\

\noindent
Put
$$
\srC (a):= {\rm Cone}(u(a)), \;\;\; C(a):=  {\rm Cone}(u_0(a)).
$$
These are complexes on $\srP$ and $P$ respectively,
supported on the graph of $f$ which is the image of $g$. Their
terms are quasicoherent sheaves on $\srP$ or $P$ respectively.

\begin{lemma}
The terms of all of our complexes are flat over $\srY$ and $Y$
respectively.
\end{lemma}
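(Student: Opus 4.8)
The plan is to verify flatness term by term for the complexes just introduced: $\srQ(a)$ and $Q(a)$ on $\srX$ and $X$; their closed pushforwards $g_\ast(\srQ(a))$ and $g_\ast(Q(a))$; the de Rham complexes $\srK(a)$ and $K(a)$; and the cones $\srC(a)$ and $C(a)$. The first point to record is that $f\colon X\to Y$ is flat: it is a dominant morphism from the smooth (hence Cohen--Macaulay) surface $X$ to the smooth curve $Y$ with all fibers one--dimensional, so flatness follows from the local criterion of flatness. The same holds for $f_\srX\colon\srX\to\srY$, which is $f\times\mathrm{id}_\srA$; in particular every locally free $\Oo_\srX$-module is flat over $\srY$, and every locally free $\Oo_X$-module is flat over $Y$. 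Moreover $g$ is a closed immersion with $p\circ g=f$, so $g_\ast$ is exact and the $\Oo_\srY$-module structure on $g_\ast(M)$ coming from $p$ agrees, on the support, with the $\Oo_\srY$-module structure on $M$ coming from $f$; hence $g_\ast$ sends sheaves flat over $\srY$ (via $f$) to sheaves flat over $\srY$ (via $p$), and likewise over $Y$.

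I would dispatch $\srQ(a)$ and $Q(a)$ first. By Theorem \ref{hblambda} the parabolic bundle $\underline{\srE}$ is locally abelian, so each $\srE_{\alpha(a)}$ is a vector bundle on $\srX$; by their construction in Section \ref{L2dol} (via Corollary \ref{weightcor} and its extension to the normal crossing points) the subsheaves $W_0(H,\srE_{\alpha(a)})$ and $W_{-2}(H,\srE_{\alpha(a)})$ are locally free; and $\Omega^1_{\srX/\srY}(\log\srD)$ is locally free. Hence the two terms of $\srQ(a)=DR^{\rm par}_{L^2}(\srX/\srY,\srE_{\alpha(a)})$ are locally free $\Oo_\srX$-modules, so flat over $\srY$; restricting to $\lambda=0$ they become locally free $\Oo_X$-modules, flat over $Y$; and applying $g_\ast$ preserves flatness over the base by the previous paragraph.

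The substantive point is the flatness of the terms of $\srK(a)=DR(\srP/\srY,V_{a-1}(g_+\srE))$. Since $\srP\to\srY$ is smooth of relative dimension two the sheaves $\Omega^j_{\srP/\srY}$ are locally free, so it suffices to show $V_{a-1}(g_+\srE)$ is flat over $\srY$. The module $g_+\srE$ is not $\Oo_\srP$-coherent, but it is supported on the graph $\srG\simeq\srX$, where as an $\Oo_\srX$-module it is the free module $\bigoplus_{k\ge 0}\srE\,\partial_t^k$ (compare \eqref{expressiongplus}). Using Saito--Mochizuki's description of the $V$-filtration, Proposition \ref{vdescrip}, together with the local computations of Section \ref{sec-nc}, the submodule $V_{a-1}(g_+\srE)$ carries an exhausting filtration by $\Oo_\srP$-coherent submodules --- e.g.\ by $\partial_t$-order --- whose graded pieces are $g_\ast$ of pieces of the multi-$V$-filtration of $\widetilde\srE$, hence by Proposition \ref{multivcomp} of the locally free bundles $\srE_\beta$. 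Each such graded piece is then $g_\ast$ of a vector bundle on $\srX$, hence flat over $\srY$; since flatness is stable under extensions and under filtered colimits, $V_{a-1}(g_+\srE)$ is flat over $\srY$, and therefore so are the terms of $\srK(a)$. The same argument, run with $(\srX,\srY,\srE)$ replaced by $(X,Y,E)$ --- using that by strict specializability of $\srE$ along the $\lambda$-line (\cite[Definition~3.3.8, Proposition~3.3.11]{Sabbah}) one has $V_{a-1}(g_+\srE)|_{\lambda=0}=V_{a-1}(g_+E)$ --- gives flatness over $Y$ of the terms of $K(a)$. Finally, the terms of $\srC(a)=\mathrm{Cone}(u(a))$ and of $C(a)$ are direct sums of (shifts of) the terms of $g_\ast(\srQ(a))$ and $\srK(a)$, resp.\ of $g_\ast(Q(a))$ and $K(a)$, and direct sums of flat modules are flat.

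I expect the main obstacle to be the identification, at the normal crossing points of $f$, of the graded pieces of the filtration on $V_{a-1}(g_+\srE)$ with direct images of the locally free pieces $\srE_\beta$: this is precisely where the graph construction is unavoidable and where the $V$-filtration along $f$ differs from the multi-$V$-filtration of Section \ref{multi}. For the present lemma it is enough that such a coherent exhausting filtration with locally free graded pieces exists; the explicit local form that pins it down is part of the computations of Section \ref{sec-nc}.
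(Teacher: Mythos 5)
Your overall strategy for $\srQ(a)$, $Q(a)$, their pushforwards, and the cones is fine, but the treatment of the one genuinely non‑locally‑free term, $V_{a-1}(g_+\srE)$, rests on a false claim. The filtration of $V_{a-1}(g_+\srE)$ by $\partial_t$-order (equivalently by degree in $s=t\partial_t$) does \emph{not} have graded pieces that are $g_\ast$ of the locally free bundles $\srE_\beta$, nor are they locally free at all at the crossing points of $D_V$. In the local model of Section \ref{sec-nc} one has $V_{b-1}(\widetilde{E}[s])=E_{b,b}[u,v]/(xu-yv-\varphi_{\log})$ with $u=A_x=\varphi_x+s/x$, $v=A_y=\varphi_y+s/y$; the degree‑one graded piece for the $(u,v)$-degree filtration is the cokernel of $e\mapsto (xe,-ye)$, i.e.\ $E_{b,b}\otimes(x,y)$ up to twist, which is the ideal sheaf of the node and is not locally free there. (Equivalently, in $s$-degree the leading terms of $A_x^iA_y^j$ produce $\sum_{i+j=k}x^{-i}y^{-j}E_{b,b}$, and already for $k=1$ this is $(xy)^{-1}(x,y)E_{b,b}$.) So the "coherent exhausting filtration with locally free graded pieces" that your argument needs does not exist, and your fallback sentence does not repair this.

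The conclusion is nevertheless true, and the correct argument is far shorter — it is essentially the paper's entire proof: since $Y$ is locally a disk with coordinate $t$ (and likewise $\srA$ has coordinate $\lambda$), flatness over the one‑dimensional base is equivalent to torsion‑freeness, and every term in sight is torsion‑free. Indeed $V_{a-1}(g_+\srE)\subset g_+(\widetilde{\srE})=g_+(\srE)[t^{-1}]$, on which $t$ acts invertibly, so there is no $t$-torsion; all terms are likewise $\lambda$-torsion‑free (they sit inside unions of vector bundles on $\srX$), which handles the restriction to $\lambda=0$. The graded pieces you were trying to exhibit are $t$-torsion‑free submodules of $\widetilde{\srE}$-type modules, which is all that flatness over a smooth curve requires; local freeness over $X$ is both false and unnecessary. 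I would replace your third paragraph by this torsion‑freeness observation and drop the appeal to Proposition \ref{multivcomp} entirely.
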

\begin{proof}
There is no $\lambda$-torsion,
then after restricting to $\lambda=0$ there is no $t$-torsion. 
\end{proof}

\subsection{The quasiisomorphism over smooth points}

The first basic result is:

\begin{lemma}
\label{smoothpoints}
If $x\in X$ is a point where $f$ is smooth, then
$u(a)$ and $u_0(a)$ are quasiisomorphisms at $g(x)=(x,f(x))$.
\end{lemma}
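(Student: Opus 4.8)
The plan is to reduce to a purely local statement in coordinates near a smooth point of $f$, and there show that the map $u(a)$ becomes, up to the canonical shift, the standard comparison between the relative de Rham complex of $\srE_{\alpha(a)}$ computed directly on $\srX$ and the relative de Rham complex computed on $\srP$ via the graph embedding of the smooth function $f$. Concretely, since $f$ is smooth at $x$, we may choose local coordinates $(x_1,\dots)$ on $X$ with $t_1 = f = x_1$, so the graph $\srG \subset \srP$ is the smooth divisor $\{x_1 - t_2 = 0\}$, and $f^{-1}(Q)$ is a smooth divisor (or empty). The first reduction is to note that, working away from the singular fibers, the vertical divisor $D_V$ is smooth, so the multi-$V$-filtration equals the $V$-filtration along $D_V$ (as remarked at the end of Subsection \ref{multi}); consequently $V_{a-1}(g_+\srE)$ is, by Proposition \ref{vdescrip}, generated over $q^{-1}\srR_{\srX}$ by the image of $g_*(\srE_{\alpha(a)})$, and in these smooth coordinates one can write this generation completely explicitly using $\partial_{t_2}$ or equivalently $\partial_{t_1}$.

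First I would treat the case of a point $x$ away from the horizontal divisor, where $Q(a)$ is locally the two-term complex $[\srE_{\alpha(a)} \xrightarrow{\nabla} \srE_{\alpha(a)}\otimes \Omega^1_{\srX/\srY}(\log\srD)]$ and $\srK(a)$ is the three-term Spencer/de Rham complex of $V_{a-1}(g_+\srE)$ on $\srP/\srY$. Using the expression \eqref{canexp} for $g_+(\srE)$ as $q^*(\srE)({}^\lambda\!\ast\srG)/q^*(\srE)$ and the section $\zeta_\srG = d\log(t_1-t_2)$, the map $u(a)$ is $e \mapsto e/(t_1-t_2)$ in degree zero (via \eqref{identif}) and $e\,\omega \mapsto e\,\omega\wedge\zeta_\srG/(t_1-t_2)$ in degree one. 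The key computation is that wedging with $\zeta_\srG$, combined with the standard description of $V_{a-1}(g_+\srE)$ in the smooth case as $\bigoplus_{j\ge 0}\big(V\text{-level}\big)\otimes \partial_{t_2}^{\,j}$, identifies $g_*(Q(a))[1]$ with the subcomplex of $\srK(a)$ consisting of the $\partial_{t_2}$-degree-zero part, and that the quotient — the part with $\partial_{t_2}$-degree $\ge 1$ — is acyclic. This acyclicity is exactly the standard fact that $\Dd$-module direct image under a closed embedding followed by de Rham is computed by the degree-zero part; for $\srR$-modules and in the $V$-filtered setting it is the statement that $t_1 - t_2$ acts invertibly modulo the degree-zero piece, which in turn follows from the characterizing property that $\partial_{t_2}(t_1-t_2)$ is invertible on $\mathrm{Gr}$ of the $V$-filtration for weights $a<1$ (this is where the hypothesis $a<1$ and Hypothesis \ref{mainhyp}, giving real weights, are used, so no resonance obstructs invertibility). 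Passing to $\lambda = 0$ to get $u_0(a)$ is then immediate by flatness over $\srA$, since all the objects involved are $\srA$-flat and the degree-zero/higher-degree decomposition is compatible with restriction.

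For a point $x$ on the horizontal divisor $D_H$ — which by the étaleness assumption is automatically in the smooth locus of $f$ — the only change is that $Q(a)$ is the $L^2$ complex $[W_0(H,\srE_{\alpha(a)}) \to W_{-2}(H,\srE_{\alpha(a)})\otimes\Omega^1_{\srX/\srY}(\log\srD)]$ rather than the naive de Rham complex, and $V_{a-1}(g_+\srE)$ must be replaced by the minimal extension data along $\srD_H$. Here I would quote Sabbah's treatment of the $L^2$ complex at horizontal divisors (\cite[Section 6.2.a, Lemma 6.2.2]{Sabbah}), together with the weight filtration of Corollary \ref{weightcor}, exactly as flagged in the Remark after Lemma \ref{mapcomplexes}: the map $u(a)$ there is by construction the composite of Sabbah's quasiisomorphism from the $L^2$ complex to the full de Rham complex with the graph-comparison map, both of which are quasiisomorphisms. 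So over $D_H$ the statement is a citation, and the content is really the computation in the first case.

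\medskip\noindent\textbf{Main obstacle.} I expect the main technical point to be the acyclicity of the cone in the $\partial_{t_2}$-degree $\ge 1$ part, i.e.\ verifying that the relative de Rham complex of $V_{a-1}(g_+\srE)$ along the smooth graph divisor retracts onto its degree-zero piece in a way strictly compatible with the $V$-filtration and with restriction to $\lambda = 0$. This is "known" in the $\srR$-module world, but pinning down that the retraction respects the $V_{a-1}$-level for $a<1$ — rather than only the full localized module $\widetilde{\srF}$ — requires the explicit Saito–Mochizuki generation statement of Proposition \ref{vdescrip} in its strong (sub-$q^{-1}\srR_{\srX}$-module) form, and checking that the operator $t_1 - t_2$ together with $\partial_{t_2}$ behaves as a "$bc$-pair" with invertible symbol on the relevant graded pieces. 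Everything else is bookkeeping with Koszul complexes of the single smooth coordinate $x_1 = t_1$, which I would not grind through here.
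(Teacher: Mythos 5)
Your proposal is correct and takes essentially the same approach as the paper: the paper's own proof is a two-line appeal to the independence of the $V$-filtration and of its de Rham complex under the choice of computing via the graph embedding or directly, at a point where $f$ is smooth. Your explicit unpacking --- local coordinates with $f$ a coordinate, the $\partial_{t_2}$-degree decomposition of $g_+(\srE)$ with acyclicity of the positive-degree part, compatibility with the $V_{a-1}$ level via Proposition \ref{vdescrip}, and the citation of Sabbah's Section 6.2 along $D_H$ --- is precisely the standard Kashiwara-type argument underlying that appeal, so there is nothing genuinely different to compare.
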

\begin{proof}
  This is by the independence of the $V$-filtration under choice of
  how to calculate it (by a graph embedding or not); our complex $\srQ
  (a)$ is just the de Rham complex of the $V_{a-1}$ piece, at a smooth
  point of $f$.
\end{proof}

\

\begin{corollary}
\label{finsupport}
The complexes $\srC (a)$ and $C(a)$ are cohomologically supported at
the singular points of $f$, so this support is finite over $\srY$
(resp. $Y$).
\end{corollary}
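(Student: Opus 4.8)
The plan is to deduce this immediately from Lemma \ref{smoothpoints} together with the definition of the mapping cone and the geometry of $f$. First I would recall that $\srC(a) = {\rm Cone}(u(a))$ and $C(a) = {\rm Cone}(u_0(a))$ are complexes of quasicoherent sheaves on $\srP$ (resp.\ $P$), supported set-theoretically on the graph $\srG = g(\srX)$. The cohomology sheaves $\Hh^j(\srC(a))$ and $\Hh^j(C(a))$ are therefore sheaves on the graph, which we identify with $\srX$ (resp.\ $X$) via $g$. The key point is that the formation of the mapping cone and its cohomology sheaves is local on $\srP$, and hence, pulled back along the isomorphism $g:\srX\xrightarrow{\sim}\srG$, local on $\srX$; so the stalk of $\Hh^j(\srC(a))$ at a point $g(x)$ depends only on the map of complexes $u(a)$ in a neighbourhood of that point.

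Next I would invoke Lemma \ref{smoothpoints}: at any point $x\in X$ where $f$ is smooth, the map $u(a)$ (resp.\ $u_0(a)$) is a quasiisomorphism at $g(x)$. By the long exact cohomology sequence of the triangle
$$
g_{\ast}(\srQ(a))\xrightarrow{u(a)}\srK(a)\to \srC(a)\xrightarrow{+1},
$$
a map of complexes is a quasiisomorphism at a point precisely when the cone is acyclic there, i.e.\ all stalks $\Hh^j(\srC(a))_{g(x)}$ vanish. Hence $\Hh^j(\srC(a))$ vanishes on the (open) smooth locus of $f$, and likewise $\Hh^j(C(a))=0$ over the smooth locus. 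Therefore the cohomological support of $\srC(a)$ (resp.\ $C(a)$) is contained in the closed subset $g(\{x\in X : f \text{ is not smooth at } x\})$.

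Finally, by the geometric setup of Subsection \ref{geom}, the map $f$ is smooth away from the vertical divisor, and more precisely the non-smooth locus consists of the singular points of $f$, which are the (finitely many) normal crossing points where two vertical components $D_{v(i)}$ meet inside a singular fibre $f^{-1}(q_l)$. In particular this locus is finite over $\srY$ (resp.\ finite over $Y$): it maps to the finite set $Q$, and is finite over it. Pushing forward along $p$, the complexes $\mathbb{R}p_{\ast}\srC(a)$ and $\mathbb{R}p_{\ast}C(a)$ are thus supported on a finite subscheme of $\srY$ (resp.\ $Y$), which is exactly the assertion. I do not expect any real obstacle here: the content is entirely in Lemma \ref{smoothpoints}, and the remaining work is a formal localisation argument for cones plus the already-established fact that the singularities of $f$ are the finitely many vertical double points. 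The one point to state carefully is that ``cohomologically supported at the singular points of $f$'' is to be read via the identification of the graph with $\srX$, so that the support, viewed downstairs, is the finite set of critical values together with their finitely many preimages among the double points.
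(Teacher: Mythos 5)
Your argument is correct and is exactly the intended one: the paper states this corollary without proof, treating it as immediate from Lemma \ref{smoothpoints} via the standard fact that a cone is acyclic precisely where the map is a quasiisomorphism, together with the observation that the non-smooth locus of $f$ is the finite set of crossing points of vertical components. Your write-up simply makes that localisation argument explicit, so there is nothing to add.
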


\

\begin{corollary}
\label{downstairs}
For any $a$, if we can show that the map $\mathbb{R}p_{\ast}(u_0(a))$ is
a quasiisomorphism then $C(a)$ and $\srC (a)$ are acyclic, hence
$u(a)$ and $u_0(a)$ are quasiisomorphisms.
\end{corollary}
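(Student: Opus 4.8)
The plan is to leverage three facts already in place: Corollary~\ref{finsupport}, which pins the cohomology of the cones $\srC(a)$ and $C(a)$ at the finitely many singular points of $f$; the observation that $p$ restricts to a \emph{finite} (hence affine) morphism over that support; and the flatness over the $\lambda$-line $\srA$ of every sheaf involved. Accordingly I would dispose of $C(a)$ on $P$ first, and then propagate to $\srC(a)$ on $\srP$.

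\textbf{Acyclicity of $C(a)$.} Let $S\subset X$ be the (finite) set of singular points of $f$. By Corollary~\ref{finsupport} each $\mathcal H^j(C(a))$ is a quasicoherent sheaf supported on the finite set $g(S)\subset P$, over which $p$ is affine; hence $R^sp_\ast\mathcal H^j(C(a))=0$ for $s>0$, so the hypercohomology spectral sequence $E_2^{s,t}=R^sp_\ast\mathcal H^t(C(a))\Rightarrow R^{s+t}p_\ast(C(a))$ degenerates to $R^jp_\ast(C(a))\cong p_\ast\mathcal H^j(C(a))$. Since $\mathbb Rp_\ast(C(a))=\mathrm{Cone}\bigl(\mathbb Rp_\ast(u_0(a))\bigr)$ is acyclic by hypothesis, every $p_\ast\mathcal H^j(C(a))$ vanishes; as pushforward along the finite morphism $g(S)\to Y$ is faithful on quasicoherent sheaves, this forces $\mathcal H^j(C(a))=0$ for all $j$. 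Thus $C(a)$ is acyclic and $u_0(a)$ is a quasiisomorphism.

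\textbf{Passage to $\srC(a)$.} Because the terms of $\srC(a)$ are flat over $\srA$, there is a short exact sequence of complexes $0\to\srC(a)\xrightarrow{\,\cdot\lambda\,}\srC(a)\to C(a)\to 0$ with $C(a)=\srC(a)|_{\lambda=0}$, and its long exact sequence together with the acyclicity of $C(a)$ shows that multiplication by $\lambda$ is an isomorphism on every $\mathcal H^j(\srC(a))$; so each of these sheaves is supported away from $\{\lambda=0\}$, while by Corollary~\ref{finsupport} it is also supported on $g(S)\times\srA$, finite over $\srY$. Running the spectral-sequence-plus-faithfulness argument of the previous step over $\srP$ reduces the vanishing of the $\mathcal H^j(\srC(a))$ to the statement that $\mathbb Rp_\ast(u(a))$ is a quasiisomorphism. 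Its cone $\mathbb Rp_\ast(\srC(a))$ is a perfect complex on $\srY$ --- it is the cone of a map from the perfect complex $\mathbb Rf_\ast\srQ(a)$ to the bundle complex $\bigoplus_i\srF^i(a)[-i]$ --- and, the terms being flat over $\srA$, its derived restriction to $\lambda=0$ is $\mathbb Rp_\ast(C(a))$, already shown acyclic; hence it is acyclic on a Zariski neighbourhood of $Y\times\{0\}$ in $\srY$. To extend acyclicity over the remaining $\lambda\neq 0$: at smooth points of $f$ this is Lemma~\ref{smoothpoints}, which applies to $u(a)$ and not only $u_0(a)$; at a double point of $D_V$ it is the explicit Koszul-complex and Tensor-Product-Formula computation of Sections~\ref{sec-qi}--\ref{sec-nc}, which is manifestly flat over $\srA$ and therefore goes through with $\lambda$ kept as a parameter, $\lambda=0$ being just the case one writes out. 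Granting this, $R^jp_\ast(\srC(a))=p_\ast\mathcal H^j(\srC(a))=0$, faithfulness of pushforward over the finite support gives $\mathcal H^j(\srC(a))=0$, so $\srC(a)$ is acyclic and $u(a)$ is a quasiisomorphism.

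\textbf{Where the work is.} Everything above is formal --- finite-support bookkeeping, base change for perfect complexes, and flatness over $\srA$ --- except for the one substantive point buried in the last step: that the double-point comparison genuinely does not degenerate as $\lambda$ varies. That is not a consequence of the present reduction; it is the content of the local calculations of the following two sections, and it is there, not here, that I expect the real obstacle to lie. (One could instead sidestep the values $\lambda\neq 0$ by invoking Sabbah's treatment of the $L^2$ de Rham complex of a twistor $\Dd$-module, but the essential difficulty is unchanged.)
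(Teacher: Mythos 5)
Your first step (acyclicity of $C(a)$ and hence of $u_0(a)$) is correct and is essentially the paper's argument, just with the finite-support/pushforward bookkeeping spelled out; likewise your observation that $\mathbb{R}p_{\ast}(\srC(a))$ is a perfect complex whose derived restriction to $\lambda=0$ is acyclic, so that $u(a)$ is a quasiisomorphism over a neighbourhood of $\lambda=0$ in $\srA$, matches the paper exactly. (Minor caution: since the terms of $\srC(a)$ are only quasicoherent, ``$\lambda$ acts invertibly on $\mathcal H^j(\srC(a))$'' does not by itself give vanishing of stalks along $\{\lambda=0\}$; but this remark is not load-bearing in your argument.)

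The genuine gap is in how you dispose of the remaining values $\lambda\neq 0$. You assert that the double-point computation of Sections \ref{sec-qi}--\ref{sec-nc} ``is manifestly flat over $\srA$ and therefore goes through with $\lambda$ kept as a parameter, $\lambda=0$ being just the case one writes out.'' That is not so: the tensor product formula and the Koszul calculation are carried out \emph{after} restricting to $\lambda=0$, precisely because there the operators $A_x=\varphi_x+s/x$, $A_y=\varphi_y+s/y$ are $\Oo_X$-linear and commute (``there is no differentiation''), turning $V_{b-1}$ into a module over the commutative ring $\Oo_D[u,v]$. For $\lambda\neq 0$ the vector fields genuinely differentiate, $g_+(\widetilde{\srE})$ is a noncommutative $\srR$-module, and the whole tensor-product mechanism breaks down as written; a proof uniform in $\lambda$ is exactly what Mochizuki's separate note supplies, and it is not what the paper does. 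The paper instead closes this gap inside the proof of the present corollary by a rescaling trick you do not have: for a fixed $\lambda\neq 0$, consider the family $(\underline{E}(\lambda),t\nabla)$, which as $t\to 0$ degenerates to a polystable parabolic Higgs bundle; the hypothesis (applied to that limiting Higgs bundle) gives acyclicity at $t=0$, semicontinuity for perfect complexes plus the finite-support argument gives it for small $t\neq 0$, and $u_t(a)$ coincides with $u(a)(\lambda)$ up to scaling. Without either this degeneration argument or a genuinely $\lambda$-uniform local computation, your proof establishes the conclusion for $u_0(a)$ and for $u(a)$ only near $\lambda=0$, not the full statement that $\srC(a)$ is acyclic.
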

\begin{proof}
  Suppose $C(a)$ is not acyclic. Then since it is supported at a
  finite set it would imply that $\mathbb{R}p_{\ast}(u_0(a))$ not be a
  quasiisomorphism.  From our hypothesis we therefore conclude that
  $C(a)$ is acyclic. Now it follows that $u_0(a)$ is a
  quasiisomorphism.

  Consider the map $\mathbb{R}p_{\ast}(u(a))$. It is a map between
  perfect complexes on $\srY$, and its restriction to $Y$ is
  $\mathbb{R}p_{\ast}(u_0(a))$, which we are assuming is a
  quasiisomorphism. Therefore it is a quasiisomorphism over an open
  set in $\srA$ containing $\lambda =0$.  As before, from the
  finiteness statement of Corollary \ref{finsupport} we conclude that
  $u(a)$ is a quasiisomorphism over that open set.

  We now claim that for any semisimple local system $L$ and any
  $\lambda \neq 0$, if we consider the corresponding parabolic
  $\lambda$-connection $(\underline{E}(\lambda ),\nabla )$ then the
  map $u(a)(\lambda )$ is a quasiisomorphism.  We sketch an argument
  as follows.  Look at a $1$-parameter family of local systems $L_t$
  such that the $t\lambda$-connection corresponding to $L_t$ is
  $(\underline{E}(\lambda ),t \nabla )$.  We know that
  $(\underline{E}(\lambda ),t \nabla )$ approach a limiting polystable
  parabolic Higgs bundle (one can use the same technique as in
  \cite{SimpsonIDSM}).  For this limiting Higgs bundle the map
  $u_0(a)$ is a quasiisomorphism as we have seen above, and by
  semicontinuity (using perfectness of the source and target of the
  higher direct image, and the global to local argument coming from
  Corollary \ref{finsupport} as above), it follows that the map
  $u_t(a)$ corresponding to $(\underline{E}(\lambda ),t \nabla )$ is a
  quasiisomorphism for $t$ near to $0$. However, the maps $u_t(a)$ are
  all the same as our original $u(a)$ up to scaling, so this shows
  that the original $u(a)$ is a quasiisomorphism.
\end{proof}

\subsection{The quasiisomorphism near horizontal divisors}

Next we quote the following main result from Saito and Sabbah. It
basically goes back to Zucker's paper \cite{Zucker}. This result was
of course the motivation for introducing the $L^2$ de Rham complexes.

\begin{proposition}
\label{horizontaldiv}
Suppose $x\in X$ is an intersection point of a horizontal and vertical
divisor component. Then $u(a)$ and $u_0(a)$ are quasiisomorphisms at
$g(x)=(x,f(x))$.
\end{proposition}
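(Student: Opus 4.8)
The plan is to localize at $x$ and deduce the statement from Sabbah's treatment of the $L^2$ de Rham complex, which is already available. Since $D_H$ lies in the smooth locus of $f$ — an immediate consequence of the etaleness assumption — the map $f$ is smooth at $x$, so we may pick local coordinates $(x_1,x_2)$ on $X$ near $x$ with $f=x_2$, the horizontal component given by $\{x_1=0\}$ and the vertical one by $\{x_2=0\}$, and extend everything over the $\lambda$-line. The point to keep in mind is that here $\srQ(a)=DR^{\rm par}_{L^2}(\srX/\srY,\srE_{\alpha(a)})$ is genuinely the $L^2$ complex, with the truncations $W_0(H,\srE_{\alpha(a)})$ and $W_{-2}(H,\srE_{\alpha(a)})$ coming from the monodromy weight filtration along $\{x_1=0\}$, and is not simply the relative de Rham complex of a piece of a $V$-filtration; so an $L^2$-type comparison, rather than the bare base-independence used for Lemma \ref{smoothpoints}, is what is needed at these points.

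Following the Remark after Lemma \ref{mapcomplexes}, the idea is to factor $u(a)$ near $g(x)$ as the composite of (i) the inclusion of $\srQ(a)$ into the full relative de Rham complex of the $V$-filtration $V_{a-1}$ of the localized module $\widetilde{\srE}$ along the \emph{smooth} divisor $\{x_2=0\}$ (with arbitrary poles allowed along $\{x_1=0\}$), followed by (ii) the graph-comparison morphism from this complex to $DR(\srP/\srY,V_{a-1}(g_+\srE))$. Step (ii) is a quasi-isomorphism at $g(x)$ precisely because $f$ is smooth there, so the $V$-filtration along $\srX_q$ is independent of the graph construction — this is the mechanism used in Lemma \ref{smoothpoints}. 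Step (i) is a quasi-isomorphism by Sabbah's analysis of the $L^2$ de Rham complex, \cite[Section 6.2.a, Lemma 6.2.2]{Sabbah}, which packages Zucker's one-variable $L^2$ computation \cite{Zucker}: in the local product picture the $L^2$ condition and the weight truncations involve only the horizontal coordinate $x_1$, while the relative differential acts in the $x_2$ direction, so the comparison is of K\"unneth type, reduced to the single puncture handled by Zucker. The one ingredient needed to apply Sabbah's statement is the identification of $W(H,\cdot)$ with the monodromy weight filtration of the residue of the Higgs field, which is Corollary \ref{weightcor} together with its crossing-point strengthening. Composing (i) and (ii) shows that $u(a)$ is a quasi-isomorphism at $g(x)$.

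For $u_0(a)$, restrict to $\lambda=0$: the terms of $\srQ(a)$ are vector bundles flat over $\srA$ and the $V$-filtration on $g_+\srE$ is strict with respect to specialization along the $\lambda$-line, so restriction to $\lambda=0$ commutes with passing to cohomology; moreover, since the Nilpotence Hypothesis forces the monodromy eigenvalues to be roots of unity, only real parameters occur, and Sabbah's restriction to $z_0=0$ is an honest isomorphism rather than merely a quasi-isomorphism, as noted around \eqref{l2pardR}. Hence $u_0(a)$ is a quasi-isomorphism at $g(x)$ as well. The only real obstacle — since all the analysis is already in Sabbah — is the bookkeeping: reconciling Sabbah's conventions (his shift $\alpha=-1$ versus our $\alpha=0$, his decreasing versus our increasing parabolic filtration) and verifying that the explicit map $u(a)$ built in Section \ref{constructmap} from $\zeta_{\srG}=d\log(t_1-t_2)$ and the parabolic inclusion coincides near $g(x)$ with Sabbah's $L^2$-to-de-Rham map post-composed with the graph-comparison quasi-isomorphism. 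The genuinely hard local analysis of the paper is deferred to the double points of $f$ in Theorem \ref{maincalc}; none of it intervenes here.
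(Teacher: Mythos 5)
Your argument is correct and is essentially the paper's: the paper's entire proof of this proposition is the citation "This is Proposition 6.2.4 of \cite{Sabbah}", and the two-step factorization you describe (Sabbah's $L^2$-to-de Rham comparison from \cite[Section 6.2.a]{Sabbah} composed with the graph-comparison quasiisomorphism at a smooth point of $f$) is exactly the reading the paper itself spells out in the Remark following Lemma \ref{mapcomplexes}. You have simply unpacked the citation in more detail, including the correct observation that $D_H$ lies in the smooth locus of $f$ so that only the horizontal weight filtration and Zucker-type analysis enter here.
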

\begin{proof}
This is Proposition 6.2.4 of \cite{Sabbah}. 
\end{proof}

\subsection{The quasiisomorphism at double points---statement}

In this subsection we state the result of our main calculation.

\begin{theorem}
\label{maincalc}
Suppose $x\in X$ is an intersection point of two vertical divisor
components. Then the map $u_0(a)$ of \eqref{u0a} is a quasiisomorphism
at $g(x)=(x,f(x))$.
\end{theorem}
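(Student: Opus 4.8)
The plan is to reduce the statement to an explicit local computation in analytic coordinates near a double point, where $f$ looks like $t = xy$ on a bidisc $X = \Delta_x\times\Delta_y$ mapping to $Y=\Delta_t$. First I would exploit the reductions already set up: by Corollary \ref{downstairs} it suffices to prove the statement locally at the singular point $x$, and by Lemma \ref{smoothpoints} and Proposition \ref{horizontaldiv} we need only treat the double point of two vertical components, where no horizontal divisor is present, so the weight filtrations $W(H,-)$ disappear and $\srE_{\alpha(a)}$ restricted to $\lambda=0$ is just the relevant piece $E_{\alpha(a)}$ of the parabolic Higgs bundle. I would also use the Nilpotence Hypothesis \ref{mainhyp} throughout, so the residues of $\varphi$ along $D_{v(i)}$ are nilpotent and the $V$-filtration parameters $\alpha$ that occur are real; this is what makes the Saito--Mochizuki description of $V_{a-1}(g_+\srE)$ in Proposition \ref{vdescrip} usable, namely that $V_{a-1}(g_+(\srE))$ is the $q^{-1}\srR_\srX$-module generated by the image of $g_\ast(\srE_{\alpha(a)})$.

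The core of the argument is then to unwind both complexes in the local coordinates $x,y$ on $X$ and $x,y,t_2$ on $P$. On the source side, $g_\ast(Q(a))$ at $\lambda=0$ is the two-term Dolbeault complex $[E_{\alpha(a)} \xrightarrow{\varphi} E_{\alpha(a)}\otimes\Omega^1_{X/Y}(\log D)]$ pushed to the graph, with $\Omega^1_{X/Y}(\log D)$ locally free of rank one generated by, say, $dx/x$ modulo the relation $ydx+xdy=0$. On the target side, using the expression $g_+(\srE) = g_\ast(\srE)[\partial_t]$ of \eqref{expressiongplus} and its restriction $R_X$-module analogue at $\lambda=0$ (where $R_X = \mathrm{Sym}^\ast(T_X)$), $K(a)$ is the relative de Rham complex of $V_{a-1}(g_+(\srE))|_{\lambda=0}$ on $P/Y$, a three-term complex in degrees $0,1,2$. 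The plan is to compute the cohomology sheaves of $\mathrm{Cone}(u_0(a))$ directly. Since $u_0(a)$ is shifted (degrees $0,1 \to 1,2$), the acyclicity of the cone is equivalent to: $u_0(a)$ induces an isomorphism on $H^1$ and $H^2$ of the target and the target has no $H^0$. The statement that $H^0(K(a)) = 0$ — i.e. that no element of $V_{a-1}(g_+\srE)|_0$ is killed by both relative vector fields — should follow from the generation statement of Proposition \ref{vdescrip} together with the fact that $\srE$, being a minimal extension, has no sections supported on the divisor (Remark \ref{s-term}). For the isomorphism on $H^1, H^2$ I would filter $V_{a-1}(g_+\srE)$ by powers of $\partial_t$ (equivalently by the $V$-filtration grading) and reduce, as foreshadowed in Proposition \ref{reduction2}, to the associated-graded map $\mathrm{gr}_a(u_0)$; on each graded piece the computation becomes a comparison of Koszul complexes built from the commuting nilpotent operators $N_1, N_2$ (the residues of $\varphi$ along $x=0$ and $y=0$) acting on a fixed vector space $\mathrm{Gr}_{\alpha(a)}$, plus the operators $\theta_x = x\partial_x$, $\theta_y = y\partial_y$ whose difference acts as $N_1 - N_2$ up to the eigenvalue shift. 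This is where the Tensor Product Formula and explicit Koszul calculations promised in Section \ref{sec-nc} enter.

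The main obstacle I expect is precisely this local Koszul comparison at the double point: one must match the two-term relative Dolbeault complex $[M \xrightarrow{\varphi} M\otimes\Omega^1_{X/Y}(\log D)]$, where the single differential encodes the relative Higgs field (roughly $N_1$ and $N_2$ glued along the relation $ydx+xdy=0$, so effectively one operator like $N_1-N_2$ on graded pieces), against the three-term relative de Rham complex of the graph-embedded $V$-filtered module, whose differentials involve $\partial_x, \partial_y$ and the nearby-cycle operator $\partial_t t$. The delicate points will be: (i) keeping track of the shift by $1$ between the parabolic index and the $V$-filtration index (the $\alpha(a)$ versus $a-1$ bookkeeping), (ii) showing that passing to $\mathrm{gr}$ of the $V$-filtration is harmless — i.e. the spectral sequence degenerates or the relevant maps are strict, which is really the strictness/decomposition input from Sabbah--Mochizuki applied fiberwise, and (iii) verifying that the map $u_0(a)$, built from wedging with $\zeta_\srG = d\log(xy - t_2)$, does realize on graded pieces the natural map of Koszul complexes and is a quasiisomorphism there — this reduces to an elementary but not entirely trivial statement that for a nilpotent $N$ on a vector space $H$ with monodromy weight filtration $W$, a certain two-step complex computing $\ker N / \mathrm{im}\,N$ type data agrees with a three-step Koszul-type complex, which is exactly the kind of linear-algebra lemma (Tensor Product Formula) that Section \ref{sec-nc} is designed to supply. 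I would organize the proof so that Theorem \ref{maincalc} is deduced from Proposition \ref{reduction2} on $\mathrm{gr}_a(u_0)$, and the latter from the purely linear-algebraic Koszul statements of the final section.
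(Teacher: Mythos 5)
Your proposal follows essentially the same route as the paper: reduce via Corollary \ref{downstairs} and Propositions \ref{reduction1}--\ref{reduction2} to the parabolic graded pieces at the double point, identify the nearby cycles $\Psi_{b-1}$ via the Tensor Product Formula of Section \ref{sec-nc}, and conclude by the explicit comparison of the two-term and three-term Koszul complexes. The only quibbles are cosmetic: the reduction to $\mathrm{gr}_a(u_0)$ uses the finite parabolic-weight filtration (an elementary five-lemma argument, no strictness input needed), while the filtration by degree in $u,v$ (your ``powers of $\partial_t$'') enters only afterwards, inside the proof of the Koszul quasiisomorphism, to reduce to the case $\varphi=0$.
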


The proof will occupy the several upcoming sections.

\begin{corollary}
\label{proofcpxqis}
The maps $u(a)$ and $u_0(a)$ are quasiisomorphisms, that is to say
this gives the proof of Theorem \ref{cpxqis}.
\end{corollary}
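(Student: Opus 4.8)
The plan is to obtain the corollary by assembling the pointwise comparison results of this section together with the global-to-local mechanism already packaged in Corollary~\ref{downstairs}, taking Theorem~\ref{maincalc} as the single nontrivial input. Since everything in sight is periodic under $\srQ(a-1)=\srQ(a)(-D_V)$, and the analogous identities hold for $\srK(a)$ compatibly with the map $u$, it suffices to treat the range $a<1$ in which the comparison map $u(a)$ is defined; the remaining values of $a$ then follow by this periodicity.

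First I would verify that $u_0(a)$ is a quasi-isomorphism at \emph{every} point of $P$. Since $g$ is a closed embedding onto the graph, only the points $g(x)$ with $x\in X$ matter, and every such $x$ falls into one of three classes: (i) $f$ is smooth at $x$ and $x$ lies off the horizontal divisor; (ii) $x$ lies on the horizontal divisor $D_H$, which by the etaleness assumption automatically makes $x$ a smooth point of $f$; or (iii) $x$ is an intersection point of two vertical components, that is, a node of a singular fiber. In case (i), Lemma~\ref{smoothpoints} gives that both $u(a)$ and $u_0(a)$ are quasi-isomorphisms at $g(x)$. In case (ii), Proposition~\ref{horizontaldiv}, together with Sabbah's $L^2$ comparison of Section~6.2 along the horizontal divisor (see the Remark following Lemma~\ref{mapcomplexes}), gives the same. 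In case (iii), Theorem~\ref{maincalc} gives that $u_0(a)$ is a quasi-isomorphism at $g(x)$. Hence $u_0(a)$ is a quasi-isomorphism everywhere on $P$, so its cone $C(a)={\rm Cone}(u_0(a))$---which by Corollary~\ref{finsupport} is cohomologically supported on the finite set of nodes---is acyclic. In particular $\mathbb{R}p_{\ast}(C(a))$ is acyclic, that is, $\mathbb{R}p_{\ast}(u_0(a))$ is a quasi-isomorphism.

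Next I would feed this into Corollary~\ref{downstairs}, whose hypothesis is precisely that $\mathbb{R}p_{\ast}(u_0(a))$ be a quasi-isomorphism and whose conclusion is that $C(a)$ and $\srC(a)$ are both acyclic, hence that $u(a)$ and $u_0(a)$ are quasi-isomorphisms. The nontrivial content borrowed at this step is the deformation argument inside Corollary~\ref{downstairs}: the map $\mathbb{R}p_{\ast}(u(a))$ of perfect complexes on $\srY$ restricts on $\lambda=0$ to the quasi-isomorphism $\mathbb{R}p_{\ast}(u_0(a))$, hence is a quasi-isomorphism over a neighborhood of $\lambda=0$ in $\srA$; the finiteness of the support of $\srC(a)$ over $\srY$ promotes this to a pointwise quasi-isomorphism there; and for $\lambda\neq0$ one degenerates the parabolic $\lambda$-connection to its limiting polystable parabolic Higgs bundle to reduce to the $\lambda=0$ case, using that $u_t(a)$ agrees with $u(a)$ up to rescaling. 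Since Theorem~\ref{cpxqis} is exactly the assertion that $u(a)$ is a quasi-isomorphism, this also gives its proof.

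I do not expect any real obstacle within this corollary itself; it is bookkeeping. The genuine difficulty, and the reason the argument is organized in this order, is Theorem~\ref{maincalc}---the local quasi-isomorphism at the nodes of the singular fibers---whose proof reduces (Proposition~\ref{reduction2}) to the graded map ${\rm gr}_a(u_0)$ and is then carried out by a Tensor Product Formula and explicit Koszul complex computations in the sections that follow. Everything upstream of that, including the present corollary, is formal.
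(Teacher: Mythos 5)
Your proposal is correct and follows essentially the same route as the paper: the same three-way case analysis (smooth points via Lemma~\ref{smoothpoints}, the horizontal divisor via Proposition~\ref{horizontaldiv} and Sabbah's $L^2$ comparison, and the vertical double points via Theorem~\ref{maincalc}). The only difference is that you make explicit the passage through Corollary~\ref{downstairs} needed to upgrade the statement for $u_0(a)$ at the double points to one for $u(a)$; the paper's proof leaves this step implicit, so your version is if anything slightly more careful.
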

\begin{proof}
  The statement of Theorem \ref{maincalc} gives the quasiisomorphism
  at crossing points of vertical components.  On smooth points use
  Lemma \ref{smoothpoints}, and at points where the horizontal divisor
  meets the vertical divisor, use Proposition \ref{horizontaldiv}.
\end{proof}

\begin{corollary}
For any $a$ we have
$$
\mathbb{R}f_{\ast}(Q(a))= \bigoplus R^i f_{\ast}(Q(a)).
$$
These terms are vector bundles. For $a\leq -2$ we have $R^i
f_{\ast}(Q(a))=F^i(a)$. The vector bundles $R^i f_{\ast}(Q(a))$ fit
together into a parabolic Higgs bundle, and this is the parabolic
Higgs bundle associated to the local system $R^if_{\ast !}(L)$.
\end{corollary}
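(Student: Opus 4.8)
The plan is to deduce this from Theorem~\ref{cpxqis} (now proved, Corollary~\ref{proofcpxqis}) together with the decomposition theorem input packaged in Theorem~\ref{rthm} and the Scholium above, and from Theorem~\ref{mainth} (available since Proposition~\ref{proofmainth} derives it from Theorem~\ref{cpxqis}); the only genuine work is a periodicity argument to pass from $a<1$ to arbitrary $a$. First I would use the quasiisomorphism: by Corollary~\ref{proofcpxqis} the map $u_0(a)\colon g_{\ast}(Q(a))\to K(a)$ is a quasiisomorphism, and since $g$ is a closed immersion one has $\mathbb{R}f_{\ast}(Q(a))=\mathbb{R}p_{\ast}(g_{\ast}Q(a))$, hence $\mathbb{R}f_{\ast}(Q(a))\cong\mathbb{R}p_{\ast}(K(a))$. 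For $a<1$ the Proposition on the higher direct image of $K(a)$ gives $\mathbb{R}p_{\ast}(K(a))\cong\bigoplus_{i=0}^{2}F^i(a)[-i]$ with each $F^i(a)=\mathbb{R}^ip_{\ast}(K(a))$ locally free; transporting this splitting through the isomorphism above yields, for $a<1$, a decomposition $\mathbb{R}f_{\ast}(Q(a))\cong\bigoplus_i R^if_{\ast}(Q(a))[-i]$ into locally free sheaves, together with the identification $R^if_{\ast}(Q(a))=F^i(a)$ --- in particular for every $a\le-2$, as asserted.

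To reach arbitrary $a$ I would invoke periodicity. From $Q(a-1)=tQ(a)=Q(a)(-D_V)$ and $D_V=f^{-1}(Q)$ the projection formula gives
$$
\mathbb{R}f_{\ast}\big(Q(a-n)\big)\ \cong\ \mathbb{R}f_{\ast}\big(Q(a)\big)\otimes_{\Oo_Y}\Oo_Y(-nQ)
$$
for all $n$. Choosing $n$ with $a-n<1$ and twisting the decomposition of the previous paragraph back up by $\Oo_Y(nQ)$ shows that the splitting, the local freeness, and the identification with (the correspondingly twisted, hence now defined for all $a$) $F^i(a)$ all persist for every real $a$; moreover the resulting family $\{R^if_{\ast}(Q(a))\}_a=\{F^i(a)\}_a$ then satisfies $R^if_{\ast}(Q(a-1))=R^if_{\ast}(Q(a))(-Q)$.

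Finally I would identify the parabolic Higgs bundle. The Scholium above --- a restatement of parts~2 and 3 of Theorem~\ref{rthm} through the $\srR$-module/parabolic Higgs dictionary of \cite[Chapter~5]{Sabbah} --- says that $\{F^i(a)\}_{a<1}$ is the parabolic Higgs bundle $\underline{F}^i$ on $(Y,Q)$ attached by non-abelian Hodge theory to the local system $R^if_{\ast !}(L)$ on $Y-Q$, namely the fibrewise $L^2$ (equivalently middle-perversity) cohomology local system $G^i$ of \S\ref{dR}; its Gauss--Manin Higgs field is that of Theorem~\ref{mainth}(3). Combining this with the identification $R^if_{\ast}(Q(a))=F^i(a)$ and the periodicity just noted --- which is exactly the standard parabolic periodicity $E_{\beta+\delta}=E_\beta(Q)$ --- gives that the bundles $R^if_{\ast}(Q(a))$ fit together into $\underline{F}^i$, the parabolic Higgs bundle of $R^if_{\ast !}(L)$. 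The step needing care is not any single input but the bookkeeping that glues them: one must check that the index $a$ of $Q(a)$ is normalized so that the twist by $\Oo_Y(-Q)$ is precisely a unit shift of the parabolic weight, so that one obtains the parabolic Higgs bundle of $R^if_{\ast !}(L)$ on the nose rather than a $\zz$-translate of it.
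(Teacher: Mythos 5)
Your proposal is correct and follows exactly the route the paper intends: the corollary is stated without a separate proof, being the immediate assembly of Corollary \ref{proofcpxqis} (the quasiisomorphism $u_0(a)$, together with $\mathbb{R}f_{\ast}=\mathbb{R}p_{\ast}\circ g_{\ast}$), the Proposition decomposing $\mathbb{R}p_{\ast}(K(a))$ into locally free $F^i(a)$, and the Scholium identifying $\{F^i(a)\}_{a<1}$ with the parabolic Higgs bundle of $R^if_{\ast !}(L)$. Your extra care with the periodicity $Q(a-1)=Q(a)\otimes f^{\ast}\Oo_Y(-Q)$ and the projection formula, needed to extend from $a<1$ to arbitrary $a$ and to match the twist with the unit shift of parabolic weight, is a sensible filling-in of a detail the paper leaves implicit.
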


\subsection{Reduction to $t=0$}

We point out here, using the above properties, that it suffices to
consider the restriction to the fiber over $t=0$. Let $Z:=
f^{-1}(0)\subset X$. Since we are localizing to a disk in $Y$ with
only one singular point, we have in fact $Z=D_V$. Let $\srZ := Z\times
\srA$ as usual.  Note that $Z=g(X)\cap (X\times \{ 0\})$.

\begin{proposition}
\label{reduction1}
In order to prove Theorem \ref{maincalc} it suffices to prove that the map
$$
u_0(a)|_{X\times \{0\} }:
g_{\ast}(Q(a))|_{X\times \{0\} } \rightarrow
K(a)|_{X\times \{0\} }
$$
is a quasiisomorphism near any double point of $Z\subset
X\times \{0\} $.
\end{proposition}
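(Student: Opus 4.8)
The plan is to obtain Theorem~\ref{maincalc} from its restriction to the fiber $X\times\{0\}$ by a Nakayama-type argument, exploiting that the cone $C(a)=\mathrm{Cone}\bigl(u_0(a)\bigr)$ is cohomologically supported inside that fiber while every sheaf in sight is flat over $\Oo_Y$. Concretely, I would fix a double point $x$ of $Z$, shrink $Y$ to a small disc with coordinate $t$ and $Q=\{0\}$ so that $x$ is the only singular point of $f$ in a neighborhood, and use Corollary~\ref{finsupport} to conclude that the cohomology sheaves $\mathcal H^j(C(a))$ are then supported set-theoretically at $g(x)=(x,0)$, in particular along $\{t=0\}\subset P$; hence every local section of $\mathcal H^j(C(a))$ is annihilated by a power of $t$.

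Next I would run the $t$-multiplication exact sequence. Each term of $C(a)$ is a finite direct sum of terms of $g_{\ast}(Q(a))$ and of $K(a)$, which are flat, hence $t$-torsion-free, over $\Oo_Y$ (the flatness statement of this section), so $C(a)$ has $t$-torsion-free terms and we get a short exact sequence of complexes
\[
0\longrightarrow C(a)\xrightarrow{\ \cdot t\ }C(a)\longrightarrow C(a)\otimes_{\Oo_Y}\Oo_Y/(t)\longrightarrow 0 .
\]
Since $\mathrm{Cone}$ is a termwise direct sum it commutes with $(-)\otimes_{\Oo_Y}\Oo_Y/(t)=(-)|_{X\times\{0\}}$, so the quotient is $\mathrm{Cone}\bigl(u_0(a)|_{X\times\{0\}}\bigr)$, which near $x$ is acyclic by the hypothesis of the proposition. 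The associated long exact cohomology sequence then forces multiplication by $t$ to be an isomorphism on each $\mathcal H^j(C(a))$ near $g(x)$; combined with the fact recorded above that $\mathcal H^j(C(a))$ is $t$-power-torsion there, this gives $\mathcal H^j(C(a))=0$ near $g(x)$ for all $j$, i.e. $u_0(a)$ is a quasiisomorphism at $g(x)$ — exactly Theorem~\ref{maincalc}.

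As for the main obstacle: within this proposition there is essentially none. The only things to check are that restriction to $t=0$ commutes with forming the cone and that the displayed sequence is exact, both immediate from flatness of the terms. The genuine content is deferred elsewhere: away from the double points $C(a)$ is already acyclic by Lemma~\ref{smoothpoints} and Proposition~\ref{horizontaldiv}, the analogue over $\srP$ for $u(a)$ then follows via Corollary~\ref{downstairs}, and the substantive work — proving the acyclicity of $C(a)|_{X\times\{0\}}$ near a crossing of two vertical components — is what the explicit Tensor Product Formula and Koszul-complex calculations of the following sections are designed to carry out.
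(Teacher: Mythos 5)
Your argument is correct, but it takes a genuinely different route from the paper. You stay upstairs on $P$ and argue locally: the terms of $C(a)$ are $t$-torsion-free by the flatness lemma, so $0\to C(a)\stackrel{t}{\to}C(a)\to C(a)|_{t=0}\to 0$ is exact; the hypothesis makes the quotient acyclic near $g(x)$, so $t$ acts invertibly on the cohomology sheaves there, while Corollary \ref{finsupport} makes those sheaves $t$-power-torsion, forcing them to vanish. The paper instead descends to the base first: it forms the cone $B(a)$ of $\mathbb{R}p_{\ast}(u_0(a))$ on $Y$, observes that it is a \emph{perfect} complex cohomologically supported at $t=0$ whose restriction to $t=0$ is acyclic (this is where the hypothesis enters, via compatibility of the cone with base change), concludes $B(a)$ is acyclic by the rigidity of perfect complexes supported at a point, and then climbs back up via Corollary \ref{downstairs}. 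Your version is more elementary and self-contained — it needs neither perfectness of the pushforwards nor Corollary \ref{downstairs}, only quasicoherence of the cohomology of $C(a)$ (so that support in $\{t=0\}$ implies each section is killed by a power of $t$) together with injectivity of $t$ from the long exact sequence. The paper's detour through $Y$ buys a uniform mechanism (perfect complexes on the base plus Corollary \ref{downstairs}) that it reuses elsewhere, in particular to propagate the quasiisomorphism to $u(a)$ over the $\lambda$-line and to nearby $\lambda\neq 0$; your argument would need a separate (but entirely analogous, $\lambda$-multiplication) step for that, which is outside the scope of Proposition \ref{reduction1} anyway.
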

\begin{proof}
  The complexes $Q(a)$ and $K(a)$ are flat over $Y$, hence so are
  their higher direct image complexes. Let
$$
B(a):= {\rm Cone}\left(
\xymatrix@1@C+2pc{
\mathbb{R}f_{\ast}(Q(a))
\ar[r]^-{\mathbb{R}p_{\ast}(u_0(a))} &
\mathbb{R}p_{\ast}(K(a))
}
\right) .
$$
It is again a complex of sheaves flat over $Y$. The
exact triangle containing $B(a)$ restricts to an exact triangle
over the point $t=0$ and indeed
$$
B(a)|_{t=0} =
{\rm Cone}\left(
{\mathbb H}^{\bullet}(Q(a)|_Z)\rightarrow
{\mathbb H}^{\bullet}(K(a)|_{X\times \{0\} })
\right) .
$$
Suppose we know the statement that $u_0(a)|_{X\times \{0\} }$ is a
quasiisomorphism. Then it follows (using the above arguments at smooth
points of $Z$ as well as at points of $D_H\cap Z$) that $B(a)|_{t=0}$
is exact. On the other hand, both $\mathbb{R}f_{\ast}(Q(a))$ and
$\mathbb{R}p_{\ast}(K(a))$ are perfect complexes as we have pointed
out above.  Therefore the cone $B(a)$ on the map between them is a
perfect complex. It is cohomologically supported at $t=0$.  Now we may
conclude by using the following property of perfect complexes: a
perfect complex which is cohomologically supported at a point, and
whose restriction to that point is acyclic, is acyclic. Therefore,
$B(a)$ is acyclic and $\mathbb{R}p_{\ast}(u_0(a))$ is a
quasiisomrphism. By Corollary \ref{downstairs} this will show Theorem
\ref{maincalc}, completing our reduction.
\end{proof}

\

\noindent
We may further reduce using the parabolic structure.
Let $\varepsilon$ be strictly smaller than the difference
of any two parabolic weights.
For any parabolic weight $a$ we have the map of complexes
\begin{equation}
\label{gru}
{\rm gr}_a(u_0):
g_{\ast}(Q(a)/Q(a-\varepsilon ) )\rightarrow
K(a)/K(a-\varepsilon ).
\end{equation}

\begin{proposition}
\label{reduction2}
In order to prove Theorem \ref{maincalc}, it suffices to
show that for any parabolic weight $a$, the map
${\rm gr}_a(u_0)$ of
\eqref{gru} is a quasiisomorphism near double points of $Z$.
\end{proposition}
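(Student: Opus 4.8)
The plan is to deduce Theorem~\ref{maincalc} from the hypothesis that every ${\rm gr}_a(u_0)$ of \eqref{gru} is a quasiisomorphism near the double points of $Z$, by combining the reduction to the fibre $t=0$ already available from Proposition~\ref{reduction1} with a finite d\'evissage along the parabolic filtration.

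First I would invoke Proposition~\ref{reduction1}: it is enough to prove that $u_0(a)|_{X\times\{0\}}$ is a quasiisomorphism near the double points of $Z\subset X\times\{0\}$. The next step is to recognise this restricted map as the map induced on parabolic-quotient complexes. Indeed all the complexes in play have terms flat over $Y$, and the parabolic periodicity gives $g_{\ast}(Q(a-1))=t\,g_{\ast}(Q(a))$ and, through the $V$-filtration formalism, $K(a-1)=t\,K(a)$, with $t$ the coordinate on $Y$ cutting out $D_V=Z$. Hence restriction to the fibre $\{t=0\}$ identifies $g_{\ast}(Q(a))|_{X\times\{0\}}$ with $g_{\ast}\big(Q(a)/Q(a-1)\big)$, identifies $K(a)|_{X\times\{0\}}$ with $K(a)/K(a-1)$, and identifies $u_0(a)|_{X\times\{0\}}$ with the map induced by $u_0(a)$ between these two quotient complexes. (By periodicity of both sides one may at this point also assume $a<1$, so that only the ``small'' minimal-extension levels of the $V$-filtration intervene.)

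Then I would filter. There are only finitely many parabolic weights $a=a_0>a_1>\dots>a_r$ in the half-open interval $(a-1,a]$ --- finiteness because $\underline{E}$ is a locally abelian parabolic bundle and the weight filtration $W$ introduces no new jumps --- and by periodicity $a-1$ is the next weight below $a_r$. Taking $\varepsilon$ smaller than the minimal gap, as in \eqref{gru}, one has $Q(a_i-\varepsilon)=Q(a_{i+1})$ and $K(a_i-\varepsilon)=K(a_{i+1})$ for $i=0,\dots,r$, with the convention $a_{r+1}:=a-1$. Thus $Q(a)/Q(a-1)$ carries the finite increasing filtration by the subcomplexes $Q(a_i)/Q(a-1)$, whose successive quotients are the complexes $Q(a_i)/Q(a_i-\varepsilon)$ of \eqref{gru}; likewise $K(a)/K(a-1)$ carries the finite filtration by the $K(a_i)/K(a-1)$, with successive quotients $K(a_i)/K(a_i-\varepsilon)$. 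Since $u_0(a)$ respects the parabolic filtrations on source and target it respects these filtrations after restriction to $X\times\{0\}$, and the map it induces on the $i$-th graded piece is exactly ${\rm gr}_{a_i}(u_0)$. Feeding the resulting short exact sequences of complexes into the long exact cohomology sequences and iterating the five lemma along this finite filtration, the hypothesis that each ${\rm gr}_{a_i}(u_0)$ is a quasiisomorphism near the double points of $Z$ propagates up the filtration to give the same statement for $u_0(a)|_{X\times\{0\}}$; by Proposition~\ref{reduction1} this is Theorem~\ref{maincalc}.

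The one place where I expect care to be needed is exactness of these filtrations under the restrictions used above: one must know that $Q(a_{i+1})\hookrightarrow Q(a_i)$ and $K(a_{i+1})\hookrightarrow K(a_i)$ are injective with cokernels the stated graded pieces, and that this survives passage to $\lambda=0$ and to $t=0$. For the $Q$-side this is transparent, the terms being locally free subsheaves of a fixed parabolic bundle with $\lambda$-connection. For the $K$-side it is a consequence of the strict specializability of the minimal extension $\srE$ in Sabbah's theory: strictness, i.e.\ absence of $\lambda$-torsion in the graded pieces of the $V$-filtration, is precisely what makes restriction to $\lambda=0$ commute with the $V$-filtration, as is already used in Theorem~\ref{rthm}; and the relation $t\,K(a)=K(a-1)$ for $a<1$ is the standard surjectivity of $t$ on the low levels of the $V$-filtration of a minimal extension. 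Everything else in this reduction is purely formal; the genuine content --- that ${\rm gr}_a(u_0)$ is a quasiisomorphism --- is postponed to the calculations of the following sections.
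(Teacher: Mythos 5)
Your proposal is correct and follows essentially the same route as the paper: apply Proposition~\ref{reduction1}, identify the restriction to $t=0$ with the quotient by $Q(a-1)=tQ(a)$ (resp.\ $K(a-1)=tK(a)$), and run a finite d\'evissage over the parabolic weights in $(a-1,a]$ whose graded pieces are exactly the maps ${\rm gr}_{a'}(u_0)$ of \eqref{gru}. The paper states this filtration argument in one sentence; you have merely supplied the (correct) details about finiteness of the weights, flatness, and strictness that make it work.
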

\begin{proof}
  Use the previous Proposition \ref{reduction1}. Both the source and
  target of the map $u_0(a)|_{X\times \{0\} }$ have filtrations such
  that the graded quotients are respectively
  $g_{\ast}(Q(a')/Q(a'-\varepsilon ) )$ and $K(a')/K(a'-\varepsilon
  )$, for the parabolic weights $a-1 < a' \leq a$. Hence, if we know
  that the ${\rm gr}_{a'}(u_0)$ are quasiisomorphisms, it will follow
  that $u_0(a)|_{X\times \{0\} }$ is a quasiisomorphism; then
  Proposition \ref{reduction1} leads to Theorem \ref{maincalc}.
\end{proof}

\section{Proof at a normal crossing} \label{sec-nc}

We now turn to the proof of Theorem \ref{maincalc}, using the
reductions above. By proposition \ref{reduction2} we would like to
obtain a quasiisomorphism for the graded pieces of the parabolic
structure.

\subsection{The $V$-filtration at a normal crossing}

As the remaining problem is to treat an intersection of two vertical
divisor components, let us restrict to a local situation. Thus we may
assume that $Y$ is a disk with coordinate $t$, and the singular fiber
is $t=0$. On $\srP =\srX \times _{\srA}\srY$ we think of $t$ as being
the coordinate of the second factor $\srY$.  Let $\partial _t$ denote
the vector field generating the $\srY$-direction of the tangent bundle
of $\srP$. Recall that it acts on functions by the derivation
$\lambda \partial /\partial t$.

Consider our $\srR _{\srX}$-module $\srE$. We look at $g_+(\srE )$
which is an $\srR _{\srP}$-module on $\srP$.  It is supported on the
graph $g(\srX )$.

We may identify sheaves on $\srP$ supported on $g(\srX )$, with their
pullbacks to $\srX$ via $g^{-1}$.

This yields on $\srX$ the $g^{-1}\srR _{\srP}$-module
$g^{-1}g_+(\srE)$.

\begin{lemma}
\label{ginvgplus}
We may write
$$
g^{-1}g_+(\srE )=\srE [\partial _t] .
$$
The action of $g^{-1}\srR _{\srP}$ is given by the formulae of
\cite[Equations (3.4.3)]{SaitoMHM}.
\end{lemma}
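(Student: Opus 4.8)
The plan is to recognize this as the explicit local incarnation of Kashiwara's equivalence for the closed immersion $g$: it is the exact analogue of the classical $\Dd$-module computation, with the Rees parameter $\lambda$ already built into $\srR_{\srP}$, so nothing is needed beyond carefully unwinding the definitions.

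First I would note that $g$ is a closed immersion --- indeed a section of the first projection $q:\srP\to\srX$ --- of codimension $1$, since $\dim\srP=\dim\srX+1$. Consequently $g_+$ reduces to the sheaf-theoretic pushforward of the appropriate transfer bimodule and carries no derived correction, so $g_+(\srE)$ is an honest $\srR_{\srP}$-module whose underlying $\Oo_{\srP}$-module is supported on $\srG=g(\srX)$; under the identification recalled just before the lemma, $g^{-1}g_+(\srE)$ is the associated $g^{-1}\srR_{\srP}$-module on $\srX$. The calculation to follow is purely formal and uses no finiteness of $\srE$ over $\Oo_{\srX}$.

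Next I would work in local coordinates. Take coordinates $(x,\lambda)$ on $\srX$, and recall that $t$ is the disk coordinate on $Y$ pulled back to $\srP$, so that $(x,t,\lambda)$ are coordinates on $\srP$ and $\srG=\{\,t=f(x)\,\}$. Setting $s:=t-f(x)$, a reduced equation for $\srG$, one has $\partial/\partial s=\partial/\partial t$ at fixed $x,\lambda$, and in the coordinates $(x,s,\lambda)$ the map $g$ is the standard codimension-one embedding $\{\,s=0\,\}\hookrightarrow\srP$. For such an embedding the transfer bimodule is free over $\Oo_{\srX}$ on the powers of $\lambda\,\partial/\partial s$, whence the $\Oo_{\srX}$-module identification
\[
g^{-1}g_+(\srE)\;=\;\bigoplus_{k\ge 0}\srE\cdot\partial_t^{\,k}\;=\;\srE[\partial_t],\qquad \partial_t:=\lambda\,\partial/\partial t ,
\]
the generator $\partial_t$ being precisely the one dictated by the Rees construction defining $\srR_{\srP}$.

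Finally I would read off the $g^{-1}\srR_{\srP}$-action on $\sum_k e_k\,\partial_t^{\,k}$, which is forced by the bimodule structure: $\Oo_{\srX}$ acts on the coefficients; $\partial_t\cdot(e\,\partial_t^{\,k})=e\,\partial_t^{\,k+1}$; the coordinate $t=s+f(x)$ acts by $f(x)$ on the coefficients together with the shift $s\cdot(e\,\partial_t^{\,k})=-k\lambda\,e\,\partial_t^{\,k-1}$ coming from $[\lambda\,\partial/\partial s,\,s]=\lambda$; and a generator $\lambda\,\partial/\partial x_i$ of $\srR_{\srX}$ acts by $(\lambda\partial_{x_i}e)\,\partial_t^{\,k}$ plus the correction term $-(\partial f/\partial x_i)\,e\,\partial_t^{\,k+1}$, which records the change of frame $\partial/\partial x_i|_{t}=\partial/\partial x_i|_{s}-(\partial f/\partial x_i)\,\partial/\partial s$. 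Putting $\lambda=1$ recovers verbatim the $\Dd$-module formulas of \cite[Equations (3.4.3)]{SaitoMHM}, so the displayed action is exactly their $\lambda$-twisted counterpart, as asserted. I do not expect a genuine obstacle here: the one place requiring care (rather than any conceptual difficulty) is the usual bookkeeping of left/right-module conventions, of the $\omega$-twist hidden in the transfer bimodule, and of the placement of the $\lambda$'s --- all pinned down once one adopts the normalizations of \cite{SaitoMHM,Sabbah}.
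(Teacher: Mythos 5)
Your proposal is correct and follows essentially the same route as the paper: change coordinates so that the graph becomes the coordinate hyperplane $s=t-f(x)=0$, identify $g_+(\srE)$ with $\bigoplus_k \partial_t^k\cdot g_{\ast}(\srE)=\srE[\partial_t]$, and obtain the $g^{-1}\srR_{\srP}$-action by the change-of-frame computation recovering Saito's formulas (3.4.3). You merely write out explicitly the change-of-variables formulas that the paper delegates to Saito and Mochizuki, which is a harmless (and arguably helpful) elaboration.
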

\begin{proof}
  It is possible to change coordinates on $\srP$ such that the image
  of the graph is just a coordinate hyperplane. We can therefore write
  explicitly the pushforward $g_+$ as follows:
$$
g_+(\srE ) = \bigoplus _{i=0}^{\infty}\partial _t^i\cdot (g_{\ast}(\srE )).
$$
The statement of the lemma is given by pulling back to $\srX$ along $g^{-1}$. 

The action of elements of $\srR _{\srP}$ is easy to calculate in the
changed coordinate system, in particular it means that the actions of
$\partial _t$ and $\Oo _{\srP}$ are just the usual ones.  On the other
hand, one must use a change of variables formula to get the action of
the vector fields on the original factor $\srX$.This is done by Saito
\cite[Theorem 3.4]{SaitoMHM} and Mochizuki \cite[Section
16.1]{MochizukiPure} \cite[12.2.2]{MochizukiWild}.
\end{proof}

\

\noindent
Recall that $V_0\srR _{\srP}\subset \srR _{\srP}$ is the sheaf of
subrings generated by the tangent vector fields tangent to the fiber
$t=0$; it contains in particular the tangent vector field $t\partial
_t$.  And, the $V$-filtration of $g_+(\srE )$ is characterized as the
increasing filtration of this module by finitely generated
sub-$V_0\srR _{\srP}$-modules $V_{b-1}(g_+(\srE ))$, such that on
$$
{\rm Gr}_{b-1}(g_+(\srE ))= V_{b-1}(g_+(\srE ))/
V_{v-\epsilon -1}(g_+(\srE )),
$$
the vector field $t\partial _t$ acts with generalized eigenvalue
$-\lambda b$, that is to say $t\partial _t + \lambda b$ is nilpotent.

\begin{proposition}
\label{vgen1}
Assume $b<1$.
The $V$-filtration of $g_+(\srE )$ is determined as follows:
$V_{b-1}(g_+(\srE ))$ is the $V_0\srR _{\srP}$-submodule of
$g_+(\srE )$ generated by $g_{\ast}(\srE _{b,b})$.
\end{proposition}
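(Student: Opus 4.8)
The plan is to verify the axiomatic characterization of the $V$-filtration for the filtration generated by $g_{\ast}(\srE_{b,b})$, using the explicit description of $g^{-1}g_+(\srE) = \srE[\partial_t]$ from Lemma \ref{ginvgplus} together with the identification between the multi-$V$-filtration and the parabolic structure from Proposition \ref{multivcomp}. Concretely, working in local coordinates $x, y$ on $X$ with $t = xy$ (the case of a vertical normal crossing), one knows from Saito-Mochizuki (Proposition \ref{vdescrip}) that the $V$-filtration $V_{a-1}g_+(\srE)$ is the sub-$V_0\srR_{\srP}$-module, or even the sub-$q^{-1}\srR_{\srX}$-module, generated by the image of $g_{\ast}(\srE_{\alpha(a)})$. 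What needs to be done here is to pass from that statement with the special weight $\alpha(a)$ (weight $a$ vertical, weight $0$ horizontal) to the statement with the ``diagonal'' weight $\beta = (b,b)$ along the two vertical components. Away from horizontal divisors this is really the same sheaf in a different guise: $\srE_{b,b}$ and $\srE_{\alpha(a)}$ agree locally at a purely vertical crossing once we set $a = b$, since the only parabolic indices in play are the two vertical ones.

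**First I would** set up the local model: $\srX = $ (two-disk) $\times \srA$ with vertical divisors $\{x=0\}, \{y=0\}$, the function $f = xy = t$, and $g^{-1}g_+(\srE) = \srE[\partial_t]$ with the Saito change-of-variables action of $g^{-1}\srR_{\srP}$ from \cite[(3.4.3)]{SaitoMHM}. **Then** I would define $W_{b-1} := V_0\srR_{\srP} \cdot g_{\ast}(\srE_{b,b})$ and check the two defining properties of the $V$-filtration. For coherence: $V_0\srR_{\srP}$ is Noetherian and $\srE_{b,b}$ is $\Oo$-coherent, so $W_{b-1}$ is $V_0\srR_{\srP}$-coherent; monotonicity and the periodicity relation $W_{b-2} = tW_{b-1}$ follow from $\srE_{b-1,b-1} = t\srE_{b,b}$ (which is $\srE_{(b,b) - \mathbf{1}}$, matching the shift conventions). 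The essential point is the eigenvalue condition: on ${\rm Gr}_{b-1}(W) = W_{b-1}/W_{b-\epsilon-1}$, the operator $t\partial_t + \lambda b$ must be nilpotent. Here I would use that $\srE_{b,b}/\srE_{b-\epsilon,b}$ and the cross-grading relate to the graded pieces ${\rm Gr}_{k,b}(\underline{\srE})$ on which, by the Nilpotence Hypothesis \ref{mainhyp} and the Remark on page \pageref{genlam}, the residue of the $\lambda$-connection along each vertical $D_k$ has eigenvalue $\lambda b$; since $t\partial_t$ acting on the graph corresponds under the change of variables to (essentially) the sum $x\partial_x + y\partial_y$ of the two vertical residue operators restricted to the diagonal weight, its eigenvalue on the relevant graded piece is $\lambda b + \lambda b$ minus the correction from the shift — and I would track the bookkeeping carefully so that it comes out to $-\lambda b$ with the nilpotent part coming from the nilpotent residues $N_{k,b}$.

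**The main obstacle** I expect is precisely this last bookkeeping: reconciling the $t\partial_t$-eigenvalue computed on the graph side (where $t = xy$, so $t\partial_t$ pulls back to a sum of two vertical Euler operators) with the single real weight $b$ appearing in the axiom $-\lambda b$. At a normal crossing the naive sum of two residue eigenvalues $\lambda b + \lambda b$ is \emph{not} $\lambda b$, so the resolution must be that the generating submodule $g_{\ast}(\srE_{b,b})$ sits inside $g_+(\srE)$ in a way that shifts the grading — this is exactly the content of the references \cite[Theorem 3.4]{SaitoMHM}, \cite[Section 16.1]{MochizukiPure} invoked in Proposition \ref{vdescrip}, and the correct statement will involve the interplay between the diagonal index and the $V$-filtration index. **My approach** would therefore be to reduce to the already-proven Saito-Mochizuki statement (Proposition \ref{vdescrip}) rather than reprove it from scratch: show that the $V_0\srR_{\srP}$-module generated by $g_{\ast}(\srE_{b,b})$ coincides with the one generated by $g_{\ast}(\srE_{\alpha(b)})$ by checking both contain the other's generators after applying finitely many log-vector-field operators, using the Caution after Definition \ref{minext} and the characterization of $\srE_\beta \cap \srE$ via derivatives of $\srE_{\beta'}$ with $\beta'_i < 1$. **Finally**, since $b < 1$ both indices are admissible so $\srE_{b,b} \subset \srE$, and the two generation expressions agree; uniqueness of the $V$-filtration from its axioms then identifies $W_{b-1}$ with $V_{b-1}(g_+(\srE))$.
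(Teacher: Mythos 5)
Your proposal is correct and, after the detour through a direct axiomatic verification, lands exactly where the paper does: the paper's entire proof is the observation that the statement restates the first part of Proposition \ref{vdescrip} (the Saito--Mochizuki generation result), since at a purely vertical crossing $\srE_{\alpha(b)}=\srE_{b,b}$ and the $V$-filtration is unique. The preliminary attempt to check the $t\partial_t$-eigenvalue axiom by hand is unnecessary here (and is precisely the nontrivial content outsourced to the cited references), so your fallback to quoting Proposition \ref{vdescrip} is the intended argument.
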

\begin{proof}
This restates the first part of Proposition \ref{vdescrip}.
We'll describe the argument for the second part of
that proposition in two steps,
in \ref{vgen2} and \ref{vgen3} below. 
\end{proof}

Let $q:\srP \rightarrow \srX$ denote the projection.
We have an inclusion of sheaves of rings
$$
q^{-1}\srR _{\srX} \subset V_0\srR _{\srP}.
$$
Denote by $s$ the section of $V_0\srR_{\srP}$ corresponding
to $t\partial_t$. We obtain the sheaf of rings
$$
q^{-1}\srR _{\srX}[s] \subset V_0\srR _{\srP}.
$$
Note that $g^{-1}q^{-1}\srR _{\srX}=\srR_{\srX}$.
This gives an inclusion of sheaves of rings on $\srX$,
$$
\srR_{\srX}[s] \subset g^{-1}V_0\srR _{\srP}.
$$
We have $g^{-1}g_{\ast}(\srE )=\srE$ and it has a
natural map to $g^{-1}g_+(\srE )$ corresponding to
the inclusion of the degree $0$ part of
$\srE [\partial _t]$ in the expression of Lemma
\ref{ginvgplus}.

A basic fact for our calculations is the following lemma.

\begin{lemma}
Given a collection of sections of
$g^{-1}g_{\ast}(\srE )$, the $g^{-1}V_0\srR_{\srP}$-submodule
of $g^{-1}g_+(\srE )$ that they generate is the same as the
$\srR_{\srX}[s]$-submodule they generate.
\end{lemma}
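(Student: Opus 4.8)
One inclusion is automatic: since $\srR_\srX[s]\subset g^{-1}V_0\srR_\srP$, the $\srR_\srX[s]$-submodule generated by a collection of sections $S$ is contained in the $g^{-1}V_0\srR_\srP$-submodule generated by $S$. So the plan is to prove the reverse inclusion, and for this it suffices to show that the $\srR_\srX[s]$-submodule $M\subset g^{-1}g_+(\srE)$ generated by $S$ is already stable under the whole ring $g^{-1}V_0\srR_\srP$: then $M$ contains the $g^{-1}V_0\srR_\srP$-submodule generated by $S$, and the two coincide.

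First I would reduce to a single stability check. Working on $\srX$ via Lemma \ref{ginvgplus}, so that $g^{-1}g_+(\srE)=\srE[\partial_t]$, recall that $V_0\srR_\srP$ is generated over $\Oo_\srP$ by the relative (over $\srA$) logarithmic vector fields along $\{t=0\}$, namely the coordinate fields $\partial_{x_i}$ in the $\srX$-directions together with the Euler field $t\partial_t$. Pulling back along $g^{-1}$, the $\partial_{x_i}$ give the standard generators of $\srR_\srX=g^{-1}q^{-1}\srR_\srX$, the field $t\partial_t$ gives the element $s$, and $g^{-1}\Oo_\srP$ is generated over $\Oo_\srX\subset\srR_\srX$ by the single function $t$. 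Hence $g^{-1}V_0\srR_\srP$ is generated as a sheaf of rings by $\srR_\srX[s]$ together with $t$, and since $M$ is by construction an $\srR_\srX[s]$-submodule it remains only to prove $t\cdot M\subset M$.

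Next I would establish $t\cdot M\subset M$ by pushing $t$ to the right past an arbitrary word in the generators of $\srR_\srX[s]$, using the commutation relations in $g^{-1}V_0\srR_\srP$: one has $[\xi,t]=0$ for every $\xi\in\srR_\srX$ (the lifted fields $\partial_{x_i}$ annihilate $t$ on $\srP$, and functions pulled back from $\srX$ commute with $t$), and $[s,t]=\lambda t$ (because $s$ lifts $t\partial_t$ and $[\partial_t,t]=\lambda$), so that $t\,s^k=(s-\lambda)^k\,t$. Consequently, for a generating section $\sigma\in S$ and any word $w$ in $\srR_\srX$ and $s$, one obtains $t\cdot(w\sigma)=w'\cdot(t\sigma)$ for a new word $w'$ again lying in $\srR_\srX[s]$. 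Finally $t\sigma=f\sigma$, where $f=g^{\ast}t$ is the local equation of the map $f$, because $t$ acts on the degree-zero part $g_\ast(\srE)\subset g_+(\srE)$ through $g^{\ast}$; and $f\sigma$ lies in the $\Oo_\srX$-span of $S$, hence in $M$. Therefore $t\cdot(w\sigma)=w'(f\sigma)\in\srR_\srX[s]\cdot S=M$, and by $\Oo_\srX$-linearity $t\cdot M\subset M$, which completes the argument.

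The computation is purely formal once the identifications are set up; the only point that needs care --- and I do not expect it to be the real obstacle --- is the bookkeeping behind Lemma \ref{ginvgplus} and \cite[(3.4.3)]{SaitoMHM}, namely that the $\srR_\srX$-action on $g^{-1}g_+(\srE)=\srE[\partial_t]$ is the one obtained by lifting the relative tangent fields of $\srX$ to $\srP$ so that they annihilate $t$ (which is what makes $[\partial_{x_i},t]=0$ hold literally), rather than the naive coefficientwise action on $\srE[\partial_t]$. With that understood, the argument uses nothing about normal crossings and applies verbatim for an arbitrary map $f$.
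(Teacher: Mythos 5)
Your argument is essentially the paper's: both proofs come down to the single fact that on the degree-zero part $g_{\ast}(\srE)\subset g_{+}(\srE)$ the coordinate $t$ acts as the function $f=xy\in\Oo_{\srX}$, so that the generators of $g^{-1}V_0\srR_{\srP}$ beyond $\srR_{\srX}[s]$ contribute nothing new to the submodule generated by sections of $g^{-1}g_{\ast}(\srE)$. The one step that is not literally correct as you state it is the claim that $g^{-1}V_0\srR_{\srP}$ is generated as a ring by $\srR_{\srX}[s]$ together with $t$: in the analytic setting $\Oo_{\srP}$ is not $\Oo_{\srX}[t]$, so $V_0\srR_{\srP}$ contains coefficient functions $h(x,y,t)$ that are not polynomial in $t$ over $\Oo_{\srX}$. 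This is harmless --- since $t-xy$ annihilates $g_{\ast}(\srE)$ and lowers the $\partial_t$-degree by one on $\srE[\partial_t]$, any such $h$ acts on a fixed element exactly as its finite Taylor polynomial in $t-xy$ with $\Oo_{\srX}$-coefficients does, so your reduction to checking $t$-stability of $M$ survives --- but it needs to be said. The paper sidesteps the point by writing a general operator in the normal form $\sum (t\partial_t)^i\partial_x^j\partial_y^k f_{ijk}(t,x,y)$ with the functions on the right, where they act first, directly on the degree-zero generators, and $f_{ijk}(t,x,y)$ can be replaced outright by $f_{ijk}(xy,x,y)\in\Oo_{\srX}$; your version instead makes the commutators $[\xi,t]=0$ and $ts^k=(s-\lambda)^k t$ explicit, which amounts to the same manipulation read in the other order.
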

\begin{proof}
Let $x,y$ be the coordinates on $X$ with $g(x,y)=xy$. 
A section of the ring $g^{-1}V_0\srR_{\srP}$
may be written as
$$
r=\sum_{i,j,k} (t\partial _t)^i\partial _x^j \partial _y^k f_{ijk}(t,x,y).
$$
Setting $a_{ijk}(x,y):= f_{ijk}(xy,x,y)$ we have
(for some function $u_{ijk}$)
$$
a_{ijk}(x,y)-f_{ijk}(t,x,y)=u_{ijk}(t,x,y)(t-xy).
$$
If $e$ is a section of $g^{-1}g_{\ast}(\srE )$ then 
$(t-xy)e=0$. Hence 
$f_{ijk}(t,x,y)\cdot e = a_{ijk}(x,y)\cdot e$,
therefore
$$
r\cdot e = \sum_{i,j,k} (t\partial _t)^i\partial _x^j \partial _y^k a_{ijk}(x,y)\cdot e .
$$
This expression is in the submodule generated by $e$ under
the action of $\srR_{\srX}[s]$.
\end{proof}

\

\

\begin{corollary}
\label{vgen2}
The $V$-filtration of $g_+(\srE )$ is determined in negative degrees as
follows: for $b<1$,
$$
g^{-1}V_{b-1}g_+(\srE )\subset g^{-1}g_+(\srE )=\srE [\partial _t]
$$
is the $\srR_{\srX}[s]$-submodule generated by
$g^{-1}g_{\ast}(\srE  _{b,b})=
{\srE}_{b,b}$.
\end{corollary}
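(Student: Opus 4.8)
The plan is to deduce Corollary \ref{vgen2} directly from Proposition \ref{vgen1} and the basic fact just established, the only real content being to keep careful track of the functor $g^{-1}$ and of the hypothesis $b<1$.

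First I would start from Proposition \ref{vgen1}: for $b<1$, $V_{b-1}(g_+(\srE))$ is the $V_0\srR_\srP$-submodule of $g_+(\srE)$ generated by $g_\ast(\srE_{b,b})$. Applying the functor $g^{-1}$ is harmless here: $g$ is a closed immersion, so $g^{-1}$ is exact, and the operation ``submodule generated by a subsheaf'' is compatible with it, being the image of a multiplication map $\mathcal A\otimes S\to M$ and $g^{-1}$ commuting both with $\otimes$ and with images. Thus $g^{-1}V_{b-1}(g_+(\srE))$ is the $g^{-1}V_0\srR_\srP$-submodule of $g^{-1}g_+(\srE)=\srE[\partial_t]$ (Lemma \ref{ginvgplus}) generated by $g^{-1}g_\ast(\srE_{b,b})$. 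Now $g^{-1}g_\ast(\srF)=\srF$ canonically for any sheaf $\srF$ on $\srX$, since $g$ is a closed immersion; applied to $\srF=\srE_{b,b}$, and using that $b<1$ forces $\srE_{b,b}\subset\srE$ by Proposition \ref{multivcomp} (equivalently by the Caution after Definition \ref{minext}), this says that the generating subsheaf is just $\srE_{b,b}$, sitting inside $\srE=g^{-1}g_\ast(\srE)$, i.e.\ in the degree-zero part of $\srE[\partial_t]$.

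It then remains to replace $g^{-1}V_0\srR_\srP$-generation by $\srR_\srX[s]$-generation, and this is exactly what the preceding Lemma does: working over a small enough open set, where $\srE_{b,b}$ is generated by finitely many sections of $g^{-1}g_\ast(\srE)=\srE$, the Lemma identifies the $g^{-1}V_0\srR_\srP$-submodule and the $\srR_\srX[s]$-submodule of $g^{-1}g_+(\srE)$ generated by those sections; since generation and equality of subsheaves are local, we glue. This yields that $g^{-1}V_{b-1}(g_+(\srE))$ is the $\srR_\srX[s]$-submodule of $\srE[\partial_t]$ generated by $\srE_{b,b}$, as claimed. There is no serious obstacle here --- the statement is genuinely a corollary --- but the point that needs attention is the bookkeeping in the first step: one must check that passing to $g^{-1}$ really converts the $V_0\srR_\srP$-generation of Proposition \ref{vgen1} into $g^{-1}V_0\srR_\srP$-generation by the \emph{same} subsheaf, and one must keep the hypothesis $b<1$ in view so that $\srE_{b,b}$ genuinely lies in the minimal extension $\srE$ and not merely in its localization $\widetilde{\srE}$ (for $b\geq 1$ the corollary would have to be reformulated through $\widetilde{\srE}$).
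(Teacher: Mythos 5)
Your proposal is correct and follows exactly the paper's own (very terse) proof, which simply says to combine Proposition \ref{vgen1} with the preceding lemma identifying $g^{-1}V_0\srR_{\srP}$-generation with $\srR_{\srX}[s]$-generation. Your added bookkeeping about $g^{-1}$ being exact on sheaves supported on the graph and about $b<1$ ensuring $\srE_{b,b}\subset\srE$ is a faithful expansion of the same argument.
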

\begin{proof}
Combine the previous proposition and lemma. 
\end{proof}

\

\noindent
We now look more closely at the action of
$\srR_{\srX}[s]$. As was explained in
\cite[Section 3.4]{Sabbah},
it is useful to localize by inverting $t=xy$. Sabbah denoted the
localization with a tilde and we conserve that notation.

Notice that $xy$ never acts invertibly on the module $\srE$; this is
somewhat different from the case of $\Dd$-modules where we can have a
holonomic module in which $xy$ is invertible. Because of
multiplication of the derivatives by $\lambda$, a finitely generated
$\srR_{\srX}$-module will not have $xy$ acting invertibly. However, as
Sabbah points out, one may make this localization if we are interested
in the $V$-filtration.

Let $\widetilde{\srE}:= \srE [(xy)^{-1}]$.  Then
$$
g_+(\widetilde{\srE})= g_+(\srE ) [t^{-1}].
$$
Notice that $\srE \subset \widetilde{\srE}$ is a submodule so
$$
g_+(\srE )\subset g_+(\widetilde{\srE})
$$
is a sub-$\srR_{\srP}$-module.

We also have the following relationship with the parabolic structure:
$$
\widetilde{\srE} = \bigcup _{a,b} \srE _{a,b}.
$$

\begin{proposition}
  The (non-finite type) $\srR_{\srP}$-module $g_+(\widetilde{\srE})$
  also admits a $V$-filtration characterized by the same properties as
  in the holonomic case.  For $b<1$ we have
$$
V_{b-1}g_+(\srE ) = V_{b-1}g_+(\widetilde{\srE}).
$$
We have
$$
g^{-1}g_+(\widetilde{\srE}) =\widetilde{\srE} [s],
$$
although one must be careful that the action of $\srR_{\srX}$ includes
terms in $s$ as referred to in Lemma \ref{ginvgplus}.

For any $b$, $g^{-1}V_{b-1}g_+(\widetilde{\srE})$ is the
sub-$\srR_{\srX}[s]$-module of $\widetilde{\srE} [s]$
generated by $\srE  _{b,b}$.
\end{proposition}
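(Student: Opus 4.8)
The plan is to reduce the whole statement to the known results for $g_+(\srE )$ in the range $b<1$ (Proposition \ref{vgen1} and Corollary \ref{vgen2}), together with Sabbah's analysis of the localized module in \cite[Section 3.4]{Sabbah}, bridging the two ranges of parabolic weights by $t$-periodicity of the $V$-filtration.

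\emph{Existence and the case $b<1$.} First I would recall from \cite[Section 3.4]{Sabbah} that, although $g_+(\widetilde{\srE})$ is not $\srR_{\srP}$-coherent, it is monodromic along $\srX_q=\{t=0\}$ and carries a unique increasing filtration $V_{b-1}(g_+(\widetilde{\srE}))$ by $V_0\srR_{\srP}$-submodules satisfying the usual axioms ($t\partial_t+\lambda b$ nilpotent on ${\rm Gr}_{b-1}$), with two extra features: multiplication by $t$ is invertible on $g_+(\widetilde{\srE})=g_+(\srE )[t^{-1}]$ and induces isomorphisms $t\colon V_{b-1}(g_+(\widetilde{\srE}))\xrightarrow{\ \sim\ }V_{b-2}(g_+(\widetilde{\srE}))$ for every $b$; and for $b<1$ the Saito--Mochizuki generation formula of Proposition \ref{vdescrip} remains valid on the localized module (its references are phrased in exactly this multi-$V$/localized generality, transported through the localized form of Proposition \ref{multivcomp}), i.e.\ $V_{b-1}(g_+(\widetilde{\srE}))$ is the $V_0\srR_{\srP}$-submodule of $g_+(\widetilde{\srE})$ generated by $g_{\ast}(\srE_{b,b})$. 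Now for $b<1$ one has $\srE_{b,b}\subset\srE$ (Proposition \ref{multivcomp}) and $V_0\srR_{\srP}\cdot g_{\ast}(\srE )\subset g_+(\srE )$, so this submodule already lies inside $g_+(\srE )$, where Proposition \ref{vgen1} identifies it with $V_{b-1}(g_+(\srE ))$. This proves the middle assertion $V_{b-1}g_+(\srE )=V_{b-1}g_+(\widetilde{\srE})$ for $b<1$.

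\emph{The identification $g^{-1}g_+(\widetilde{\srE})=\widetilde{\srE}[s]$.} I would obtain this by localizing Lemma \ref{ginvgplus}. On the graph, inverting $xy$ on $\srX$ is the same as inverting $t$ on $\srP$, so $g^{-1}g_+(\widetilde{\srE})=\bigl(g^{-1}g_+(\srE )\bigr)[(xy)^{-1}]=\widetilde{\srE}[\partial_t]$; since now $t=xy$ is invertible, $\partial_t=(xy)^{-1}s$ with $s=t\partial_t$, hence $\widetilde{\srE}[\partial_t]=\widetilde{\srE}[s]$. The warning that the $\srR_{\srX}$-action on $\widetilde{\srE}[s]$ carries $s$-terms is precisely the change-of-coordinates content recorded in Lemma \ref{ginvgplus}.

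\emph{The generation statement for all $b$.} The only genuinely new point is the passage from $b<1$ to arbitrary $b$, and I would carry it out upstairs on $\srP$ before pulling back. Pick $n\in\nn$ with $b-n<1$. Periodicity of the parabolic structure gives $\srE_{b-n,b-n}=(xy)^n\srE_{b,b}$, hence $g_{\ast}(\srE_{b-n,b-n})=t^{\,n}g_{\ast}(\srE_{b,b})$ in $g_+(\widetilde{\srE})$; a standard commutation check on the generators $\Oo_{\srP},\partial_x,\partial_y,t\partial_t$ (each bracket $[r,t]$ is $t$ times an element of $V_0\srR_{\srP}$) gives $tV_0\srR_{\srP}=V_0\srR_{\srP}t$, so $t^{\,n}\bigl(V_0\srR_{\srP}\cdot M\bigr)=V_0\srR_{\srP}\cdot(t^{\,n}M)$ for any submodule $M$. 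Combining this with the $t$-isomorphisms of the first step,
\[
V_{b-1}g_+(\widetilde{\srE})=t^{-n}V_{b-n-1}g_+(\widetilde{\srE})
=t^{-n}\bigl(V_0\srR_{\srP}\cdot g_{\ast}(\srE_{b-n,b-n})\bigr)
=t^{-n}t^{\,n}\bigl(V_0\srR_{\srP}\cdot g_{\ast}(\srE_{b,b})\bigr)
=V_0\srR_{\srP}\cdot g_{\ast}(\srE_{b,b}),
\]
so the $V_0\srR_{\srP}$-generation description holds for every $b$. Finally I would apply $g^{-1}$: the elementary lemma used to prove Corollary \ref{vgen2} only uses that sections of $g^{-1}g_{\ast}$ are annihilated by $t-xy$, so it applies verbatim with $\srE$ replaced by $\widetilde{\srE}$, and it turns the $V_0\srR_{\srP}$-submodule generated by $g_{\ast}(\srE_{b,b})$ into the $\srR_{\srX}[s]$-submodule of $\widetilde{\srE}[s]$ generated by $\srE_{b,b}$, which is the last assertion. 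The one part of this that is not formal bookkeeping is the behaviour of the $V$-filtration on the non-coherent module $g_+(\widetilde{\srE})$ — its existence and uniqueness, the exactness of multiplication by $t$ on the filtration steps, and the $b<1$ generation formula there — and for all of that I would simply appeal to \cite[Section 3.4]{Sabbah} together with the references in Proposition \ref{vdescrip}.
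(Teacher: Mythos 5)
Your proposal is correct and follows essentially the same route as the paper, whose own proof is a one-line remark that everything reduces to the previous statements once one notes that $t=xy$ is invertible on $\widetilde{\srE}$, allowing passage between $\partial_t$ and $s=t\partial_t$. What you have written is simply the careful expansion of that remark — the appeal to Sabbah's Section 3.4 for the localized $V$-filtration, the identification $\widetilde{\srE}[\partial_t]=\widetilde{\srE}[s]$, and the $t$-periodicity argument extending the generation statement from $b<1$ to all $b$ — and each step checks out.
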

\begin{proof}
  This is the same as the previous statements, noting that $t=xy$ is
  invertible on $\widetilde{\srE}$, allowing us to go between
  $\partial_t$ and $s=t\partial _t$ by multiplying or dividing by $t$.
\end{proof}

\

\noindent
We can now restrict to $\lambda =0$. This restriction is a quotient,
dividing everything by the submodules generated by $\lambda$. The
other operations that have intervened above, namely taking
localization and taking submodules generated by something, are all of
the form colimits. Therefore, all these operations commute.

Recall that $E$ denotes the $R_{X,0}$-module over $X$.
It is the restriction of $\srE$ to $\lambda =0$. We have
$g_+(E)$ the restriction of $g_+(\srE )$, with the
formula
$$
g^{-1}(g_+(E))=E[\partial _t].
$$
Let $\widetilde{E}$ be obtained by inverting $xy$ on $E$.
Again,
it is the restriction of $\widetilde{\srE}$ to $\lambda =0$. We have
$$
g^{-1}(g_+(\widetilde{E})) = \widetilde{E}[s]
$$
where as before $s=t\partial _t$.

\begin{proposition}
The $V$-filtrations $V_{b-1}g_+(E)$ (resp.
$V_{b-1}g_+(\widetilde{E})$) are the restrictions to $\lambda =0$ of the
$V_{b-1}g_+(\srE )$ (resp.
$V_{b-1}g_+(\widetilde{\srE})$). We have
$$
V_{b-1}(g_+(E))=V_{b-1}(g_+(\widetilde{E}))
\mbox{ for } b<1
$$
and for any $b$, $V_{b-1}(g_+(\widetilde{E}))$ is the submodule of
$\widetilde{E}[s]$ generated by $g_{\ast}E_{b,b}$ under the action of
the ring $R_{X,0}[s]$.
\end{proposition}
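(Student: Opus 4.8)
The plan is to deduce everything from one formal observation --- that restriction to $\lambda=0$, i.e. the functor $M\mapsto \overline{M}:=M\otimes_{\Oo_{\srA}}(\Oo_{\srA}/\lambda)=M/\lambda M$ on $\srR_{\srX}$-modules, commutes with all the operations used in the preceding Proposition and in Corollary \ref{vgen2} to describe the $V$-filtration over $\srA$ --- together with the $\lambda$-strictness needed to make ``restriction of a submodule'' a genuine submodule of the restriction. First I would record that this functor, being a tensor product, commutes with arbitrary colimits: in particular it commutes with the filtered colimit $\widetilde{N}=N[(xy)^{-1}]=\varinjlim\bigl(N\xrightarrow{\,\cdot xy\,}N\xrightarrow{\,\cdot xy\,}\cdots\bigr)$, so $\overline{\widetilde{N}}=\widetilde{\overline{N}}$; and it commutes with cokernels, hence with formation of images, hence with the operation ``sub-$A$-module generated by an $\Oo_{\srX}$-subsheaf $S$'' for any sheaf of $\Oo_{\srX}$-algebras $A$, since that submodule is the image of the multiplication map $A\otimes_{\Oo_{\srX}}S\to M$. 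Applying this with $A=\srR_{\srX}[s]$, $S=\srE_{b,b}$ and $M=\widetilde{\srE}[s]=g^{-1}g_{+}(\widetilde{\srE})$, and using $\overline{\widetilde{\srE}[s]}=\widetilde{E}[s]$, $\overline{\srR_{\srX}[s]}=R_{X,0}[s]$, $\overline{\srE_{b,b}}=E_{b,b}$ together with the description of $g^{-1}V_{b-1}g_{+}(\widetilde{\srE})$ from the preceding Proposition, one sees that the image of $\overline{V_{b-1}g_{+}(\widetilde{\srE})}$ in $g^{-1}g_{+}(\widetilde{E})=\widetilde{E}[s]$ is exactly the sub-$R_{X,0}[s]$-module generated by $g_{\ast}E_{b,b}$; the case $b<1$ with $\srE$ in place of $\widetilde{\srE}$ is handled the same way from Corollary \ref{vgen2}.

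Next I would upgrade ``image of $\overline{V_{b-1}g_{+}(\widetilde{\srE})}$'' to ``$\overline{V_{b-1}g_{+}(\widetilde{\srE})}$ itself sits inside $g_{+}(\widetilde{E})$ as a submodule'', i.e. that restriction to $\lambda=0$ does not create a kernel; this, plus the identification of the resulting filtrations with the restrictions of the $\lambda$-family ones, is the content of the first sentence of the Proposition. Here I would invoke strictness: $\srE$ is the minimal extension, hence strictly S-decomposable and in particular strictly specializable along $t=0$ (Remark \ref{s-term}, \cite[Definitions 3.3.8, Proposition 3.3.11]{Sabbah}), and the same then holds for $g_{+}(\srE)$ and, after inverting $t=xy$, for $g_{+}(\widetilde{\srE})$ (for $b\ge 1$ one reduces to the finite-type case via the periodicity $t\colon V_{b-1}g_{+}(\widetilde{\srE})\xrightarrow{\sim}V_{b-2}g_{+}(\widetilde{\srE})$, $t$ being invertible on $\widetilde{\srE}$). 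In particular the quotients $g_{+}(\srE)/V_{b-1}g_{+}(\srE)$ and $g_{+}(\widetilde{\srE})/V_{b-1}g_{+}(\widetilde{\srE})$ are $\lambda$-torsion-free; the long exact Tor-sequence of $0\to V_{b-1}g_{+}(\srE)\to g_{+}(\srE)\to g_{+}(\srE)/V_{b-1}g_{+}(\srE)\to 0$ tensored with $\Oo_{\srA}/\lambda$ identifies $\ker\bigl(\overline{V_{b-1}g_{+}(\srE)}\to\overline{g_{+}(\srE)}\bigr)$ with the $\lambda$-torsion of the quotient, which vanishes, and similarly for $\widetilde{\srE}$. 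Hence $\overline{V_{b-1}g_{+}(\srE)}\hookrightarrow g_{+}(E)$ and $\overline{V_{b-1}g_{+}(\widetilde{\srE})}\hookrightarrow g_{+}(\widetilde{E})$, so these really are filtrations by submodules, and by the previous paragraph they are the generation filtrations. Finally, restricting the equality $V_{b-1}g_{+}(\srE)=V_{b-1}g_{+}(\widetilde{\srE})$ (valid for $b<1$, from the preceding Proposition) to $\lambda=0$ and using these injectivities yields $V_{b-1}g_{+}(E)=V_{b-1}g_{+}(\widetilde{E})$ for $b<1$.

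The main obstacle is precisely this strictness step. Everything else is colimit bookkeeping, but the assertion that the restriction of the $V$-filtration is again a filtration by submodules rests entirely on $\lambda$-torsion-freeness of the $V$-steps and of the relevant quotients --- that is, on the strict-specializability half of Sabbah--Mochizuki's structure theory for the twistor $\Dd$-module attached to $\srE$, transported through $g_{+}$ and through localization along $t$ --- and I would be careful to cite it cleanly rather than re-derive it. The one compatibility that is not purely formal, and which I would check explicitly, is that restriction to $\lambda=0$ respects the twisted $R_{X,0}$-action on $\widetilde{E}[s]$: the action carries the extra $s$-terms of Lemma \ref{ginvgplus}, but restriction respects it because it respects the $\srR_{\srX}$- and $V_0\srR_{\srP}$-actions from which those formulas are inherited termwise.
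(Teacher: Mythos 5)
Your proof is correct and follows the same route as the paper, which disposes of this proposition with essentially a single observation (stated in the paragraph immediately preceding it) that restriction to $\lambda=0$ is a quotient and hence commutes with the colimit operations --- localization and generation of submodules --- used to describe the $V$-filtration over the $\lambda$-line. The only difference is that you make explicit the $\lambda$-strictness step guaranteeing that the restricted $V$-steps actually inject into $g_+(E)$ and $g_+(\widetilde{E})$, a point the paper leaves implicit as part of the quoted Sabbah--Mochizuki strict specializability theory.
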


\subsection{Towards explicit calculations}

After the discussion from above, we are in the following situation. We
have that $X$ is a product of disks, and $D$ is the union of the two
coordinate lines $D_1$ and $D_2$. Use coordinates $(x,y)$ on $X$, with
$D_1$ given by $y=0$ and $D_2$ by $x=0$.

From now on we identify sheaves on $P$ supported along
$g(X)$ with sheaves on $X$ via $g^{-1}$ and $g_{\ast}$.
We work with sheaves on $X$.

We are given a module $\widetilde{E}$ over the ring of functions with
poles along $D$,
call it $\Oo _X[x^{-1},y^{-1}]$. It has submodules denoted
by
$E_{a,b}$ which are locally free
over $\Oo _X$
and give $\widetilde{E}$ when localized by inverting $x$ and $y$.
Define
$$
\psi _{b,b} := E_{b,b}/ E_{b-\epsilon , b-\epsilon }.
$$
The Higgs field on $\widetilde{E}$ is given by two sections
$\varphi_x$ and $\varphi _y$ of $End(\widetilde{E})$, having
logarithmic poles with respect to each of the submodules $E_{a,b}$. In
particular, we have
$$
\varphi_x(E_{b,b})\subset x^{-1}E_{b,b}, \;\;\;
\varphi_y(E_{b,b})\subset y^{-1}E_{b,b}
$$
and the same for $b-\epsilon$. It follows that $x\varphi_x$ and
$y\varphi_y$ act on $\psi _{b,b}$.

We have
$$
V_{b-1}(g_+\widetilde{E})\subset
g_+(\widetilde{E})=\widetilde{E}[s]
$$
is the submodule generated by $E_{b,b}$ under the operations of $s$
and the basis vector fields $\partial _x$ and $\partial _y$.  These
act according to the formulae of \cite[Equations (3.4.3)]{SaitoMHM} as
explained above.  In the present situation we are restricting to
$\lambda =0$ so there is no differentiation: the actions of all the
vector fields commute and they act trivially on functions. Recall,
however, that in order to characterize the $V$-filtration one needs to
use the full $\srR_{\srX}$-module $g_+(\widetilde{\srE})$ with
differentiation. After the characterization as a submodule generated
by $\srE _{b,b}$ we can then restrict to $\lambda = 0$ and have the
same characterization there.

In view of commutativity, the
actions of the vector fields are easier to write down: 
$\partial _x$ and $\partial _y$ act
respectively by endomorphisms
$$
A_x:= \varphi _x +s/x , \mbox{  and  } A_y:= \varphi _y + s/y.
$$
The same statement holds
for
$V_{b-\epsilon -1}$. 

By definition the module of nearby cycles is
$$
\Psi _{b-1} :=V_{b-1}(\widetilde{E}[s])/
V_{b-\epsilon -1}(\widetilde{E}[s]).
$$
Notice that we keep here the subscript $b-1$ in order to conform to
the usual practice in the theory of $\Dd$-modules. For brevity on the
other hand we used the notation $\psi _{b,b}$ without the $-1$'s. This
shouldn't cause too much confusion as either expression may be
considered as some kind of notation.

The following lemma completes our review of the second
part of Proposition \ref{vdescrip}, 
which we recall is due to Saito
and Mochizuki. We felt it would be useful to give here a proof adapted specifically to the Dolbeault case. 

\begin{lemma}
\label{vgen3}
The submodule 
$V_{b-1}(\widetilde{E}[s])\subset\widetilde{E}[s]$ 
can also be characterized as the submodule generated 
by $E_{b,b}$ under just $\Oo _X$ and
the operations $A_x$ and $A_y$.
\end{lemma}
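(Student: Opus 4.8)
The plan relies on the special feature of the Dolbeault case $\lambda=0$: on $\widetilde E[s]=g^{-1}g_+(\widetilde E)$ all the operators in play commute, and the ``extra'' operator $s=t\partial_t$ can be recovered from $A_x$ together with the logarithmic Higgs field, so that adjoining $s$ to $A_x,A_y$ does not enlarge the submodule generated from $E_{b,b}$. I would first record the structure of $\widetilde E[s]$ at $\lambda=0$: it is $\widetilde E\otimes_{\cc}\cc[s]$ with $s$ acting freely by shift (legitimate since $t=xy$ is invertible on $\widetilde E$, so $\partial_t$ and $s$ differ by the automorphism ``multiply by $xy$''); the operators $A_x,A_y$ are $\Oo_X$-linear and, being the $\lambda=0$ values of $\partial_x,\partial_y$ inside the commutative ring $R_{X,0}={\rm Sym}^{\ast}(T_X)$, satisfy $[A_x,A_y]=0$ — the only input being $[\varphi_x,\varphi_y]=0$, i.e. $\varphi\wedge\varphi=0$ written in the basis $\tfrac{dx}{x},\tfrac{dy}{y}$ of logarithmic one-forms, together with the centrality of $s$; and $s$ commutes with $A_x$, $A_y$ and with multiplication by $\Oo_X$. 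Consequently $M:=\sum_{i,j\ge 0}A_x^iA_y^j(E_{b,b})\subseteq\widetilde E[s]$ is precisely the $\Oo_X$-submodule generated by $E_{b,b}$ under $A_x$ and $A_y$, and it is stable under $\Oo_X$, $A_x$ and $A_y$.

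The key step is to show $s\cdot E_{b,b}\subseteq M$. Solving $A_x=\varphi_x+s/x$ for $s$ gives $s=xA_x-x\varphi_x$, hence for $e\in E_{b,b}$
\[
s\cdot e \;=\; x\cdot A_x(e)\;-\;(x\varphi_x)(e) .
\]
The first summand lies in $M$ because $A_x(e)\in M$ and $M$ is $\Oo_X$-stable; the second lies in $E_{b,b}\subseteq M$ because the logarithmic pole hypothesis $\varphi_x(E_{b,b})\subseteq x^{-1}E_{b,b}$ says exactly that $x\varphi_x$ maps $E_{b,b}$ into itself. Thus $s\cdot e\in M$, and since $s$ commutes with $A_x$, $A_y$ and $\Oo_X$ this propagates: $s\cdot M=\sum_{i,j}A_x^iA_y^j(s\cdot E_{b,b})\subseteq M$. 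Therefore $M$ is stable under $\Oo_X$, $A_x$, $A_y$ and $s$, i.e. under $R_{X,0}[s]$; since it contains $E_{b,b}$ and, by the localized $\lambda=0$ form of the characterization of the $V$-filtration (Proposition \ref{vgen1} and its specialization recalled just above), $V_{b-1}(\widetilde E[s])$ is the smallest $R_{X,0}[s]$-submodule containing $E_{b,b}$, we conclude $V_{b-1}(\widetilde E[s])\subseteq M$. The reverse inclusion is immediate because $V_{b-1}(\widetilde E[s])$ is already $\Oo_X$-, $A_x$- and $A_y$-stable and contains $E_{b,b}$. Hence $M=V_{b-1}(\widetilde E[s])$, which is the assertion.

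I do not anticipate a genuine obstacle here: the restriction to $\lambda=0$ is precisely what trivializes the argument, since the noncommutativity and the $s$-degree bookkeeping (coming from $[\partial_t,t]=\lambda$ and the correction terms in the $\srR$-module action) that complicate the general-$\lambda$ statement of Saito and Mochizuki all disappear. The only points needing a little care are pinning down that the operators $A_x,A_y,s$ on $\widetilde E[s]$ are indeed the $\lambda=0$ specializations of Saito's formulas \cite[(3.4.3)]{SaitoMHM}, and checking that ``generated under $\Oo_X$, $A_x$ and $A_y$'' is unambiguous, which it is once $[A_x,A_y]=0$. With those identifications in place, the computation above is the whole proof.
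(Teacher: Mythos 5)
Your proposal is correct and follows essentially the same route as the paper: the whole content is the identity $s=xA_x-x\varphi_x$ together with the fact that the logarithmic pole condition makes $E_{b,b}$ stable under $x\varphi_x$, which shows that adjoining $s$ to the generators $\Oo_X, A_x, A_y$ changes nothing. The paper packages this as $s^kE_{b,b}=(x\varphi_x-xA_x)^kE_{b,b}\subset E_{b,b}[s]$ while you show directly that the $(\Oo_X,A_x,A_y)$-module generated by $E_{b,b}$ is $s$-stable, but these are the same argument.
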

\begin{proof}
Recall that $E_{b,b}$ is preserved by the logarithmic
Higgs field, so it is stable under the operations
$x\varphi _x$ and $y\varphi _y$. The same is
therefore true of $E_{b,b}[s]$. Thus
$xA_x$ also preserves $E_{b,b}[s]$ and 
$$
s^k E_{b,b} = (x\varphi_x -xA_x)^kE_{b,b} \subset E_{b,b}[s].
$$
Now $V_{b-1}(\widetilde{E}[s])$ is the sub-$\Oo _X$-module of 
$\widetilde{E}[s]$ generated by 
$E_{b,b}$ under the operations $s,A_x,A_y$. 
Hence, it is also 
the submodule generated by $E_{b,b}[s]$ under the
operations $A_x,A_y$. From the previous formula we see
that it is generated from $E_{b,b}$ by 
$A_x,A_y$. 
\end{proof}

\

\subsection{The tensor product formula}

Let the ring $\Oo _X [u,v]$ act on $\widetilde{E}[s]$ by setting the
action of $u$ equal to $A_x$ and the action of $v$ equal to $A_y$.
Define
$$
\varphi _{\log} := x\varphi _x - y\varphi _y,
$$
acting on $\widetilde{E}$, hence also on $\widetilde{E}[s]$;
and consider the endomorphism of $\widetilde{E}[s]$ given by
$$
A_{\log} := xA_x -yA_y =xu-yv.
$$
The action of this endomorphism is equal to the
action of  $\varphi _{\log}$.
In other words,
the element $xu-yv -\varphi _{\log}$ acts by $0$ on $\widetilde{E}[s]$.

Let $w:= xu-yv$ so we have a map $\Oo _X[w] \rightarrow \Oo _X[u,v]$,
and we note that for any $\Oo _X[w]$-module $M$ we have
$$
M\otimes _{\Oo _X[w]} \Oo _X[u,v] =
\frac{M [u,v]}{ (xu-yv - w)M[u,v]}.
$$
If furthermore $M$ is an $\Oo _D$-module, equivalent to saying that
$xyM=0$, then we may also write
$$
M\otimes _{\Oo _D[w]} \Oo _D[u,v] =
\frac{M [u,v]}{ (xu-yv - w)M[u,v]}.
$$
We clearly have a map $E_{b,b}\rightarrow
V_{b-1}(\widetilde{E}[s])$ giving
$$
\psi _{b,b} \rightarrow \Psi _{b-1},
$$
and on the other hand the operations $A_x$ and $A_y$ on 
$\Psi _{b-1}$ give an $\Oo _D[u,v]$-module structure.
The action of $xu-yv$ coincides with the action of $\varphi _{\log}$ on
$\psi_{b,b}$,
so we obtain a natural map
\begin{equation}
\label{tpf}
\psi _{b,b}\otimes _{\Oo _ D[w]}\Oo _D[u,v]\rightarrow 
\Psi _{b-1}.
\end{equation}
The following proposition gives a formula  for
$\Psi _{b-1}$:

\begin{theorem}[\sc{Tensor product formula}]
\label{tensorproductformula}
The map \eqref{tpf} is an isomorphism.
\end{theorem}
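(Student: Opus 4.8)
The plan is as follows. First I would record that the map \eqref{tpf} is well defined: the operator $xu-yv-\varphi_{\log}$ acts on $\widetilde{E}[s]$ by $xA_x-yA_y-\varphi_{\log}=(x\varphi_x+s)-(y\varphi_y+s)-\varphi_{\log}=0$, so it annihilates $\Psi_{b-1}$; moreover $\psi_{b,b}$ is killed by $xy$, hence is an $\Oo_D$-module, so tensoring over $\Oo_D[w]$ makes sense and agrees with tensoring over $\Oo_X[w]$. Surjectivity of \eqref{tpf} is immediate from Lemma \ref{vgen3}, which says $V_{b-1}(\widetilde{E}[s])$ is generated over $\Oo_X$ by $E_{b,b}$ under the commuting operators $A_x,A_y$; the issue is injectivity.

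The core of the argument is to lift the statement to the level of the $V$-filtration itself, where no subquotient is involved. For each real $\gamma$ set $\mathcal{M}_\gamma:=E_{\gamma,\gamma}\otimes_{\Oo_X[w]}\Oo_X[u,v]=E_{\gamma,\gamma}[u,v]/(xu-yv-\varphi_{\log})E_{\gamma,\gamma}[u,v]$, with $w$ acting on $E_{\gamma,\gamma}$ as $\varphi_{\log}$, and let $\Phi_\gamma\colon\mathcal{M}_\gamma\to V_{\gamma-1}(\widetilde{E}[s])$ be the map $u\mapsto A_x$, $v\mapsto A_y$. By Lemma \ref{vgen3} each $\Phi_\gamma$ is surjective, and the key claim is that each $\Phi_\gamma$ is in fact an isomorphism. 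Granting this, the theorem follows formally: the squares relating $\Phi_{b-\epsilon}$ and $\Phi_b$ through $E_{b-\epsilon,b-\epsilon}\hookrightarrow E_{b,b}$ and $V_{b-\epsilon-1}\hookrightarrow V_{b-1}$ commute, so
\[
\Psi_{b-1}=\mathrm{coker}\bigl(V_{b-\epsilon-1}\hookrightarrow V_{b-1}\bigr)\cong\mathrm{coker}\bigl(\mathcal{M}_{b-\epsilon}\to\mathcal{M}_b\bigr),
\]
and right exactness of $\otimes$ identifies this cokernel with $(E_{b,b}/E_{b-\epsilon,b-\epsilon})\otimes_{\Oo_X[w]}\Oo_X[u,v]=\psi_{b,b}\otimes_{\Oo_D[w]}\Oo_D[u,v]$; chasing through, the resulting isomorphism is exactly \eqref{tpf}.

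To prove the claim I would filter $\mathcal{M}_\gamma$ by degree in $u,v$ and filter $V_{\gamma-1}(\widetilde{E}[s])\subset\widetilde{E}[s]$ by degree in $s$. Since $A_x=\varphi_x+s/x$ and $A_y=\varphi_y+s/y$ raise the $s$-degree by at most one, $\Phi_\gamma$ is filtered, and both filtrations are exhaustive and start at $0$; hence it suffices to show $\mathrm{gr}(\Phi_\gamma)$ is injective, after which surjectivity of $\Phi_\gamma$ forces bijectivity. The map $\Theta$ obtained by following $\mathrm{gr}(\Phi_\gamma)$ with the inclusion $\mathrm{gr}^s V_{\gamma-1}\hookrightarrow\mathrm{gr}^s\widetilde{E}[s]=\widetilde{E}[s]$ is the $\Oo_X$-linear map $\bar u^i\bar v^j e\mapsto x^{-i}y^{-j}s^{i+j}e$, so it is enough to show $\Theta$ is injective. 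Two computations do this. On the source side: because $E_{\gamma,\gamma}$ is locally free, $x\bar u-y\bar v$ is a nonzerodivisor on $E_{\gamma,\gamma}[\bar u,\bar v]$, so the leading form of $(xu-yv-\varphi_{\log})g$ is $(x\bar u-y\bar v)\,\mathrm{in}(g)$ for every nonzero $g$; hence the initial submodule of the defining ideal of $\mathcal{M}_\gamma$ is exactly $(x\bar u-y\bar v)E_{\gamma,\gamma}[\bar u,\bar v]$ and $\mathrm{gr}^F\mathcal{M}_\gamma=E_{\gamma,\gamma}[\bar u,\bar v]/(x\bar u-y\bar v)E_{\gamma,\gamma}[\bar u,\bar v]$. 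On the target side: the weight-$m$ part of $\Theta$ is $(e_i)_{i=0}^m\mapsto\sum_i x^{-i}y^{-(m-i)}e_i$, and after clearing the unit $(xy)^m$ its kernel becomes $\{(e_i)\in E_{\gamma,\gamma}^{\,m+1}:\sum_i x^{m-i}y^i e_i=0\}$; this is precisely the image of $(e'_i)_{i=0}^{m-1}\mapsto(xe'_{i-1}-ye'_i)_{i=0}^{m}$, i.e. the first syzygies of the monomials $x^m,x^{m-1}y,\dots,y^m$ (with coefficients in $E_{\gamma,\gamma}$) are generated by the consecutive ones. This is a local statement, so by local freeness of $E_{\gamma,\gamma}$ it reduces to the classical minimal free resolution of the ideal $(x,y)^m\subset\Oo_X$. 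Thus $\ker\Theta=(x\bar u-y\bar v)E_{\gamma,\gamma}[\bar u,\bar v]$, so $\Theta$ induces an injection on $\mathrm{gr}^F\mathcal{M}_\gamma$, and $\mathrm{gr}(\Phi_\gamma)$ is injective.

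The step I expect to be the main obstacle is the passage to the $s$-adic associated graded together with the identification of the leading form of $xu-yv-\varphi_{\log}$ with the clean relation $x\bar u-y\bar v$: one has to rule out lower-order cancellations, keep the associated graded of the non-coherent module $\widetilde{E}[s]$ under control, and do this compatibly with the $\lambda=0$ specialization and with the intermediate parabolic steps $E_{b-\epsilon,b-\epsilon}\subset E_{b,b}$. Once one is on the graded level, the whole content is the classical syzygy computation for powers of the maximal ideal $\mathfrak{m}=(x,y)$, which is precisely the Koszul-complex material carried out in the calculations that follow.
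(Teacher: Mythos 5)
Your proof is correct, and its overall skeleton coincides with the paper's: both arguments reduce the theorem to the single claim that $E_{\gamma,\gamma}\otimes_{\Oo_X[w]}\Oo_X[u,v]\to V_{\gamma-1}(\widetilde{E}[s])$ is an isomorphism for each level $\gamma$ (surjectivity being Lemma \ref{vgen3}), and both then conclude by right exactness of the tensor product applied to $E_{b-\epsilon,b-\epsilon}\to E_{b,b}\to\psi_{b,b}\to 0$. Where you genuinely diverge is in the proof of injectivity of that map. The paper splits it into two claims: first, that $\widetilde{E}[u,v]/(xu-yv-\varphi_{\log})\widetilde{E}[u,v]\to\widetilde{E}[s]$ is an isomorphism, proved by the change of variables $u'=x(u-\varphi_x)$, $v'=y(v-\varphi_y)$ which turns the relation into $u'-v'$; and second, that $E_{b,b}[u,v]\cap(xu-yv-\varphi_{\log})\widetilde{E}[u,v]=(xu-yv-\varphi_{\log})E_{b,b}[u,v]$, proved by a pole-order contradiction using leading terms in $u,v$ restricted to $x=0$. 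You avoid the localized module almost entirely: you filter the source by $u,v$-degree and the target by $s$-degree, use the fact that $x\bar u-y\bar v$ is a nonzerodivisor on the locally free module $E_{\gamma,\gamma}[\bar u,\bar v]$ to rule out cancellation of leading forms, and reduce the injectivity to the classical statement that the syzygies of $x^m,x^{m-1}y,\dots,y^m$ over the two-dimensional regular ring $\Oo_X$ are generated by the consecutive Koszul relations. Both routes are elementary and of comparable length; yours has the feature of performing the reduction to the associated-graded ($\varphi=0$) situation already here, so that this step and the subsequent Koszul quasiisomorphism Theorem \ref{qi} (which the paper also proves by a degree filtration reducing to $\varphi=0$) run on the same mechanism, while the paper's change-of-variables trick yields the clean identification $\widetilde{E}[u,v]/(xu-yv-\varphi_{\log})\cong\widetilde{E}[s]$ as a byproduct. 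The obstacles you flag at the end are in fact already disposed of by what you wrote: $\mathrm{gr}^s\widetilde{E}[s]=\widetilde{E}[s]$ tautologically since the filtration comes from a grading, and the nonzerodivisor argument is exactly what excludes lower-order cancellation.
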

\begin{proof}
  From the above discussion, we may also say that we would like to
  show that the map
\begin{equation}
\label{tpf2}
\frac{\psi _{b,b}[u,v]}{
(xu-yv-\varphi _{\log})\psi _{b,b}[u,v]}
\rightarrow \Psi _{b-1}.
\end{equation}
is an isomorphism.

Notice that by the definition of $V_{b-1}$, the map
$$
E_{b,b}[u,v] \rightarrow V_{b-1}(\widetilde{E}[s])
$$
is surjective. Hence it follows that the map \eqref{tpf2}
is surjective. We would like to show that it is injective.

The right hand side of \eqref{tpf2} is
$$
\frac{E_{b,b}[u,v]}{E_{b-\epsilon , b-\epsilon }[u,v] +
  (xu-yv-\varphi_{\log} )E_{b,b}[u,v]}.
$$
We need to show that if $a\in E_{b,b}[u,v]$ and the image of $a$ in
$\widetilde{E}[s]$ is in $V_{b-\epsilon -1}(\widetilde{E}[s])$, that
is to say if the image of $a$ is in the image of $E_{b-\epsilon ,
  b-\epsilon }[u,v]$, then
$$
a\in E_{b-\epsilon , b-\epsilon }[u,v] + (xu-yv-\varphi_{\log} )E_{b,b}[u,v].
$$

Our first claim is that the map
\begin{equation}
\label{claimmap}
\frac{\widetilde{E}[u,v]}{(xu-yv-\varphi _{\log})
\cdot \widetilde{E}[u,v]}
\rightarrow \widetilde{E}[s]
\end{equation}
is injective. In this situation $x$ and $y$ are invertible and we can
write $s=xu-x\varphi_x = yv-y\varphi_y$.  Consider the following
change of variables: put $u':=x( u-\varphi _x)$ and $v':= y(v-
\varphi_y)$ acting on $\widetilde{E}$.  These formulas determine a map
$$
\widetilde{E}[u',v']\rightarrow \widetilde{E}[u,v]
$$
by sending
$$
\sum (u')^i(v')^j e_{ij} \mbox{  to  }
\sum (x( u-\varphi _x))^i (y(v- \varphi _y))^j e_{ij} .
$$
This map is an isomorphism. The composed map
$$
\widetilde{E}[u',v']\rightarrow \widetilde{E}[u,v]\rightarrow \widetilde{E}[s]
$$
maps the action of $u'$ to the action of $x(A_x-\varphi _x) =s$ and
also the action of $v'$ to the action of $y(A_y-\varphi _y) =s$, that
is to say it sends $u'$ and $v'$ to $s$. The kernel is therefore the
submodule $(u'-v')\widetilde{E}[u,v]$. Transporting back by the
isomorphism, we see that the kernel of $\widetilde{E}[u,v]\rightarrow
\widetilde{E}[s]$ is generated by $x(u-\varphi _x) - y(v-\varphi _y) =
xu-yv-\varphi _{\log}$.  This proves the claim that \eqref{claimmap}
is injective.

We next note that \eqref{claimmap} is surjective. Indeed, we saw above
that its image is the same as the image of
$\widetilde{E}[u',v']\rightarrow \widetilde{E}[s]$ and this map is
visibly surjective since $u'$ and $v'$ map to $s$.

We have shown that \eqref{claimmap} is an isomorphism, in particular
we may replace $\widetilde{E}[s]$ by the left hand side of this map.
Therefore we may consider the map
\begin{equation}
\label{bmap}
E_{b,b}[u,v] \rightarrow \widetilde{E}[u,v]/(xu-yv-\varphi _{\log} )
\widetilde{E}[u,v] \cong \widetilde{E}[s].
\end{equation}
The image is $V_{b-1}(\widetilde{E}[s])$.
Our second claim is that the kernel of \eqref{bmap}
is equal to $(xu-yv-\varphi _{\log} )E_{b,b}[u,v]$.
This claim is equivalent to the statement
$$
E_{b,b}[u,v] \cap (xu-yv-\varphi _{\log} )\widetilde{E}[u,v] =
(xu-yv-\varphi _{\log} )E_{b,b}[u,v].
$$
To prove this statement, choose a basis for $E_{b,b}$ so we may write
$E_{b,b}\cong \Oo _X ^r$.  Write $\varphi _{\log} = B_{ij}$ in terms
of this basis, with $B_{ij}\in \Oo _X$.  In this notation,
$\widetilde{E}[u,v]=\Oo _X[x^{-1},y^{-1}][u,v]^r$. Suppose we have a
vector $(f_i)$ here, so $f_i\in \Oo _X[x^{-1},y^{-1}][u,v]$. Suppose
that
$$
(xu-yv)f_i + \sum _j B_{ij} f_j \in \Oo _X[u,v].
$$
Write
$$
f_i = x^{-a}y^{-b}\sum _{k,l}g^{kl}_iu^kv^l.
$$
Assume that $a,b$ are chosen so that $g^{kl}_i\in \Oo _X$ but at least
one of them is nonzero along each of the components of $D$, i.e. $a$
and $b$ are the smallest possible. Suppose one of $a$ or $b$ is $>0$,
that is to say some $f_i$ has a pole. We will obtain a
contradiction. Suppose for example $a>0$. Then restrict the $g^{kl}_i$
to $(x=0)$, and consider the terms of maximal $k+l$, call $\hat{g}_i$
the sum of these terms of the form $g^{kl}_iu^kv^l$ .  After
multiplying by $(xu-yv)$ we get terms with strictly bigger degree in
$u,v$, but the restriction of the term $xu\hat{g}_i$ to $(x=0)$
vanishes; the restriction of $yv\hat{g}_i$ is nonzero. But it has
strictly bigger degre in $u,v$ than any possible term in the
restriction to $(x=0)$ of $\sum _jB_{ij}g_j$. Here $g_i:= \sum
_{k,l}g^{kl}_iu^kv^l$.  It follows that the restriction of
$$
(xu-yv)g_i + \sum _j B_{ij} g_j
$$
to $(x=0)$ is nonvanishing, but since $f_i=x^{-a}y^{-b}g_i$ with $a>0$
this contradicts the hypothesis that $(xu-yv)f_i + \sum _j B_{ij} f_j
\in \Oo _X[u,v]$. We conclude that $a\leq 0$ and similarly $b\leq 0$,
in other words our section $(f_i)$ is in
$\Oo_X^r[u,v]=E_{b,b}[u,v]$. This proves the second claim.

The corollary of the second claim is the formula
$$
V_{b-1}(E[s]) = E_{b,b}[u,v]/(xu-yv-\varphi _{\log})E_{b,b}[u,v].
$$
This may also be written as
$$
V_{b-1}(\widetilde{E}[s])=E_{b,b}\otimes _{\Oo _X[w]} \Oo _X[u,v].
$$
The same holds for $b+\epsilon$.
But now the statement of the proposition follows: we have
a right exact sequence
$$
E_{b-\epsilon, b-\epsilon}\rightarrow E_{b,b}\rightarrow \psi_{b,b}\rightarrow 0,
$$
and tensor product is
right exact, so we get
the exact sequence
$$
E_{b-\epsilon, b+\epsilon}\otimes _{\Oo _X[w]} \Oo _X[u,v]
\rightarrow E_{b,b}\otimes _{\Oo _X[w]} \Oo _X[u,v]
\rightarrow \psi_{b,b}\otimes _{\Oo _X[w]} \Oo _X[u,v]\rightarrow 0,
$$
In view of the previous formula for $V_{b-1}(E[s])$ and the same for $b-\epsilon$, this may be written
$$
V_{b-\epsilon -1}(\widetilde{E}[s])\rightarrow 
V_{b-1}(\widetilde{E}[s])
\rightarrow \psi_{b,b}\otimes _{\Oo _X[w]} \Oo _X[u,v]\rightarrow 0,
$$
in other words
$$
\Psi _{b-1} = V_{b-1}(\widetilde{E}[s])/ 
V_{b-\epsilon -1}(E[s]) = \psi_{b,b}\otimes _{\Oo _X[w]} \Oo _X[u,v].
$$
This proves the tensor product formula of the proposition.
\end{proof}

\subsection{Consequence for the Koszul complexes}

In general suppose $Z$ is a scheme or analytic space and $\Vv$ is a
vector bundle. Let $\text{Sym}^{*}(\Vv ^{\vee})$ denote the symmetric
algebra on the dual vector bundle $\Vv^{\vee}$.  A
$\text{Sym}^{*}(\Vv ^{\vee})$-module coherent over $Z$ is the same
thing as a coherent sheaf $\Ff$ on $Z$ together with a morphism
$$
\phi : \Ff \rightarrow \Ff \otimes _{\Oo _Z} \Vv 
$$
such that the induced map $\phi \wedge \phi : \Ff \rightarrow \Ff
\otimes _{\Oo _Z} \bigwedge ^2\Vv$ is zero. The case of a Higgs bundle
is when $\Vv = \Omega ^1_Z$. In our case, the structure of logarithmic
Higgs bundles of $E_{\beta}$ over $Z=X$ corresponds to $\Vv =
\Omega^1_{X/Y}(\log D_V)$.

In general given $(\Vv , \Ff , \phi )$ as above we get the {\em Koszul
  complex}
\begin{equation}
\label{koszul}
{\bf Kosz}(\Vv , \Ff , \phi ):= 
\xymatrix@R-2pc{ \Big[ 
\ldots \ar[r] &  \Ff \otimes _{\Oo _Z}\bigwedge ^i\Vv \ar[r]^-{\wedge
  \phi} & 
\Ff \otimes _{\Oo _Z}\bigwedge ^{i+1}\Vv
\ar[r] & \ldots \Big]. \\ 
& i & i+1 & 
}
\end{equation}
If we are working with modules over a ring rather than quasicoherent
sheaves on a scheme or space, we shall use the same notation.

Locally at a crossing point of vertical divisors we have
$$
Q(b)=
{\rm DOL}^{\rm par}_{L^2}(X/Y,E_{\alpha (b)}) 
={\bf Kosz}(\Omega ^1_{X/Y}(\log D), E_{b,b}, \varphi ).
$$
The Koszul complex is compatible with quotients in the module
variable, since tensoring with the locally free sheaves $\bigwedge
^i\Vv$ is exact.  It follows that
\begin{equation}
\label{qbkoszul}
Q(b)/Q(b-\epsilon ) = 
{\bf Kosz}(\Omega ^1_{X/Y}(\log D), \psi _{b,b}, \varphi )
\end{equation}
locally near a crossing point of two vertical divisor components
because $\psi _{b,b}=E_{b,b}/E_{b-\epsilon , b-\epsilon }$.

On the other hand, 
$$
K(b) = {\bf Kosz}(\Omega ^1_{P/Y}, V_{b-1}(g_+(\srE )), -- ), 
$$
so again 
\begin{equation}
\label{kbkoszul}
K(b)/K(b-\epsilon ) = 
{\bf Kosz}(\Omega ^1_{P/Y}, \Psi _{b-1}, -- ).
\end{equation}
We didn't give a name to the Higgs field for the sheaf 
$\Psi _{b-1}$ on $P/Y$.

The $\psi _{b,b}$ and $\Psi _{b-1}$
are $\Oo _D$-modules supported on $D\subset X$. Therefore we 
may consider the Koszul complexes as being constructed over
the divisor $Z:= D$, in the 
neighborhood of a normal crossing point of the vertical divisor. 

In view of the tensor product formula of the previous subsection, let
us consider an $\Oo _D$-module $M$ with action of an endomorphism
$\varphi$, and set
$$
N:= M[u,v]/(xu-yv-\varphi )M.
$$
In the previous notations, $M=\psi _{b,b}$ and $N=\Psi _{b-1}$. We
have shortened $\varphi _{\log}$ to just $\varphi$ here.

Over $D$ we have two distinct vector bundles that can be used to
define Koszul complexes. The first $V_1$ is the restriction to $D$ of
$\Omega ^1_{X/Y}(\log D_V)$. It has rank $1$ and a local generator for
$V_1^{\ast}$ is $x\partial _x-y\partial_y$.  The operator $\varphi =
\varphi _{\log}$ is an action of $S(V_1^{\ast})$ on $M$ and we get the
Koszul complex ${\bf Kosz}(V_1,M,\varphi )$.

On the other hand, $V_2$ is the vector bundle $\Omega ^1_X$ restricted
to $D$, of rank $2$ with local generators $\partial _x$ and $\partial
_y$. We have $S(V_2^{\ast})=\Oo _D[u,v]$, and the $\Oo _D[u,v]$-module
structure of $N$ corresponds to a map
$$
\psi : N\rightarrow N\otimes V_2.
$$
We get the Koszul complex ${\bf Kosz}(V_2,N,\psi )$.

The map of Koszul complexes that we would like to consider
is the vertical map between the two horizontal complexes:
\begin{equation}
\label{vertmap}
\begin{array}{ccccccc}
{\bf Kosz}(V_1,M,\varphi ) & = & 
& & M& \stackrel{\varphi }{\rightarrow} & M \\
\downarrow & & & & \downarrow & & \downarrow \\
{\bf Kosz}(V_2,N,\psi ) & = & N & \rightarrow & N^{\oplus 2} & \rightarrow & N
\end{array}
\end{equation}
where the differentials on the bottom are $n\mapsto (vn,un)$ and
$(n,n')\mapsto un-vn'$.  The middle vertical map sends $m$ to
$(xm,ym)$ and the right vertical map is just the standard
inclusion. In $N$ we have $(xu-yv)m = \varphi m$ so the square
commutes.

The upper complex is the Koszul complex for $M$ with the action of
$\varphi$, whereas the lower complex is the Koszul complex for $N$
with its action of $u$ and $v$.

\begin{theorem}[\sc{Koszul quasiisomorphism theorem}] \label{qi}
  Suppose $M$ is an $\Oo _D$-module with action of $\varphi :
  M\rightarrow M$ as above.  We suppose that $M$ has no elements
  annihilated by both $x$ and $y$, for example suppose it has no
  sections supported in dimension $0$.  Then the vertical map of
  complexes \eqref{vertmap} is a quasiisomorphism.
\end{theorem}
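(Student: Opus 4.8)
The plan is to read both sides of \eqref{vertmap} as derived tensor products over a pair of polynomial rings, so that the statement collapses to a single $\mathrm{Tor}$-vanishing which the hypothesis on $M$ controls precisely. Working locally at the crossing point, I would write $R:=\Oo_D=\Oo_X/(xy)$ for the local ring of the normal crossing $D=\{xy=0\}$, and introduce the two ring maps $R[w]\to R$, $w\mapsto 0$, and $R[w]\to R[u,v]$, $w\mapsto xu-yv$; then $M$ becomes an $R[w]$-module via $w\mapsto\varphi$ and $N=M\otimes_{R[w]}R[u,v]$. The first step is to observe that ${\bf Kosz}(V_1,M,\varphi)=[M\xrightarrow{\varphi}M]$ is $M$ tensored with the two-term Koszul resolution of $R=R[w]/(w)$ over $R[w]$, hence represents $M\otimes^{L}_{R[w]}R$, while ${\bf Kosz}(V_2,N,\psi)=[N\to N^{\oplus2}\to N]$ is $N$ tensored with the Koszul resolution of $R=R[u,v]/(u,v)$ over $R[u,v]$, hence represents $N\otimes^{L}_{R[u,v]}R$ --- in both cases up to the degree shift already recorded in Lemma \ref{mapcomplexes} --- and then to check that the explicit vertical map of \eqref{vertmap} is the natural comparison map between these two presentations of $R$, tensored through $M$.

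Granting this, the theorem reduces to the identity $(M\otimes_{R[w]}R[u,v])\otimes^{L}_{R[u,v]}R\cong M\otimes^{L}_{R[w]}R$, which follows from associativity of the derived tensor product the moment $N=M\otimes_{R[w]}R[u,v]$ is in fact the derived tensor product, i.e.\ $\mathrm{Tor}^{R[w]}_{>0}(M,R[u,v])=0$. To compute this I would use that $R[u,v]=R[u,v,w]/(xu-yv-w)$ as an $R[w]$-algebra, and that $xu-yv-w$ lies in neither of the two associated primes $(x),(y)$ of the reduced ring $R[u,v,w]$, hence is a non-zero-divisor there; so $0\to F\xrightarrow{\ \cdot(xu-yv-w)\ }F\to R[u,v]\to 0$ with $F:=R[u,v,w]$ is a free resolution of $R[u,v]$ over $R[w]$. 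Tensoring it over $R[w]$ with $M$ (on which $w$ acts as $\varphi$) turns $F$ into $M[u,v]$ and the map into the operator $xu-yv-\varphi$, so $\mathrm{Tor}^{R[w]}_{1}(M,R[u,v])=\ker\!\bigl(xu-yv-\varphi\colon M[u,v]\to M[u,v]\bigr)$ and the higher $\mathrm{Tor}$'s vanish for free. Everything thus comes down to proving that $xu-yv-\varphi$ acts injectively on $M[u,v]$, and this is where the hypothesis must be used and where the main obstacle will lie.

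For the injectivity: the assumption that no nonzero section of $M$ is annihilated by both $x$ and $y$ says exactly that $\mathrm{depth}_{(x,y)}M\geq1$, so $H^{0}_{(x,y)}(M)=0$ and $M$ injects into $M[x^{-1}]\oplus M[y^{-1}]$; since $\varphi$ and multiplication by $x,y$ extend to these localizations compatibly with the inclusion, it suffices to show $xu-yv-\varphi$ is injective on each of $M[x^{-1}][u,v]$ and $M[y^{-1}][u,v]$. On $M[x^{-1}]$ one has $y=0$ (because $xy=0$ and $x$ is invertible there), so the operator becomes $x\cdot(u-x^{-1}\varphi)$; multiplication by $x$ is invertible, and substituting $u-x^{-1}\varphi$ for $u$ is an automorphism of $M[x^{-1}][u,v]$ --- the operator $x^{-1}\varphi$ acts on coefficients and commutes with $u,v$ --- conjugating multiplication by $u-x^{-1}\varphi$ to multiplication by $u$, which is visibly injective; the case of $M[y^{-1}]$ is symmetric. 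This yields the $\mathrm{Tor}$-vanishing, and with the first two paragraphs it proves the theorem. The main obstacle, as indicated, is this injectivity statement, the device of passing to the two localizations and absorbing $\varphi$ into a change of variables being what makes it go through; a secondary, purely bookkeeping, point is the identification of the explicit map \eqref{vertmap} with the abstract comparison map and the tracking of the degree shift of Lemma \ref{mapcomplexes}.
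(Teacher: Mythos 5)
Your argument is correct, and it takes a genuinely different route from the paper's. You read the top row of \eqref{vertmap} as $M\otimes^{L}_{R[w]}R$ (via the length-one Koszul resolution of $R$ over $R[w]$, $w\mapsto\varphi$) and the bottom row as $N\otimes^{L}_{R[u,v]}R$, so that by associativity of $\otimes^{L}$ the whole theorem collapses to the single vanishing $\mathrm{Tor}^{R[w]}_{1}(M,R[u,v])=0$, i.e.\ injectivity of $xu-yv-\varphi$ on $M[u,v]$; you then get that injectivity by embedding $M$ into $M[x^{-1}]\oplus M[y^{-1}]$ (this is exactly where the hypothesis on $M$ enters) and conjugating the operator to multiplication by $u$, resp.\ $v$, on each factor. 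The paper instead filters both complexes by degree in $u,v$, proves that the associated graded of the bottom row is the same Koszul complex for $\tilde N=M[u,v]/(xu-yv)M[u,v]$ --- the key exactness-of-graded-pieces claim there uses the hypothesis through the injection $M\to (M/xM)/\mathrm{tors}\oplus (M/yM)/\mathrm{tors}$, a close cousin of your localization step --- thereby reducing to $\varphi=0$, and finishes with explicit monomial computations for $M=\Oo_D$, $\Oo_D/x\Oo_D$, $\Oo_D/y\Oo_D$. Your route buys uniformity (arbitrary $M$ satisfying the hypothesis, no reduction to $\varphi=0$, no case analysis of module types) and isolates the one homological obstruction; the paper's route is more elementary and, as a by-product, exhibits the cohomology explicitly ($\Oo_D$ in the two rightmost spots), parallel to its proof of the tensor product formula. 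The two points you defer are indeed routine: the lift $1\mapsto xe_{1}+ye_{2}$ of $\mathrm{id}_{R}$ along the two Koszul resolutions reproduces exactly the map $m\mapsto(xm,ym)$ of \eqref{vertmap} (and the two-out-of-three argument through a free resolution of $M$ turns the Tor-vanishing into the quasiisomorphism of the explicit map), while the passage from ``no element killed by $x$ and $y$'' to ``no element killed by $x^{n}$ and $y^{k}$'', needed for the embedding into the localizations, is a one-line induction using $xy=0$. It would be worth writing out that last implication and the resolution $0\to R[w][u,v]\xrightarrow{\,xu-yv-w\,}R[w][u,v]\to R[u,v]\to 0$ explicitly if you adopt this proof.
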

\begin{proof}
  Filter $N$ by submodules $N_{\leq a}$ where $N_{\leq a}$ is the
  image of the polynomials of degree $\leq a$ in $u,v$, under the
  surjection $M[u,v]\rightarrow N$. Each Koszul complex is turned into
  a filtered complex by shifting the filtrations so that the
  differential precisely preserves the filtrations. To be precise, let
  $C^0\rightarrow C^1\rightarrow C^2$ denote the lower complex ${\bf
    Kosz}(V_2,N,\psi )$ and define the increasing filtration
  $U_{\cdot}C^{\cdot}$ by
$$
U_aC^0 := N_{\leq a},\;\;\; U_aC^1:= (N_{\leq a+1})^{\oplus 2},\;\;\;
U_aC^2:= N_{\leq a+2}.
$$
Let $D^1\rightarrow D^2$ denote the upper complex 
${\bf Kosz}(V_1,M,\varphi )$ and define
$$
U_{-2}D^1:=0, \;\;\; U_{-1}D^1:= M,
$$
and
$$
U_{-3}D^2:=0, \;\;\; U_{-2}D^2:= M.
$$
The associated-graded of the complex $D^{\cdot}$ is
$$
{\rm Gr}_{-1}(D^{\cdot})= 0 \rightarrow M \rightarrow 0,
$$
$$
{\rm Gr}_{-2}(D^{\cdot})= 0 \rightarrow 0 \rightarrow M.
$$
In all, the associated-graded complex is isomorphic to $D^{\cdot}$
itself but with zero as differential.

We claim that the associated-graded of the complex $C^{\cdot}$ is
isomorphic to the complex
$$
\tilde{N}\rightarrow \tilde{N}^{\oplus 2} \rightarrow \tilde{N}
$$
where
$$
\tilde{N}:= M[u,v]/(xu-yv)M[u,v]
$$
and the differentials are defined in the same way as before.

It suffices to see that on each piece, let us say on $C^0$ for example.

Consider the exact sequence
$$
M[u,v] \stackrel{xu-yv-\varphi}{\longrightarrow} M[u,v] \rightarrow N
\rightarrow 0.
$$
Define filtrations on the pieces by setting $U_aM[u,v]$ equal to the
polynomials of degree $\leq a$ in $u,v$, in the middle; on the left,
shift by one, using the filtration $U'_a:= U_{a-1}$. On the right
$N_{\leq a}$ is the image of $U_a$ from the middle.  We claim that the
exact sequence of associated-graded pieces is still exact. On the
right, the map is clearly surjective because of the definition of the
filtration on $N$. In the middle, suppose we have an element $f\in
U_aM[u,v]$ which maps to zero in $N_{\leq a}/N_{\leq a-1}$. This means
that $f\in (xu-yv-\varphi) M[u,v] + U_{a-1}M[u,v]$. We would like to
show that $f\in (xu-yv-\varphi) U_{a-1}M[u,v]$ (it is somewhat similar
to the proof of the tensor formula (Proposition
\ref{tensorproductformula}) in the previous section). Write
$$
f=(xu-yv-\varphi) g + h
$$
with $g\in M[u,v]$ and $h\in U_{a-1}M[u,v]$. Suppose $g\in U_cM[u,v]$
and that $c$ is the smallest such, so the projection of $g$ in
$U_c/U_{c-1}$ is nonzero. We assume $c>a-1$ and would like to deduce a
contradiction. Write
$$
g=\sum_{k+l\leq c} g^{kl}u^kv^l
$$
with $g^{kl}\in M$. Some $g^{kl}$ is nonzero for a $k+l=c$.

Our situation is that $xyM=0$, and if $xm=0$ and $ym=0$ then $m=0$. It
follows that the map
$$
M\rightarrow (M/xM)/\mbox{tors} \oplus (M/yM)/\mbox{tors}
$$
is injective, where ``$\mbox{tors}$'' denotes the $x$ and $y$
torsion. Indeed, if $m$ is such that its projection into $M/xM$ and
$M/yM$ are torsion, then $x^nm\in yM$ and writing $x^nm=ym'$ we have
$x^{n+1}m=0$; similarly in the other direction we get some
$y^{n+1}m=0$, but then $m=0$ by the hypothesis.

We may therefore assume for example that the projection of some
$g^{kl}$ into $M_1:=(M/xM)/\mbox{tors}$ is nonzero.

Now the projection of $f$ into
$$
U_{c+1}M_1[u,v] / U_cM_1[u,v]
$$
(extending the notation $U_{\cdot}$ in the natural way) is equal to
the projection of $-yvg$ here.  Indeed, $xug$ projects to zero and
also $h$ projects to zero because it is in $U_{a-1}$ and $a-1<c$.
However, the projection of $yg^{kl}$ is nonzero because, by hypothesis
$M_1$ contains no elements annihilated by $y$.  Therefore, the
projection of $f$ into the above graded quotient of level $c+1$ is
nonzero. This contradicts the hypothesis that $f\in U_aM[u,v]$ but
$c+1>a$.

This proves the claim, which says that the sequence of
associated-graded pieces is still exact. This claim says, in other
terms, that
$$
{\rm Gr}(C^0)\cong \tilde{N}.
$$
Similarly for the other terms of the complex with the appropriate
shift of indices. We obtain the statement that the complex ${\rm
  Gr}(C^{\cdot})$ is just the same Koszul complex but for the module
$\tilde{N}$ constructed using $\varphi =0$.

Give the upper Koszul complex ${\bf Kosz}(V_1,M,\varphi )$ a
filtration compatibly with the lower one ${\bf Kosz}(V_2,N,\psi )$, so
that the differential vanishes on the associated-graded pieces.  The
associated-graded of this complex is just the same complex but with
zero as differential: $M\stackrel{0}{\rightarrow}M$.  To show a
quasiisomorphism, it suffices to show a filtered quasiisomorphism.

We have seen above that the associated-graded complexes are the same
ones, but for the endomorphism $\varphi = 0$. This reduces the theorem
to the case $\varphi = 0$. That will be the subject of the
calculations in the next section.

Note that we need to consider all possible module types for $M$. This
amounts to looking at $M=\Oo _D$ and $M=\Oo _D/x\Oo _D$ and $M=\Oo
_D/y\Oo _D$.  These calculations, done in the next section, will
complete the proof of the theorem.
\end{proof}

\

\subsection{Calculation}

We now finish the calculations needed for the above proof.  Since the
question is local, we can assume we are in a simplified global
situation of an affine variety consisting of two crossed lines in
$\aaaa^2$. Thus, work with the coordinate ring:
\[
A := \cc [x,y]/(xy)
\]
of our variety $D \subset \aaaa^2$,
and the algebra over it:
\[
N := A[u,v]/(xu-vy).
\]
Our module $M$ is just $M=A$.
We set up the Koszul complex $K$ with respect to $u,v$:
\[
N \to N^2 \to N.
\]
The first map sends 1 to $v e_1 + u e_2$,
and the second map sends $e_1$ to $u$ and $e_2$ to $-v$.
The question is to calculate the cohomology of $K$.

We will use the grading:
\[
N = \oplus_{i=0}^\infty  N_i,
\]
where $N_i$ is the $A$-submodule of $N$ involving monomials of degree
$i$ in $u,v$. The Koszul sequence $K$ is now the direct sum of graded
pieces $K_i$:
\[
 0 \to N_{i-1} \to N_{i}^{\oplus 2} \to N_{i+1} \to 0.
\]

\

\begin{proposition}
For $i\geq 1$, the Koszul complex $K$ is exact.
\end{proposition}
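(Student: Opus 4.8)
The plan is to recognize the three–term complex $K_{i}$ as a single graded strand of an ordinary Koszul complex and then compute its homology as a ${\rm Tor}$. Write $B:=A[u,v]=\cc[x,y,u,v]/(xy)$, graded by the degree in $u,v$, set $\sigma:=xu-vy\in B_{1}$, so that $N=B/\sigma B$, and note $B/(u,v)=A$. With the standard internal grading on Koszul complexes (the $\bigwedge^{p}$–term shifted by $p$), the complex $0\to N_{i-1}\to N_{i}^{\oplus 2}\to N_{i+1}\to 0$ of the proposition is, up to the harmless signs in the differentials, exactly the internal degree $i+1$ strand of $K_{\bullet}(u,v;N)=K_{\bullet}(u,v;B)\otimes_{B}N$. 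Since $u,v$ avoid both minimal primes $(x)$ and $(y)$ of $B$ (and $v$ still does after killing $u$), $u,v$ is a regular sequence on $B$, so $K_{\bullet}(u,v;B)$ is a graded free resolution of $A$; hence
$$
H_{p}\bigl(K_{\bullet}(u,v;N)\bigr)\;\cong\;{\rm Tor}^{B}_{p}(A,N).
$$

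Next I would compute this ${\rm Tor}$ the other way, from a free resolution of $N$. The element $\sigma=xu-vy$ lies in neither minimal prime of $B$ — indeed $xu\in(x)$ but $vy\notin(x)$, and symmetrically for $(y)$ — so $\sigma$ is a non-zero-divisor on the reduced ring $B$, and
$$
0\longrightarrow B(-1)\stackrel{\sigma}{\longrightarrow} B\longrightarrow N\longrightarrow 0
$$
is a graded free resolution. Tensoring $\bigl[\,B(-1)\stackrel{\sigma}{\to}B\,\bigr]$ with $A=B/(u,v)$ turns multiplication by $\sigma$ into multiplication by $\sigma\bmod(u,v)$, and $\sigma=xu-vy\equiv 0 \pmod{(u,v)}$; so the tensored complex is $\bigl[\,A(-1)\stackrel{0}{\to}A\,\bigr]$. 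Therefore ${\rm Tor}^{B}_{0}(A,N)=A$ sitting in internal degree $0$, ${\rm Tor}^{B}_{1}(A,N)=A(-1)$ sitting in internal degree $1$, and ${\rm Tor}^{B}_{p}(A,N)=0$ for $p\geq 2$.

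Putting the two computations together, all homology of $K_{\bullet}(u,v;N)$ is concentrated in internal degrees $0$ and $1$, so its internal degree $i+1$ strand $K_{i}$ is acyclic as soon as $i+1\geq 2$, that is, for every $i\geq 1$; this is the assertion of the proposition. (As a check on the hypothesis $i\geq 1$: at $i=0$ the surviving ${\rm Tor}_{1}=A$ reappears as the nonzero kernel of $A^{\oplus2}=N_{0}^{\oplus 2}\to N_{1}$, so $K_{0}$ is genuinely not exact.) The only steps that need real attention are the two elementary verifications — that $\sigma$ is a non-zero-divisor on $B$ and that $u,v$ is a $B$-regular sequence — and careful bookkeeping of the grading shift $B(-1)$, which is what pins the stray homology to internal degree $1$ rather than to higher degrees; everything else is formal. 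I regard this ${\rm Tor}$ route as the cleanest; the hands-on alternative of writing a monomial $\cc$-basis of each $N_{i}$ from the relations $xu=vy$ and $xy=0$ and then checking injectivity on the left, surjectivity on the right, and exactness in the middle is feasible but considerably more laborious, and its hard part — identifying $\ker(N_{i}^{\oplus2}\to N_{i+1})$ with the image of $N_{i-1}$ — is precisely what the ${\rm Tor}$ computation disposes of in one line.
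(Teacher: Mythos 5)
Your argument is correct, and it takes a genuinely different route from the paper's. The paper proves the proposition by hand: it checks injectivity on the left and surjectivity on the right using the multiplication maps $u,v\colon N_{i-1}\to N_i$, and then establishes exactness in the middle by writing an explicit monomial basis of $N_i$ in four blocks and matching where basis elements go. You instead identify $K_i$ as the internal-degree-$(i+1)$ strand of $K_{\bullet}(u,v;N)$ and compute all the homology at once as ${\rm Tor}^{B}_{\bullet}(A,N)$ from the two-term resolution $0\to B(-1)\xrightarrow{\;\sigma\;}B\to N\to 0$. The two nontrivial inputs — that $u,v$ is a $B$-regular sequence and that $\sigma=xu-vy$ is a non-zero-divisor on $B$ — both follow from the observation that the relevant elements avoid the two minimal primes $(x)$, $(y)$ of the reduced ring $B$, exactly as you say, and the grading bookkeeping pinning ${\rm Tor}_1$ to internal degree $1$ is right: it reproduces the paper's residual cohomology $A$ in the middle of $K_0$ and on the right of $K_{-1}$, and kills everything for $i\geq 1$. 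What your route buys is a one-line replacement for the paper's hardest step (identifying $\ker(N_i^{\oplus 2}\to N_{i+1})$ with the image of $N_{i-1}$), plus a structural explanation of where the surviving homology sits; it also adapts with no extra work to the second case the paper must treat for Theorem \ref{qi}, namely $M=A/(x)$, since there $N=\cc[y,u,v]/(yv)$ and $\sigma\equiv -vy$ is still a non-zero-divisor while $u,v$ remains a regular sequence on $\cc[y,u,v]$. What the paper's computation buys is complete explicitness — a monomial basis of each $N_i$ and of the kernels — which is occasionally useful downstream but is not needed for the quasiisomorphism statement.
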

\begin{proof}
For exactness on the ends, note that
the two individual maps
\[
 N_{i-1} \to N_{i}
\]
given by multiplication with $u,v$ respectively are both injective.

\noindent
---Hence $N_{i-1} \to N_{i}^{\oplus 2}$ is injective, so
the cohomology on left vanishes.

\noindent
---Every monomial in $N_{i+1}$ is
the product of either $u$ or $v$ with a monomial
in $N_i$, so the cohomology on the right vanishes.

The following lemma says that
$K_i$  has no cohomology in the middle either.

\begin{lemma}
The map
$N_{i-1} \to \ker(N_{i}^{\oplus 2} \to N_{i+1})$
given by multiplication by $(v,u)$
is an isomorphism.
\end{lemma}
\begin{proof}
  The claim is essentially combinatorial - we check it by matching
  where monomials go.  With $z:=xu=yv$, $N_i$ has a monomial basis
  consisting of 4 blocks:
\[
u^i, u^{i-1}v,\dots,v^i
\]
\[
yu^i,zu^{i-1},zu^{i-2}v,\dots,zv^{i-1}, xv^i
\]
\[
y^2u^i, y^3u^i, \dots
\]
\[
x^2v^i, x^3v^i, \dots.
\]
Multiplication by either $u$ or $v$ sends each block to the
corresponding block of $N_{i+1}$, so it suffices to check the claim on
each of the four blocks separately. For each of the first two blocks,
the two multiplication maps are injective, and differ from each other
by a shift of one place, so pairs in the kernel are precisely the
images of basis elements in the corresponding two blocks of
$N_{i-1}$. For basis elements in the third block, multiplication by
$v$ vanishes, while multiplication by $u$ sends them bijectively to
basis elements in the third block of $N_{i+1}$, so the kernel again
consists of the basis elements in the third block of $N_{i-1}$.  More
explicitly, the map in this third block sends $(f,g) \to fu-gv = fu$,
so $(f,g)$ is in the kernel iff $f=0$, iff there is some $h \in
N_{i-1}$ such that $(f,g) = (0,g) = (hv,hu)$, namely $h=g/u$.  The
fourth block is obtained by symmetry.
\end{proof}

This completes the proof of the proposition.
\end{proof}

This leaves the sequences $K_i$ for $i=0$ and $i=-1$. For $i=0$ the
sequence is
\[
 0 \to A^{\oplus 2} \to N_1 \to 0.
\]
This is clearly surjective (everything in $N_1$ is divisible by $u$ or
$v$ or both), and the kernel is the submodule of $A^{\oplus 2}$
generated (over $A$) by $(x,y)$.  So the cohomology is isomorphic to
$A$, in the middle.

Finally, for $i=-1$, the sequence is $0\to 0 \to A$, so the cohomology
is $A$ on the right.

So this confirms that the cohomology is $A$, that is to say $O_D$,
occurring in two places, thus completing the proof of Therem \ref{qi}
in this case.

We need to consider one additional case.
The ring $A$ is, as before,

\[
A := \cc [x,y]/(xy)
\]
but now we also have the $A$-module:

\[
M := A/(x) = \cc [x,y]/(x) = \cc [y]
\]
(on which $x$ acts as 0), and we set:

\[
N:= M[u,v]/(xu-yv)M = M[u,v]/(yv)M = \cc [y,u,v]/(yv)
\]
As before, we set up the Koszul complex $K$ with respect to $u,v$:

\[
N \to N^2 \to N.
\]
We let $e_1, e_2$ be the generators of the two copies of $N$ in the
middle.  The first map sends 1 to $-v e_1 + u e_2$, and the second map
sends $e_1$ to $u$ and $e_2$ to $v$.  The question is to calculate its
cohomology. Again, we use the grading:

\[
N = \oplus_{i=0}^\infty  N_i,
\]
where $N_i$ is the $A$-submodule of $N$ involving monomials of degree
$i$ in $u,v$. The Koszul sequence $K$ is again the direct sum of
graded pieces $K_i$:
\[
 0 \to N_{i-1} \to N_{i}^{\oplus 2} \to N_{i+1} \to 0.
\]
For $i \geq 1$,we see that $K_i$ is exact. More precisely:

\begin{itemize}
\item of the two individual maps
\[
 N_{i-1} \to N_{i}
\]
given by multiplication with $u,v$ respectively, the first is
injective, the second is not.
\item  hence $N_{i-1} \to N_{i}^{\oplus 2}$ is still
injective, so no cohomology on left.
\item every monomial in $N_{i+1}$
is the product of either $u$ or $v$ with a monomial
in $N_i$, so there is no cohomology on the right.
\item by matching where monomials go we see that the kernel
of $N_{i}^{\oplus 2} \to N_{i+1}$ is generated as a vector space by:
$-v u^{i-1} e_1 + u^i e_2, -v^2 u^{i-2} e_1 +v u^{i-1} e_2,\dots, -v^i
e_1 +v^{i-1} u e_2, y u^i e_2, y^2 u^i e_2, y^3 u^i e_2, \dots$, so as
an $M$-module it is generated by $-v u^{i-1} e_1 + u^i e_2, -v^2
u^{i-2} e_1 +v u^{i-1} e_2,\dots, -v^i e_1 +v^{i-1} u e_2$, and all
these generators are clearly in the image of $N_{i-1}$. So no
cohomology in the middle either.
\item This leaves the sequences $K_i$ for $i=0$ and
$i=-1$. For $i=0$ the sequence is
\[
 0 \to M^{\oplus 2} \to N_1 \to 0.
\]
This is clearly surjective (everything in $N_1$ is divisible by $u$ or
$v$), and the kernel is the submodule $y M e_2$ generated (over $M$)
by $y e_2$.  So the cohomology is isomorphic to $M$, in the middle.
\item Finally, for $i=-1$, the sequence is $0\to 0 \to M$,
so the cohomology is $M$ on the right.
\end{itemize}

\

\noindent
This completes the calculation of the Koszul cohomology and shows that
the two complexes are quasi isomorphic as claimed, thus completing the
proof of Theorem \ref{qi}.

\subsection{Completion of proofs}

In this section we go back and see how this result leads
to the statement we were originally looking for.

In the previous section we have completed the proof of the Koszul
quasiisomorphism theorem \ref{qi}.

The Tensor product formula of Theorem \ref{tensorproductformula} tells
us that if we set $M$ equal to the module $\psi _{b,b}$ then
$\Psi_{b-1}$ corresponds to the module denoted by $N$ in Theorem
\ref{qi}.
 
By \eqref{qbkoszul} the Koszul complex for $M$ is the same as the
complex denoted $Q(b)/Q(b-\epsilon )$, on the left of the morphism
\eqref{gru} above Proposition \ref{reduction2}.  Similarly by
\eqref{kbkoszul}, the Koszul complex for $N$ is the same as the
complex denoted $K(b)/K(b-\epsilon )$ on the right of the morphism
\eqref{gru}) above Proposition \ref{reduction2}.

The map of Koszul complexes is the same as the map ${\rm gr}_a(u_0)$
in \eqref{gru} (setting $a=b$).  Therefore, the Koszul
quasiisomorphism theorem \ref{qi} tells us the quasiisomorphism asked
for in Proposition \ref{reduction2}. From that proposition, we now
obtain the proof of Theorem \ref{maincalc}.

Now Corollary \ref{proofcpxqis} gives the proof of Theorem
\ref{cpxqis}, and in turn Proposition \ref{proofmainth} gives the
proof of the main Theorem \ref{mainth}.

\section{Further considerations}

In this section we look in more detail at several aspects.  The first
two subsections provide some details on the proofs of parts 3 and 5 of
Theorem \ref{mainth}.

We start by looking at the absolute Dolbeault complex on $X$ and use it
to define the Gauss-Manin Higgs field on the higher direct
images. Next we look at the analytical aspects of the direct image
harmonic bundle. This includes a sketch of our original strategy for
proving the theorem, involving the study of the family of $L^2$
cohomology spaces.

Then in the last two subsections we explore generalizations to higher
dimensional cases.
 
\subsection{The absolute complex and Gauss-Manin}
\label{globalcomplex}

Recall that the Gauss-Manin connection on the relative algebraic de
Rham cohomology of a vector bundle with connection, comes from an
exact sequence of complexes. Whereas the higher direct image bundle is
defined using the relative de Rham complex, the exact sequence needed
to define the Gauss-Manin connection uses the absolute de Rham
complex. Similarly, in our case, in order to construct the Higgs field
on the parabolic bundle $\underline{F}^i$, we should look at the
absolute Dolbeault complex on $X$.

In order to prepare for the generalized situation to be considered in
subsection \ref{hdb} later, let us consider the case when $(X,D)$ is
of arbitrary dimension and let $f : (X,D)\to (Y,Q)$ be a smooth split
semistable family of curves (see Definition~\ref{splitsemistable}).
In this case we define the absolute $L^{2}$ Dolbeault complex as
follows. For every $i \geq 0 $ we have a short exact sequences of
forms on $X$:
\[
\xymatrix@1{ 0 \ar[r] & f^{*} \Omega^{i}_{Y}(\log Q)  \ar[r] &
  \Omega^{i}_{X}(\log D) \ar[r] & \Omega^{1}_{X/Y}(\log \, D)\otimes
  f^{*}\Omega^{i-1}_{Y}(\log Q) \ar[r] & 0,
}
\]
where
$\Omega^{1}_{X/Y}(\log D) =
\Omega^{1}_{X}(\log \, D)/f^{*}\Omega^{1}_{Y}(\log Q)$ is the
relative logarithmic dualizing sheaf of $f$. Consider 
the tensor product of this sequence with $W_{0}(H,E_{\alpha(a)})$:
\[
\xymatrix@R-1pc
             { 0 \ar[d] \\ W_{0}(H,E_{\alpha(a)})\otimes f^{*} \Omega^{i}_{Y}(\log Q) \ar[d]
  \\ W_{0}(H,E_{\alpha(a)}) \otimes \Omega^{i}_{X}(\log D) \ar[d] \\
  W_{0}(H,E_{\alpha(a)})\otimes \Omega^{1}_{X/Y}(\log D)
  \otimes f^{*}\Omega^{i-1}_{Y}(\log Q) \ar[d] \\ 0
  }
\]
and pull it back by the natural map
\[
W_{-2}(H,E_{\alpha(a)})\otimes \Omega^{1}_{X/Y}(\log D)
\otimes f^{*}\Omega^{i-1}_{Y}(\log Q) \to
W_{0}(H,E_{\alpha(a)})
\otimes \Omega^{1}_{X/Y}(\log \, D)\otimes f^{*}\Omega^{i-1}_{Y}(\log Q).
\]
This gives us a new object
\[
W_{-2,0}\left(H,E_{\alpha(a)}\otimes \Omega^{i}_{X}(\log \, D)\right)
\]
which fits into an extension:
{\small
\[
\xymatrix@R-1pc@C-0.5pc{
  0 \ar[d] & 0 \ar[d] \\
  W_{0}(H,E_{\alpha(a)})\otimes f^{*} \Omega^{i}_{Y}(\log Q) \ar[d] \ar@{=}[r] &
  W_{0}(H,E_{\alpha(a)})\otimes f^{*} \Omega^{i}_{Y}(\log Q) \ar[d]  \\
  W_{-2,0}\left(H,E_{\alpha(a)}\otimes \Omega^{i}_{X}(\log D)\right)
  \ar[d] \ar@{^{(}->}[r] &
  W_{0}(H,E_{\alpha(a)}) \otimes \Omega^{i}_{X}(\log D) \ar[d] \\
  W_{-2}(H,E_{\alpha(a)})\otimes \Omega^{1}_{X/Y}(\log D)
\otimes f^{*}\Omega^{i-1}_{Y}(\log Q)  \ar[r] \ar[d] &
  W_{0}(H,E_{\alpha(a)})\otimes \Omega^{1}_{X/Y}(\log D)
  \otimes f^{*}\Omega^{i-1}_{Y}(\log Q) \ar[d] \\
  0 & 0
}
\]
}
and we get a well defined absolute complex
\[
\Dol(X,E_{\alpha(a)})
 := 
\left[ \begin{array}{c}
    W_{0}(H,E_{\alpha(a)}) \\[+0.3pc]
    \downarrow \wedge \varphi \\[+0.3pc]
    W_{-2,0}\left( H, E_{\alpha(a)}\otimes \Omega^{1}_{X}(\log D) \right) \\[+0.3pc]
    \downarrow \wedge \varphi \\[+0.3pc]
    W_{-2,0}\left( H, E_{\alpha(a)}\otimes \Omega^{2}_{X}(\log D) \right) \\[+0.3pc]
    \downarrow \wedge \varphi \\[+0.3pc]
    \vdots  \\[+0.3pc]
    \downarrow \wedge \varphi \\[+0.3pc]
    W_{-2,0}\left( H, E_{\alpha(a)}\otimes \Omega^{d_{X}}_{X}(\log D) \right) 
\end{array}\right] \  \begin{array}{c}
     0 \\[+0.3pc]
    \\[+0.3pc]
    1 \\[+0.3pc]
    \\[+0.3pc]
    2 \\[+0.3pc]
    \\[+0.3pc]
    \vdots  \\[+0.3pc]
    \\[+0.3pc]
    d_{X}
\end{array}
\]
This absolute complex maps
naturally onto the vertical Dolbeault $L^{2}$ complex
\[
\Dol(X/Y,E_{\alpha(a)})
 := 
\left[ \begin{array}{c}
    W_{0}(H,E_{\alpha(a)}) \\[+0.3pc]
    \downarrow \wedge \varphi \\[+0.3pc]
    W_{-2}( H, E_{\alpha(a)})\otimes \Omega^{1}_{X/Y}(\log D) 
\end{array}\right] \  \begin{array}{c}
     0 \\[+0.3pc]
    \\[+0.3pc]
    1 
\end{array}
\]
that  we previously considered.

Using this map we get a short exact sequence of complexes
\begin{equation} \label{eq:sequence}
  \xymatrix@R-0.3pc{ 0 \ar[d] \\
    \Dol(X/Y,E_{\alpha(a)})[-1]\otimes
  f^{*}\Omega^{1}_{Y}(\log Q) \ar[d] \\
  \Dol(X,E_{\alpha(a)})/I^{2}(E_{\alpha(a)}) \ar[d] \\
  \Dol(X/Y,E_{\alpha(a)}) \ar[d] \\ 0 }
\end{equation}
where as usual the subcomplexes $I^{k}(E_{\alpha(a)})$ are defined inductively:
\[
\begin{aligned}
  I^{0}(E_{\alpha(a)}) & =  \Dol(E_{\alpha(a)}) \\[+0.5pc]
  I^{k+1}(E_{\alpha(a)}) & =  \text{image}\left[
    I^{k}(E_{\alpha})\otimes f^{*}\Omega^{1}_{Y}(\log Q)
    \longrightarrow \Dol(E_{\alpha(a)}) \right].
\end{aligned}
\]
By the usual construction \cite{SimpsonFam} the push forward of this sequence
by $f$ yields a connecting homomorphism
\[
\xymatrix@R-0.5pc{
\mathbb{R}^{i}f_{*}\Dol(X/Y,E_{\alpha(a)})  \ar[r]^-{\theta} \ar@{=}[d] &
\mathbb{R}^{i+1}f_{*}\left(\Dol(X/Y,E_{\alpha(a)})[-1]
\otimes \Omega_{Y}^{1}(\log Q)\right) \ar@{=}[d]  \\
\underline{F}^{i}_{a}  & \underline{F}^{i}_{a} \otimes f^{*}\Omega^{1}_{Y}(\log Q)
}
\]
which is a tame Higgs field on the parabolic bundle $\underline{F}_{\bullet}^{i}$.

\subsection{Analytic considerations}
\label{ancon}

This subsection treats the analytic family of $L^2$ cohomology
spaces. These considerations are certainly present in the theory of
Saito-Sabbah-Mochizuki used above, but they are used in a somewhat
roundabout way: Saito and Sabbah used adapted versions of Zucker's
theory \cite{Zucker} in order to take the higher direct image along a
family of curves.

Our original approach to our question was to look at the $L^2$ metric
on cohomology and give some estimates (to be described below) on the
order of growth of holomorphic sections of the higher direct image
bundles $F^1_a$. Using Poincar\'e duality can give estimates in the
other direction. This way of thinking can almost lead to a proof of
the main theorem. However, we need to know the local freeness of the
higher direct images, which is part 1 of Theorem \ref{mainth}. This
local freeness is, fundamentally speaking, a consequence of strictness
for mixed Hodge or twistor structures. That strictness is encapsulated
in the theory of Saito-Sabbah-Mochizuki. Once we know local freeness,
one can either appeal to the full statement of Sabbah's theorem, as we
have done to identify $F^1_a$ as the parabolic structure associated to
the higher direct image local system, or alternatively the analytic
considerations treated in this subsection can also give that
identification. In either case, we still need the present subsection
in order to get part 5 of Theorem \ref{mainth} about the metric.

Choose a K\"ahler metric on $X-D$ that has the local 
behavior of the Poincar\'e metric along the components of $D$,
and of the product of Poincar\'e metrics at crossing points.  
For each $y\in Y-Q$ we get a quasiprojective 
curve $X_y^o:= X_y -D_{H,y}$
and the induced metric is equivalent to the Poincar\'e metric
at the puncture points (i.e. the points of $D_{H,y}$). 

Now, if $(\Ll , \scD ',\scD '', h)$ 
is a harmonic bundle on $X-D$
we obtain its restriction 
$(\Ll _y, \scD'_y,\scD''_y,h)$ to $X_y^o$. 
Define 
$$
{\bf Har}^1(X^o_y,\Ll _y) \subset A^1(X^o_y, \Ll _y)
$$
to be the space of harmonic $1$-forms with coefficients in $\Ll _y$.
These are the forms in the kernel of the Laplacian. Recall
\cite{SimpsonHBLS} that the $\scD_y$-laplacian coincides with twice
the $\scD'_y$ or $\scD''_y$ laplacians.

The analogue of Zucker's theory \cite{Zucker}, for which we may refer
to \cite[Section 6.2]{Sabbah}, tells us that this space is naturally
isomorphic to the cohomology group $H^1(X_y, j_{y,\ast}(L_y))$ where
$j_{y}:X^o_y\hookrightarrow X_y$ is the inclusion and $L_y:= \Ll
_y^{D_y}$ is the restriction of our global local system to
$X^o_y$. Recall that in this case of sheaves on curves,
$j_{y,\ast}(L_y)$ is the {\em middle perversity extension} and
$H^1(X_y, j_{y,\ast}(L_y))$ is the middle perversity intersection
cohomology group of $L_y$ with respect to the compactification $X_y$
of $X_y^o$.

These cohomology spaces vary in a local system, in particular they
have the same dimension. We use without proof the corollary that the
family of spaces of harmonic forms fits together into a $\Cc^{\infty}$
vector bundle denoted
$$
\Hh ={\bf Har}^1((X-D)/(Y-Q),\Ll )
$$
over $Y-Q$, whose fibers are the ${\bf Har}^1(X^o_y,\Ll _y)$.

Furthermore, we also use without proof, the extension of the
calculations in \cite{SimpsonFam}, that were for the case of compact
fibers, showing that the naturally defined operators
$\scD'_{\Hh},\scD''_{\Hh}, \scD_{\Hh}=\scD'_{\Hh}+\scD''_{\Hh}$, and
the $L^2$ metric $h_{\Hh}$ (obained by using the $L^2$ inner product
on each fiber $\Hh _y={\bf Har}^1(X^o_y,\Ll _y)$) are $\Cc ^{\infty}$
and fit together to give a structure of harmonic bundle on $\Hh$ over
$Y-Q$. The underlying flat bundle of this harmonic bundle is
$R^1f_{\ast}(j_{\ast}L)$ where here $j:X-D\hookrightarrow X-D_V$ is
the inclusion into the partial compactification with the horizontal
divisors.

Another way of saying the previous paragraph is that the $L^2$ metric
on the fibers of the local system $R^1f_{\ast}(j_{\ast}L)$, obtained
from the identifications of these fibers with the ${\bf
  Har}^1(X^o_y,\Ll _y)$, is a harmonic metric over $Y-Q$.

We would like to understand the asymptotic behavior of the harmonic
bundle \linebreak $(\Hh , \scD'_{\Hh},\scD''_{\Hh},h_{\Hh})$ near a point $q\in
Q$.

\

\begin{proposition}
\label{tame}
This harmonic bundle is a tame harmonic bundle corresponding to a
filtered local system with trivial filtrations in the terminology of
\cite{SimpsonHBNC} applied to the curve $Y-Q$.
\end{proposition}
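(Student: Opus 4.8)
The plan is to deduce the proposition from the results already proved, supplemented by an elementary growth estimate and by the local computation of Section \ref{sec-nc}.

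\emph{Tameness.} The flat bundle underlying $(\Hh,\scD'_{\Hh},\scD''_{\Hh},h_{\Hh})$ is $R^1f_{\ast}(j_{\ast}L)|_{Y-Q}$, i.e. the local system $G^1$ of Theorem \ref{rthm}, and $h_{\Hh}$ is the $L^2$ metric. First I would check tameness directly. Since $(\Ll,h)$ is a tame harmonic bundle on $X-D$, flat sections of $L$ have at most polynomial growth in $|z_k|^{-1}$ along the components $D_k$, and the fibers $X_y^o$ have finite volume in the chosen (asymptotically Poincar\'e) metric. Using that near a point of $D_V=f^{-1}(q)$ one has $|y|$ comparable to $|z_{v(i)}|$ at a smooth point and to $|z_{v(i)}z_{v(j)}|$ at a node, it follows that for a $\scD''$-harmonic $L^2$ representative $\omega_y$ of a cohomology class, the $L^2$-norm of $\omega_y$ over $X_y^o$ grows at most polynomially in $|y|^{-1}$ as $y\to q$. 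Hence every flat section of $G^1$ has at most polynomial growth with respect to $h_{\Hh}$, which is exactly tameness of $(\Hh,h_{\Hh})$.

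\emph{Triviality of the filtration.} By the dictionary recalled in Section \ref{sec-hb}, the associated filtered local system on $Y-Q$ has trivial filtrations at $q$ if and only if the residue of the corresponding logarithmic Higgs field $\theta$ on $\underline{F}^1$ at $q$ has zero real part on each graded piece; and since the residues of the input Higgs field are nilpotent by Hypothesis \ref{mainhyp}, it is enough to show that this residue of $\theta$ is itself nilpotent, which then forces the grade to vanish and the monodromy eigenvalues of $G^1$ around $q$ to have norm $1$. Conceptually this is clear: by the construction of Section \ref{globalcomplex}, $\theta$ and its residue at $q$ are obtained from the relative $L^2$ Dolbeault complex and the exact sequence \eqref{eq:sequence}, hence are functorially induced on cohomology by the residues $N_{v(i),b}$ of $\varphi$ along the vertical branches, and a functorially induced operator of a nilpotent endomorphism is nilpotent. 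To make this rigorous near a node one uses the explicit local picture of Section \ref{sec-nc}: away from the nodes of $D_V$ the assertion is Zucker's estimate already quoted in Proposition \ref{horizontaldiv}, while at a node the Tensor Product Formula \eqref{tpf} presents the nearby-cycles module $\Psi_{b-1}$ in terms of $\psi_{b,b}$ and the operator $\varphi_{\log}=x\varphi_x-y\varphi_y$, which is the difference of the two nilpotent residues, and the Koszul quasiisomorphism \eqref{vertmap} transports this to the de Rham side; reading off the residue of $\theta$ from this description shows it is nilpotent. Therefore $(\Hh,\scD'_{\Hh},\scD''_{\Hh},h_{\Hh})$ corresponds to a filtered local system on $Y-Q$ with trivial filtrations.

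\emph{Main obstacle.} The one genuinely delicate point is the node analysis: one must verify that passing from the $L^2$ Dolbeault complex to the graph/nearby-cycles picture introduces no parabolic jump at $q$ carrying a nonzero real residue --- equivalently that the $L^2$-norms of cohomology classes grow only like powers of $\log|y|$, with no honest power of $|y|$. This is precisely what the Koszul computations of Section \ref{sec-nc} control; alternatively it can be phrased as the statement that the Nilpotence Hypothesis \ref{mainhyp} is preserved under proper semistable direct image, which is built into the Saito--Sabbah--Mochizuki formalism already invoked in the proof of Theorems \ref{mainth} and \ref{rthm}.
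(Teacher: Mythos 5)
Your tameness estimate is in the same spirit as the paper's: one bounds the $L^2$ norm of the harmonic (hence minimizing) representative by exhibiting an explicit family of representatives with controlled norm, obtained by flowing a representative from a nearby fiber along a lift of the radial vector field. The paper actually extracts the sharper sub-polynomial bound $\|\xi(t)\|\le C_\epsilon t^{-\epsilon}$, which it needs later. Where you diverge is in the trivial-filtrations statement, and there your argument has a genuine gap, in two places.

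First, a circularity. The filtration of the filtered local system attached to $(\Hh,h_{\Hh})$ is by definition the order-of-growth filtration of the $L^2$ metric, and the dictionary of Section \ref{sec-hb} converts its triviality into a statement about the residue of the logarithmic Higgs field on the parabolic extension $\underline{G}^1$ \emph{associated to that metric}. You instead compute the residue of the Gauss--Manin field $\theta$ on the algebraically defined $\underline{F}^1$. The identification $\underline{F}^1\cong\underline{G}^1$ as parabolic bundles at $q$ is exactly Theorem \ref{growthorder}, whose proof uses Proposition \ref{tame}; over $Y-Q$ the Higgs fields do agree, but the residue data at $q$ depend on which parabolic extension one takes. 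One could try to repair this by noting that, once tameness is known, the grades $\eta_i$ are determined by the limits as $z\to 0$ of the eigenvalues of $z\theta_z$ computed on $Y-Q$ alone, so nilpotence of the residue of \emph{any} locally free logarithmic extension would suffice; but you do not make that reduction, and it still presupposes local freeness of $F^1_a$ near $q$.

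Second, the nilpotence of $\mathrm{res}_q\theta$ does not follow from the principle that ``a functorially induced operator of a nilpotent endomorphism is nilpotent'': the residue of the connecting homomorphism of \eqref{eq:sequence} at $q$ is the Higgs analogue of the logarithm of the local monodromy of a semistable degeneration, and its nilpotence is a theorem (in the $\srR$-module picture it is the nilpotence of $t\partial_t+\lambda b$ on the graded pieces of the $V$-filtration, specialized to $\lambda=0$, i.e.\ part of strict specializability in Theorem \ref{rthm}), not a formal consequence of the nilpotence of the $N_{v(i),b}$; the Koszul computations of Section \ref{sec-nc} identify the nearby-cycles module but do not by themselves deliver this. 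The paper sidesteps all of this with a purely analytic device: it applies the same sub-polynomial upper bound to the dual local system $L^{\ast}$ and uses Poincar\'e duality for intersection cohomology to convert it into a lower bound (no decaying flat sections) for $\Hh$; the two bounds together are precisely the definition of ``tame with trivial filtrations''. You need either to restore that duality step or to carry out the reduction sketched above rigorously.
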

\begin{proof}
  Choose $q\in Q$ and consider a coordinate $t$ on a neighborhood of
  $q$ with $t(q)=0$. Along the ray $t\in \rr _{>0}$ suppose we are
  given a family of cohomology classes $\overline{\xi} (t)\in H^1(X_t,
  j_{t,\ast}(L_t))$. We would like to estimate the function
$$
t\mapsto \| \overline{\xi} (t) \| _{h_{\Hh}(t)}.
$$
The norm of the cohomology class is by definition the norm of its
harmonic representative, and this is the minimum of the norms of all
representatives. Thus, given a family of representatives $\xi (t)$ for
the classes $\overline{\xi}(t)$, we get
$$
| \overline{\xi} (t) | _{h_{\Hh}(t)} \leq
\| \xi (t) \| _{L^2, X^o_t}.
$$
We get a family of representatives by choosing a lift of the radial
vector field over the ray, into $X$, and flowing a representative on
the fiber over $t=1$ towards the singular fiber. The norm on the local
system has sub-polynomial growth as we approach the divisor, and a
standard choice of vector field satisfies a sub-polynomial estimate
just as in the case of scalar coefficients. These calculations (which
are not detailed here) give
$$
\| \xi (t) \| _{L^2, X^o_t}\leq C_{\epsilon} t^{-\epsilon}
$$
for any $\epsilon >0$, so 
$$
| \overline{\xi} (t) | _{h_{\Hh}(t)}
\leq C_{\epsilon} t^{-\epsilon} .
$$
This says that the norms of our cohomology classes have sub-polynomial
growth.

Poincar\'e duality for intersection cohomology says that starting from
the dual local system $L^{\ast}$ leads to the dual vector bundle $\Hh
^{\ast}$, and the resulting $L^2$ metric on $\Hh ^{\ast}$ is the dual
of $h_{\Hh}$.  The same estimate holds for the dual, so we get that
flat sections of $\Hh ^{\ast}$ also have sub-polynomial growth.  We
conclude that the harmonic bundle $(\Hh , \scD'_{\Hh}, \scD''_{\Hh},
h_{\Hh})$ is tame \cite{SimpsonHBNC}.
\end{proof}

\

\noindent
This tame harmonic bundle corresponds to a parabolic Higgs bundle
$\underline{G}^1=\{ G^1_a\}$ on $Y$, with parabolic structure on the
divisor $Q$.  Theorem \ref{mainth} identifies this parabolic structure
bundle with the parabolic bundle $\underline{F}^1=\{ F^1_a\}$ obtained
by higher direct image of the $L^2$ parabolic Dolbeault complex. Our
proof used Sabbah's theory in full to identify $\underline{G}^1$. We
indicate here a different proof of part of that, the present proof
being useful in order to fix the identification as stated in part 5 of
Theorem \ref{mainth}.

The fact that $\Delta _{\scD_y} = 2\Delta _{\scD''_y}$ gives an
isomorphism between $L^2$ Dolbeault cohomology and $L^2$ de Rham
cohomology on each fiber.  The analogue of Zucker's theory for our
case \cite{Sabbah,Zucker} tells us that this $L^2$ cohomology is the
same as the hypercohomology of ${\rm DOL}^{\rm par}_{L^2}(X/Y,
\underline{E})$ on each of the fibers (as discussed in
section \ref{L2dol}). In other words, we get an isomorphism
$$
\Hh _y \cong F^1_a(y)
$$
for any $y\in Y-Q$, and (as usual, by some analytic considerations
that we don't treat here) these fit together to give an isomorphism of
$\Cc ^{\infty}$ bundles
\begin{equation}
\label{isohf}
\Hh \cong F^1_a |_{Y-Q}.
\end{equation}
The holomorphic structure $\delbar _{\Hh}$ (the $(0,1)$ component of
$\scD''_{\Hh}$) corresponds to the holomorphic structure of
$F^1_a|_{Y-Q}$. The Higgs field $\varphi _{\Hh}$ corresponds to the
Higgs field on $F^1_a$ given by the Gauss-Manin construction with the
absolute Dolbeault complex discussed in the previous subsection.

On the other hand, $(\Hh , \scD''_{\Hh})$ is also isomorphic to $G^1_a
|_{Y-Q}$ since the latter is by definition the parabolic bundle
associated to $(\Hh , \scD'_{\Hh}, \scD''_{\Hh}, h_{\Hh})$.  Therefore
$$
F^1_a |_{Y-Q}\cong G^1_a |_{Y-Q}
$$
as holomorphic Higgs bundles on $Y-Q$. 
We would like to show that this extends to an isomorphism of 
parabolic bundles, in other words $F^1_a\cong G^1_a$. 

In the parabolic Higgs bundle associated to the harmonic bundle the
piece $G^1_a$ of parabolic weight $a$ at a point $q\in Q$ is the sheaf
of holomorphic sections whose norm is less than $|z|^{-a-\epsilon}$
for any $\epsilon >0$. The identification $F^1_a\cong G^1_a$ is
therefore equivalent to the following statement.

\begin{theorem}
\label{growthorder}
A holomorphic section of $F^1_a |_{Y-Q}$ in a neighborhood of $q\in Q$
is in $F^1_a$ if and only if the section of $\Hh$ corresponding to it
by \eqref{isohf}, has norm bounded by $|z|^{-a-\epsilon}$ for any
$\epsilon >0$.
\end{theorem}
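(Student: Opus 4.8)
The plan is to prove the two inclusions $F^1_a\subseteq G^1_a$ and $G^1_a\subseteq F^1_a$ separately: the first by an $L^2$-norm estimate on harmonic representatives, the second by a comparison of parabolic degrees. For the inclusion $F^1_a\subseteq G^1_a$, which is the ``only if'' part of the statement, I would take a holomorphic section $\sigma$ of the algebraic sheaf $F^1_a$ over a neighbourhood of $q\in Q$ and bound the norm of the section of $\Hh$ corresponding to it. By part~2 of Theorem~\ref{mainth}, $\sigma(y)$ is a class in the hypercohomology $\mathbb{H}^1(X_y,\Dol(X/Y,E_{\alpha(a)})|_{X_y})$, and $\|\sigma(y)\|_{h_{\Hh}}$ is the minimal $L^2$-norm of a representative of this class; so it suffices to exhibit, for $y$ near $q$, a representative obtained from a local lift of $\sigma$ through the higher direct image and to check that its $L^2$-norm over the open fibre $X^o_y$ is $O(|z(y)|^{-a-\epsilon})$ for every $\epsilon>0$. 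Here the point is that such a lift has coefficients in the sheaves $W_0(H,E_{\alpha(a)})$ and $W_{-2}(H,E_{\alpha(a)})\otimes\Omega^1_{X/Y}(\log D)$ entering the complex, which have order of growth at most $-a-\epsilon$ along $D_V=f^{-1}(q)$ because the weight $\alpha(a)$ assigns $a$ to every vertical component; on the fibre over $t=z(y)$, where a local defining function of $D_V$ restricts to the constant $z(y)$, this forces the bound $C_\epsilon|z(y)|^{-a-\epsilon}$ on the coefficients, and hence — after integrating in the chosen Poincar\'e-type metric, up to enlarging $\epsilon$ — the same bound on the $L^2$-norm. These are the computations of \cite[Proposition~4.4]{Zucker} at smooth points of $D_V$, together with their product analogue at the nodes, in the spirit of the estimates in Proposition~\ref{tame}; they give $F^1_a\subseteq G^1_a$ for all $a$.

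For the reverse inclusion $G^1_a\subseteq F^1_a$ — the ``if'' part — I would argue by parabolic degrees. By the previous step $F^1_a\subseteq G^1_a$ for every $a$; moreover $\underline{F}^1$ and $\underline{G}^1$ share the same underlying holomorphic Higgs bundle over $Y-Q$ via \eqref{isohf}, and both obey the periodicity $F^1_{a+1}=F^1_a(q)$, $G^1_{a+1}=G^1_a(q)$. Thus each quotient $G^1_a/F^1_a$ is a torsion sheaf supported at $q$, and ${\rm pardeg}(\underline{G}^1)-{\rm pardeg}(\underline{F}^1)$ is a nonnegative sum of lengths of such quotients, which vanishes if and only if $F^1_a=G^1_a$ for all $a$. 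Now ${\rm pardeg}(\underline{G}^1)=0$, since by Proposition~\ref{tame} the bundle $\underline{G}^1$ is the parabolic Higgs bundle of a tame harmonic bundle with trivial filtrations on $(Y,Q)$, hence polystable with vanishing parabolic Chern classes; and ${\rm pardeg}(\underline{F}^1)=0$ as well, since by part~4 of Theorem~\ref{mainth} — already obtained from Theorem~\ref{rthm} through the quasiisomorphism of Theorem~\ref{cpxqis} — the parabolic bundle $\underline{F}^1$ is itself associated by the non-abelian Hodge correspondence to a harmonic bundle, so it too is polystable with vanishing parabolic Chern classes (alternatively one may compute ${\rm pardeg}(\underline{F}^1)$ directly by a parabolic Riemann--Roch along $f$, using the three pieces $i=0,1,2$ and the vanishing of the parabolic Chern classes of $\underline{E}$). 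Equality of the two parabolic degrees then forces $F^1_a=G^1_a$ for all $a$. As a variant one can bypass the degree computation and obtain the reverse inclusion from the perfect Serre--Poincar\'e duality pairing on the intersection cohomology of the fibres, pairing a section of norm $\leq|z|^{-a-\epsilon}$ against the algebraic $F^1$-classes for the dual local system $L^{\ast}$, exactly as in the proof of Proposition~\ref{tame}.

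The step I expect to be the main obstacle is the norm estimate of the first paragraph at the nodes of the singular fibres. Away from the horizontal divisor and at smooth points of $D_V$ the estimate reduces to Zucker's computation, but at a crossing point $xy=t$ of two vertical components the fibre $X_y$ degenerates to a nodal curve, the metric picks up product-Poincar\'e behaviour, and one must track carefully how the parabolic weight $a$ along each branch of $D_V$, the volume factor in the fibre direction, and the $W_{-2}$-truncation in the top term of the complex combine to yield precisely the bound $|z(y)|^{-a-\epsilon}$ and nothing coarser. These are the analytic inputs that the earlier sections deliberately quote from Saito--Sabbah--Mochizuki rather than redo; a complete treatment would either reproduce the relevant portion of \cite[Section~6.2]{Sabbah} in the present relative setting, or bootstrap from the quasiisomorphism already established at the double points in Theorem~\ref{maincalc}.
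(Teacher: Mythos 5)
Your proposal is correct and takes essentially the same approach as the paper: the inclusion $F^1_a\subseteq G^1_a$ via the Zucker-type norm estimate on representatives over the degenerating fibres, and the reverse inclusion by comparing parabolic degrees (both zero), which is precisely the first option the paper sketches. The paper then spells out the Poincar\'e-duality pairing argument --- which you also mention as a variant --- in more detail as its preferred way to close the argument.
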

\begin{proof}
  Recall that the parabolic structure on the bundle $E$ along $D_V$ is
  characterized by a similar norm estimate. Using this together with
  the definition of $F^i_a$ and a local estimate for the size of forms
  on the degenerating curves $X_y$ as $y\rightarrow q$ (the same as in
  the constant coefficient case, see \cite{Clemens} for example) we
  conclude one direction of the statement: any section of $F^1_a$ has
  norm bounded by $|z|^{-a-\epsilon}$ for any $\epsilon >0$. (In fact
  with more work one could obtain a more precise estimate of the form
  $|z| ^{-a} |\log |z||^k$ for some $k$.) This proves that
$$
F^1_a\subset G^1_a.
$$
One way to conclude from here would be to calculate by Riemann-Roch
the parabolic degree of $\underline{F}$. That will of course turn out
to be $0$, and since $\underline{G}$ also has parabolic degree $0$
these imply that the two parabolic structures are the same.

One may alternatively proceed, as in the previous proof of tameness,
by using Poincar\'e duality. Let $\ddual E$ denote the dual bundle
assoicated to the dual local system $\ddual L$, and let
$\ddual \Hh$, $\ddual F^1_a$ and $\ddual G^1_a$ be the resulting
objects. Intersection cohomology and $L^2$ cohomology of $\ddual E$
on the fibers $X_y$ are compatible with duality, so the pairing
$$
(\eta , \xi )\mapsto \int _X\eta \wedge \xi 
$$
induces a perfect pairing 
$$
\Hh \times \ddual \Hh \rightarrow \Cc ^{\infty}_{Y-Q}.
$$
It is holomorphic in the Dolbeault realization, and by looking at
parabolic growth rates we get a pairing
\begin{equation}
\label{gpairing}
I_G:G^1_a \otimes \ddual G^1_{-a}\rightarrow \Oo _Y.
\end{equation}
This gives a morphism of parabolic sheaves 
$$
\underline{G}^1\rightarrow 
\left( \ddual \underline{G} \right)^{\vee}
$$
that is an isomorphism over $Y_Q$. Since both have degree $0$ it
follows that it is an isomorphism of parabolic bundles, in other words
the pairing \eqref{gpairing} is a perfect pairing of vector bundles on
$Y$ for any $a$.

The parabolic bundle $\ddual \underline{E}$ is dual to
$\underline{E}$, and this extends to the $L^2$ Dolbeault complex. For
that, it is convenient to use the alternate version of the $L^2$
Dolbeault complex built using terms $W_1$ and $W_{-1}$ instead of
$W_0$ and $W_{-2}$. These two versions are quasiisomorphic, as Zucker
observed \cite{Zucker} (the same reasoning holds in the twistor
case). We get
$$
{\rm DOL}^{\rm par}_{L^2}(X/Y, E_{\alpha (a)})
\cong 
{\rm DOL}^{\rm par}_{L^2}(X/Y, 
\ddual E_{\alpha (-a)})^{\vee}\otimes \omega _{X/Y}[-1].
$$

Now, duality for the morphism $f:x\rightarrow Y$ gives 
a perfect pairing 
$$
I_F:F^1_a\otimes \ddual F^1_{-a}\rightarrow \Oo _Y.
$$
Here is where we appeal to the results of the calculations in the main
part of the paper, that show part 1 of Theorem \ref{mainth}: the
higher direct image sheaves $F^1_a\otimes \ddual F^1_{-a}$ are
bundles, so the duality pairing is a perfect pairing of locally free
sheaves.

These pairings coincide over $Y-Q$. Thus, the inclusions 
$F^1_a\subset G^1_a$ and $\ddual F^1_a\subset \ddual G^1_a$
give a commutative diagram 
$$
\begin{array}{ccc}
F^1_a\otimes \ddual F^1_{-a}&
\stackrel{I_F}{\rightarrow} &\Oo _Y\\
\downarrow & &\downarrow \\
G^1_a \otimes \ddual G^1_{-a}&
\stackrel{I_G}{\rightarrow} &\Oo _Y
\end{array}
$$
with perfect pairings on the top and the bottom. It now follows that
$F^1_a\rightarrow G^1_a$ is an isomorphism. We show that it induces an
injection on fibers over a point $q\in Q$. If $\eta$ is a section of
$F^1_a$ nonvanishing at $q$, then there is a section $\xi$ of
$F^1_{-a}$ such that $I_F(\eta\otimes \xi )(q)\neq 0$. But if $\eta$
maps to a section of $G^1_a$ vanishing at $q$ it would imply that
$I_G(\eta \otimes \xi )(q)=0$, contradicting the commutativity of the
diagram.  This shows that $F^1_a(q)\hookrightarrow G^1_a(q)$.  This
holds at all points of $Q$. Since both bundles have the same rank and
the map is an isomorphism on $Y-Q$ this shows that it is an
isomorphism over all of $Y$. This completes the proof of the theorem.
\end{proof}

In conclusion, the above proof provides our basic compatibility of
Theorem \ref{mainth}, provided we know that the higher direct image
sheaves $F^1_a$ are locally free.

We didn't see how to prove this local freeness property in
general. One could plan to use a strategy based on Steenbrink's
argument \cite{Steenbrink}.  It would take place in explicit normal
crossings situations using a double complex whose terms come from
multiple intersections of the divisor components.  It should be
possible to develop Steenbrink's approach for twistor connections,
using the strictness property of mixed twistor structures, but we
didn't do that. Indeed, Steenbrink's argument was never developed to
its full potential, because the advent of Saito's theory of Hodge
modules provided a very general and more powerful method. We have
taken that route by appealing to \cite{Sabbah} for the proof of local
freeness of the $F^1_a$, but then we also get the calculation of the
higher direct image as part of the same package.

\subsection{Semistable families over higher dimensional base}
\label{hdb}

In this section we take note that our main theorems lead rather
directly to the corresponding statements in the case of families of
curves over a higher dimensional base.

\begin{definition}
\label{splitsemistable}
We say that a morphism $f:(X,D)\rightarrow (Y,Q)$ 
is a {\em split smooth semistable family of curves} if:
\begin{enumerate}
\item[(1)] $X$ and $Y$ are smooth projective varieties, 
\item[(2)] $D$ and $Q$ are reduced divisors with simple normal crossings,
\item[(3)] all the fibers of $f$ are reduced curves,
\item[(4)] we have a decomposition $D=D_V+D_H$ where 
$D_V = f^{-1}(Q)$ and $D_H$ is a disjoint union of components
mapping locally isomorphically to $Y$, 
\item[(5)]the map $f$ is smooth away from $D_V$, and
\item[(6)] for $y\in Q$, the curve $X_y:= f^{-1}(y)$ is a semistable
  curve with only nodes. (It follows that  the nodes are distinct from the points
  marked by the components of $D_H$.)
\end{enumerate}
\end{definition}

\

\noindent
Suppose $f:(X,D)\rightarrow (Y,Q)$ is a split smooth semistable
family. We use notational conventions analogous to those in effect up
until now.

Suppose $L$ is a local system on $X-D$ such that the eigenvalues of
the monodromy around all components of $D$ are in $S^1\subset
\cc^{\times}$. Define as before the associated harmonic bundle $(\Ll ,
\scD ', \scD'', h)$, the associated parabolic bundle with
$\lambda$-connection $\underline{\srE}=\{ \srE_{\beta}\}$, and the
associated $\srR _{\srX}$-module $\srE$. Suppose $a$ is a parabolic
weight for the divisor $Q$, and define the parabolic weight $\alpha
(a)$ by associating the weight $a_i$ to any component $D_j$ of $D_V$
mapping to the component $Q_i$ of $Q$. The weight of $\alpha (a)$
associated to components of $D_H$ is $0$. The $0$-th associated graded
pieces along the horizontal divisor components $D_{h(j)}\subset D_H$
are defined as previously, and they vary in a locally constant family
by \cite{MochizukiPure}. Define the $L^2$ parabolic de Rham and
Dolbeault complexes
$$
\text{DR}^{\rm par}_{L^2}(\srX /\srY ; \srE  _{\alpha (a)}), 
\;\;\;\;
\text{DOL}^{\rm par}_{L^2}(X/Y ; E  _{\alpha (a)})
$$
by the same formulas \eqref{l2pardR} and \eqref{l2pardol} using the weight filtrations on horizontal
complexes as before.

On the other hand, we have the $\srR$-module de Rham complex
$$
\text{DR}(\srX /\srY ; g_+(\srE ))
$$
defined as before using the graph embedding $g: \srX \rightarrow
\srP := \srX\times _{\srA} \srY$. 

\begin{theorem}
\label{higherdim}
In the above situation, the higher direct images 
$$
\srF ^i_a:=\mathbb{R}^if_{\ast}\text{DR}^{\rm par}_{L^2}(\srX /\srY
; \srE _{\alpha (a)})
$$
are locally free on $\srY$ and they fit together to form a
parabolic vector bundle with $\lambda$-connection. Their restriction
to $\lambda =0$ is
$$
\srF^i_a (0)= F^i_a = 
\mathbb{R}^if_{\ast}\text{DOL}^{\rm par}_{L^2}(X/Y ; E  _{\alpha (a)})
$$
and these fit together to form a parabolic Higgs bundle.  These
parabolic Higgs bundles and parabolic bundles with
$\lambda$-connection are those associated to the local system $G^i$ of
middle perversity higher direct images of $L$ to $Y-Q$. The higher
direct image (under the projection $p:\srP\rightarrow \srY$)
$$
\srF^i:= \mathbb{R}^ip_{\ast}\text{DR}(\srX /\srY ; g_+(\srE ))
$$
are strictly $S$-decomposable $\srR_{\srY}$-modules 
(see Remark \ref{s-term})
whose piece of
strict support $\srY$ is equal to the main chart of the pure twistor
$\Dd$-module associated to $G^i$.
\end{theorem}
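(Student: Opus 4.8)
The plan is to deduce the theorem from the one-dimensional base case by a reduction that introduces no essential new input beyond Sections~\ref{sec-dr}--\ref{sec-nc}. First I note that every construction used there makes sense verbatim when $\dim Y$ is arbitrary, as long as the fibres of $f$ are curves: the $L^2$ parabolic Dolbeault and de Rham complexes \eqref{l2pardol} and \eqref{l2pardR} only involve the line bundle $\Omega^1_{X/Y}(\log D)$ and the weight filtrations along the components of $D_H$, which restrict on each fibre to the usual data on the punctured curve $X_y^o$; and the graph embedding $g:\srX\to\srP:=\srX\times_{\srA}\srY$, the $\srR_{\srP}$-module $g_+(\srE)$, its $V$-filtration along the smooth divisors of $\srP$ lying over the components of $Q$, the de Rham complex $DR(\srP/\srY,V_{a-1}(g_+\srE))$, and the comparison map $u(a)$ of Lemma~\ref{mapcomplexes} with its restriction $u_0(a)$ to $\lambda=0$, all carry over once one works \'etale-locally on $Y$, so that one may assume $Y$ is a polydisc. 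With this set-up in place, the assertion that $\srF^i=\mathbb{R}^ip_\ast\,DR(\srX/\srY,g_+\srE)$ is strictly $S$-decomposable (Remark~\ref{s-term}) and that its piece of strict support $\srY$ is the main chart of the pure twistor $\Dd$-module attached to $G^i$ is exactly Sabbah's decomposition theorem for pushforwards of pure twistor $\Dd$-modules \cite{Sabbah} applied to Mochizuki's twistor $\Dd$-module \cite{MochizukiPure}; this holds in all dimensions and needs nothing further.

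The only step that must be redone is the quasiisomorphism, i.e.\ the analogue of Theorem~\ref{cpxqis}: that $u(a)$ and $u_0(a)$ are quasiisomorphisms. As in Corollary~\ref{proofcpxqis} this is checked pointwise. At a smooth point of $f$ the argument of Lemma~\ref{smoothpoints} applies unchanged, and at a point of $D_H\cap D_V$ the Zucker-type computation of \cite[Section~6.2]{Sabbah} quoted in Proposition~\ref{horizontaldiv} applies unchanged. The remaining points are the nodes of the singular fibres, and near such a node one can choose coordinates $x,y,z_2,\dots,z_m$ on $X$ in which $f(x,y,z)=(xy,z_2,\dots,z_m)$, so that the local model is the product of the ``two crossed lines over a disc'' picture of Section~\ref{sec-nc} with the smooth factor parametrised by $z_2,\dots,z_m$. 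Because the Koszul complexes appearing in Section~\ref{sec-nc} are built from the relative one-forms $\Omega^\bullet_{X/Y}(\log D)$ and $\Omega^\bullet_{P/Y}$, in which only $dx$ and $dy$ occur, the $z_j$ enter merely as inert parameters; the $V$-filtration computation, the Tensor Product Formula (Theorem~\ref{tensorproductformula}) and the Koszul Quasiisomorphism Theorem (Theorem~\ref{qi}) then reproduce word for word after replacing the ring $\cc[x,y]/(xy)$ by $\cc[x,y,z_2,\dots,z_m]/(xy)$ --- the grading by degree in $u,v$, the matching of monomials, and the three module types $\Oo_D$, $\Oo_D/x\Oo_D$, $\Oo_D/y\Oo_D$ are all unaffected. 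This yields the analogue of Theorem~\ref{maincalc}, hence the desired quasiisomorphisms.

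Granting the comparison, the rest follows as in Section~\ref{sec-dr}. The compatibility of $V$-filtrations with direct images \cite[Theorem~3.1.8]{Sabbah} identifies $\mathbb{R}^ip_\ast DR(V_{a-1}(g_+\srE))$ with the $V_{a-1}$-level of the higher direct image; pushing $u(a)$ forward by $p$ (using $\mathbb{R}f_\ast=\mathbb{R}p_\ast\circ g_\ast$) identifies $\srF^i_a$ with this, strictness along the $\lambda$-line gives the same after restriction to $\lambda=0$ so that $F^i_a=\srF^i_a(0)=\mathbb{R}^if_\ast\,\Dol(X/Y,E_{\alpha(a)})$, and the decomposition theorem shows that all of these are locally free and compatible with base change. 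Since $Q$ now has several components the parabolic structure is multi-indexed, so one organizes the $V_{b-1}(\widetilde{\srF}^i)$ into a locally abelian parabolic bundle with $\lambda$-connection via the multi-$V$-filtration along $Q$ and its identification with the parabolic structure, i.e.\ the downstairs form of Proposition~\ref{multivcomp} (\cite[Lemma~3.4.1]{Sabbah}, \cite[Chapter~15]{MochizukiPure}); its restriction to $\lambda=0$ is the parabolic Higgs bundle, and the Higgs/Gauss--Manin field on it is the connecting homomorphism of \eqref{eq:sequence} built from the absolute $L^2$ Dolbeault complex $\Dol(X,E_{\alpha(a)})$ of Section~\ref{globalcomplex}. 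Finally, over $Y-Q$ the strict-support-$\srY$ module $\overline{\srF}^i$ corresponds under the Sabbah--Mochizuki correspondence to a local system, which by applying the correspondence fibrewise is identified with $G^i$; as in Theorem~\ref{rthm} this identification is compatible with the parabolic and harmonic data at $Q$.

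The main difficulty I anticipate is organizational rather than mathematical: one must check carefully that the product local model $f(x,y,z)=(xy,z)$ is genuinely available at every singular point, including those lying over crossings of components of $Q$, and one must keep track of the multi-parabolic bookkeeping along the reducible divisor $Q$, together with the compatibility between the $V$-filtration along $\srD_V=f^{-1}(Q)$ and the $V$-filtrations along the components of $Q$ downstairs. All of this is provided by the several-variable results of \cite{Sabbah} and \cite{MochizukiPure}, and once they are invoked correctly no new input beyond the local calculations of Section~\ref{sec-nc} is required.
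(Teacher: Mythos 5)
Your overall strategy (rerun the graph-embedding/$V$-filtration/Koszul machinery with the extra base coordinates carried along) is not the one the paper uses, and as written it has two concrete problems. First, the claim that the coordinates $z_2,\dots,z_m$ ``enter merely as inert parameters'' because ``only $dx$ and $dy$ occur'' is false for the target complex: the terms of $DR(\srP/\srY, V_{a-1}(g_+\srE))$ are built from $\Omega^{\bullet}_{\srP/\srY}\cong q^{\ast}\Omega^{\bullet}_{\srX/\srA}$, which has rank $\dim X=m+1$, so all of $dz_2,\dots,dz_m$ appear and the complex has $m+2$ terms, while the source $g_{\ast}\mathrm{DR}^{\rm par}_{L^2}$ still has only two. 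The graph now has codimension $m$ in the fibres of $\srP\to\srY$, the comparison map shifts degrees by $m$, and $g_+(\srE)\cong g_{\ast}(\srE)[\partial_{t_1},\dots,\partial_{t_m}]$; the quasiisomorphism therefore requires, in addition to the two-variable Koszul computation of Section \ref{sec-nc}, a contraction of the Koszul directions corresponding to $\partial_{z_j}\leftrightarrow \varphi_{z_j}+\partial_{t_j}$ (a Kashiwara-equivalence step). That step is standard but it is not ``word for word'' the surface computation. Second, since $Q$ is now a normal crossings divisor with several components, assembling the $V_{b-1}(\widetilde{\srF}^i)$ into a locally abelian parabolic bundle on $Y$ requires compatibility of the $V$-filtrations along the different components of $Q$ at their crossing points, together with compatibility of each with the pushforward; the single-function statement \cite[Theorem 3.1.8]{Sabbah} does not give this, and deferring it to unspecified ``several-variable results'' leaves a real gap — this multi-variable pushforward compatibility is precisely the delicate point that the paper is at pains to avoid.

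The paper's actual proof sidesteps all of this by reduction to the already-established surface case: for any point $y_0\in Y$ choose a curve $C\subset Y$ through $y_0$ transverse to each component $Q_i$ individually (such curves exist even through multiple intersections of $Q$); then $X\times_Y C\to C$ is a split smooth semistable surface over a curve and Theorem \ref{mainth} applies to it. Constancy of the fibre ranks of $\mathbb{R}f_{\ast}$ along all such curves, combined with the fact that these are cohomology sheaves of perfect complexes, gives local freeness and base change on all of $Y$; and the two candidate parabolic extensions of $F^i_a|_{Y-Q}$ (the higher direct image on one hand, the parabolic bundle Mochizuki associates to $G^i$ on the other) agree because their restrictions to every transverse curve agree. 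You should either adopt this reduction, or, if you want to keep your direct approach, supply the extra Koszul contraction in the $z_j$-directions and a genuine multi-$V$-filtration argument at the crossings of $Q$.
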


The identification of the theorem is functorial, in particular the
component sheaves of the parabolic structure reflect the growth rate
of the $L^2$ harmonic metric on the local system $G^i$.

\begin{proof}
If $C\subset Y$ is a curve immersed into $Y$ and transverse to each
boundary divisor $Q_i$ individually, then $X\times _YC$ is a smooth
surface mapping to $C$ by a split smooth semistable map. Hence, the
considerations of the main part of the paper apply. Notice that we may
choose such curves passing through any multiple intersection of $Q$
(since we only asked transversality to each component). We obtain that
the ranks of the higher direct image sheaves $\srF ^i_a$ or $F^i_a$
are constant over $Y$. Since these are cohomology sheaves of perfect
complexes it follows from semicontinuity that they are locally free,
and the higher direct image is compatible with base change. Now,
notice that we know from \cite{MochizukiPure} that the local system
$G^i$ corresponds to a harmonic bundle, which in turn corresponds to a
parabolic Higgs bundle and parabolic $\lambda$-connection. We have an
identification with $F^i_a$ or $\srF ^i_a$ over $Y-Q$ (resp. $\srY
-\srQ$). But now, these bundes over the open set have two extensions
to parabolic bundles on $Y$ with respect to $Q$, namely on one hand
the $F^i_a$ (resp. $\srF ^i_a$), and on the other hand the parabolic
bundles associated to $G^i$. For any smooth embedded curve $C$
transverse to the $Q_i$, the restrictions of the two parabolic bundles
to $C$ coincide. It follows that the two parabolic bundles are the
same.
\end{proof}

\

\subsection{Higher dimensional families over a curve}

One may similarly ask the question of how to generalize our result to
the case of higher dimensional fibers. Note first of all that a higher
dimensional map can be decomposed into a series of $1$-dimensional
fibrations, using alterations \cite{AbramovichKaru}. Therefore, in
principle Theorem \ref{higherdim} can be applied inductively to obtain
some approximation of the higher direct image. This is an
approximation because there may be extra terms along the way coming
from the birational transformations and finite coverings involved in
making the required alterations.

It is therefore natural to ask for a global formula. The semistable
reduction theorem of Abramovich and Karu in the case of higher
dimensional base and higher dimensional fibers involves reduction to toric
singularities \cite{AbramovichKaru}. It is an interesting and
important question to understand how to calculate in this situation,
but that would go way beyond the scope of the methods that we are
discussing here.

We can, nonetheless, ask about the case of higher dimensional fibers
over a $1$-dimensional base. This case also presents a certain
collection of difficulties, and we are not able to state a theorem
about it at the present time. Let us review some of these difficulties
and discuss what might be done.

Suppose first of all that there are no horizontal divisors, in other
words the map $X-D\rightarrow Y-Q$ is proper. In this case, we don't
need to consider the intersection cohomology or $L^2$ cohomology on
the fibers.  Suppose $L$ is a local system on $X-D$ whose monodromy
eigenvalues are in $S^1\subset \cc ^{\times}$, provided with trivial
filtrations, and let $(\underline{E},\varphi )$ be the parabolic
logarithmic Higgs bundle associated to it by
\cite{MochizukiKobayashiHitchin}. We would like to obtain the
parabolic Higgs bundle on $Y$ associated to the local system
$R^if_{\ast}(L)$ on $Y-Q$.

Given a parabolic weight $a$ (at a point $q\in Q$ in the base), let
$\alpha (a)$ be the parabolic weight on $D$ obtained by assigning $a$
to each divisor component. We have a bundle $E_{\alpha (a)}$ on $X$
with logarithmic Higgs field
$$
E_{\alpha (a)}\stackrel{\varphi}{\rightarrow}
E_{\alpha (a)}\otimes \Omega ^1_X(\log D)
$$
inducing the relative Higgs field with values in 
$$
\Omega ^1_{X/Y}(\log D):= \Omega ^1_X(\log D)/f^{\ast}\Omega ^1_Y(\log Q).
$$
The relative Dolbeault complex is
$$
{\rm DOL}(X/Y,E_{\alpha (a)}):= 
\left[ 
\ldots \stackrel{\varphi}{\rightarrow}
E_{\alpha (a)}\otimes \Omega ^i_{X/Y}(\log D)
\stackrel{\varphi}{\rightarrow}\ldots \right].
$$
The generalization of Theorem \ref{mainth} to this case should say that 
$$
F^i_a:= \mathbb{R}^if_{\ast} 
{\rm DOL}(X/Y,E_{\alpha (a)}) 
$$
are locally free over $Y$, compatible with base change, and they fit
together to form a parabolic Higgs bundle $\underline{F}^i$; and this
is the parabolic Higgs bundle associated to $R^if_{\ast}(L)$.

In this case, the proof should in principle follow the same outline as
what we have done here. One would need to identify the $V$-filtration
for the graph embedding, in the case of a normal crossing of several
divisors. This should be an algebraic problem similar to the one we
have treated here for the crossing of two divisors, perhaps only
requiring a more general notation.

The more difficult case is when there is a horizontal divisor with
normal crossings. The first question is how to define the appropriate
Dolbeault complex. It is natural to conjecture that it should be the
complex consisting of holomorphic $E$-valued forms that are in $L^2$
(for the Poincar\'e metric near $D$), such that their derivatives are
in $L^2$. Jost and Zuo have proven this theorem in the case of
variations of Hodge structure \cite{JostZuo}.

Next, it would be good to have an algebraic description in terms of
weight filtrations. This is bound to be considerably more complicated
than in the case of relative dimension $1$, because there are several
different weight filtrations that interact in a subtle way.

Assuming a good understanding of these issues, the resulting algebraic
structures will then interact with the parabolic weights along the
vertical divisors, at points of $D_H\cap D_V$.  Identifying the
$V$-filtration will be a significant question since normal crossing
points of $D_V$ can touch points (or even crossing points) of
$D_H$. One of the crucial simplifications in our arguments of the
present paper was that the crossing points of $D_V$ were disjoint from
$D_H$ and the two aspects could be treated separately.

This rough overview highlights the difficulties that would be involved
in obtaining directly a generalized formula for families of higher
relative dimension.

The other path is to decompose a morphism $f$ into a sequence of
one-dimensional fibrations, by making alterations at each stage, and
applying Theorem \ref{higherdim} inductively. This has the advantage
of being currently accessible, however in practice it will require the
undoubtedly complicated investigation of what happens under the
alterations that occur in the middle of the process.

Note however that if we are in the special situation where the
horizontal divisors are smooth over $Y$ and meet only smooth points of
the vertical divisors all these difficulties disapear and we can give
an algebraic description of the $L^{2}$ Dolbeault complex. 
We give this description next.

Suppose $f : X \to Y$ is a morphism between smooth projective
varieties.  Suppose also that $D \subset X$ and $Q \subset Y$ are 
reduced simple normal crosings divisors such that $D$ decomposes as $D
= D_{H} + D_{V}$, with $D_{V} = f^{-1}(Q)$ scheme theoretically, and
that $f$ is smooth away from $D_{V}$. Additionally we assume that each
component of $D_{H}$ is smooth over $Y$ and that $D_{H}\cup
D_{V}$ is contained in the smooth locus of $D_{V}$. Note that these
assumptions in particular imply that $D_{H}$ is a disjoint union of
smooth connected components, i.e. $D_{H}$ embeds in the normalization
$D^{\nu}$of $D$ as a union of connected components of $D^{\nu}$.

Let $\left(\underline{E}_{\bullet},\varphi\right)$ be a tame parabolic
Higgs bundle on $(X,D)$ and let $a$ be a parabolic level along $Q$.
By definition $a$ is an assignment of a real number to each
irreducible component of $Q$.  As before we will write $\alpha(a)$ for
the parabolic level along $D$ which assigns $0$ to each horizontal
component of $D$, while to a vertical component of $D$ it assigns the
value of $a$ on the image of this vertical component under $f$.  As
before we consider the level $\alpha(a)$ representative
$E_{\alpha(a)}$ of the parabolic bundle $\underline{E}_{\bullet}$ and
the horizontal weight filtration $W_{\ell}(H,E_{\alpha(a)})$ of
$E_{\alpha(a)}$.

Explicitly, for each component $D_{h(j)}$ of $D_{H}$
we have the associated graded bundle
\[
\text{Gr}_{h(j),0}(E_{\alpha(a)}) = E_{\alpha(a)}/E_{\alpha(a) - \epsilon\delta^{h(j)}}
\]
and the induced nilpotent endomorphism $\text{res}_{D_{h(j)}} \varphi$
 of $\text{Gr}_{h(j),0}(E_{\alpha(a)})$.  This gives rise to an
associated monodromy weight filtration $W_{\ell}
\left(\text{Gr}_{h(j),0}(E_{\alpha(a)})\right)$. Set
\[
\begin{aligned}
\text{Gr}_{H,0}(E_{\alpha(a)}) & = \bigoplus_{j}
\text{Gr}_{h(j),0}(E_{\alpha(a)}), \\
W_{\ell}(\text{Gr}_{H,0}(E_{\alpha(a)})) & = \bigoplus_{j} W_{\ell}
\left(\text{Gr}_{h(j),0}(E_{\alpha(a)})\right).
\end{aligned}
\]
Since in our setup all $D_{h(j)}$ are disjoint, we can view
$\text{Gr}_{H,0}(E_{\alpha(a)}) $ as a torsion sheaf on $X$ equipped
with a surjective sheaf map
\[
E_{\alpha(a)} \to \text{Gr}_{H,0}(E_{\alpha(a)}).
\]
Following the pattern in section~\ref{sec-locstudy} we define
$W_{\ell}(H,E_{\alpha(a)})$ as the preimage of the monodromy weight
filtration $W_{\ell}(\text{Gr}_{D_{H},0})$ associated to the action of
the nilpotent $\text{res}_{D_{H}} \varphi$.

For each form degree $i \geq 0$ we have\footnote{The standard Poincare residue 
$\text{res}_{D} : \Omega^{i}_{X}(\log D) \longrightarrow
\Omega^{i-1}_{D^{\nu}}$ maps logarithmic $i$-forms on
$(X,D)$ to holomorphic $i-1$ forms on the normalization
$D^{\nu}$. Since $D_{H} \subset D^{\nu}$ we can compose
$\text{res}_{D}$ with the projection $\Omega^{i-1}_{D^{\nu}}
\twoheadrightarrow \Omega^{i-1}_{D_{H}}$  to get a residue map $\text{res}_{D_{H}}$.}
a residue map
\[
\text{res}_{D_{H}} : \Omega^{i}_{X}(\log D) \longrightarrow
\Omega^{i-1}_{D_{H}},
\]
and after tensoring with $W_{0}(H,E_{\alpha(a)})$, a residue map
\[
\text{res}_{D_{H}} : W_{0}(H,E_{\alpha(a)})\otimes \Omega^{i}_{X}(\log D) \longrightarrow
W_{0}(H,E_{\alpha(a)})_{|D_{H}} \otimes \Omega^{i-1}_{D_{H}}.
\]
Similarly, for every $i \geq 0$ we have a residue map on $f$-relative logarithmic forms
\[
\text{res}_{D_{H}} : W_{0}(H,E_{\alpha(a)})\otimes \Omega^{i}_{X/Y}(\log D) \longrightarrow
W_{0}(H,E_{\alpha(a)})_{|D_{H}} \otimes \Omega^{i-1}_{D_{H}/Y}.
\]
Pulling back the subbundles
\[
\begin{aligned}
  W_{-2}(H,E_{\alpha(a)})_{|D_{H}} \otimes \Omega^{i-1}_{D_{H}} &
  \subset W_{0}(H,E_{\alpha(a)})_{|D_{H}} \otimes \Omega^{i-1}_{D_{H}} \\
W_{-2}(H,E_{\alpha(a)})_{|D_{H}} \otimes \Omega^{i-1}_{D_{H}/Y} &
\subset W_{0}(H,E_{\alpha(a)})_{|D_{H}} \otimes \Omega^{i-1}_{D_{H}/Y}
\end{aligned}
\]
by these residue maps yields locally free subsheaves
\[
\begin{aligned}
  W_{-2,0}\left(H,E_{\alpha(a)}\otimes \Omega^{i}_{X}(\log D)\right) &
  \subset W_{0}(H,E_{\alpha(a)})\otimes \Omega^{i}_{X}(\log D), \\
  W_{-2,0}\left(H,E_{\alpha(a)}\otimes \Omega^{i}_{X/Y}(\log D)\right) &
  \subset W_{0}(H,E_{\alpha(a)})\otimes \Omega^{i}_{X/Y}(\log D).
\end{aligned}
\]
By construction these subsheaves are preserved by $\varphi$ and so we
get absolute and relative parabolic Dolbeault 
complexes:
\begin{equation} \label{eq-absolute-general}
\Dol(X,E_{\alpha(a)})
 := 
\left[ \begin{array}{c}
    W_{0}(H,E_{\alpha(a)}) \\[+0.3pc]
    \downarrow \wedge \varphi \\[+0.3pc]
    W_{-2,0}\left( H, E_{\alpha(a)}\otimes \Omega^{1}_{X}(\log D) \right) \\[+0.3pc]
    \downarrow \wedge \varphi \\[+0.3pc]
    W_{-2,0}\left( H, E_{\alpha(a)}\otimes \Omega^{2}_{X}(\log D) \right) \\[+0.3pc]
    \downarrow \wedge \varphi \\[+0.3pc]
    \vdots  \\[+0.3pc]
    \downarrow \wedge \varphi \\[+0.3pc]
    W_{-2,0}\left( H, E_{\alpha(a)}\otimes \Omega^{d_{X}}_{X}(\log D) \right) 
\end{array}\right] \  \begin{array}{c}
     0 \\[+0.3pc]
    \\[+0.3pc]
    1 \\[+0.3pc]
    \\[+0.3pc]
    2 \\[+0.3pc]
    \\[+0.3pc]
    \vdots  \\[+0.3pc]
    \\[+0.3pc]
    d_{X}
\end{array}
\end{equation}
and
\begin{equation} \label{eq-relative-general}
\Dol(X/Y,E_{\alpha(a)})
 := 
\left[ \begin{array}{c}
    W_{0}(H,E_{\alpha(a)}) \\[+0.3pc]
    \downarrow \wedge \varphi \\[+0.3pc]
    W_{-2,0}\left( H, E_{\alpha(a)}\otimes \Omega^{1}_{X/Y}(\log D) \right) \\[+0.3pc]
    \downarrow \wedge \varphi \\[+0.3pc]
    W_{-2,0}\left( H, E_{\alpha(a)}\otimes \Omega^{2}_{X/Y}(\log D) \right) \\[+0.3pc]
    \downarrow \wedge \varphi \\[+0.3pc]
    \vdots  \\[+0.3pc]
    \downarrow \wedge \varphi \\[+0.3pc]
    W_{-2,0}\left( H, E_{\alpha(a)}\otimes \Omega^{d_{X/Y}}_{X/Y}(\log D) \right) 
\end{array}\right] \  \begin{array}{c}
     0 \\[+0.3pc]
    \\[+0.3pc]
    1 \\[+0.3pc]
    \\[+0.3pc]
    2 \\[+0.3pc]
    \\[+0.3pc]
    \vdots  \\[+0.3pc]
    \\[+0.3pc]
    d_{X}-d_{Y}
\end{array}
\end{equation}

\

\begin{remark} \label{rem-compare-curve}
Note that if we choose local coordinates $z_{i}$ on $X$ so that a
component $D_{h(j)}$ of $D_{H}$ is given by the equation $z_{1} = 0$, then on
this local chart $\Omega^{p}_{X}(\log D)$ decomposes as
\[
\Omega^{p}_{X}(\log D) =
\wedge^{p}(\oplus_{i \geq 2} \mathcal{O}dz_{i})\oplus \frac{dz_{1}}{z_{1}}
\wedge (\wedge^{p-1}(\oplus_{i \geq 2} \mathcal{O}dz_{i}).
\]
Using this decomposition and the definition of $W_{-2,0}$ 
we get an identification

\

\[
\begin{aligned}
  W_{-2,0}\left( H, E_{\alpha(a)}\otimes \Omega^{i}_{X}(\log D) \right)  & \\[0.5pc]
& \hspace{-1.4in}= 
\left[
\begin{array}{c}
  W_{0}(H,E_{\alpha(a)})\otimes \wedge^{p}(\oplus_{i \geq 2} \mathcal{O}dz_{i})\\ \bigoplus \\
(W_{-2}(H,E_{\alpha(a)}) + z_{1} W_{0}(H,E_{\alpha(a)})) \otimes \frac{dz_{1}}{z_{1}}
  \wedge (\wedge^{p-1}(\oplus_{i \geq 2} \mathcal{O}dz_{i})
\end{array}
\right].
\end{aligned}
\]
By the definition of a parabolic bundle we have that $z_{1}
W_{0}(H,E_{\alpha(a)})) \subset E_{\alpha(a) - \delta^{h(j)}}$ and so
under the natural map $W_{0}(H,E_{\alpha(a)})) \twoheadrightarrow
W_{0}(\text{Gr}_{h(j),0})$ the subsheaf $z_{1}
W_{0}(H,E_{\alpha(a)}))$ maps to zero. This implies that $z_{1}
W_{0}(H,E_{\alpha(a)})) \subset W_{-2}(H,E_{\alpha(a)}))$ and hence
\begin{equation} \label{eq-W-20}
  W_{-2,0}\left( H, E_{\alpha(a)}\otimes \Omega^{i}_{X}(\log D) \right)   = 
\left[
\begin{array}{c}
  W_{0}(H,E_{\alpha(a)})\otimes \wedge^{p}(\oplus_{i \geq 2}
  \mathcal{O}dz_{i})\\ \bigoplus \\ W_{-2}(H,E_{\alpha(a)})\otimes
  \frac{dz_{1}}{z_{1}} \wedge (\wedge^{p-1}(\oplus_{i \geq 2}
  \mathcal{O}dz_{i})
\end{array}
\right].
\end{equation}
This formula implies immediately that in the case when $f : X \to Y$
is of relative dimension one, the complexes $\Dol(X,E_{\alpha(a)})$
and $\Dol(X/Y,E_{\alpha(a)})$ we just defined coincide with the
complexes defined in section~\ref{globalcomplex}. 
\end{remark}

\

\noindent
Tautologically we again get  a short exact sequence of complexes
\begin{equation} \label{eq:sequence2}
  \xymatrix@R-0.3pc{ 0 \ar[d] \\
    \Dol(X/Y,E_{\alpha(a)})[-1]\otimes
  f^{*}\Omega^{1}_{Y}(\log Q) \ar[d] \\
  \Dol(X,E_{\alpha(a)})/I^{2}(E_{\alpha(a)}) \ar[d] \\
  \Dol(X/Y,E_{\alpha(a)}) \ar[d] \\ 0 }
\end{equation}
with $I^{k}(E_{\alpha(a)})$  defined inductively as in section~\ref{globalcomplex}.

Again pushing forward this short exact sequence by $f$ will give rise
to a parabolic Higgs sheaf on $Y$. If $(\underline{E},\varphi)$ comes
from a harmonic bundle we expect that this push forward will
correspond via the NAHC with the $L^{2}$ push forward of this harmonic
bundle. This can be verified in two important special cases.

First, Remark~\ref{rem-compare-curve} and Theorem~\ref{higherdim}
imply that this holds if $f$ is of relative dimension one. Second, the
analytic considerations in the section~\ref{ancon} imply that this
statement holds in the case when $Y$ is a point, i.e. when we are
dealing with the global cohomology of a harmonic bundle on the
complement of a smooth divisor.

\

\

\bigskip

\noindent
Ron Donagi, {\sc University of Pennsylvania}, donagi@math.upenn.edu

\smallskip

\noindent
Tony Pantev, {\sc University of Pennsylvania}, tpantev@math.upenn.edu

\smallskip

\noindent
Carlos Simpson, {\sc CNRS, Universit\'e C\^ote d'Azur, LJAD}, Carlos.SIMPSON@unice.fr

\end{document}